\pdfoutput=1
\documentclass[leqno,10pt]{amsart}
\usepackage[normal]{phaine_style}
\usepackage[margin=1.175in]{geometry}


\DeclareSourcemap{
  \maps[datatype=bibtex]{
    \map{
      \step[fieldsource=note, final]
      \step[fieldset=addendum, origfieldval, final]
      \step[fieldset=note, null]
    }
  }
}

\DeclareMathOperator{\image}{im}

\renewcommand{\Equiv}{\mathrm{Equiv}}

\newcommand{\smalltop}{\mathrm{top}}

\newcommand{\Tatep}{\Tup_{p'}}

\DeclareMathOperator{\Kum}{Kum}
\DeclareMathOperator{\can}{can}

\newcommand{\units}{\cross}

\DeclareMathOperator{\Units}{\Ocal^{\units}}
\DeclareMathOperator{\Unitsone}{\Ocal_{1}^{\units}}

\DeclareMathOperator{\Res}{Res}

\DeclareMathOperator{\OTop}{\Ocal_{\smalltop}^{\units}}

\newcommand{\chitop}{\upchi_{\smalltop}}

\newcommand{\eq}{\mathrm{eq}}

\newcommand{\evhat}{\widehat{\ev}}

\newcommand{\NNpprime}{\NN_{p'}}

\newcommand{\ZZell}{\ZZ_{\ell}}

\newcommand{\ZZpprime}{\ZZhat_{p'}}
\newcommand{\QQell}{\QQ_{\ell}}

\newcommand{\mult}{\mathrm{mult}}

\newcommand{\mup}{\mathrm{m}}
\newcommand{\mubf}{\mathbf{\mu}}
\newcommand{\mum}{\mubf_m}
\newcommand{\mun}{\mubf_n}

\newcommand{\GGmet}{\GG_{\mup,\et}}

\newcommand{\ftpts}{\ensuremath{\textup{ft-pts}}}
\newcommand{\Xftpts}{X_{\ftpts}}

\DeclareMathOperator{\res}{res}
\DeclareMathOperator{\UH}{UH}
\newcommand{\UHres}{\UH_{\res}}


\usepackage{xargs}  
\newcommandx{\PHtodo}[2][1=]{\todo[linecolor=blue,backgroundcolor=blue!25,bordercolor=blue,#1]{#2}}


\newtheorem{claim}[equation]{Claim}
\theoremstyle{definition}

\newtheorem*{remark*}{Remark}


\newcommand{\arcdescent}{\xspace{arc-de\-scent}\xspace}

\newcommand{\arctopology}{\xspace{arc-topol\-o\-gy}\xspace}

\newcommand{\hcover}{\xspace{h-cover}\xspace}

\newcommand{\hhypercover}{\xspace{h-hy\-per\-cover}\xspace}

\newcommand{\hdescent}{\xspace{h-de\-scent}\xspace}
\newcommand{\hlocal}{\xspace{h-lo\-cal}\xspace}
\newcommand{\hlocally}{\xspace{h-lo\-cal\-ly}\xspace}
\newcommand{\hsheaf}{\xspace{h-sheaf}\xspace}

\newcommand{\htopology}{\xspace{h-topol\-o\-gy}\xspace}


\newcommand{\RTop}{\categ{RTop}}

\newcommand{\LTop}{\categ{LTop}}


\newcommand{\FFp}{\FF_{p}}
\newcommand{\ZZhat}{\widehat{\ZZ}}



\newcommand{\awn}{\ensuremath{\textup{awn}}}
\DeclareMathOperator{\norm}{norm}
\DeclareMathOperator{\sn}{sn}
\newcommand{\tft}{\ensuremath{\textup{tft}}}

\newcommand{\kawn}{k^{\awn}}
\newcommand{\Aawn}{A^{\awn}}

\newcommand{\Sawn}{S^{\awn}}
\newcommand{\Wawn}{W^{\awn}}
\newcommand{\Xawn}{X^{\awn}}
\newcommand{\Yawn}{Y^{\awn}}

\newcommand{\kperf}{k^{\perf}}

\newcommand{\Xperf}{X^{\perf}}
\newcommand{\Yperf}{Y^{\perf}}

\newcommand{\Xsn}{X^{\sn}}
\newcommand{\Ysn}{Y^{\sn}}

\newcommand{\Schawn}{\Sch^{\awn}}
\newcommand{\Schawntft}{\Sch^{\awn,\tft}}
\newcommand{\Schsn}{\Sch^{\sn}}
\newcommand{\Schperf}{\Sch^{\perf}}
\newcommand{\Schft}{\Sch^{\ft}}


\DeclareMathOperator{\pin}{pin}
\newcommand{\Hompin}{\Hom^{\pin}}
\newcommand{\Hompink}{\Hom_{k}^{\pin}}

\newcommand{\RToppin}{\RTop^{\pin}}



\newcommand{\pprimecomp}{_{p'}^{\wedge}}

\newcommand{\piet}{\uppi^{\et}}


\DeclareMathOperator{\characteristic}{char}

\renewcommand{\Open}{\mathrm{Open}}


\newcommand{\aff}{\ensuremath{\textup{aff}}}
\renewcommand{\ft}{\ensuremath{\textup{ft}}}

\newcommand{\sep}{\mathrm{sep}}
\newcommand{\qcs}{\mathrm{qcs}}

\newcommand{\Et}{\categ{Ét}}
\newcommand{\Etaff}{\Et{}^{\aff}}

\newcommand{\Etsep}{\Et{}^{\sep}}

\newcommand{\Etqcs}{\Et{}^{\qcs}}

\newcommand{\Uet}{U_{\et}}
\newcommand{\Vet}{V_{\et}}
\newcommand{\Wet}{W_{\et}}
\newcommand{\Xet}{X_{\et}}
\newcommand{\Yet}{Y_{\et}}
\newcommand{\Zet}{Z_{\et}}

\newcommand{\Xetawn}{X_{\et}^{\awn}}
\newcommand{\Yetawn}{Y_{\et}^{\awn}}


\newcommand{\Het}{\Hup_{\et}}




\DeclareMathOperator{\cl}{cl}
\newcommand{\Xcl}{X_{\cl}}

\DeclareMathOperator{\map}{map}



\makeatletter
\def\swappedhead#1#2#3{%
  \thmnumber{\@upn{\the\thm@headfont#2\@ifnotempty{#1}{\,~}}}%
  \thmname{#1}%
  \thmnote{ {\the\thm@notefont{#3}}}}
\makeatother


\makeatletter
\def\@seccntformat#1{%
  \protect\textup{\protect\@secnumfont
    \ifnum\pdfstrcmp{subsection}{#1}=0 \bfseries\fi
    \csname the#1\endcsname
    \protect{\,}
  }%
}  
\makeatother

\usepackage{secdot}


\makeatletter
\def\@tocline#1#2#3#4#5#6#7{\relax
  \ifnum #1>\c@tocdepth 
  \else
    \par \addpenalty\@secpenalty\addvspace{#2}%
    \begingroup \hyphenpenalty\@M
    \@ifempty{#4}{%
      \@tempdima\csname r@tocindent\number#1\endcsname\relax
    }{%
      \@tempdima#4\relax
    }%
    \parindent\z@ \leftskip#3\relax \advance\leftskip\@tempdima\relax
    \rightskip\@pnumwidth plus4em \parfillskip-\@pnumwidth
    #5\leavevmode\hskip-\@tempdima
      \ifcase #1
       \or\or \hskip 1em \or \hskip 2em \else \hskip 3em \fi%
      #6\nobreak\relax
    \hfill\hbox to\@pnumwidth{\@tocpagenum{#7}}\par
    \nobreak
    \endgroup
  \fi}
\makeatother

\hyphenation{Grund-leh-ren} 
\hyphenation{base-change}
\hyphenation{ho-mo-mor-phism}

\addbibresource{References.bib}


\title{Reconstruction of schemes from their étale topoi}

\author{Magnus Carlson}

\author{Peter J. Haine} 

\author{Sebastian Wolf}

\date{\today}

\begin{document}


\begin{abstract} 
	Let $ k $ be a field that is finitely generated over its prime field.
	In Grothendieck's anabelian letter to Faltings, he conjectured that sending a $ k $-scheme to its étale topos defines a fully faithful functor from the localization of the category of finite type $ k $-schemes at the universal homeomorphisms to a category of topoi.
	We prove Grothendieck's conjecture for infinite fields of arbitrary characteristic.
	In characteristic $ 0 $, this shows that seminormal finite type $ k $-schemes can be reconstructed from their étale topoi, generalizing work of Voevodsky.
	In positive characteristic, this shows that perfections of finite type $ k $-schemes can be reconstructed from their étale topoi.
\end{abstract}

\maketitle

\setcounter{tocdepth}{1}
\tableofcontents


\setcounter{section}{-1}

\section{Introduction}

Let $ k $ be a finitely generated field of characteristic $ 0 $. 
In Grothendieck's influential \textit{anabelian letter} to Faltings \cite{MR1483108}, he made three classes of conjectures regarding the reconstruction of $ k $-schemes.
The first class is about describing isomorphisms between `anabelian' $ k $-schemes in terms of outer isomorphisms between their étale fundamental groups; the second is the \textit{section conjecture} about describing the set of rational points of an `anabelian' $ k $-scheme $ X $ in terms of sections of the natural homomorphism $ \fromto{\piet_1(X)}{\Gal(\kbar/k)} $.
This paper concerns the third conjecture, which is about reconstructing $ k $-schemes from their étale topoi \cite[p. 7]{MR1483108}. 
The main result of this paper is to prove an extension of Grothendieck's conjecture.

Write $ \Schft_{k} $ for the category of finite type $ k $-schemes and $ \RTop_k $ for the category of topoi over the étale topos of $ \Spec(k) $.
Grothendieck's conjecture asks to what extent the functor
\begin{equation*}
	(-)_{\et} \colon \Schft_{k} \to \RTop_{k} \period
\end{equation*}
which takes a scheme $ X $ to its étale topos $ \Xet $ is fully faithful.  
In order to make this plausible, two technical modifications are needed.
For the first, note that every morphism between finite type $ k $-schemes sends closed points to closed points, but this is not true of an arbitrary geometric morphism between étale topoi (see \Cref{ex:a_geometric_morphism_that_isn't_pinned}).
Hence we must add this restriction to the morphisms of topoi considered.
We say that a geometric morphism of topoi $ \fromto{\Xcal}{\Ycal} $ is \textit{pinned} if the induced map on underlying spaces sends closed points to closed points.
Write \smash{$ \RToppin_{k} $} for the category of topoi over $ \Spec(k)_{\et} $ and pinned geometric morphisms. 

The second is that by the \emph{topological invariance} of the étale topos \stacks{03SI},
any universal homeomorphism of schemes induces an equivalence of étale topoi.
Hence the optimal result would be that the functor $ \goesto{X}{\Xet} $ becomes fully faithful after localizing \smash{$ \Schft_k $} by inverting the universal homeomorphisms.
Grothendieck's precise conjecture is that this is true when $ k $ is finitely generated of characteristic $ 0 $.
Our main result is a proof of this conjecture, together with its natural extension to infinite fields of positive characteristic.

Denote the collection of universal homeomorphisms by $ \UH $.  

\begin{theorem}\label{intro_thm:etale_reconstruction}
	Let $ k $ be a field that is finitely generated over its prime field.
	If $ \characteristic(k) > 0 $, assume that $ k $ has transcendence degree at least $ 1 $.
	Then the functor
	\begin{equation*}
		(-)_{\et} \colon \fromto{\Schft_{k}[\UH^{-1}]}{\RToppin_{k}}
	\end{equation*}
	sending a finite type $ k $-scheme $ X $ to its étale topos $ \Xet $ is fully faithful.
\end{theorem}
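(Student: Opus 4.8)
The plan is to begin by replacing the localization with a concrete category of schemes. Since $(-)_{\et}$ inverts universal homeomorphisms by topological invariance, it factors through $\Schft_k[\UH^{-1}]$, and I would identify the latter with a category of \emph{absolutely weakly normal} schemes: every finite type $k$-scheme $X$ admits a terminal universal homeomorphism $\fromto{\Xawn}{X}$, the assignment $\goesto{X}{\Xawn}$ is a reflective localization, and between objects of the form $\Xawn$ the morphisms of $\Schft_k[\UH^{-1}]$ are ordinary scheme morphisms. (In characteristic $0$ this recovers the seminormalization $\Xsn$; in characteristic $p$ it additionally incorporates the perfection $\Xperf$, which is why the reconstruction statement concerns seminormalizations, respectively perfections.) Because $(\Xawn)_{\et} = \Xet$, the theorem reduces to showing that the comparison map
\begin{equation*}
  \Hom(\Xawn,\Yawn) \longrightarrow \Hompink(\Xet,\Yet)
\end{equation*}
is a bijection for all finite type $k$-schemes $X$ and $Y$. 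As both sides satisfy Zariski descent in the variable $X$, I would reduce to $X$ local and then organize the construction of a morphism around the generic points of $X$ together with their specializations.

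Next I would extract geometric data from a pinned geometric morphism $\phi\colon\fromto{\Xet}{\Yet}$. First, $\phi$ determines a continuous map of underlying spaces $\fromto{\lvert X\rvert}{\lvert Y\rvert}$, since $\lvert X\rvert$ is recovered functorially from $\Xet$ as its locale of subterminal objects (the open subschemes of $X$), and the \emph{pinned} hypothesis is exactly the condition guaranteeing that this map sends closed points to closed points, as a genuine morphism of finite type $k$-schemes must. Second, localizing $\phi$ at a generic point $\eta$ of $X$ with image $y = \phi(\eta)$ yields a map of classifying topoi $\fromto{\BGal(\kappa(\eta))}{\BGal(\kappa(y))}$, equivalently an outer, continuous homomorphism between the absolute Galois groups of the residue fields $\kappa(\eta)$ and $\kappa(y)$, both finitely generated over the prime field.

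The crux is to promote this Galois-theoretic data to geometry and to verify that it recovers $\phi$. Here I would invoke birational anabelian geometry for finitely generated fields, in its homomorphism form: each homomorphism of absolute Galois groups over $k$ is induced by a unique field embedding $\fromto{\kappa(y)}{\kappa(\eta)}$, hence by a rational $k$-map $X \dashrightarrow Y$. This is precisely where the hypotheses enter, as finite generation over the prime field is what makes the residue fields anabelian and, in characteristic $p$, transcendence degree at least $1$ is needed to exclude finite fields, whose absolute Galois group $\ZZhat$ retains no such information. To glue the resulting rational maps into an actual morphism of awn schemes and to confirm that it induces $\phi$ on all of $\Xet$, I would reconstruct the structure sheaf from the topos by Kummer theory: recover the sheaves $\mun$ of roots of unity, then via the Kummer sequence the sheaf of units $\Units$ and the group scheme $\GGmX$, and finally the additive structure of the local rings. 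Absolute weak normality, via the valuative criterion, together with the pinning then forces the rational map to extend to a morphism $\fromto{\Xawn}{\Yawn}$ inducing $\phi$.

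The main obstacle is fullness, and within it two distinct difficulties. The first is the anabelian input itself: one needs a \emph{functorial} reconstruction of finitely generated fields at the level of homomorphisms, not merely isomorphisms, and it must be rendered compatible with the base $\Spec(k)_{\et}$. The second, and most delicate, is the globalization, where the embeddings defined separately at the various generic points must be shown to glue into a single morphism and to extend across specializations; this is where recovering the \emph{additive} structure of the local rings, which the multiplicative Kummer data alone does not see, and exploiting the pinned hypothesis to match specialization behavior both become essential. Faithfulness is comparatively direct, since an awn $k$-morphism between reduced schemes is determined by its restrictions to the generic points, where the injectivity half of the anabelian correspondence already suffices.
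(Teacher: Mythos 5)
Your opening reduction is the same as the paper's (identify $\Schft_{k}[\UH^{-1}]$ with absolutely weakly normal schemes topologically of finite type, so that the theorem becomes bijectivity of $\fromto{\Hom_k(\Xawn,Y)}{\Hompink(\Xetawn,\Yet)}$), but everything after that diverges, and the divergence contains a genuine gap. Your crux is the Hom-form of birational anabelian geometry: ``each homomorphism of absolute Galois groups over $k$ is induced by a unique field embedding.'' This statement is false as you use it. For $k$ of characteristic $0$, the surjection $\Gup_{k(t)} \to \Gup_k$ admits a continuous section, which is a continuous homomorphism of absolute Galois groups over $\Gup_k$ induced by no field embedding $k(t) \hookrightarrow k$ whatsoever — indeed the paper uses exactly this section to manufacture a non-pinned geometric morphism $\fromto{\Spec(k)_{\et}}{\PP^1_{k,\et}}$. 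The known Hom-form results (Mochizuki, over sub-$p$-adic fields, hence applicable to finitely generated fields of characteristic $0$) require the homomorphism to be \emph{open}, and your pinned hypothesis constrains only the behavior at \emph{closed} points; it gives you no openness for the homomorphism $\fromto{\Gup_{\upkappa(\eta)}}{\Gup_{\upkappa(y)}}$ extracted at a generic point, so you cannot invoke those theorems. Worse, in positive characteristic — the case the theorem is specifically extending to — even the open Hom-form of birational anabelian geometry for finitely generated fields is an open problem (only the isomorphism version, due to Pop, is known). Your globalization step (gluing rational maps across generic points, recovering additive structure from multiplicative Kummer data, extending over specializations) is also left entirely schematic, but it is moot given that the local input is unavailable.

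For contrast: the paper needs no anabelian input at all. After the same localization step it proves injectivity elementarily (reduce along fibers and closed points to finite extensions of $k$, where geometric morphisms into $\Spec(L)_{\et}$ are computed directly), then proves that both $X \mapsto \Hom_S(\Xawn,Y)$ and $X \mapsto \Hompink(\Xetawn,\Yet)$ satisfy \hdescent, so that by alterations surjectivity reduces to $X$ affine, regular, connected with a rational point and $Y = \GGm$. There, a pinned morphism $\philowerstar$ yields a class $\phiupperstar(c_1) \in \Het^1(\Xawn;\ZZpprime(1)) \isomorphic \Units(\Xawn)\pprimecomp$, and the whole difficulty is showing this $p'$-adic class is an honest unit; this is done by an evaluation-pairing and determinant argument using a nontrivial discrete valuation on $k$, with amenability (finite generation of $\Pic$ for normal schemes over finitely generated fields, no $p'$-divisible points on tori) guaranteeing the Kummer map is an isomorphism and the comparison map on $\kbar$-points is injective. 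If you want a workable proof, you should replace the anabelian step with a descent-plus-divisibility argument of this kind; as written, your proposal rests on a statement that is false without openness and unproven in characteristic $p$ even with it.
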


The first ingredient in our proof of \Cref{intro_thm:etale_reconstruction} is an explicit description of the source category.
For this, recall that a scheme $ X $ is \emph{absolutely weakly normal} if every universal homeomorphism $ X' \to X $ is an isomorphism.
Over fields, this reduces to more well-known notions.
A characteristic $ 0 $ scheme is absolutely weakly normal if and only if it is \emph{seminormal}, i.e., does not have cuspidal cubic singularities.
A positive characteristic scheme is absolutely weakly normal if and only if it is \emph{perfect}.
The inclusion of the full subcategory of absolutely weakly normal schemes into all schemes admits a right adjoint
\begin{equation*}
	(-)^{\awn} \colon \Sch \to \Schawn
\end{equation*}
called \emph{absolute weak normalization}.
Moreover, absolute weak normalization induces an equivalence of categories \smash{$ \equivto{\Sch[\UH^{-1}]}{\Schawn} $}.
As a result, in order to prove \Cref{intro_thm:etale_reconstruction} we equivalently need to show that for all finite type $ k $-schemes $ X $ and $ Y $, the natural map
\begin{equation}\label{equ:etale_reconstr_map}
	\Hom_{k}(\Xawn,Y) \to \Hompink(\Xetawn,\Yet)
\end{equation}
is bijective; here the right-hand side is the set of isomorphism classes of pinned geometric morphisms $ \fromto{\Xetawn}{\Yet} $ over $ \Spec(k)_{\et} $.
By the Mordell--Weil Theorem, \Cref{intro_thm:etale_reconstruction} follows from the following more refined result:

\begin{theorem}[(see \Cref{prop:equivalent_conditions_for_etale_reconstruction,thm:reconstruction_for_amenable_fields})]\label{intro_thm:reconstruction_for_amenable_fields}
	Let $ k $ be a field of exponential characteristic $ p $.
	Assume that:
	\begin{enumerate}[label=\stlabel{intro_thm:reconstruction_for_amenable_fields}, ref=\arabic*]
		\item\label{intro_thm:reconstruction_for_amenable_fields.1} For any torus $ T $ over $ k $, the only element of $T(k)$ divisible by all integers coprime to $ p $ is the identity.

		\item\label{intro_thm:reconstruction_for_amenable_fields.2} For any regular finite type $ k $-scheme $ X $, the only torsion element of $ \Pic(X) $ that is divisible by all integers coprime to $ p $ is the identity.

		\item\label{intro_thm:reconstruction_for_amenable_fields.3} The field $ k $ admits a nontrivial discrete valuation.
	\end{enumerate}
	Then for all finite type $ k $-schemes $ X $ and $ Y $ the natural map 
	\begin{equation*}
		\Hom_{k}(\Xawn,Y) \to \Hompink(\Xetawn,\Yet)
	\end{equation*}
	is bijective.
\end{theorem}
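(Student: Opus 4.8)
The plan is to prove the bijectivity of \eqref{equ:etale_reconstr_map} by reducing to a reconstruction of the sheaf of functions, and then building that reconstruction out of Kummer theory, with hypotheses \ref{intro_thm:reconstruction_for_amenable_fields.1} and \ref{intro_thm:reconstruction_for_amenable_fields.2} annihilating the unavoidable Kummer error terms and hypothesis \ref{intro_thm:reconstruction_for_amenable_fields.3} supplying the additive structure. First I would invoke \Cref{prop:equivalent_conditions_for_etale_reconstruction} to replace the bijectivity of \eqref{equ:etale_reconstr_map} by an equivalent, more tractable reconstruction statement — reducing in particular to affine $ Y $, and hence to reconstructing the ring $ \Ocal(\Xawn) $ together with its pullback along a given geometric morphism. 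The essential task then becomes: given a pinned geometric morphism $ f \colon \fromto{\Xetawn}{\Yet} $ over $ \Spec(k)_{\et} $, produce a $ k $-morphism $ \fromto{\Xawn}{Y} $ realizing $ f $, and show that this assignment is inverse to $ (-)_{\et} $. I would organize the construction of the inverse in three stages.

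First (\emph{points}), the geometric morphism $ f $ induces a map of underlying sober spaces, and the pinned hypothesis guarantees that closed points are carried to closed points. Since $ \Xawn $ and $ Y $ are of finite type over $ k $, the residue data at each point — in particular the finite residue extension of $ k $ — is encoded in the local structure of the topos there, and working over $ \Spec(k)_{\et} $ keeps everything compatible with $ k $; this yields a $ k $-equivariant map on classical points $ \fromto{X^{\awn}_{\cl}}{Y_{\cl}} $. Second (\emph{units}), for each integer $ n $ coprime to $ p $ the topos $ \Xetawn $ determines $ H^1_{\et}(\Xawn,\mun) $, and $ f $ pulls such classes back from $ Y $. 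The Kummer sequence exhibits $ H^1_{\et}(-,\mun) $ as an extension of $ \Pic(-)[n] $ by $ \Ocal^{\units}(-)/n $; passing to the limit over all $ n $ coprime to $ p $, the Picard hypothesis \ref{intro_thm:reconstruction_for_amenable_fields.2} forces the prime-to-$ p $ divisible torsion $ \varprojlim_n \Pic(-)[n] $ to vanish on regular schemes, while the torus hypothesis \ref{intro_thm:reconstruction_for_amenable_fields.1}, applied to $ \GG_{\mup} $ and its forms, forces the prime-to-$ p $ divisible units to vanish. Hence on regular schemes $ \Ocal^{\units} $ embeds into the $ f $-equivariant datum $ \varprojlim_n H^1_{\et}(-,\mun) $ with the expected image, so that $ f $ induces a pullback $ \fromto{\Ocal^{\units}(Y)}{\Ocal^{\units}(\Xawn)} $. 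I would obtain this first over the regular locus and at generic points and then propagate it to all of the (possibly singular) absolutely weakly normal scheme $ \Xawn $.

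Third (\emph{functions and gluing}), to upgrade this multiplicative datum to a ring map I would use the valuation hypothesis \ref{intro_thm:reconstruction_for_amenable_fields.3}: a nontrivial discrete valuation detects which relations among units arise from additive relations $ a+b=c $, which is exactly the information needed to recover addition from the multiplicative structure. This promotes the pullback on $ \Ocal^{\units} $ to a pullback on $ \Ocal $, and together with the map on points assembles into a morphism of locally ringed spaces over $ k $, i.e. a $ k $-morphism $ \fromto{\Xawn}{Y} $; applying $ (-)_{\et} $ recovers $ f $ up to isomorphism, giving surjectivity of \eqref{equ:etale_reconstr_map}. For injectivity I would run the same reconstruction of points and units from the isomorphism class of $ f $: two $ k $-morphisms inducing the same pinned geometric morphism induce the same map on classical points and the same Kummer-pullback, hence the same map on $ \Ocal $, and therefore coincide since $ \Xawn $ is reduced and $ Y $ is separated.

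The step I expect to be the main obstacle is the exactness of the Kummer-theoretic reconstruction in the second stage combined with the multiplicative-to-additive passage in the third. Concretely, one must verify that hypotheses \ref{intro_thm:reconstruction_for_amenable_fields.1} and \ref{intro_thm:reconstruction_for_amenable_fields.2} uniformly kill both the kernel and the cokernel of the Kummer map, controlling the relevant $ \varprojlim^1 $ contributions, and that the pullback constructed over the regular locus and recovered via valuations descends and glues to genuine, mutually compatible local ring homomorphisms on $ \Xawn $, which need not itself be regular. The interplay between the regularity required for hypothesis \ref{intro_thm:reconstruction_for_amenable_fields.2} and the merely absolute-weak-normality of $ \Xawn $ is where I would expect the bulk of the technical work to lie.
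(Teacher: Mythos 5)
Your Kummer-theoretic skeleton (stage two) does match the paper's strategy---the class $\chitop(\philowerstar)$ obtained by pulling back the canonical class $c_1 \in \Het^1(\GGm;\ZZpprime(1))$ and inverting the $p'$-Kummer map is exactly \Cref{const:chitop_with_desired_target,prop:discreteness_of_chi_top_implies_étale_reconstruction}---but there are two genuine gaps. The first is the role you assign to hypothesis \enumref{intro_thm:reconstruction_for_amenable_fields}{3}: the discrete valuation is \emph{not} used to recover addition from the multiplicative structure, and your proposed mechanism (``a valuation detects which relations among units arise from additive relations $a+b=c$'') has no workable implementation. In the paper the ring structure is recovered without any valuation: after reducing to $Y = \AA^1$ (glued from two copies of $\GGm$), one defines $\psi(b) \in \Aawn$ for each $b \in B$ as the unique function agreeing with $\blowerstar \circ \philowerstar$ on $\kbar$-points, and additivity and multiplicativity of $\psi$ are checked on $\kbar$-points using the injectivity theorem (\Cref{cor:injectivity_part_of_reconstruction_for_Jacobson_schemes}) and the product splitting for morphisms into $(\AA^1 \cross \AA^1)_{\et}$; see the proof of \Cref{thm:reduction_to_the_regular_case}. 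What the valuation actually does is solve precisely the problem you wave away with ``the expected image'': Kummer theory only produces a class $\chitop(\philowerstar) \in \Units(\Xawn)\pprimecomp$, and the whole difficulty is to show this \emph{completed} class comes from an honest unit. This is \Cref{prop:argument_using_discrete_valuation}: $\Unitsone(\Xawn)$ is a finitely generated free $\ZZ[\nicefrac{1}{p}]$-module (\Cref{lem:Units_1_is_finitely_generated}, a Rosenlicht-type finiteness result your proposal never invokes), one writes $\xi = g_1^{a_1}\cdots g_m^{a_m}$ with $a_i \in \ZZpprime$, evaluates at well-chosen $\kbar$-points via $\evhat$, extends the valuation to finite extensions, and forces $a_i \in \ZZ[\nicefrac{1}{p}]$ by a nonvanishing-determinant argument on the matrix $(\nu(g_s(x_t)))_{s,t}$. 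This is also where hypothesis \enumref{intro_thm:reconstruction_for_amenable_fields}{1} genuinely enters, through the injectivity of $i \colon \kbar^{\units} \to \colim_E (E^{\units})\pprimecomp$ (via \Cref{lem:amenable_is_stable_under_finite_extension}), and one still needs \Cref{prop:Suffices_to_reconstruct_on_points} to upgrade agreement on $\kbar$-points to an identification of geometric morphisms.

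The second gap is your passage from regular schemes to general $\Xawn$: ``obtain this over the regular locus and propagate'' would fail, since a singular seminormal scheme is not controlled by its regular locus (units and morphisms do not extend from a dense open in any straightforward way), and hypothesis \enumref{intro_thm:reconstruction_for_amenable_fields}{2} concerns only regular schemes. The paper's actual mechanism is descent: both sides of the reconstruction map are \hsheaves in $X$ (\Cref{thm:h-descent_for_morphisms_from_Xawn} for $X \mapsto \Hom_k(\Xawn,Y)$, and \Cref{prop:X_et_is_an_h-sheaf} together with \Cref{cor:pinned_homomorphisms_are_an_h-sheaf} for the topos side---the latter requiring proper basechange for étale sheaves of \emph{sets} and the rigidity of pinned morphisms, \Cref{prop:rigidity}, to get a sheaf of sets rather than of groupoids), after which de Jong's alterations reduce everything to $X$ affine, regular, connected with a rational point and $Y = \GGm$ (\Cref{thm:reduction_to_the_regular_case}). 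Without this \hdescent apparatus (or at least descent along the normalization, which the paper notes suffices over finitely generated fields), the tension you correctly flag between the regularity in \enumref{intro_thm:reconstruction_for_amenable_fields}{2} and the merely absolutely weakly normal $\Xawn$ is left unresolved, so the proposal as written does not close.
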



\subsection*{Voevodsky's work}\label{intro_subsec:Voevodskys_work}

We conclude the introduction by noting that \Cref{intro_thm:reconstruction_for_amenable_fields} improves upon a result of Voevodsky \cite{MR1098621}.
Voevodsky proved that if $ k $ is a finitely generated field of characteristic $ 0 $ and $ X $ and $ Y $ are finite type $ k $-schemes with $ X $ normal, then the natural map 
\begin{equation*}
	\Hom_{k}(X,Y) \to \Hompink(\Xet,\Yet)
\end{equation*}
is bijective. 
\Cref{intro_thm:reconstruction_for_amenable_fields} generalizes this result in two directions.
First, \Cref{intro_thm:reconstruction_for_amenable_fields} also holds in positive characteristic.
Second, even in charactistic $ 0 $, \Cref{intro_thm:reconstruction_for_amenable_fields} applies when $ X $ is only seminormal, and seminormal schemes are quite a lot more general than normal schemes. 
For example, the nodal cubic is not normal, but \textit{is} seminormal.

In Voevodsky's proof of \Cref{intro_thm:etale_reconstruction} for normal schemes over a finitely generated field of characteristic $ 0 $, the fact that the group $ \Pic(X) $ is finitely generated is crucial. 
However, for seminormal schemes, $ \Pic(X) $ is in general not finitely generated and it is not clear how to adapt Voevodsky's argument in this more general setting.%
\footnote{More precisely, it suffices to know that $ \Pic(X) $ has no infinitely divisible torsion elements. If $ X $ is seminormal and $ \Sup_2 $, \cite[Theorem 6.5]{MR1399035} implies that $ \Pic(X) $ has no infinitely divisible torsion elements. 
Without the $ \Sup_2 $ assumption, we are neither aware of a proof nor a counterexample.
This is also observed in forthcoming work by Zach Berens.}
Our proof bypasses these issues by proceeding as follows.
We show that both the source and target of \eqref{equ:etale_reconstr_map} are \hlocal in $ X $.
Therefore, using de Jong's alterations, we may reduce to the case that $ X $ is regular.
In particular, in characteristic $ 0 $, we can reduce to Voevodsky's argument.
More generally, the assumption that $ k $ is amenable and admits a nontrivial discrete valuation allows us to adapt Voevodsky's methods to work in arbitrary characteristic.

\begin{remark*}
	Let $ k $ be an infinite finitely generated field of positive characteristic.
	Since this paper was written, we learned that in forthcoming work Zach Berens independently shows that the proof in Voevodsky's paper can be adapted to show that perfections of finite type $ k $-schemes can be reconstructed from their étale topoi.
\end{remark*}


\subsection*{Linear overview}\label{intro_subsec:linear_overview} 

\Cref{sec:background_on_absolute_weak_normalization} provides background on absolute weak normalization, seminormalization, and perfection.
Part of the difficulty in proving the positive characteristic cases of \Cref{intro_thm:etale_reconstruction,intro_thm:reconstruction_for_amenable_fields} is that absolute weak normalizations of finite type schemes are typically no longer of finite type or even noetherian.
To effectively deal with this issue we introduce morphisms \emph{topologically of finite type}, i.e., morphisms that factor as a universal homeomorphism followed by a morphism of finite type, and prove some basic results about them.

In \cref{sec:the_etale_reconstruction_property}, we introduce pinned geometric morphisms as well as what it means for a field $ k $ to satisfy the conclusion of \Cref{intro_thm:etale_reconstruction}.
We call the latter the \textit{étale reconstruction property}.
We give a number of equivalent formulations of étale reconstruction (see \Cref{prop:equivalent_conditions_for_etale_reconstruction}) and discuss some immediate consequences (see \Cref{prop:consequences_of_etale_reconstruction}).
In \cref{sec:proof_of_injectivity}, we prove that the injectivity part of the reconstruction theorem holds over any field, in fact any qcqs base scheme $ S $ (see \Cref{thm:injectivity_part_of_reconstruction,cor:injectivity_part_of_reconstruction_for_Jacobson_schemes}).
In \cref{sec:reduction_to_regular_source_and_target_GGm}, we prove that for any $ S $-scheme $ Y $, the functor $ X \mapsto \Hom_S(\Xawn,Y) $ satisfies \hdescent.
We furthermore show that the étale topos satisfies \hdescent.
Using alterations, this lets us reduce proving \Cref{intro_thm:reconstruction_for_amenable_fields} to the case where $ X $ is an affine, regular, connected, finite type $ k $-scheme that admits a rational point and $ Y = \GGm $ (\Cref{thm:reduction_to_the_regular_case}).
\Cref{sec:amenable_fields} introduces the class of fields satisfying condition \enumref{intro_thm:reconstruction_for_amenable_fields}{1} and a slight generalization of \enumref{intro_thm:reconstruction_for_amenable_fields}{2}; we call these \textit{amenable} fields, see \Cref{def:amenable}.
We also prove some basic results about amenable fields.
In \cref{sec:proof_of_the_reconstruction_theorem}, we prove \Cref{intro_thm:reconstruction_for_amenable_fields}.
The proof given in \cref{sec:proof_of_the_reconstruction_theorem} generalizes the main argument in Voevodsky's paper \cite[\S3]{MR1098621} to work in positive characteristic.
It also streamlines a number of points.

Throughout the paper we use the language of topoi in our results and proofs, in contrast to Voevodsky who chose to work with sites.
In \Cref{app:morphisms_of_etale_sites_and_etale_topoi}, we prove that the two perspectives are equivalent for our purposes: any geometric morphism between étale topoi of topologically noetherian schemes is induced by a morphism of (quasicompact separated) étale sites.


\subsection*{Notational conventions}

Let $ f \colon \fromto{X}{Y} $ be a morphism of schemes.
We write:
\begin{enumerate}
	\item $ |X| $ for the underlying topological space of $ X $.

	\item $ \Sch $ for the category of schemes and  $ \Sch_{X} $ for the category of $ X $-schemes.

	\item $ \Et_{X} \subset \Sch_{X} $ for the full subcategory spanned by the étale $ X $-schemes.
	We regard $ \Et_X $ as a site equipped with the étale topology.

	\item $ \Xet \colonequals \Sh_{\et}(\Et_{X}) $ for the étale topos of $ X $.

	\item $ \flowerstar \colon \fromto{\Xet}{\Yet} $ for the geometric morphism induced by the morphism of sites $ X \cross_Y (-) \colon \fromto{\Et_{Y}}{\Et_{X}} $.
\end{enumerate}


\subsection*{Acknowledgments}

We thank Clark Barwick, Jakob Stix, and Bogdan Zavyolov for many enlightening discussions around the contents of this paper.
We especially thank Jakob for helpful comments on a draft of this work. The first-named author acknowledges support by Deutsche Forschungsgemeinschaft (DFG) through the Collaborative Research Centre TRR 326 `Geometry and Arithmetic of Uniformized Structures', project number 444845124.
The second-named author gratefully acknowledges support from the NSF Mathematical Sciences Postdoctoral Research Fellowship under Grant \#DMS-2102957 and a grant from the Simons Foundation (816048, LC). 
The third-named author acknowledges support from Collaborative Research Centre SFB 1085 `Higher Invariants' funded by the DFG.


\section{Background on absolute weak normalization}\label{sec:background_on_absolute_weak_normalization}

As mentioned in the introduction, if $ f \colon \fromto{X}{Y} $ is a universal homeomorphism of schemes, then by the topological invariance of the étale topology \stacks{03SI}, the induced geometric morphism of étale topoi $ \flowerstar \colon \fromto{\Xet}{\Yet} $ is an equivalence of categories.
Thus, in order to have a reconstruction theorem, we must invert universal homeomorphisms of schemes.
Said differently, we must work with schemes with the property that universal homeomorphisms between them are automatically isomorphisms.

In this section, we explain how to do this.
The solution is to work with \textit{absolutely weakly normal schemes}.
This notion was originally introduced by Rydh \cite[Appendix B]{MR2679038} in order to study effective descent for étale morphisms, and further studied by Barwick \cite{arXiv:1012.1889} for the purpose of inverting universal homeomorphisms.
For schemes over fields, the theory reduces to previously studied notions.
A scheme $ X $ in positive characteristic is absolutely weakly normal if and only if $ X $ is \textit{perfect}, i.e., the absolute Frobenius $ \fromto{X}{X} $ is an isomorphism.
In characteristic $ 0 $, absolute weak normality reduces to \textit{seminormality}.
Seminormality was originally introduced by Traverso \cite{MR0277542} in order to characterize the rings for which the Picard group is homotopy-invariant, and later simplified by Swan \cite{MR595029}.

In \cref{subsec:absolute_weak_normalization_seminormalization_and_perfection}, we recall the definitions of absolute weak normality, seminormality, and perfection.
In \cref{subsec:inverting_universal_homeomorphisms}, we explain why absolute weak normalization inverts universal homeomorphisms.
In \cref{subsec:finiteness_properties}, we discuss the interaction between seminormalization, perfection, and finite type hypotheses on morphisms of schemes.
In particular, the perfection of a finite type $ \FF_p $-scheme is almost never of finite type (see \Cref{prop:the_only_perfect_noetherian_schemes_are_products_of_fields}).
As a result, we introduce the class of schemes \textit{topologically of finite type} (\Cref{def:topologically_of_finite_type}). 
This notion behaves well with respect to perfection and is needed for the positive characteristic case of \Cref{intro_thm:etale_reconstruction}.


\subsection{Seminormalization, perfection, and absolute weak normalization}\label{subsec:absolute_weak_normalization_seminormalization_and_perfection}

An elementary example of a universal homeomorphism is the normalization of the cuspidal cubic.
Seminormal schemes are the schemes that do not have cuspidal cubic singularities:

\begin{definition}[(seminormality)]\label{def:semi_normality}
	Let $ A $ be a ring.
	We say that $ A $ is \defn{seminormal} if for all $ x,y \in A $ such that $ x^3 = y^2 $, there exists an element $ a \in A $ such that $ x = a^2 $ and $ y = a^3 $.
\end{definition}

\begin{proposition}\label{prop:basic_properties_of_seminormal_rings}
	Let $ A $ be a ring.
	\begin{enumerate}[label=\stlabel{prop:basic_properties_of_seminormal_rings}, ref=\arabic*]
		\item\label{prop:basic_properties_of_seminormal_rings.1} Assume that $ A $ is seminormal and let $ x,y \in A $ be such that $ x^3 = y^2 $.
		Then there exists a \emph{unique} element $ a \in A $ such that $ x = a^2 $ and $ y = a^3 $.

		\item\label{prop:basic_properties_of_seminormal_rings.2} If $ A $ is seminormal, then $ A $ is reduced.

		\item\label{prop:basic_properties_of_seminormal_rings.3} The ring $ A $ is seminormal if and only if for each maximal ideal $ \mfrak \subset A $, the local ring $ A_{\mfrak} $ is seminormal.

		\item\label{prop:basic_properties_of_seminormal_rings.4} If $ A $ is a normal ring, then $ A $ is seminormal.
	\end{enumerate}
\end{proposition}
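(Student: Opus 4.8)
The plan is to prove the four parts in the order \ref{prop:basic_properties_of_seminormal_rings.2}, \ref{prop:basic_properties_of_seminormal_rings.1}, \ref{prop:basic_properties_of_seminormal_rings.3}, \ref{prop:basic_properties_of_seminormal_rings.4}, since the uniqueness in part~\ref{prop:basic_properties_of_seminormal_rings.1} is cleanest to deduce once reducedness is known, and the local criterion rests on that uniqueness. To show seminormality forces reducedness, I would first handle square-zero elements: if $ y^2 = 0 $, applying the defining property to the pair $ (x,y) = (0,y) $ yields $ a $ with $ a^2 = 0 $ and $ y = a^3 = a\cdot a^2 = 0 $, so $ y = 0 $. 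A general nilpotent $ t $ satisfies $ t^{2^n} = 0 $ for some $ n $, and downward induction on $ n $ via the square-zero case (each $ t^{2^{n-1}} $ is square-zero, hence zero) gives $ t = 0 $. For uniqueness in part~\ref{prop:basic_properties_of_seminormal_rings.1}, suppose $ a^2 = b^2 = x $ and $ a^3 = b^3 = y $. The key computation is $ a^2(a-b) = a^3 - a^2 b = y - b^3 = 0 $ (using $ a^2 b = b^2\cdot b = b^3 $), whence expanding gives $ (a-b)^3 = 4a^2(a-b) = 0 $; reducedness from part~\ref{prop:basic_properties_of_seminormal_rings.2} then forces $ a = b $.

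For part~\ref{prop:basic_properties_of_seminormal_rings.3}, the direction from global to local I would obtain by showing that seminormality is stable under any localization $ A \to S^{-1}A $. Given $ \xi,\eta \in S^{-1}A $ with $ \xi^3 = \eta^2 $, write them over a common denominator as $ \xi = a/s $, $ \eta = b/s $; then $ \xi^3 = \eta^2 $ produces $ t \in S $ with $ t s^2 a^3 = t s^3 b^2 $ in $ A $. Multiplying this relation by the monomial $ s t^5 $ converts the two sides into a perfect cube and a perfect square, namely $ (s t^2 a)^3 = (s^2 t^3 b)^2 $. Seminormality of $ A $ yields $ c \in A $ with $ c^2 = s t^2 a $ and $ c^3 = s^2 t^3 b $, and one checks directly that $ \gamma = c/(st) \in S^{-1}A $ satisfies $ \gamma^2 = \xi $ and $ \gamma^3 = \eta $. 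For the converse, given $ x^3 = y^2 $ in $ A $, I would consider the subsheaf $ \mathcal{F} \subseteq \mathcal{O}_{\Spec A} $ whose sections over an open $ U $ are the $ a \in \mathcal{O}(U) $ with $ a^2 = x $ and $ a^3 = y $. Seminormality of each $ A_{\mfrak} $ gives a local solution, which spreads out to hold on a basic open $ D(s) $; since each $ A_{\mfrak} $ is reduced, $ A $ and all its localizations are reduced, so the argument of part~\ref{prop:basic_properties_of_seminormal_rings.1} shows $ \mathcal{F}(D(s)) $ is a singleton. Thus $ \mathcal{F} $ is a singleton on a cover of $ \Spec A $, hence the terminal sheaf, and its unique global section is the desired $ a \in A $.

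Finally, part~\ref{prop:basic_properties_of_seminormal_rings.4} I would reduce to the local case via part~\ref{prop:basic_properties_of_seminormal_rings.3}: for a normal ring each $ A_{\mfrak} $ is a normal domain, with fraction field $ K $. Given $ x^3 = y^2 $ in such a domain, the case $ x = 0 $ forces $ y = 0 $, and otherwise the element $ a = y/x \in K $ satisfies $ a^2 = y^2/x^2 = x $, so $ a $ is a root of the monic polynomial $ T^2 - x $ and therefore lies in $ A_{\mfrak} $ by integral closedness; one then checks $ a^3 = y $. Hence every $ A_{\mfrak} $ is seminormal, and part~\ref{prop:basic_properties_of_seminormal_rings.3} concludes.

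The step I expect to be the main obstacle is the converse in part~\ref{prop:basic_properties_of_seminormal_rings.3}, namely passing from local solutions to a single global one. The passage is formal once uniqueness is available—locally unique solutions automatically agree on overlaps and glue—but it is essential to have established reducedness (part~\ref{prop:basic_properties_of_seminormal_rings.2}) first, so that the uniqueness argument applies verbatim in every localization of $ A $, and to note that spreading out a local solution only requires the two equations $ a^2 = x $ and $ a^3 = y $ to hold, so it is unproblematic on a basic open neighborhood.
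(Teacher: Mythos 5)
Your proposal is correct in all four parts, but it is substantially more self-contained than the paper's proof. The paper disposes of parts (1)--(3) purely by citation (the Stacks Project for (1) and (2), Swan's Proposition 3.7 for the local criterion (3)) and only writes out an argument for (4) --- and that argument coincides with yours: reduce via (3) to the case of a normal domain, observe $(y/x)^2 = x$, and use the monic equation $T^2 - x$ together with integral closedness to get $a = y/x \in A$. What you do differently is reprove the cited facts from scratch, and your arguments check out: the square-zero case plus downward induction on $t^{2^n}$ for reducedness; the identity $(a-b)^3 = 4a^2(a-b)$ (which indeed holds formally once $a^2 = b^2$, since $3ab^2 = 3a^3$ and $b^3 = a^2b$) combined with $a^2(a-b) = y - y = 0$ for uniqueness; the $st^5$-multiplier trick $(st^2a)^3 = (s^2t^3b)^2$, which proves the slightly stronger statement that seminormality passes to \emph{arbitrary} localizations $S^{-1}A$, not just localizations at maximal ideals; and the gluing of locally unique solutions for the converse of (3), which is essentially how Swan proves his local criterion. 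Your closing remarks correctly isolate the two pressure points: uniqueness (hence agreement on overlaps) needs only reducedness, which is available in every localization once each $A_{\mathfrak{m}}$ is seminormal, and spreading a solution from $A_{\mathfrak{m}}$ to a basic open $D(s)$ is harmless because only the two equations $a^2 = x$ and $a^3 = y$ must persist. The trade-off is the usual one: the paper's citations keep the exposition short, while your version makes the standard proofs explicit and records the more general localization stability along the way.
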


\begin{proof}
	For \enumref{prop:basic_properties_of_seminormal_rings}{1}, see \cite[{\href{https://stacks.math.columbia.edu/tag/0EUK\#footnote-1-mark}{Tag 0EUK, Footnote 1}}]{stacksproject} or \cite{MR662270}. 
	Item \enumref{prop:basic_properties_of_seminormal_rings}{2} is \stacks{0EUQ}.
	Item \enumref{prop:basic_properties_of_seminormal_rings}{3} is \cite[Proposition 3.7]{MR595029}.

	For \enumref{prop:basic_properties_of_seminormal_rings}{4}, note that since $ A $ is normal, for each maximal ideal $ \mfrak \subset A $, the localization $ A_{\mfrak} $ is a normal domain.
	Hence by \enumref{prop:basic_properties_of_seminormal_rings}{3} it suffices to prove the claim in the case where $ A $ is a normal domain.
	In this case, since $ A $ is a domain, it suffices to show that if $ x,y \in A \sminus \{0\} $ and $ x^3 = y^2 $, there exists an element $ a \in A $ such that $ x = a^2 $ and $ y = a^3 $.
	In the fraction field of $ A $, we have
	\begin{equation*}
		\paren{\frac{y}{x}}^2 = \frac{x^3}{x^2} = x \period
	\end{equation*}
	Hence the polynomial $ t^2 - x \in A[t] $ is an integral equation for $ y/x $.
	Since $ A $ is integrally closed, we see that $ y/x \in A $.
	Hence $ a = y/x $ is an element such that $ x = a^2 $ and $ y = a^3 $.
\end{proof}





In positive characteristic, another example of a universal homeomorphism is the Frobenius: 

\begin{recollection}[{(perfect $ \FFp $-schemes; see \cite[\S3]{MR3674218})}]
	Let $ p $ be a prime number, and let $ X $ be an $ \FFp $-scheme.
	Write $ \Frob_X \colon \fromto{X}{X} $ for the \defn{absolute Frobenius}: on underlying spaces, $ \Frob_X $ is the identity, and on structure sheaves $ \Frob_X $ is given by $ \goesto{a}{a^p} $.
	Also recall that $ \Frob_X $ is a universal homeomorphism \stacks{0CC8}.
	We say that $ X $ is \defn{perfect} if $ \Frob_X $ is an isomorphism.
	
	The \defn{perfection} of $ X $ is the cofiltered limit
	\begin{equation*}
		\Xperf \isomorphic \lim \bigg(
		\begin{tikzcd}[sep=2.5em]
			\cdots \arrow[r, "\Frob_X"] & X \arrow[r, "\Frob_X"] & X 
		\end{tikzcd}\bigg) \period
	\end{equation*}
	(Note that since the absolute Frobenius is an affine morphism, this limit exists in the category of schemes.)
	The $ \FFp $-scheme $ \Xperf $ is always perfect.
	Write
	\begin{equation*}
		\Schperf_{\FFp} \subset \Sch_{\FFp}
	\end{equation*}
	for the full subcategory spanned by the perfect $ \FFp $-schemes.
	Then the assignment $ \goesto{X}{\Xperf} $ is right adjoint to the inclusion \smash{$ \Schperf_{\FFp} \subset \Sch_{\FFp} $}.
\end{recollection}

To invert universal homeomorphisms for schemes over $ \Spec(\ZZ) $, it turns out that we just need to make sure that our schemes do not have a simple class of plane singularity types involving prime powers.

\begin{definition}[(absolute weak normality)]\label{def:absolute_weak_normality}
	A ring $ A $ is \defn{absolutely weakly normal} if the following conditions are satisfied:
	\begin{enumerate}[label=\stlabel{def:absolute_weak_normality}, ref=\arabic*]
		\item\label{def:absolute_weak_normality.1} The ring $ A $ is seminormal.

		\item\label{def:absolute_weak_normality.2} For each prime number $ \el $ and all elements $ x,y \in A $ such that $ \el^{\el} x = y^\el $, there is a unique element $ a \in A $ such that $ x = a^\el $ and $ y = \el a $.
	\end{enumerate}
\end{definition}

\begin{observation}\label{obs:absolute_weak_normality_with_p_invertible}
	Let $ A $ be a ring and $ \el $ a prime number.
	If $ \el $ is invertible in $ A $, then given elements $ x,y \in A $ such that $ \el^{\el} x = y^\el $, there is a unique element $ a \in A $ such that $ x = a^\el $ and $ y = \el a $; namely $ a = y/\el $.
\end{observation}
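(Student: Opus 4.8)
The plan is to verify directly that the candidate element $ a = y/\el $ does the job and that it is forced, so both existence and uniqueness come for free from the invertibility of $ \el $. Since $ \el $ is invertible in $ A $, the element $ a \colonequals \el^{-1} y $ is well defined. First I would check the identity $ y = \el a $, which is immediate: $ \el a = \el \cdot \el^{-1} y = y $. Next I would verify $ x = a^\el $ by computing $ a^\el = (\el^{-1} y)^\el = \el^{-\el} y^\el $ and substituting the hypothesis $ y^\el = \el^\el x $ to get $ a^\el = \el^{-\el} \el^\el x = x $. This establishes existence.

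For uniqueness, I would note that the requirement $ y = \el a $ already determines $ a $: multiplying through by $ \el^{-1} $ forces $ a = \el^{-1} y = y/\el $, so any element satisfying both conditions must coincide with the candidate. Hence there is exactly one such $ a $. There is no genuine obstacle here; the statement is essentially the computation $ (y/\el)^\el = y^\el/\el^\el = x $ together with the remark that dividing by $ \el $ is unambiguous. The only point worth flagging is that invertibility of $ \el $ is used twice — once to make sense of $ y/\el $ and define the candidate, and once to read off $ a = y/\el $ from $ y = \el a $ for uniqueness — which is precisely why this case can be separated off from the genuinely substantive content of \Cref{def:absolute_weak_normality}, where $ \el $ need not be invertible.
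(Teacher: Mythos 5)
Your proof is correct and matches the paper's intended argument exactly: the paper records this as an observation with the witness $ a = y/\el $ built into the statement, and your direct verification that $ \el a = y $, that $ a^\el = \el^{-\el} y^\el = x $, and that $ y = \el a $ forces $ a = y/\el $ is precisely that one-line check spelled out. Nothing is missing.
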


\begin{lemma}[{\stacks{0EUM}}]
	Seminormality and absolute weak normality are Zariski local properties of rings. 
\end{lemma}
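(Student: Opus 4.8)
The plan is to unwind ``Zariski local property'' into two checkable conditions and verify both for each of $ P \in \{\text{seminormal}, \text{absolutely weakly normal}\} $: \textbf{(i)} \emph{stability under localization}, i.e.\ if $ A $ satisfies $ P $ and $ f \in A $, then $ A_f $ satisfies $ P $; and \textbf{(ii)} \emph{descent along standard covers}, i.e.\ if $ f_1,\dots,f_n \in A $ generate the unit ideal and each $ A_{f_i} $ satisfies $ P $, then $ A $ satisfies $ P $. Since \Cref{def:semi_normality} and \Cref{def:absolute_weak_normality} both phrase $ P $ purely as the solvability of a system of equations, the two cases have the same shape; I would prove seminormality first and reuse it, as condition \enumref{def:absolute_weak_normality}{1} builds seminormality into absolute weak normality.

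For \textbf{(i)} in the seminormal case, fix $ x,y \in A_f $ with $ x^3 = y^2 $. Choosing a common exponent, I would write $ x = x_0/f^{2s} $ and $ y = y_0/f^{3s} $ with $ x_0,y_0 \in A $, so the relation becomes $ x_0^3 = y_0^2 $ in $ A_f $, i.e.\ $ f^N(x_0^3 - y_0^2) = 0 $ in $ A $ for some $ N $. The key manipulation is to absorb the ambiguity: replacing $ (x_0,y_0,s) $ by $ (f^2 x_0, f^3 y_0, s+1) $ leaves $ x $ and $ y $ unchanged but multiplies $ x_0^3 - y_0^2 $ by $ f^6 $, so after finitely many replacements $ x_0^3 = y_0^2 $ holds \emph{in} $ A $. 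Seminormality of $ A $ then yields $ a_0 \in A $ with $ x_0 = a_0^2 $, $ y_0 = a_0^3 $, whence $ a \colonequals a_0/f^s $ solves the original system in $ A_f $. Condition \enumref{def:absolute_weak_normality}{2} is handled identically, writing $ x = x_0/f^{\el s} $, $ y = y_0/f^s $ and rescaling by $ (f^\el, f) $ to reduce $ \el^\el x_0 = y_0^\el $ to an identity in $ A $.

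For \textbf{(ii)}, given a solution $ a_i $ over each $ A_{f_i} $, I would glue along the standard cover of $ \Spec A $ using exactness of $ 0 \to A \to \prod_i A_{f_i} \to \prod_{i,j} A_{f_i f_j} $. The crucial input is that the $ a_i $ agree on overlaps: each $ A_{f_i f_j} $ is seminormal (resp.\ absolutely weakly normal) by part \textbf{(i)}, hence reduced by \enumref{prop:basic_properties_of_seminormal_rings}{2}, and in a reduced ring the relevant solution is unique. For seminormality this is \enumref{prop:basic_properties_of_seminormal_rings}{1}; for absolute weak normality it follows from a short binomial computation: if $ a,b $ both solve $ x = (\blank)^\el $, $ y = \el(\blank) $, then writing $ c = a - b $ one has $ \el c = 0 $, and expanding $ a^\el = (c+b)^\el $ together with $ a^\el = b^\el $ kills every term $ \binom{\el}{k} c^k b^{\el-k} $ with $ 0 < k < \el $ (as $ \el \mid \binom{\el}{k} $ and $ \el c = 0 $), leaving $ c^\el = 0 $, so $ c = 0 $ by reducedness. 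Thus the $ a_i $ form a compatible family, glue to a unique $ a \in A $, and injectivity of $ A \to \prod_i A_{f_i} $ upgrades the local identities to identities in $ A $. The step I expect to require the most care is the denominator clearing in \textbf{(i)}: the numerators must be rescaled so as to preserve the fractions $ x,y $ while turning the weak relation $ f^N(x_0^3 - y_0^2) = 0 $ into an honest equation, with the exponents $ 2s,3s $ (resp.\ $ \el s, s $) tracked so that $ a_0/f^s $ lands back in $ A_f $ with the correct powers. The descent half is comparatively formal once overlap uniqueness is secured, which is precisely why reducedness---guaranteed by seminormality---is the load-bearing hypothesis.
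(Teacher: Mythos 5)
Your proof is correct and is essentially the argument behind the paper's citation: the paper does not prove this lemma itself but defers to \stacks{0EUM}, whose proof proceeds exactly as yours does --- stability under localization by clearing denominators with the weighted rescalings $(f^2x_0, f^3y_0)$ resp.\ $(f^\el x_0, f y_0)$, and descent along standard covers by gluing the (unique) solutions via exactness of $0 \to A \to \prod_i A_{f_i} \to \prod_{i,j} A_{f_if_j}$, with uniqueness on overlaps secured by reducedness. Your binomial computation showing uniqueness of the solution to $\el^\el x = y^\el$ in a reduced ring is also the standard one, so nothing is missing.
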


\begin{definition}
	A scheme $ X $ is \defn{seminormal} (resp., \defn{absolutely weakly normal}) if for every affine open $ U \subset X $, the ring $ \Gamma(U;\Ocal_X) $ is seminormal (resp., absolutely weakly normal).
\end{definition}

\begin{lemma}[{\stacks{0EUP}}]\label{lem:seminormality_and_absolute_weak_normality_are_affine_local}
	Seminormality and absolutely weakly normality are affine local properties of schemes.
	In particular, if $ X $ is a seminormal (resp., absolutely weakly normal) scheme, then every open subscheme of $ X $ is seminormal (resp., absolutely weakly normal).
\end{lemma}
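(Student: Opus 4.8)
The plan is to deduce the statement from the fact, recorded just above in Lemma~\stacks{0EUM}, that seminormality and absolute weak normality are Zariski local properties of rings, via the standard mechanism that promotes a Zariski local property of rings to an affine local property of schemes.

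First I would isolate what needs proving. By definition a scheme $ X $ is seminormal (resp.\ absolutely weakly normal) when $ \Gamma(U;\Ocal_X) $ has the corresponding ring-theoretic property for \emph{every} affine open $ U \subset X $. Asserting that the property is \emph{affine local} amounts to saying that it is enough to require this for the members of \emph{some} affine open cover. Hence the whole content is the implication: if $ \{U_i\}_{i} $ is an affine open cover of $ X $ such that each $ \Gamma(U_i;\Ocal_X) $ is seminormal (resp.\ absolutely weakly normal), then $ \Gamma(V;\Ocal_X) $ is seminormal (resp.\ absolutely weakly normal) for every affine open $ V \subset X $.

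To prove this implication I would fix such a $ V $ and cover it by principal opens adapted to the $ U_i $. For each $ v \in V $, pick $ i $ with $ v \in U_i $; since $ V \cap U_i $ is open in both affines $ V $ and $ U_i $, one finds $ f \in \Gamma(V;\Ocal_X) $ with $ v \in D(f) \subset V \cap U_i $ such that $ D(f) $ is simultaneously a principal open of $ U_i $. Quasicompactness of $ V $ yields finitely many such $ f_1,\dots,f_n $ generating the unit ideal of $ \Gamma(V;\Ocal_X) $, with each localization $ \Gamma(V;\Ocal_X)_{f_j} $ isomorphic to a principal localization of some $ \Gamma(U_i;\Ocal_X) $. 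By the localization half of Lemma~\stacks{0EUM}, every $ \Gamma(V;\Ocal_X)_{f_j} $ is then seminormal (resp.\ absolutely weakly normal), and by the gluing half of the same lemma this descends to $ \Gamma(V;\Ocal_X) $. This is precisely the abstract passage from a local property of rings to an affine local property of schemes; the only genuinely fiddly point, and the step I would be most careful about, is the bookkeeping that realizes the $ D(f) $ as principal opens of both $ V $ and $ U_i $, i.e.\ the standard fact that the intersection of two affine opens is covered by opens that are principal in each.

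Finally, the `in particular' clause is immediate from the definition and does not require the local property at all: if $ U \subset X $ is open and $ X $ is seminormal (resp.\ absolutely weakly normal), then any affine open $ W \subset U $ is also an affine open of $ X $, so $ \Gamma(W;\Ocal_U) = \Gamma(W;\Ocal_X) $ has the property; as this holds for all affine opens $ W $ of $ U $, the open subscheme $ U $ inherits it.
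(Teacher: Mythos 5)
Your proposal is correct and matches the paper's treatment: the paper gives no argument of its own here, simply citing \stacks{0EUP}, whose proof in the cited source deduces the scheme-level statement from the ring-level Zariski locality \stacks{0EUM} by exactly the standard mechanism you describe (cover an affine open $V$ by opens that are simultaneously principal in $V$ and in some chart $U_i$, use quasicompactness to get $f_1,\dots,f_n$ generating the unit ideal, then apply the localization and gluing halves of the ring-level lemma). Your observation that the `in particular' clause follows directly from the every-affine-open definition, without any locality input, is also accurate.
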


Over fields, absolute weak normality reduces to seminormality or perfectness:
	
\begin{theorem}[{(absolutely weakly normal schemes over fields)}]\label{thm:absolutely_weakly_normal_schemes_over_fields}
	\hfill
	\begin{enumerate}[label=\stlabel{thm:absolutely_weakly_normal_schemes_over_fields}, ref=\arabic*]
		\item\label{thm:absolutely_weakly_normal_schemes_over_fields.1} A $ \QQ $-scheme $ X $ is absolutely weakly normal if and only if $ X $ is seminormal.

		\item\label{thm:absolutely_weakly_normal_schemes_over_fields.2} 
		An $ \FFp $-scheme $ X $ is absolutely weakly normal if and only if $ X $ is perfect.
	\end{enumerate}
\end{theorem}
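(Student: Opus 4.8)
The plan is to reduce everything to a statement about rings and then verify the elementary conditions of \Cref{def:absolute_weak_normality} by hand. Since seminormality and absolute weak normality are affine-local (\Cref{lem:seminormality_and_absolute_weak_normality_are_affine_local}), and since $ X $ is perfect precisely when the absolute Frobenius restricts to an isomorphism on each affine open (as $ \Frob_X $ is the identity on underlying spaces, being an isomorphism is local), it suffices to prove two purely ring-theoretic facts: for a $ \QQ $-algebra $ A $, absolute weak normality is equivalent to seminormality; and for an $ \FFp $-algebra $ A $, absolute weak normality is equivalent to the Frobenius $ \varphi\colon \fromto{A}{A} $, $ \goesto{a}{a^p} $, being bijective. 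Part \enumref{thm:absolutely_weakly_normal_schemes_over_fields}{1} is then immediate: every prime $ \el $ is invertible in a $ \QQ $-algebra, so condition \enumref{def:absolute_weak_normality}{2} holds automatically by \Cref{obs:absolute_weak_normality_with_p_invertible}, leaving only condition \enumref{def:absolute_weak_normality}{1}.

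For part \enumref{thm:absolutely_weakly_normal_schemes_over_fields}{2}, I would first dispatch the easier implication, that an absolutely weakly normal $ \FFp $-algebra $ A $ is perfect. Injectivity of $ \varphi $ is formal: such an $ A $ is seminormal, hence reduced by \enumref{prop:basic_properties_of_seminormal_rings}{2}, and in characteristic $ p $ the identity $ (a-b)^p = a^p - b^p $ forces $ \varphi $ to be injective on any reduced $ \FFp $-algebra. For surjectivity, I would apply condition \enumref{def:absolute_weak_normality}{2} with $ \el = p $ to an arbitrary $ x \in A $ and $ y = 0 $: since $ p = 0 $ in $ A $ we have $ p^{p} x = 0 = 0^{p} $, so the condition yields an $ a $ with $ a^p = x $. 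Thus $ \varphi $ is bijective and $ A $ is perfect.

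The substance lies in the converse, that a perfect $ \FFp $-algebra $ A $ is absolutely weakly normal. Condition \enumref{def:absolute_weak_normality}{2} splits into two cases: for $ \el \neq p $ the prime $ \el $ is invertible, so \Cref{obs:absolute_weak_normality_with_p_invertible} applies; for $ \el = p $ the hypothesis $ p^{p} x = y^{p} $ reads $ 0 = y^{p} $, whence $ y = 0 $ by injectivity of $ \varphi $, and the required unique $ a $ with $ a^p = x $ and $ pa = 0 $ (automatic) is $ a = \varphi^{-1}(x) $. The main obstacle is verifying seminormality, and here I expect the key to be an exponent trick that exploits invertibility of $ \varphi $. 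Given $ x,y \in A $ with $ x^3 = y^2 $, choose non-negative integers $ \alpha,\beta $ with $ 2\alpha + 3\beta = p $ — possible for every prime $ p \geq 2 $, since every integer $ \geq 2 $ lies in the numerical semigroup generated by $ 2 $ and $ 3 $ — and set $ a \colonequals \varphi^{-1}(x^{\alpha}y^{\beta}) $. One then computes $ (a^2)^p = (x^{\alpha}y^{\beta})^2 = x^{2\alpha}(x^3)^{\beta} = x^{2\alpha+3\beta} = x^p $ using $ y^2 = x^3 $, and $ (a^3)^p = (x^{\alpha}y^{\beta})^3 = (y^2)^{\alpha}y^{3\beta} = y^{2\alpha+3\beta} = y^p $ using $ x^3 = y^2 $; injectivity of $ \varphi $ then gives $ a^2 = x $ and $ a^3 = y $, so $ A $ is seminormal. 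Combined with the affine-local reduction, this proves the theorem.
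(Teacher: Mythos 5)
Your proposal is correct, and the comparison with the paper splits cleanly by part. For \enumref{thm:absolutely_weakly_normal_schemes_over_fields}{1} your argument coincides with the paper's: reduce to the affine case via \Cref{lem:seminormality_and_absolute_weak_normality_are_affine_local} and note that condition \enumref{def:absolute_weak_normality}{2} is vacuous for $\QQ$-algebras by \Cref{obs:absolute_weak_normality_with_p_invertible}. For \enumref{thm:absolutely_weakly_normal_schemes_over_fields}{2} the paper does no work at all — it simply cites \stacks{0EVV} — whereas you prove the ring-theoretic statement from scratch; so the real comparison is between your inline argument and a citation. Every step of yours checks out: surjectivity of Frobenius comes from condition \enumref{def:absolute_weak_normality}{2} with $\el = p$ and $y = 0$ (the hypothesis $p^p x = 0 = 0^p$ holds for all $x$ in characteristic $p$); injectivity comes from reducedness via \enumref{prop:basic_properties_of_seminormal_rings}{2}; in the converse direction the $\el = p$ case of condition \enumref{def:absolute_weak_normality}{2} collapses to existence and uniqueness of $p$-th roots, since the hypothesis reads $y^p = 0$ and $pa = 0$ is automatic; and seminormality follows from your numerical-semigroup trick: every prime $p$ satisfies $p = 2\alpha + 3\beta$ with $\alpha,\beta \geq 0$ (indeed every integer $\geq 2$ does), and with $a \colonequals \varphi^{-1}(x^{\alpha}y^{\beta})$ the relation $x^3 = y^2$ gives $\varphi(a^2) = x^{2\alpha + 3\beta} = \varphi(x)$ and $\varphi(a^3) = y^{2\alpha+3\beta} = \varphi(y)$, so injectivity of $\varphi$ finishes. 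Your affine-local criterion for perfectness is also sound, since $\Frob_X$ is the identity on underlying spaces, so being an isomorphism is equivalent to $\goesto{a}{a^p}$ being bijective on sections over each affine open. What each route buys: the paper's citation is shorter and defers the bookkeeping to the Stacks Project; your version is self-contained and makes transparent exactly how perfectness supplies both halves of \Cref{def:absolute_weak_normality} — unique $p$-th roots dispose of the prime-power condition at $\el = p$, and, through the identity $2\alpha + 3\beta = p$, of the cuspidal condition as well.
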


\begin{proof}
	For \enumref{thm:absolutely_weakly_normal_schemes_over_fields}{1}, since absolute weak normality and seminormality are affine-local properties, it suffices to check the claim when $ X = \Spec(A) $ is the spectrum of a $ \QQ $-algebra.
	Since absolutely weakly normal rings are seminormal, we just need to show that if $ A $ is seminormal, then $ A $ is absolutely weakly normal. 
	Since $ A $ is a $ \QQ $-algebra, this follows from \Cref{obs:absolute_weak_normality_with_p_invertible}.
	
	Item \enumref{thm:absolutely_weakly_normal_schemes_over_fields}{2} is \stacks{0EVV}.
\end{proof}

\begin{corollary}
	A field $ k $ is perfect if and only if $ k $ is absolutely weakly normal.
\end{corollary}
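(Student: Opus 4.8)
The plan is to reduce the statement to \Cref{thm:absolutely_weakly_normal_schemes_over_fields} by a case analysis on $ \characteristic(k) $, after first recording that every field is seminormal. Indeed, a field is a normal domain, so by \enumref{prop:basic_properties_of_seminormal_rings}{4} it is seminormal; this disposes of condition \enumref{def:absolute_weak_normality}{1} in the definition of absolute weak normality once and for all, independently of the characteristic.

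In characteristic $ 0 $ I would argue that both conditions in the corollary hold automatically, so the equivalence is vacuously true. On the one hand, a field of characteristic $ 0 $ is perfect by convention. On the other hand, $ \Spec(k) $ is a $ \QQ $-scheme, so by \enumref{thm:absolutely_weakly_normal_schemes_over_fields}{1} it is absolutely weakly normal precisely when it is seminormal, which we have already observed to be the case. Hence both sides of the `if and only if' are true.

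The characteristic $ p > 0 $ case is where the real content lies. Here $ \Spec(k) $ is an $ \FFp $-scheme, so \enumref{thm:absolutely_weakly_normal_schemes_over_fields}{2} identifies absolute weak normality of $ k $ with perfectness of $ \Spec(k) $ as a scheme, i.e., with the absolute Frobenius $ \Frob_{\Spec(k)} $ being an isomorphism. The step to watch is reconciling this scheme-theoretic notion of perfectness with the field-theoretic one. For the one-point scheme $ \Spec(k) $ the absolute Frobenius is induced by the ring endomorphism $ \goesto{a}{a^{p}} $ of $ k $, which is injective because $ k $ is a domain and is the identity on the underlying space; hence it is an isomorphism of schemes exactly when the $ p $-power map is surjective, that is, when $ k = k^{p} $. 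Since $ k = k^{p} $ is by definition the condition that $ k $ is a perfect field in positive characteristic, this yields the desired equivalence.

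I do not anticipate a genuine obstacle: the argument is a bookkeeping exercise built on \Cref{thm:absolutely_weakly_normal_schemes_over_fields}. The only point requiring any care is the last one, namely verifying that `perfect as an $ \FFp $-scheme' and `perfect as a field' coincide for a field, which reduces to the elementary observation about the $ p $-power endomorphism above.
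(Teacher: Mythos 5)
Your proof is correct and follows the same route the paper intends: the corollary is stated there as an immediate consequence of \Cref{thm:absolutely_weakly_normal_schemes_over_fields}, and you have simply filled in the routine details (every field is seminormal by \enumref{prop:basic_properties_of_seminormal_rings}{4}, characteristic $0$ fields are perfect by convention, and for $\Spec(k)$ in characteristic $p$ the Frobenius is automatically injective, so it is an isomorphism precisely when $k = k^p$). Nothing to correct.
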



\subsection{Inverting universal homeomorphisms}\label{subsec:inverting_universal_homeomorphisms}

Let $ \Ccal $ be a category and $ W \subset \Mor(\Ccal) $ a collection of morphisms.
Recall that the \textit{localization} of $ \Ccal $ at $ W $ is a category $ \Ccal[W^{-1}] $ equipped with a functor $ L \colon \fromto{\Ccal}{\Ccal[W^{-1}]} $ that sends morphisms in $ W $ to isomorphisms satisfying the following universal property: for any category $ \Dcal $, precomposition with $ L $ defines a fully faithful functor
\begin{equation*}
	-\of L \colon \incto{\Fun(\Ccal[W^{-1}],\Dcal)}{\Fun(\Ccal,\Dcal)}
\end{equation*}
with essential image those functors $ F \colon \fromto{\Ccal}{\Dcal} $ that send morphisms in $ W $ to isomorphisms in $ \Dcal $.

\begin{example}\label{ex:Bousfield_(co)localizations_are_Dwyer-Kan_localizations}
	Let $ F \colon \fromto{\Ccal}{\Ccal_0} $ be a functor.
	If $ F $ admits a fully faithful left or right adjoint, then $ F $ exhibits $ \Ccal_0 $ at the localization at those morphisms $ w $ in $ \Ccal $ such that $ F(w) $ is an isomorphism in $ \Ccal_0 $.
	See \cite[Proposition 5.3.1]{MR1291599} or \cite[Tags \kerodonlink{04JM} \& \kerodonlink{04JR}]{Kerodon}.
\end{example}

We now state the main theorem characterizing the category of absolutely weakly normal schemes as the localization of $ \Sch $ at the universal homeomorphisms.

\begin{notation}
	Write $ \Schsn \subset \Sch $ for the full subcategory spanned by the seminormal schemes and $ \Schawn \subset \Schsn $ for the full subcategory spanned by the absolutely weakly normal schemes. 
	We write $ \UH \subset \Mor(\Sch) $ for the collection of universal homeomorphisms of schemes and write $ \UHres \subset \UH $ for those universal homeomorphisms that induce isomorphisms on residue fields.
\end{notation}

\begin{theorem}[{(existence of absolute weak normalization \& seminormalization)}]\label{thm:existence_of_absolute_weak_normalization_and_seminormalization}
	\hfill
	\begin{enumerate}[label=\stlabel{thm:existence_of_absolute_weak_normalization_and_seminormalization}, ref=\arabic*]
		\item\label{thm:existence_of_absolute_weak_normalization_and_seminormalization.2} The inclusions $ \incto{\Schawn}{\Sch} $ and $ \incto{\Schsn}{\Sch} $ admit right adjoints
		\begin{equation*}
			(-)^{\awn} \colon \fromto{\Sch}{\Schawn} \andeq (-)^{\sn} \colon \fromto{\Sch}{\Schsn}
		\end{equation*}
		Moreover, the counit $ \fromto{\Xawn}{X} $ is a universal homeomorphism and the counit $ \fromto{\Xsn}{X} $ is a universal homeomorphism that induces isomorphisms on residue fields.

		\item\label{thm:existence_of_absolute_weak_normalization_and_seminormalization.3} If $ X $ is a reduced scheme, $ Y $ is an absolutely weakly normal (resp., seminormal) scheme, and $ f \colon \fromto{X}{Y} $ is a universal homeomorphism (resp. universal homeomorphism that induces isomorphisms on residue fields), then $ f $ is an isomorphism.

		\item\label{thm:existence_of_absolute_weak_normalization_and_seminormalization.4} A morphism of schemes $ f \colon \fromto{X}{Y} $ is a universal homeomorphism (resp. universal homeomorphism that induces isomorphisms on residue fields) if and only if the morphism $ f^{\awn} $ (resp., $ f^{\sn} $) is an isomorphism. 

		\item\label{thm:existence_of_absolute_weak_normalization_and_seminormalization.5} The functors
		\begin{equation*}
			(-)^{\awn} \colon \fromto{\Sch}{\Schawn} \andeq (-)^{\sn} \colon \fromto{\Sch}{\Schsn}
		\end{equation*}
		identify $ \Schawn $ and $ \Schsn $ with the localizations $ \Sch[\UH^{-1}] $ and $ \Sch[\UHres^{-1}] $, respectively.
	\end{enumerate}
\end{theorem}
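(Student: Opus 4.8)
The plan is to prove the four assertions in sequence, deriving the last two formally from the first two. The construction of the right adjoints is the substantive step; the rigidity statement merely repackages the underlying ring theory; and the characterization of universal homeomorphisms, together with the identification with the localization, then follow by pure formalism, the latter from \Cref{ex:Bousfield_(co)localizations_are_Dwyer-Kan_localizations}. Almost all of the genuine content is due to Traverso--Swan and Rydh--Barwick, so the task is really one of organizing and globalizing their results.

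First I would construct the adjoints. Since absolutely weakly normal and seminormal schemes are reduced (\Cref{prop:basic_properties_of_seminormal_rings}), I factor the inclusions through the reduced schemes: the inclusion of reduced schemes into all schemes has the right adjoint $X \mapsto X_{\mathrm{red}}$, whose counit $\fromto{X_{\mathrm{red}}}{X}$ is a universal homeomorphism, so it suffices to build right adjoints to $\incto{\Schawn}{\Sch_{\mathrm{red}}}$ and $\incto{\Schsn}{\Sch_{\mathrm{red}}}$ and then compose. For a reduced ring $A$ these are the classical reflections $\fromto{A}{A^{\sn}}$ and $\fromto{A}{A^{\awn}}$ of Traverso--Swan and Rydh \cite{MR595029,MR2679038,arXiv:1012.1889}; their universal property—initiality among ring maps to seminormal, resp.\ absolutely weakly normal, rings—is exactly the required adjunction on affine reduced schemes, and the maps they induce on spectra are integral, radicial, and surjective, hence universal homeomorphisms, inducing isomorphisms on residue fields in the seminormal case. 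To globalize, I would check that these operations commute with localization at elements and then glue along an affine open cover; affine-locality of the two notions (\Cref{lem:seminormality_and_absolute_weak_normality_are_affine_local}) guarantees that the glued scheme again lies in the subcategory and that the universal property is local on the target, yielding the right adjoints and the claim that the counits $\fromto{\Xawn}{X}$ and $\fromto{\Xsn}{X}$ are universal homeomorphisms of the stated kinds.

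Next, for the rigidity statement I reduce affinely to: if $B$ is absolutely weakly normal (resp.\ seminormal) and $\fromto{B}{A}$ is a ring map with $A$ reduced whose spectrum is a universal homeomorphism (resp.\ one inducing isomorphisms on residue fields), then it is an isomorphism. Surjectivity on spectra forces the kernel into the nilradical of $B$, which vanishes since $B$ is reduced, so $B \hookrightarrow A$ exhibits $A$ as a reduced integral universal-homeomorphism extension of $B$; the conditions defining absolute weak normality and seminormality are precisely arranged so that such a $B$ admits no nontrivial extension of this kind, which is the ring-theoretic content of Rydh and Swan \cite{MR2679038,MR595029}, whence $A = B$. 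With rigidity in hand the characterization is formal. For a universal homeomorphism $f\colon\fromto{X}{Y}$, naturality of the counit gives a commuting square whose horizontal maps are the counits and whose vertical maps are $f^{\awn}$ and $f$; the composite $\fromto{\Xawn}{X}$ followed by $f$ is then a universal homeomorphism, hence so is the equal composite through $\Yawn$, and cancelling the universal homeomorphism $\fromto{\Yawn}{Y}$ shows that $f^{\awn}$ is a universal homeomorphism between absolutely weakly normal schemes, so an isomorphism by rigidity. Conversely, if $f^{\awn}$ is an isomorphism, the same square shows that $\fromto{\Xawn}{X}$ followed by $f$ is a universal homeomorphism, and cancelling the counit $\fromto{\Xawn}{X}$ shows $f$ is one; here I use that universal homeomorphisms are closed under composition and admit cancellation of a factor that is itself a universal homeomorphism. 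The seminormal case is identical, with $\UHres$ and the residue-field condition throughout.

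Finally, the identification with the localization is immediate from \Cref{ex:Bousfield_(co)localizations_are_Dwyer-Kan_localizations}: the functor $(-)^{\awn}\colon\fromto{\Sch}{\Schawn}$ has a fully faithful left adjoint, namely the inclusion, so it exhibits $\Schawn$ as the localization of $\Sch$ at the morphisms $w$ with $w^{\awn}$ an isomorphism, and by the characterization just proved these are exactly the universal homeomorphisms; likewise $(-)^{\sn}$ realizes $\Schsn$ as $\Sch[\UHres^{-1}]$. I expect the real obstacle to be concentrated in the construction and in the rigidity statement—that is, in the ring theory guaranteeing that the reflections exist, have universal-homeomorphism counits, and are maximal among reduced universal-homeomorphism extensions—with everything downstream being formal; the only additional bookkeeping is tracking the residue-field condition that distinguishes the seminormal case from the absolutely weakly normal one.
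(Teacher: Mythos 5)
Your proposal is correct and takes essentially the same route as the paper: the paper likewise treats (2)--(3) as the substantive input (simply citing Stacks Project Tags 0EUS \& 0H3G rather than re-deriving the Traverso--Swan--Rydh ring theory and gluing as you sketch), and then deduces (4) from the naturality square for the counits via composition/cancellation of universal homeomorphisms and rigidity, and (5) from (2) and (4) together with \Cref{ex:Bousfield_(co)localizations_are_Dwyer-Kan_localizations}, exactly as you do. Your unpacking of the cited affine results (reflections on reduced rings, localization-compatibility, affine-locality, and the characterization of seminormal/absolutely weakly normal rings as having no proper subintegral, resp.\ weakly subintegral, extensions) is a faithful expansion of those references, not a different argument.
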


\begin{proof}
	Items \enumref{thm:existence_of_absolute_weak_normalization_and_seminormalization}{2}--\enumref{thm:existence_of_absolute_weak_normalization_and_seminormalization}{3} are \cite[Tags \stackslink{0EUS} \& \stackslink{0H3G}]{stacksproject} for absolute weak normalization and \cite[Tags \stackslink{0EUS} \& \stackslink{0H3G}]{stacksproject} for seminormalization.
	Thre rest of the proof is exactly the same in both cases, so we only treat the case of absolute weak normalization.
	For \enumref{thm:existence_of_absolute_weak_normalization_and_seminormalization}{4}, consider the commutative square 
	\begin{equation*}
		\begin{tikzcd}
			\Xawn \arrow[r, "f^{\awn}"] \arrow[d] & \Yawn \arrow[d] \\ 
			X \arrow[r, "f"'] & Y \period
		\end{tikzcd}
	\end{equation*}
	By \enumref{thm:existence_of_absolute_weak_normalization_and_seminormalization}{2}, the vertical morphisms are universal homeomorphisms, so by \enumref{thm:existence_of_absolute_weak_normalization_and_seminormalization}{3} we see that $ f $ is a universal homeomorphism if and only if the morphism $ f^{\awn} $ is an isomorphism.
	To conclude, note that \enumref{thm:existence_of_absolute_weak_normalization_and_seminormalization}{5} follows from items \enumref{thm:existence_of_absolute_weak_normalization_and_seminormalization}{2} and \enumref{thm:existence_of_absolute_weak_normalization_and_seminormalization}{4} combined with \Cref{ex:Bousfield_(co)localizations_are_Dwyer-Kan_localizations}. 
\end{proof}

\begin{definition}
	Given a scheme $ X $, we call $ \Xawn $ the \defn{absolute weak normalization} of $ X $ and call $ \Xsn $ the \defn{seminormalization} of $ X $.
\end{definition}

As an immediate consequence of \Cref{thm:absolutely_weakly_normal_schemes_over_fields,thm:existence_of_absolute_weak_normalization_and_seminormalization}, we deduce:

\begin{corollary}\label{cor:absolute_weak_normalization_over_fields}
	\hfill
	\begin{enumerate}[label=\stlabel{cor:absolute_weak_normalization_over_fields}, ref=\arabic*]
		\item\label{cor:absolute_weak_normalization_over_fields.1} For each $ \QQ $-scheme $ X $, we have $ \Xawn = \Xsn $.
		Moreover, the functor \smash{$ (-)^{\sn} \colon \fromto{\Sch_{\QQ}}{\Schsn_{\QQ}} $} identifies $ \Schsn_{\QQ} $ with the localization $ \Sch_{\QQ}[\UH^{-1}] $.
		
		\item\label{cor:absolute_weak_normalization_over_fields.2} For each $ \FFp $-scheme $ X $, we have $ \Xawn = \Xperf $.
		Moreover, the functor \smash{$ (-)^{\perf} \colon \fromto{\Sch_{\FFp}}{\Schperf_{\FFp}} $} identifies \smash{$ \Schperf_{\FFp} $} with the localization \smash{$ \Sch_{\FFp}[\UH^{-1}] $}.	
	\end{enumerate}
\end{corollary}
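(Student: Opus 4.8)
The plan is to deduce both parts formally from the two cited theorems, using the uniqueness of right adjoints together with \Cref{ex:Bousfield_(co)localizations_are_Dwyer-Kan_localizations}. The underlying observation is that $ \Sch_{\QQ} $ and $ \Sch_{\FFp} $ are \emph{full} subcategories of $ \Sch $: since $ \fromto{\Spec(\QQ)}{\Spec(\ZZ)} $ and $ \fromto{\Spec(\FFp)}{\Spec(\ZZ)} $ are monomorphisms, being a $ \QQ $- (resp.\ $ \FFp $-)scheme is a property, and any morphism of schemes between two such schemes is automatically a morphism over the base. I would first check that the right adjoints $ (-)^{\awn} $ and $ (-)^{\sn} $ of \Cref{thm:existence_of_absolute_weak_normalization_and_seminormalization} restrict to these subcategories: for a $ \QQ $-scheme $ X $, the composite $ \fromto{\Xsn}{X}\to\Spec(\QQ) $ exhibits $ \Xsn $ as a seminormal $ \QQ $-scheme, and by full faithfulness the adjunction bijection $ \Hom(Z,\Xsn) \isomorphic \Hom(Z,X) $ for seminormal $ Z $ restricts to $ \QQ $-schemes. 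Hence $ (-)^{\sn} $ restricts to a right adjoint of the inclusion $ \incto{\Schsn_{\QQ}}{\Sch_{\QQ}} $, and likewise for $ (-)^{\awn} $ and for $ (-)^{\perf} $.

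For \enumref{cor:absolute_weak_normalization_over_fields}{1}, \Cref{thm:absolutely_weakly_normal_schemes_over_fields}\,\enumref{thm:absolutely_weakly_normal_schemes_over_fields}{1} identifies the full subcategories $ \Schawn_{\QQ} = \Schsn_{\QQ} $ of $ \Sch_{\QQ} $. Thus the restrictions of $ (-)^{\awn} $ and $ (-)^{\sn} $ to $ \Sch_{\QQ} $ are both right adjoints of one and the same inclusion $ \incto{\Schsn_{\QQ}}{\Sch_{\QQ}} $, so by uniqueness of adjoints they are naturally isomorphic; in particular $ \Xawn \isomorphic \Xsn $. For the localization statement, \Cref{ex:Bousfield_(co)localizations_are_Dwyer-Kan_localizations} shows that the restricted $ (-)^{\sn} $ exhibits $ \Schsn_{\QQ} $ as the localization of $ \Sch_{\QQ} $ at those $ w $ with $ w^{\sn} $ an isomorphism. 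By \Cref{thm:existence_of_absolute_weak_normalization_and_seminormalization}\,\enumref{thm:existence_of_absolute_weak_normalization_and_seminormalization}{4} these are exactly the universal homeomorphisms of $ \QQ $-schemes that induce isomorphisms on residue fields. Finally, in characteristic $ 0 $ \emph{every} universal homeomorphism induces isomorphisms on residue fields: the residue field extensions of a universal homeomorphism are purely inseparable, hence trivial over $ \QQ $. Therefore this class coincides with $ \UH \cap \Mor(\Sch_{\QQ}) $, giving the identification of $ \Schsn_{\QQ} $ with $ \Sch_{\QQ}[\UH^{-1}] $.

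For \enumref{cor:absolute_weak_normalization_over_fields}{2} I would argue identically, now using \Cref{thm:absolutely_weakly_normal_schemes_over_fields}\,\enumref{thm:absolutely_weakly_normal_schemes_over_fields}{2} to identify $ \Schawn_{\FFp} = \Schperf_{\FFp} $, and the recollection on perfect $ \FFp $-schemes, which supplies $ (-)^{\perf} $ as a right adjoint of $ \incto{\Schperf_{\FFp}}{\Sch_{\FFp}} $. Uniqueness of adjoints then yields a natural isomorphism between the restriction of $ (-)^{\awn} $ and $ (-)^{\perf} $, hence $ \Xawn \isomorphic \Xperf $. Applying \Cref{ex:Bousfield_(co)localizations_are_Dwyer-Kan_localizations} to $ (-)^{\perf} $, and using the natural isomorphism $ w^{\perf} \isomorphic w^{\awn} $ together with \Cref{thm:existence_of_absolute_weak_normalization_and_seminormalization}\,\enumref{thm:existence_of_absolute_weak_normalization_and_seminormalization}{4}, the inverted morphisms are exactly $ \UH \cap \Mor(\Sch_{\FFp}) $. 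Here no residue-field condition intervenes, precisely because over $ \FFp $ one compares with $ (-)^{\awn} $ (equivalently $ (-)^{\perf} $) rather than with $ (-)^{\sn} $.

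The argument is almost entirely formal once the two cited theorems are available; the only genuine verifications are (a) that $ (-)^{\awn} $, $ (-)^{\sn} $, and $ (-)^{\perf} $ restrict to the fixed-characteristic subcategories, which reduces to the monomorphism observation of the first paragraph, and (b) the characteristic-$ 0 $ fact that universal homeomorphisms of $ \QQ $-schemes induce isomorphisms on residue fields, which is exactly what upgrades $ \UHres $ to $ \UH $ in \enumref{cor:absolute_weak_normalization_over_fields}{1}. I expect (b) to be the subtlest point, although it is standard.
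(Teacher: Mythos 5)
Your proof is correct and follows essentially the same route as the paper, which states the corollary as an immediate consequence of \Cref{thm:absolutely_weakly_normal_schemes_over_fields,thm:existence_of_absolute_weak_normalization_and_seminormalization}; your write-up simply makes the implicit deductions explicit (fullness of $ \Sch_{\QQ}, \Sch_{\FFp} \subset \Sch $, restriction of the right adjoints, uniqueness of adjoints, and \Cref{ex:Bousfield_(co)localizations_are_Dwyer-Kan_localizations}). One micro-simplification: in \enumref{cor:absolute_weak_normalization_over_fields}{1}, once you know $ (-)^{\sn} \isomorphic (-)^{\awn} $ on $ \Sch_{\QQ} $, you can apply \Cref{ex:Bousfield_(co)localizations_are_Dwyer-Kan_localizations} to this functor and invoke \enumref{thm:existence_of_absolute_weak_normalization_and_seminormalization}{4} for $ (-)^{\awn} $, which inverts exactly $ \UH $, making your step (b) unnecessary --- though (b) is indeed correct and standard, since universal homeomorphisms are universally injective, so their residue field extensions are purely inseparable and hence trivial in characteristic $ 0 $.
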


\begin{remark}
	For normal varieties, \enumref{cor:absolute_weak_normalization_over_fields}{2} was proven by Stix \cite[Lemma 4.1.2]{MR2012864}.
\end{remark}

\begin{notation}\label{ntn:absolute_weak_normalization_of_affine_schemes}
	Let $ X = \Spec(A) $ be an affine scheme.
	Since universal homeomorphisms are affine \stacks{04DE}, both $ \Xawn $ and $ \Xsn $ are affine.
	We write 
	\begin{equation*}
		A^{\awn} \colonequals \Gamma(\Xawn;\Ocal_{\Xawn}) \andeq A^{\sn} \colonequals \Gamma(\Xsn;\Ocal_{\Xsn}) \period
	\end{equation*}
	If $ A $ is an $ \FFp $-algebra, we write $ A^{\perf} \colonequals A^{\awn} $.
\end{notation}

For future use, we record the fact that absolute weak normalization preserves Zariski covers.

\begin{lemma}\label{lem:absolute_weak_normalization_of_open_immersions}
	Let $ j \colon \incto{U}{X} $ be an open immersion of schemes.
	Then the natural square
	\begin{equation*}
		\begin{tikzcd}
			U^{\awn} \arrow[r, "j^{\awn}"] \arrow[d] & \Xawn \arrow[d] \\ 
			U \arrow[r, hooked, "j"'] & X
		\end{tikzcd}
	\end{equation*}
	is cartesian.
	Hence, $ j^{\awn} $ is an open immersion and the functor $ (-)^{\awn} \colon \fromto{\Sch}{\Sch} $ preserves Zariski covers.
\end{lemma}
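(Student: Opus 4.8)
The plan is to identify the pullback $ P \colonequals U \cross_X X^{\awn} $ with the absolute weak normalization $ U^{\awn} $, which is precisely the assertion that the square is cartesian. I would first record two properties of $ P $. The projection $ \fromto{P}{X^{\awn}} $ is the base change of the open immersion $ j $, hence itself an open immersion; thus $ P $ is isomorphic to an open subscheme of the absolutely weakly normal scheme $ X^{\awn} $, and so $ P $ is absolutely weakly normal by \Cref{lem:seminormality_and_absolute_weak_normality_are_affine_local} (in particular, $ P $ is reduced, since absolutely weakly normal rings are seminormal, hence reduced by \enumref{prop:basic_properties_of_seminormal_rings}{2}). On the other hand, the projection $ \fromto{P}{U} $ is the base change of the counit $ \fromto{X^{\awn}}{X} $, which is a universal homeomorphism by \enumref{thm:existence_of_absolute_weak_normalization_and_seminormalization}{2}; since universal homeomorphisms are stable under base change, $ \fromto{P}{U} $ is a universal homeomorphism.

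Next I would compare $ U^{\awn} $ with $ P $. Functoriality of absolute weak normalization provides $ j^{\awn} \colon \fromto{U^{\awn}}{X^{\awn}} $ fitting into the given commutative square, and together with the counit $ \fromto{U^{\awn}}{U} $ this induces the canonical map $ \sigma \colon \fromto{U^{\awn}}{P} $ to the pullback, characterized by $ \mathrm{pr} \circ \sigma = j^{\awn} $ and $ \mathrm{pr}_U \circ \sigma = (\fromto{U^{\awn}}{U}) $. Since $ \mathrm{pr}_U \circ \sigma $ is the counit, a universal homeomorphism, and $ \mathrm{pr}_U $ is itself a universal homeomorphism, the cancellation property for universal homeomorphisms shows that $ \sigma $ is a universal homeomorphism. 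Concretely, using the characterization of universal homeomorphisms as the integral, universally injective, surjective morphisms \stacks{04DC}: integrality of $ \sigma $ follows from that of $ \mathrm{pr}_U \circ \sigma $ because $ \mathrm{pr}_U $ is separated; universal injectivity of $ \sigma $ follows from that of $ \mathrm{pr}_U \circ \sigma $; and surjectivity of $ \sigma $ holds because $ \mathrm{pr}_U $ is a homeomorphism on underlying spaces. As $ U^{\awn} $ is reduced and $ P $ is absolutely weakly normal, \enumref{thm:existence_of_absolute_weak_normalization_and_seminormalization}{3} then shows that $ \sigma $ is an isomorphism, so the square is cartesian.

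The two consequences then fall out. Since $ \sigma $ is an isomorphism and $ j^{\awn} = \mathrm{pr} \circ \sigma $ with $ \mathrm{pr} \colon \fromto{P}{X^{\awn}} $ an open immersion, the map $ j^{\awn} $ is an open immersion. For Zariski covers, let $ \{\fromto{U_i}{X}\} $ be a jointly surjective family of open immersions; by the above each $ \fromto{U_i^{\awn}}{X^{\awn}} $ is an open immersion and, by the cartesian square, is the pullback of $ U_i $ along the counit $ \fromto{X^{\awn}}{X} $. Since the counit is a universal homeomorphism, it is in particular a homeomorphism on underlying spaces, so the opens $ U_i^{\awn} $ cover $ X^{\awn} $.

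I expect the \emph{main obstacle} to be the identification of $ \sigma $ as an isomorphism: it rests on the cancellation property for universal homeomorphisms combined with the rigidity statement \enumref{thm:existence_of_absolute_weak_normalization_and_seminormalization}{3}. The step that makes that rigidity statement applicable is verifying that $ P $ is genuinely absolutely weakly normal, not merely reduced, which is exactly where the affine-locality of absolute weak normality (\Cref{lem:seminormality_and_absolute_weak_normality_are_affine_local}) is essential.
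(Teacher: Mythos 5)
Your proof is correct and follows essentially the same route as the paper: identify the pullback $U \cross_X \Xawn$ as an open subscheme of $\Xawn$ (hence absolutely weakly normal by \Cref{lem:seminormality_and_absolute_weak_normality_are_affine_local}), use the $2$-of-$3$ property to see that the comparison map $U^{\awn} \to U \cross_X \Xawn$ is a universal homeomorphism, and conclude via the rigidity statement \enumref{thm:existence_of_absolute_weak_normalization_and_seminormalization}{3}. Your extra unwinding of the cancellation property via \stacks{04DC} is a harmless elaboration of what the paper simply cites as $2$-of-$3$.
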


\begin{proof}
	Write $ c \colon \fromto{\Xawn}{X} $ for the counit.
	Since $ j $ is an open immersion,
	\begin{equation*}
		U \cross_X \Xawn \isomorphic c^{-1}(U) \comma
	\end{equation*}
	where $ c^{-1}(U) $ is given the unique subscheme structure.
	Since open subschemes of absolutely weakly normal schemes are again absolutely weakly normal (\Cref{lem:seminormality_and_absolute_weak_normality_are_affine_local}), $ U \cross_X \Xawn $ is absolutely weakly normal.
	Moreover, since $ c \colon \fromto{\Xawn}{X} $ is a universal homeomorphism, the natural morphism $ \fromto{U \cross_X \Xawn}{U} $ is also a universal homeomorphism.
	Since the natural morphism $ \fromto{U^{\awn}}{U} $ is a universal homeomorphism and universal homeomorphisms satisfy the $ 2 $-of-$ 3 $ property, the natural morphism
	\begin{equation*}
		\fromto{U^{\awn}}{U \cross_X \Xawn}
	\end{equation*} 
	is a universal homeomorphism.
	Since both $ U^{\awn} $ and $ U \cross_X \Xawn $ are absolutely weakly normal, \Cref{thm:existence_of_absolute_weak_normalization_and_seminormalization} shows that the natural morphism $ \fromto{U^{\awn}}{U \cross_X \Xawn} $ is an isomorphism.
\end{proof}


\subsection{Finiteness properties}\label{subsec:finiteness_properties}

We now analyze the interaction between seminormalization, perfection, and finite type hypotheses.
The first result is that, like normalization, seminormalization preserves the property of a scheme being (locally) of finite type over a field.

\begin{recollection}[(normalization)]
	Let $ X $ be a scheme such that every quasicompact open of $ X $ has finitely many irreducible components (e.g., $ X $ is locally topologically noetherian).
	We write $ \upnu \colon \fromto{X^{\norm}}{X} $ for the normalization of $ X $.
	Recall that the natural morphism $ \upnu \colon \fromto{X^{\norm}}{X} $ is an integral surjection.
	In particular, $ \upnu $ is affine; if $ X = \Spec(A) $ is affine, we write
	\begin{equation*}
		A^{\norm} \colonequals \Gamma(X^{\norm};\Ocal_{X^{\norm}}) \period
	\end{equation*}
	Also recall that the natural closed immersion $ \incto{X_{\red}}{X} $ induces an isomorphism \smash{$ \isomto{(X_{\red})^{\norm}}{X^{\norm}} $}.
\end{recollection}

\begin{nul}
	Notice that since normal schemes are seminormal (\Cref{prop:basic_properties_of_seminormal_rings}), the normalization morphism $ \upnu \colon \fromto{X^{\norm}}{X} $ factors through the seminormalization of $ X $.
	Since the seminormalization $ c \colon \fromto{\Xsn}{X} $ is a universal homeomorphism, $ c $ is affine \stacks{04DE}.
	Since $ \upnu $ is also affine, by cancellation we deduce that the induced morphism $ \fromto{X^{\norm}}{\Xsn} $ is affine.
\end{nul}

\begin{proposition}\label{prop:when_is_seminormalization_finite}
	Let $ X $ be a locally noetherian scheme.
	\begin{enumerate}[label=\stlabel{prop:when_is_seminormalization_finite}, ref=\arabic*]
		\item\label{prop:when_is_seminormalization_finite.1} If $ \upnu \colon \fromto{X^{\norm}}{X} $ is finite, then $ c \colon \fromto{\Xsn}{X} $ is finite.

		\item\label{prop:when_is_seminormalization_finite.2} If $ X $ is Nagata, then both $ \upnu \colon \fromto{X^{\norm}}{X} $ and $ c \colon \fromto{\Xsn}{X} $ are finite.

		\item\label{prop:when_is_seminormalization_finite.3} If $ X $ is (locally) of finite type over a field $ k $, then both $ \upnu \colon \fromto{X^{\norm}}{X} $ and $ c \colon \fromto{\Xsn}{X} $ are finite.
		Hence the seminormalization $ \Xsn $ is also (locally) of finite type over $ k $.
	\end{enumerate}
\end{proposition}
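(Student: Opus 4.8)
The plan is to establish \enumref{prop:when_is_seminormalization_finite}{1} directly by a noetherian submodule argument, and then to deduce \enumref{prop:when_is_seminormalization_finite}{2} and \enumref{prop:when_is_seminormalization_finite}{3} from it together with standard facts about Nagata schemes. Since finiteness of a morphism is local on the target and $ X $ is locally noetherian, I would first reduce \enumref{prop:when_is_seminormalization_finite}{1} to the case $ X = \Spec(A) $ with $ A $ noetherian. Because $ \incto{X_{\red}}{X} $ is finite and the recollection on normalization gives $ \isomto{(X_{\red})^{\norm}}{X^{\norm}} $ (while $ (X_{\red})^{\sn} \isomorphic \Xsn $ since seminormal schemes are reduced by \enumref{prop:basic_properties_of_seminormal_rings}{2}), I may further assume $ A $ reduced.

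The key input is the factorization recorded just before the proposition: the normalization $ \upnu $ factors as $ X^{\norm} \to \Xsn \to X $ through affine morphisms. On global sections this produces maps of $ A $-algebras $ A \to A^{\sn} \to A^{\norm} $. Since $ c \colon \fromto{\Xsn}{X} $ is a universal homeomorphism it is bijective, and since $ \upnu $ is surjective the morphism $ X^{\norm} \to \Xsn $ is surjective; as $ A^{\sn} $ is reduced, surjectivity on spectra forces the ring map $ A^{\sn} \to A^{\norm} $ to be injective (its kernel lies in the nilradical of $ A^{\sn} $). Now if $ \upnu $ is finite, then $ A^{\norm} $ is a finite $ A $-module, so its $ A $-submodule $ A^{\sn} $ is finite over $ A $ as well, as submodules of finitely generated modules over noetherian rings are finitely generated. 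Hence $ c $ is finite, which proves \enumref{prop:when_is_seminormalization_finite}{1}.

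For \enumref{prop:when_is_seminormalization_finite}{2}, I would use the standard fact that the normalization of a Nagata scheme is finite (this is exactly what the universally Japanese part of the Nagata condition buys), and then \enumref{prop:when_is_seminormalization_finite}{1} shows that $ c $ is finite. For \enumref{prop:when_is_seminormalization_finite}{3}, I would use that a field is Nagata and that the Nagata property is stable under morphisms locally of finite type, so any scheme locally of finite type over $ k $ is Nagata; \enumref{prop:when_is_seminormalization_finite}{2} then applies to give finiteness of both $ \upnu $ and $ c $. The final claim is then formal: since $ c \colon \fromto{\Xsn}{X} $ is finite it is of finite type, so the composite $ \Xsn \to X \to \Spec(k) $ is (locally) of finite type.

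The only genuinely substantive point is the embedding $ A^{\sn} \hookrightarrow A^{\norm} $ in the reduced affine case, i.e., the assertion that the seminormalization really sits between $ A $ and its normalization. I would obtain this exactly as above, from the recalled factorization together with the reducedness of $ A^{\sn} $; once it is in place, the remainder is the noetherian submodule principle of \enumref{prop:when_is_seminormalization_finite}{1} and the two cited properties of Nagata schemes, so I do not anticipate any real difficulty.
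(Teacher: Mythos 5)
Your proposal is correct and follows the same skeleton as the paper's proof of this proposition: reduce \enumref{prop:when_is_seminormalization_finite}{1} to the reduced, affine, noetherian case via $ X_{\red} $ (using \smash{$ (X_{\red})^{\norm} \isomorphic X^{\norm} $} and \smash{$ (X_{\red})^{\sn} \isomorphic \Xsn $}), use the factorization $ X^{\norm} \to \Xsn \to X $ through affine morphisms, embed $ A^{\sn} $ into the finite $ A $-module $ A^{\norm} $, and conclude by the noetherian submodule principle; parts \enumref{prop:when_is_seminormalization_finite}{2} and \enumref{prop:when_is_seminormalization_finite}{3} are then deduced from exactly the same two standard facts the paper cites (finiteness of the normalization of a Nagata scheme, and the fact that schemes locally of finite type over a field are Nagata), with the final claim following because finite morphisms are of finite type. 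The one place you genuinely diverge is the step you correctly isolate as substantive: the injectivity of $ A^{\sn} \to A^{\norm} $. The paper obtains this by noting that $ A \to A^{\norm} $ is injective (as $ A $ is reduced) and then citing Swan's Theorem 4.1, whereas you derive it directly: $ \upnu $ is an integral surjection and the universal homeomorphism $ c $ is bijective on points, so $ X^{\norm} \to \Xsn $ is surjective on spectra, which forces the kernel of $ A^{\sn} \to A^{\norm} $ into the nilradical of $ A^{\sn} $, and this vanishes because seminormal rings are reduced. This argument is valid and self-contained — it does not even require injectivity of $ A \to A^{\norm} $, only surjectivity of the normalization and reducedness of $ A^{\sn} $ — so your route is slightly more elementary, while the paper's citation buys only brevity.
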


\begin{proof}
	For \enumref{prop:when_is_seminormalization_finite}{1}, first note that since $ X $ is locally noetherian, the natural closed immersion $ \incto{X_{\red}}{X} $ is finite.
	Since the natural morphisms
	\begin{equation*}
		\fromto{(X_{\red})^{\norm}}{X^{\norm}} \andeq \fromto{(X_{\red})^{\sn}}{\Xsn}
	\end{equation*}
	are isomorphisms, it suffices to prove the claim under the additional assumption that $ X $ is reduced.
	In this case, factor the normalization morphism as a composite
	\begin{equation*}
		\begin{tikzcd}
			\upnu \colon X^{\norm} \arrow[r] & \Xsn \arrow[r, "c"] & X \period
		\end{tikzcd}
	\end{equation*}
	Both of morphisms in the composite are affine, hence it suffices to prove the claim under the additional assumption that $ X = \Spec(A) $ is affine, noetherian, and reduced.

	Since $ A $ is reduced, the normalization homomorphism $ \fromto{A}{A^{\norm}} $ is injective.
	Thus \cite[Theorem 4.1]{MR595029} shows that the induced morphism $ \fromto{A^{\sn}}{A^{\norm}} $ is also injective.
	Since $ A^{\norm} $ is a finite $ A $-module and $ A $ is noetherian, we deduce that $ A^{\sn} $ is also a finite $ A $-module.
	Thus the seminormalization morphism $ \incto{A}{A^{\sn}} $ is finite, as desired.

	For \enumref{prop:when_is_seminormalization_finite}{2}, the finiteness of the normalization morphism is the content of \stacks{035S}.
	Since Nagata schemes are locally noetherian, the finiteness of the seminormalization morphism follows from \enumref{prop:when_is_seminormalization_finite}{1}.
	For \enumref{prop:when_is_seminormalization_finite}{3}, note that since $ X $ is (locally) of finite over a field, $ X $ is Nagata \stacks{035B}.
	Thus \enumref{prop:when_is_seminormalization_finite}{2} shows that the counit $ \fromto{\Xsn}{X} $ is a finite morphism.
	Since finite morphisms are of finite type, the composite $ \Xsn \to X \to \Spec(k) $ is of (locally) of finite type.
\end{proof}

Unfortunately, perfection (hence, also absolute weak normalization) does not preserve the property of being finite type.
In fact, a perfect $ \FFp $-scheme is basically never noetherian.
The following precise formulation of this fact is well-known:


\begin{proposition}\label{prop:the_only_perfect_noetherian_schemes_are_products_of_fields}
	Let $ X $ be a perfect $ \FFp $-scheme.
	Then $ X $ is noetherian if and only if $ X $ is isomorphic to the spectrum of a finite product of perfect fields.
\end{proposition}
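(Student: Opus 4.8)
The plan is to prove the two implications separately, with essentially all of the content in the forward direction. For the reverse implication, if $ X \isomorphic \Spec(k_1 \cross \cdots \cross k_n) $ with each $ k_i $ a perfect field, then $ X $ is the disjoint union of the finitely many one-point schemes $ \Spec(k_i) $, hence noetherian; it is perfect because its absolute Frobenius is the product of the Frobenii of the factors, each of which is an isomorphism. (This also confirms that any such $ X $ is indeed perfect, as the hypothesis of the proposition requires.)

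For the forward direction, assume $ X $ is noetherian, and I would reduce to a statement about the local rings of $ X $. First, note that perfect rings are reduced, since the Frobenius is injective. Second, the Frobenius remains an isomorphism after localization: localizations of reduced rings are reduced, so the Frobenius stays injective, and it is surjective because every element and every denominator acquires a (unique) $ p $-th root in the localization. Consequently every stalk $ \Ocal_{X,x} $ is a perfect ring, and it is also noetherian since $ X $ is. The crux is then the following local claim: a perfect noetherian local ring $ A $, with maximal ideal $ \mfrak $, is a field. Indeed, for any $ f \in \mfrak $ the unique $ p $-th root $ f^{1/p} $ lies in $ \mfrak $ (as $ \mfrak $ is prime and $ (f^{1/p})^{p} = f \in \mfrak $), so $ f = (f^{1/p})^{p} \in \mfrak^{p} \subseteq \mfrak^{2} $. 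Thus $ \mfrak = \mfrak^{2} $, and since $ A $ is noetherian the ideal $ \mfrak $ is finitely generated, so Nakayama's lemma forces $ \mfrak = 0 $; hence $ A $ is a field.

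Applying this to every stalk shows that each $ \Ocal_{X,x} $ is a (perfect) field, so $ X $ is reduced of Krull dimension $ 0 $. A noetherian scheme of dimension $ 0 $ has finite discrete underlying space and is the disjoint union of the spectra of its finitely many local rings; since these local rings are perfect fields, $ X \isomorphic \coprod_{i=1}^{n} \Spec(k_i) \isomorphic \Spec(k_1 \cross \cdots \cross k_n) $ with each $ k_i $ perfect, as desired. I expect the only real obstacle to be the local claim --- verifying $ \mfrak = \mfrak^{2} $ and invoking Nakayama --- together with the routine but necessary bookkeeping that perfectness passes to localizations; the passage from ``all stalks are fields'' to the global product decomposition is then standard commutative algebra.
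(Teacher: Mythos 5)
Your proof is correct, but it takes a genuinely different route from the paper. The paper's own proof outsources the key affine case --- that a perfect noetherian $\FFp$-algebra is a finite product of perfect fields --- to an external reference, and then handles a general (possibly non-affine) $X$ by observing that $\dim(X) = \sup_U \dim(U)$ over affine opens, that the affine case forces $\dim(X) = 0$, and that every $0$-dimensional qcqs scheme is affine. You instead give a self-contained stalk-local argument: perfectness passes to localizations (your verification is right --- injectivity from reducedness, surjectivity because $s^{1/p}$ becomes invertible in the localization since its $p$-th power does), each noetherian local stalk $(\Ocal_{X,x},\mfrak)$ satisfies $\mfrak = \mfrak^2$ via unique $p$-th roots staying in the prime $\mfrak$, and Nakayama then kills $\mfrak$; finally a noetherian scheme all of whose local rings are fields has finite discrete underlying space and decomposes as $\coprod_{i} \Spec(k_i) \cong \Spec(k_1 \times \cdots \times k_n)$. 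What your approach buys is a complete proof with no black boxes and no need for the ``$0$-dimensional qcqs implies affine'' fact, since the finite discrete decomposition gives affineness for free; what the paper's approach buys is brevity, deferring the real commutative-algebra content (which is essentially your local claim) to the cited source. One small point of bookkeeping you handled implicitly but correctly: perfectness of the \emph{scheme} means the Frobenius on $\Ocal_X(U)$ is bijective for every open $U$, so the stalks, as filtered colimits of localizations of perfect rings, are indeed perfect.
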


\begin{proof}
	Clearly a finite product of perfect fields is noetherian.
	So it remains to show that if $ X $ is noetherian, then $ X $ is isomorphic to the spectrum of a finite product of perfect fields.
	See \cite{MO:470168} for a proof of the claim under the additional assumption that $ X $ is affine.
	If $ X $ is not assumed to be affine, note that
	\begin{equation*}
		\dim(X) = \sup_{\substack{U \subset\, X \\ \textup{affine open}}} \dim(U) \period
	\end{equation*}
	Since the spectrum of a product of fields is $ 0 $-dimensional, by the affine case we see that $ \dim(X) = 0 $.
	Since every $ 0 $-dimensional qcqs scheme is affine, $ X $ is also affine.
\end{proof}

\noindent As a result, for the positive characteristic version of our reconstruction results, we need to work with schemes that are only \textit{universally homeomorphic} to finite type schemes.

\begin{definition}\label{def:topologically_noetherian}
	We say that a scheme $ X $ is \defn{(locally) topologically noetherian} if the topological space $ |X| $ is (locally) noetherian.
\end{definition}

\begin{example}
	Let $ X $ be a (locally) noetherian scheme.
	Then the absolute weak normalization $ \Xawn $ is (locally) topologically noetherian.
\end{example}

\begin{definition}\label{def:topologically_of_finite_type}
	We say that a morphism of schemes $ f \colon \fromto{X}{S} $ is \defn{(locally) topologically of finite type} if there exists a universal homeomorphism of $ S $-schemes $ \fromto{X}{Y} $ such that $ Y $ is (locally) of finite type over $ S $.
\end{definition}

\begin{observation}\label{obs:morphism_locally_topologically_of_finite_type_are_stable_under_pullback}
	\hfill
	\begin{enumerate}[label=\stlabel{obs:morphism_locally_topologically_of_finite_type_are_stable_under_pullback}, ref=\arabic*]
		\item\label{obs:morphism_locally_topologically_of_finite_type_are_stable_under_pullback.1} Since universal homeomorphisms and morphisms (locally) of finite type are both stable under pullback, the class of morphisms (locally) topologically of finite type is stable under pullback.

		\item\label{obs:morphism_locally_topologically_of_finite_type_are_stable_under_pullback.2} Since universal homeomorphisms are closed under composition, if $ f \colon \fromto{X}{S} $ is (locally) topologically of finite type, then for any universal homeomorphism $ h \colon \fromto{X'}{X} $, the composite $ fh \colon \fromto{X'}{S} $ is (locally) topologically of finite type.
	\end{enumerate}
\end{observation}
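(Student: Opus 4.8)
The plan is to prove each part directly from \Cref{def:topologically_of_finite_type} by exhibiting the witnessing universal homeomorphism demanded by that definition. Recall that $ f \colon X \to S $ being (locally) topologically of finite type means precisely that there is a universal homeomorphism $ g \colon X \to Y $ of $ S $-schemes with $ Y \to S $ (locally) of finite type. So in both cases the task reduces to producing such a $ Y $ together with a universal homeomorphism over the relevant base.

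For the pullback statement, I would fix a witness $ g \colon X \to Y $ for $ f $ and an arbitrary morphism $ S' \to S $, and then base change the whole diagram along $ S' \to S $. This produces a morphism $ X \cross_S S' \to Y \cross_S S' $, which is the base change of $ g $ along $ Y \cross_S S' \to Y $. Since universal homeomorphisms are stable under base change, this morphism is again a universal homeomorphism of $ S' $-schemes; and since being (locally) of finite type is stable under base change, $ Y \cross_S S' \to S' $ is (locally) of finite type. Hence $ X \cross_S S' \to Y \cross_S S' $ witnesses that $ X \cross_S S' \to S' $ is (locally) topologically of finite type.

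For the second statement I would argue by composition. Given the witness $ g \colon X \to Y $ for $ f $ and a universal homeomorphism $ h \colon X' \to X $, the composite $ gh \colon X' \to Y $ is a composite of universal homeomorphisms, hence a universal homeomorphism. Because $ X' $ carries its $ S $-structure via $ fh $ and $ h $ lies over $ S $, the map $ gh $ is a morphism of $ S $-schemes, and $ Y \to S $ is (locally) of finite type by hypothesis; thus $ gh $ witnesses that $ fh \colon X' \to S $ is (locally) topologically of finite type.

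There is no serious obstacle: both parts are formal consequences of the two closure properties of universal homeomorphisms (stability under base change and under composition) together with the corresponding stability properties of morphisms (locally) of finite type. The only point that warrants a moment's care is bookkeeping of the $ S $- (resp.\ $ S' $-)scheme structures, that is, verifying that the witnessing universal homeomorphism one produces is genuinely a morphism over the relevant base; in both cases this is immediate from the way the structure maps are defined.
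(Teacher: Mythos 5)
Your proof is correct and matches the paper's reasoning exactly: the paper states this as an observation whose justification is precisely your two arguments (base-changing the witnessing universal homeomorphism for part (1), composing with it for part (2)). Nothing is missing, including the bookkeeping of structure maps over the base.
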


The main example we care about is the following:

\begin{lemma}\label{lem:absolute_weak_normalization_topologically_of_finite_type}
	Let $ f \colon \fromto{X}{S} $ be a morphism of schemes.
	If $ f $ is (locally) topologically of finite type, then the induced morphisms $ \fromto{\Xawn}{S} $ and $ \fromto{\Xawn}{\Sawn} $ are (locally) topologically of finite type.
\end{lemma}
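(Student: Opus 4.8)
The plan is to treat the two induced morphisms separately, deducing the first directly from a stability property already recorded and the second by a fiber-product argument. Throughout, fix a universal homeomorphism of $ S $-schemes $ u \colon \fromto{X}{Y} $ with $ Y $ (locally) of finite type over $ S $; such a $ u $ exists because $ f $ is (locally) topologically of finite type.

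For the morphism $ \fromto{\Xawn}{S} $, I would simply invoke \Cref{obs:morphism_locally_topologically_of_finite_type_are_stable_under_pullback}: the counit $ \fromto{\Xawn}{X} $ is a universal homeomorphism by \Cref{thm:existence_of_absolute_weak_normalization_and_seminormalization}, so precomposing with it preserves the property of being (locally) topologically of finite type. Concretely, the composite $ \Xawn \to X \xrightarrow{u} Y $ is a universal homeomorphism, since universal homeomorphisms are closed under composition, and $ Y $ is (locally) of finite type over $ S $; hence $ \fromto{\Xawn}{S} $ is (locally) topologically of finite type.

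For the morphism $ f^{\awn} \colon \fromto{\Xawn}{\Sawn} $, the key idea is to base change $ Y $ along the counit $ \fromto{\Sawn}{S} $. Set $ Y' \colonequals Y \cross_S \Sawn $; since (locally) finite type morphisms are stable under base change, $ Y' $ is (locally) of finite type over $ \Sawn $. I claim there is a universal homeomorphism $ \fromto{\Xawn}{Y'} $ over $ \Sawn $, which will finish the proof. To produce the map, observe that the two composites $ \Xawn \to X \xrightarrow{u} Y \to S $ and $ \Xawn \xrightarrow{f^{\awn}} \Sawn \to S $ both equal the structure morphism $ \fromto{\Xawn}{S} $: the first because $ u $ is a morphism over $ S $, and the second by naturality of the counit (the commuting square appearing in the proof of \Cref{thm:existence_of_absolute_weak_normalization_and_seminormalization}). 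Thus the universal property of the fiber product yields a morphism $ \fromto{\Xawn}{Y'} $ over $ \Sawn $.

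It remains to check that $ \fromto{\Xawn}{Y'} $ is a universal homeomorphism. The projection $ \fromto{Y'}{Y} $ is the base change of the counit $ \fromto{\Sawn}{S} $, which is a universal homeomorphism by \Cref{thm:existence_of_absolute_weak_normalization_and_seminormalization}; as universal homeomorphisms are stable under base change, $ \fromto{Y'}{Y} $ is a universal homeomorphism. Its composite with $ \fromto{\Xawn}{Y'} $ is the map $ \Xawn \to X \xrightarrow{u} Y $, which is a universal homeomorphism as noted above. Since universal homeomorphisms satisfy the $ 2 $-of-$ 3 $ property, $ \fromto{\Xawn}{Y'} $ is a universal homeomorphism, as desired. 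The only step needing any care is verifying the compatibility of the two maps to $ S $ so that the universal property of $ Y' $ applies; everything else is a formal consequence of stability under composition and base change together with $ 2 $-of-$ 3 $ for universal homeomorphisms, so I do not anticipate a genuine obstacle.
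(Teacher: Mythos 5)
Your proof is correct and takes essentially the same route as the paper: the first morphism is handled identically via \Cref{obs:morphism_locally_topologically_of_finite_type_are_stable_under_pullback} and the counit. For the second, the paper factors $ f^{\awn} $ as $ \Xawn \to X \times_S \Sawn \to \Sawn $ and cites pullback stability of `topologically of finite type' for the right-hand map --- which, once unwound, is exactly your witness $ Y' = Y \times_S \Sawn $ --- together with the same $2$-of-$3$ argument for universal homeomorphisms.
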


\begin{proof}
	The claim for the morphism $ \fromto{\Xawn}{S} $ follows from \enumref{obs:morphism_locally_topologically_of_finite_type_are_stable_under_pullback}{2} and the fact that the counit $ \fromto{\Xawn}{X} $ is a universal homeomorphism.
	For the second claim, note that $ f^{\awn} $ factors as
	\begin{equation*}
		\Xawn \to X \cross_S \Sawn \to \Sawn \period
	\end{equation*}
	By the $ 2 $-of-$ 3 $ property for universal homeomorphisms, the left-hand morphism is a universal homeomorphism.
	Since $ \fromto{X}{S} $ is (locally) topologically of finite type, \enumref{obs:morphism_locally_topologically_of_finite_type_are_stable_under_pullback}{1} shows that the right-hand morphism is (locally) topologically of finite type.
	Applying \enumref{obs:morphism_locally_topologically_of_finite_type_are_stable_under_pullback}{2} again shows that the composite $ \fromto{\Xawn}{\Sawn} $ is (locally) topologically of finite type.
\end{proof}

\begin{example}\label{ex:perfection_topologically_of_finite_type}
	Let $ k $ be a field of characteristic $ p > 0 $ and $ X $ a scheme (locally) topologically of finite type over $ k $.
	Then $ \Xperf $ is (locally) topologically of finite type over both $ k $ and $ \kperf $.
\end{example}

For seminormal schemes over a field of characteristic $ 0 $, the properties `(locally) of finite type' and `(locally) topologically of finite type' coincide:

\begin{lemma}\label{lem:locally_of_finite_type_for_seminormal_schemes_over_a_field_of_characteristic_0}
	Let $ k $ be a field of characteristic $ 0 $ and let $ X $ be a $ k $-scheme.
	\begin{enumerate}[label=\stlabel{lem:locally_of_finite_type_for_seminormal_schemes_over_a_field_of_characteristic_0}, ref=\arabic*]
		\item\label{lem:locally_of_finite_type_for_seminormal_schemes_over_a_field_of_characteristic_0.1} If $ X $ is (locally) topologically of finite type over $ k $, then $ \Xsn $ is (locally) of finite type over $ k $.

		\item\label{lem:locally_of_finite_type_for_seminormal_schemes_over_a_field_of_characteristic_0.2} If $ X $ is seminormal, then $ X $ is (locally) topologically of finite type over $ k $ if and only if $ X $ is (locally) of finite type over $ k $.
	\end{enumerate}
\end{lemma}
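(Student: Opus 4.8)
The plan is to deduce both parts from two inputs already established: the characteristic-$0$ coincidence $(-)^{\sn} = (-)^{\awn}$ on $\QQ$-schemes (\enumref{cor:absolute_weak_normalization_over_fields}{1}), and the finiteness of seminormalization for finite type schemes over a field (\enumref{prop:when_is_seminormalization_finite}{3}). The \emph{key point}, which makes the characteristic-$0$ hypothesis essential, is that over $\QQ$ seminormalization inverts \emph{all} universal homeomorphisms, not merely those inducing isomorphisms on residue fields.

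For \enumref{lem:locally_of_finite_type_for_seminormal_schemes_over_a_field_of_characteristic_0}{1}, I would start from the definition of (locally) topologically of finite type: there is a universal homeomorphism of $ k $-schemes $ g \colon \fromto{X}{Y} $ with $ Y $ (locally) of finite type over $ k $. Since $ k $ has characteristic $ 0 $, every scheme here is a $ \QQ $-scheme, so $ (-)^{\sn} = (-)^{\awn} $ by \enumref{cor:absolute_weak_normalization_over_fields}{1}, and in particular $ g^{\sn} = g^{\awn} $. By \enumref{thm:existence_of_absolute_weak_normalization_and_seminormalization}{4} the morphism $ g $ is a universal homeomorphism if and only if $ g^{\awn} $ is an isomorphism; hence $ g^{\sn} \colon \fromto{X^{\sn}}{Y^{\sn}} $ is an isomorphism. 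This is an isomorphism of $ k $-schemes because $ (-)^{\sn} $ is functorial and $ (\Spec k)^{\sn} = \Spec k $ (as $ k $ is perfect, even normal). Finally, $ Y $ is (locally) of finite type over $ k $, so \enumref{prop:when_is_seminormalization_finite}{3} gives that $ Y^{\sn} $ is (locally) of finite type over $ k $, and therefore so is $ X^{\sn} \isomorphic Y^{\sn} $.

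For \enumref{lem:locally_of_finite_type_for_seminormal_schemes_over_a_field_of_characteristic_0}{2}, the implication from ``(locally) of finite type'' to ``(locally) topologically of finite type'' is immediate: take the identity of $ X $ as the required universal homeomorphism. For the converse, suppose $ X $ is seminormal and (locally) topologically of finite type. The counit $ c \colon \fromto{X^{\sn}}{X} $ is a universal homeomorphism inducing isomorphisms on residue fields (\enumref{thm:existence_of_absolute_weak_normalization_and_seminormalization}{2}), its source $ X^{\sn} $ is seminormal hence reduced (\enumref{prop:basic_properties_of_seminormal_rings}{2}), and its target $ X $ is seminormal by hypothesis; so \enumref{thm:existence_of_absolute_weak_normalization_and_seminormalization}{3} shows $ c $ is an isomorphism, i.e.\ $ X \isomorphic X^{\sn} $. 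By part \enumref{lem:locally_of_finite_type_for_seminormal_schemes_over_a_field_of_characteristic_0}{1}, $ X^{\sn} $ is (locally) of finite type over $ k $, and hence so is $ X $.

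There is no genuine obstacle in this argument; the only point to watch is the characteristic-$0$ input, since for a general universal homeomorphism $ g $ over an imperfect field the morphism $ g^{\sn} $ need not be an isomorphism, and the reduction in part \enumref{lem:locally_of_finite_type_for_seminormal_schemes_over_a_field_of_characteristic_0}{1} would fail.
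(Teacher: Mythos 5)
Your proposal is correct and takes essentially the same route as the paper's proof: both establish an isomorphism $ \isomto{\Xsn}{\Ysn} $ via \enumref{thm:existence_of_absolute_weak_normalization_and_seminormalization}{4} and then conclude with the finiteness of seminormalization over a field (\enumref{prop:when_is_seminormalization_finite}{3}), with part \enumref{lem:locally_of_finite_type_for_seminormal_schemes_over_a_field_of_characteristic_0}{2} reduced to part \enumref{lem:locally_of_finite_type_for_seminormal_schemes_over_a_field_of_characteristic_0}{1} using $ X = \Xsn $ for seminormal $ X $. The only cosmetic difference is where characteristic $ 0 $ enters: the paper notes that in characteristic $ 0 $ a universal homeomorphism automatically induces isomorphisms on residue fields and applies the seminormalization half of \enumref{thm:existence_of_absolute_weak_normalization_and_seminormalization}{4}, whereas you invoke $ (-)^{\sn} = (-)^{\awn} $ over $ \QQ $ (\enumref{cor:absolute_weak_normalization_over_fields}{1}) and apply the absolute-weak-normalization half — the two are interchangeable one-line observations.
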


\begin{proof}
	For \enumref{lem:locally_of_finite_type_for_seminormal_schemes_over_a_field_of_characteristic_0}{1}, factor the structure morphism $ \fromto{X}{\Spec(k)} $ as a composite of a universal homeomorphism $ \fromto{X}{Y} $ followed by a morphism $ \fromto{Y}{\Spec(k)} $ (locally) of finite type.
	Since $ \characteristic(k) = 0 $, the universal homeomorphism $ \fromto{X}{Y} $ also induces an isomorphism on residue fields.
	By \enumref{thm:existence_of_absolute_weak_normalization_and_seminormalization}{4}, the induced morphism $ \fromto{\Xsn}{\Ysn} $ is an isomorphism.
	To conclude, note that since $ Y $ is (locally) of finite type over $ k $, by \enumref{prop:when_is_seminormalization_finite}{3}, the seminormalization $ \Ysn \isomorphic \Xsn $ is also locally of finite type over $ k $. 

	For the nontrivial implication of \enumref{lem:locally_of_finite_type_for_seminormal_schemes_over_a_field_of_characteristic_0}{2}, assume that $ X $ is (locally) topologically of finite type over $ k $.
	Then since $ X $ is seminormal, by \enumref{lem:locally_of_finite_type_for_seminormal_schemes_over_a_field_of_characteristic_0}{1} we conclude that $ X = \Xsn $ is (locally) of finite type over $ k $.
\end{proof}


We conclude this section by showing that the localization of the category of finite type $ k $-schemes at the universal homeomorphisms is the category of $ k $-schemes that are absolutely weakly normal and topologically of finite type over $ k $.

\begin{notation}\label{ntn:Schawntft}
	Let $ S $ be a scheme.
	We write \smash{$ \Schft_S \subset \Sch_S $} for the full subcategory spanned by the finite type $ S $-schemes.
	We write \smash{$ \Schawntft_S \subset \Sch_S $} for the full subcategory spanned by the $ S $-schemes that are absolutely weakly normal and topologically of finite type over $ S $.
\end{notation}

\begin{observation}
	The full subcategory $ \Schawntft_S \subset \Sch_S $ is the smallest full subcategory closed under isomorphisms and containing the absolute weak normalizations of finite type $ S $-schemes.
\end{observation}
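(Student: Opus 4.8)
The plan is to verify the two properties that together characterize a \emph{smallest} full subcategory with the stated closure: first, that $ \Schawntft_S $ is itself closed under isomorphisms and contains every absolute weak normalization $ X^{\awn} $ of a finite type $ S $-scheme $ X $; and second, that conversely every object of $ \Schawntft_S $ is isomorphic to such an $ X^{\awn} $. Granting both, any full subcategory $ \mathcal{C} \subseteq \Sch_S $ that is closed under isomorphisms and contains all the $ X^{\awn} $ must contain all of $ \Schawntft_S $ (by the second property, since $ \mathcal{C} $ then contains everything isomorphic to an $ X^{\awn} $), while $ \Schawntft_S $ is one such subcategory (by the first property). This is exactly the assertion that $ \Schawntft_S $ is the smallest.

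For the first property, closure under isomorphisms is immediate, as being absolutely weakly normal and being topologically of finite type over $ S $ are both invariant under isomorphism of $ S $-schemes. For the containment, let $ X $ be a finite type $ S $-scheme. Then $ X^{\awn} $ is absolutely weakly normal by construction, and the structure morphism $ \fromto{X}{S} $, being of finite type, is in particular topologically of finite type; \Cref{lem:absolute_weak_normalization_topologically_of_finite_type} then shows that $ \fromto{X^{\awn}}{S} $ is topologically of finite type. Hence $ X^{\awn} \in \Schawntft_S $.

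For the second property, let $ Z \in \Schawntft_S $. Since $ Z $ is absolutely weakly normal, it is in particular reduced (\Cref{prop:basic_properties_of_seminormal_rings}), so the counit $ \fromto{Z^{\awn}}{Z} $, which is a universal homeomorphism by \enumref{thm:existence_of_absolute_weak_normalization_and_seminormalization}{2}, is an isomorphism by \enumref{thm:existence_of_absolute_weak_normalization_and_seminormalization}{3}. On the other hand, because $ Z $ is topologically of finite type over $ S $, by \Cref{def:topologically_of_finite_type} there is a universal homeomorphism $ g \colon \fromto{Z}{Y} $ with $ Y $ of finite type over $ S $. Applying $ (-)^{\awn} $ and invoking \enumref{thm:existence_of_absolute_weak_normalization_and_seminormalization}{4}, the fact that $ g $ is a universal homeomorphism implies that $ g^{\awn} \colon \fromto{Z^{\awn}}{Y^{\awn}} $ is an isomorphism. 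Composing, we obtain $ Z \isomorphic Z^{\awn} \isomorphic Y^{\awn} $ with $ Y $ finite type over $ S $, as required.

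I expect no serious obstacle here: the statement is bookkeeping built on \Cref{thm:existence_of_absolute_weak_normalization_and_seminormalization} and \Cref{lem:absolute_weak_normalization_topologically_of_finite_type}. The only point requiring a moment's care is recognizing that an object of $ \Schawntft_S $ is not merely universally homeomorphic to a finite type scheme but is in fact \emph{the} absolute weak normalization of one; this is precisely where one combines the coreflection identity $ Z^{\awn} \isomorphic Z $ with the characterization of universal homeomorphisms via $ (-)^{\awn} $.
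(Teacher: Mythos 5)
Your proof is correct and follows the only natural route: the paper states this as an unproved Observation, and your verification (that $ \Schawntft_S $ has the two closure properties and that every object of it is isomorphic to $ Y^{\awn} $ for some finite type $ Y $, via the counit isomorphism and item 4 of the theorem on absolute weak normalization) is exactly the implicit argument. The only nitpick is that in applying item 3 to the counit $ \fromto{Z^{\awn}}{Z} $ you cite reducedness of the target $ Z $, whereas the hypothesis concerns the source $ Z^{\awn} $ — which is also absolutely weakly normal, hence reduced, so nothing breaks.
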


\begin{lemma}\label{lem:awn_is_localization_for_finite_type}
	Let $ k $ be a field.
	Then the functor $ (-)^{\awn} \colon \Schft_k \to \Schawn_{k} $ induces an equivalence of categories
	\begin{equation*}
		\equivto{\Schft_k[\UH^{-1}]}{\Schawntft_k} \period
	\end{equation*}
\end{lemma}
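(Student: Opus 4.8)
The plan is to verify that $ (-)^{\awn} $ inverts universal homeomorphisms, is essentially surjective onto $ \Schawntft_k $, and is fully faithful, with the last point the real content.

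First I would record that $ (-)^{\awn}\colon \Schft_k \to \Sch_k $ sends universal homeomorphisms to isomorphisms by \enumref{thm:existence_of_absolute_weak_normalization_and_seminormalization}{4}, and that its image lands in $ \Schawntft_k $: for finite type $ X $ the scheme $ \Xawn $ is absolutely weakly normal by construction and topologically of finite type over $ k $ by \Cref{lem:absolute_weak_normalization_topologically_of_finite_type} (finite type schemes are topologically of finite type). The universal property of localization then produces the functor $ \bar{L}\colon \Schft_k[\UH^{-1}] \to \Schawntft_k $, and it remains to show $ \bar L $ is an equivalence. For essential surjectivity, take $ Z \in \Schawntft_k $ and a universal homeomorphism $ Z \to Y $ with $ Y $ finite type. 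Since $ Z $ is absolutely weakly normal its counit is an isomorphism $ Z \cong Z^{\awn} $, while \enumref{thm:existence_of_absolute_weak_normalization_and_seminormalization}{4} gives $ Z^{\awn} \cong Y^{\awn} $; hence $ Z \cong \bar L(Y) $.

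The main work is full faithfulness: for $ X,Y \in \Schft_k $ I must produce a bijection
\[
	\Hom_{\Schft_k[\UH^{-1}]}(X,Y) \xrightarrow{\ \sim\ } \Hom_{\Schawntft_k}(\Xawn,\Yawn) \period
\]
Because $ \Schawntft_k $ is a full subcategory of $ \Schawn_k $ and $ (-)^{\awn} $ is right adjoint to the fully faithful inclusion $ \Schawn_k \hookrightarrow \Sch_k $, the target is computed by the adjunction as $ \Hom_{\Schawn_k}(\Xawn,\Yawn) \cong \Hom_{\Sch_k}(\Xawn,Y) $. For the source, I would use that the universal homeomorphisms in $ \Schft_k $ are stable under composition and under base change preserving finite type (\Cref{obs:morphism_locally_topologically_of_finite_type_are_stable_under_pullback}), so they admit a calculus of fractions and
\[
	\Hom_{\Schft_k[\UH^{-1}]}(X,Y) \cong \operatorname*{colim}_{(X' \to X)} \Hom_{\Schft_k}(X',Y) \comma
\]
the colimit running over universal homeomorphisms $ X' \to X $ with $ X' $ finite type over $ k $. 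It then suffices to identify this filtered colimit with $ \Hom_{\Sch_k}(\Xawn,Y) $ compatibly with $ \bar L $.

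This identification is where the argument bites, and is exactly what the notion ``topologically of finite type'' was introduced to handle: $ \Xawn $ is a cofiltered limit $ \Xawn \cong \lim_i X_i $ of finite type $ k $-schemes along universal homeomorphisms with affine transition maps, and this system is cofinal among all finite type universal homeomorphisms $ X' \to X $. In characteristic $ 0 $ this is immediate, since $ \Xawn = \Xsn $ is itself finite type (\enumref{prop:when_is_seminormalization_finite}{3}) and terminal in the indexing category (every such $ X' \to X $ induces isomorphisms on residue fields, so $ \Xsn \cong (X')^{\sn} $ maps canonically to $ X' $). In positive characteristic $ \Xawn = \Xperf $ is the limit of the tower $ \cdots \to X \to X \to X $ along the absolute Frobenius, whose terms are finite type, and cofinality follows because a finite universal homeomorphism $ \Spec(A) \to \Spec(B) $ of finite type $ k $-schemes is dominated by a power of Frobenius: there is a single $ n $ with $ b^{p^n} \in B $ for all of the finitely many generators, so the $ n $-th Frobenius factors through it. Granting the cofinal limit presentation, the equality $ \Hom_{\Sch_k}(\Xawn,Y) \cong \operatorname*{colim}_i \Hom_{\Sch_k}(X_i,Y) $ is the standard fact that a finitely presented $ Y $ sees a limit of quasicompact quasiseparated schemes with affine transition maps as a filtered colimit of Hom-sets, and matching the two colimits via cofinality yields the bijection. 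The main obstacle is thus the bookkeeping fusing the calculus-of-fractions description of the localization with this limit–colimit interchange, and in particular checking the cofinality and finite-presentation inputs in the non-noetherian positive characteristic case.
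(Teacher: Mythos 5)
Your proposal is correct, and its mathematical core coincides with the paper's proof: in characteristic $0$ everything reduces to the fact that $ \Xawn = \Xsn $ is again of finite type and receives a canonical map to every finite type universal homeomorphism over $ X $ (\Cref{prop:when_is_seminormalization_finite} together with \enumref{thm:existence_of_absolute_weak_normalization_and_seminormalization}{4}), while in characteristic $ p $ the decisive input is that every finite type universal homeomorphism is dominated by a power of Frobenius --- your ring-theoretic sketch with the generators $ b $ satisfying $ b^{p^n} \in B $ is exactly the paper's argument with the twisted schemes $ Y_n $ (and, as in the paper, one must use the twisted structure morphisms, since the absolute Frobenius is not $ k $-linear). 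The packaging differs: the paper first collapses $ \Schft_k[\UH^{-1}] $ to the localization at Frobenii, where the telescope description of hom-sets as $ \colim_n \Hom_k(X_n,Y) $ is immediate, and then verifies the universal property of $ \Schawntft_k $; you instead run a calculus of right fractions over the whole class $ \UH $ and then a cofinality argument against the Frobenius tower, making explicit the limit--colimit interchange for the finitely presented $ Y $ against $ \Xperf = \lim_n X_n $ that the paper leaves implicit in ``it is easy to see.'' Your route buys transparency about where finite presentation enters, but it carries two proof obligations you assert rather than check: the equalization (third) Ore axiom, and the second condition in the definition of a cofinal subcategory. Both hold, and routinely: if $ wf = wg $ with $ w $ a universal homeomorphism, then the diagonal of $ w $ is a surjective closed immersion, so its pullback along $ (f,g) $ is a surjective closed immersion --- hence a universal homeomorphism --- equalizing $ f $ and $ g $; and two parallel maps over $ X $ from a Frobenius twist into a universal homeomorphism agree on points and residue fields (purely inseparable extensions admit at most one embedding), so their difference on the finitely many generating sections is nilpotent and is killed by a further power of Frobenius. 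With these two verifications supplied, your argument is a complete proof, essentially equivalent to the paper's but organized through the fraction calculus rather than through the intermediate identification $ \Schft_k[\UH^{-1}] \simeq \Schft_k[\Frob^{-1}] $.
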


\begin{proof}
	If $ \characteristic(k) = 0 $, then the claim follows from \Cref{thm:existence_of_absolute_weak_normalization_and_seminormalization} and \Cref{prop:when_is_seminormalization_finite}.

	If $ \characteristic(k) > 0 $, note that if $ f \colon X \to Y $ is a universal homeomorphism of finite type $ k $-schemes, it follows from \Cref{cor:absolute_weak_normalization_over_fields} that $ f^{\perf} \colon \Xperf \to \Yperf $ is an isomorphism of $ k $-schemes.
	Thus there is an inverse map $ g^{\perf} \colon \Yperf \to \Xperf $ over $ k $.
	For $ n \geq 0 $, write $ Y_n $ for the $ k $-scheme defined by
	\begin{equation*}
		\begin{tikzcd}[sep=3em]
			Y \arrow[r, "\Frob_Y^n"] & Y \arrow[r] & \Spec(k) \period
		\end{tikzcd}
	\end{equation*}
	Since $ X $ and $ Y $ are of finite type over $ k $, there is some $ n \gg 0 $ such that composite $ \Yperf \to \Xperf \to X $ factors as
	\begin{equation*}
		\Yperf \to Y_n \to X
	\end{equation*}
	and the composite $ Y_n \to X \xrightarrow{f} Y $ is given by \smash{$ \Frob_Y $}.
	Since \smash{$ \Frob_Y $} is a universal homeomorphism, it follows that the localization \smash{$ \Schft_k[\UH^{-1}] $} is equivalent to the localization \smash{$ \Schft_k[\Frob^{-1}] $} at absolute Frobenii.
	
	It remains to show that \smash{$ \Schawntft_k $} is the localization of \smash{$ \Schft_k $} at at absolute Frobenii.
	For this note that we may also describe the category \smash{$ \Schft_k[\Frob^{-1}] $} as follows.
	The objects are finite type $ k $-schemes and for two objects $ X $ and $ Y $, the set of morphisms $ \fromto{X}{Y} $ is given by
	\begin{equation*}
		\colim \Big\lparen\!
		\begin{tikzcd}[sep=3em]
			\Hom_{k}(X,Y) \arrow[r, "\Frob^*"] & \Hom_{k}(X_1,Y) \arrow[r, "\Frob^*"] & \Hom_{k}(X_2,Y) \arrow[r, "\Frob^*"] & \cdots
		\end{tikzcd}
		\!\Big\rparen \period
	\end{equation*}
	With this description it is easy to see that $ \Schawntft_k $ has the correct universal property.
\end{proof}


\section{The étale reconstruction property}\label{sec:the_etale_reconstruction_property}

The purpose of this section is to formulate what it means for a field $ k $ to have the property that absolutely weakly normal schemes topologically of finite type over $ k $ can be reconstructed from their étale topoi.
In order to do this, in \cref{subsec:notation_and_recollections_on_topoi}, we begin by setting some topos-theoretic notations and recalling a bit of background from topos theory.
Schemes topologically of finite type over a field are necessarily \textit{Jacobson}.
In \cref{subsec:background_on_Jacobson_schemes}, we recall the necessary background on Jacobson schemes. 
A particularly important point is that every morphism between schemes topologically of finite type over a field carries closed points to closed points.
However, not every geometric morphism between the étale topoi of such schemes has this property.

In \cref{subsec:pinned_morphisms_and_the_etale_reconstruction_property}, we study the class of geometric morphisms that send closed points to closed points, which we called \textit{pinned} geometric morphisms.
We show that for schemes $ X $ and $ Y $ topologically of finite type over $ k $, the groupoid of pinned geometric morphisms $ \fromto{\Xet}{\Yet} $ over $ \Spec(k)_{\et} $ is equivalent to a set.
This rigidity property is fundamental; we need it to make descent arguments in \cref{sec:reduction_to_regular_source_and_target_GGm}.
We also use this rigidity property to formulate what it means for a field to \textit{satisfy étale reconstruction}, see \Cref{prop:equivalent_conditions_for_etale_reconstruction,def:etale_reconstruction}.
In \cref{subsec:consequences_of_etale_reconstruction}, we record some immediate consequences of the étale reconstruction property (see \Cref{prop:consequences_of_etale_reconstruction}).


\subsection{Notation and recollections on topoi}\label{subsec:notation_and_recollections_on_topoi}

We make use of the following notation throughout the rest of the paper.

\begin{notation}\label{ntn:topoi}
	We write $ \RTop $ for the $ (2,1) $-category with objects topoi, $ 1 $-morphisms (right adjoints in) geometric morphisms, and $ 2 $-morphisms natural isomorphisms.


\end{notation}

\noindent We also have a relative notion of the $ (2,1) $-category of topoi sliced over a particular topos.

\begin{notation}\label{def:B-topoi}
	Let $ \Bcal $ be a topos. 
	We write $ \RTop_{\Bcal} $ for the $ (2,1) $-category of topoi over $ \Bcal $ defined in the following manner:
	\begin{enumerate}[label=\stlabel{def:B-topoi}, ref=\arabic*]
		\item An object of $\RTop_{\Bcal}$ is a geometric morphism of topoi $ \plowerstar \colon \fromto{\Xcal}{\Bcal} $.

		\item A $ 1 $-morphism from $ \plowerstar \colon \fromto{\Xcal}{\Bcal} $ to $ \qlowerstar \colon \fromto{\Ycal}{\Bcal} $ is a pair $ (\flowerstar,\alpha)$ where $ \flowerstar \colon \fromto{\Xcal}{\Ycal} $ is a geometric morphism and $\alpha$ is a natural isomorphism $ \alpha \colon \isomto{\qlowerstar\flowerstar}{\plowerstar} $.

		\item A $2$-morphism from $ (\flowerstar,\alpha) \colon \fromto{\Xcal}{\Ycal} $ to $ (\glowerstar,\beta) \colon \fromto{\Xcal}{\Ycal} $ is a natural isomorphism $ \gamma \colon \isomto{\flowerstar}{\glowerstar} $ such that the triangle of natural isomorphisms
		\begin{equation*} 
			\begin{tikzcd}
			  & \qlowerstar\glowerstar \arrow[dr," \beta"] \\
			 \qlowerstar\flowerstar \arrow[rr, "\alpha"'] \arrow[ur,"\qlowerstar \gamma"] & & \plowerstar
			\end{tikzcd} 
		\end{equation*}
		commutes.
	\end{enumerate}
\end{notation}

\begin{notation}\label{ntn:HomB}
	Let $ \Bcal $ be a topos.
	Given geometric morphisms $ \plowerstar \colon \fromto{\Xcal}{\Bcal} $ and $ \qlowerstar \colon \fromto{\Ycal}{\Bcal} $, we write $ \Hom_{\Bcal}(\Xcal,\Ycal) $ for the $ \Hom $ \textit{groupoid} in \smash{$ \RTop_{\Bcal} $}.
	If $ \Bcal = S_{\et} $ is the étale topos of a scheme $ S $, we simply write
	\begin{equation*}
		\RTop_{S} \colonequals \RTop_{S_{\et}} \andeq \Hom_{S}(\Xcal,\Ycal) \colonequals \Hom_{S_{\et}}(\Xcal,\Ycal) \period
	\end{equation*}
	Moreover, if $ S = \Spec(A) $ is affine, we write
	\begin{equation*}
		\RTop_{A} \colonequals \RTop_{\Spec(A)_{\et}} \andeq \Hom_{A}(\Xcal,\Ycal) \colonequals \Hom_{\Spec(A)_{\et}}(\Xcal,\Ycal) \period
	\end{equation*}
\end{notation}

\begin{recollection}[(the underlying space of a topos)]
	Let $ \Xcal $ be a topos.
	We write $ \Open(\Xcal) $ for the poset of isomorphism classes of subobjects of the terminal object of $ \Xcal $.
	The poset $ \Open(\Xcal) $ is a locale.
	We write $ |\Xcal| $ for the topological space associated to the locale $ \Open(\Xcal) $, i.e., the topological space of points of $ \Open(\Xcal) $.

	If $ \flowerstar \colon \Xcal \to \Ycal $ is a geometric morphism, the pullback functor $ \fupperstar $ preserves subobjects of the terminal object, thus restricts to a morphism of locales
	\begin{equation*}
		\fupperstar \colon \fromto{\Open(\Xcal)}{\Open(\Ycal)} \period
	\end{equation*}
	We write $ |\flowerstar| \colon \fromto{|\Xcal|}{|\Ycal|} $ for the induced map of underlying spaces.
\end{recollection}

\begin{example}
	Let $ X $ be a scheme.
	Then there is a natural isomorphism from  the locale of open subsets of the underlying space $ |X| $ to the locale $ \Open(\Xet) $.
	Since $ |X| $ is sober, we deduce that there is a natural isomorphism of topological spaces $ \isomto{|\Xet|}{|X|} $.
\end{example}

We also recall some background about categories of points of topoi.

\begin{notation}[(category of points)]\label{ntn:category_of_points}
	Let $ \Xcal $ be a topos.
	We write $ \Pt(\Xcal) $ for the \defn{category of points} of $ \Xcal $.
	Objects of $ \Pt(\Xcal) $ are left exact left adjoints $ \xupperstar \colon \fromto{\Xcal}{\Set} $ and morphisms are natural transformations.
	The assignment $ \goesto{\Xcal}{\Pt(\Xcal)} $ defines a functor
	\begin{equation*}
		\Pt \colon \fromto{\RTop}{\Cat}
	\end{equation*}
	from the $ (2,1) $-category of topoi to the $ (2,1) $-category of categories.
	The functoriality is given by sending a geometric morphism $ \flowerstar \colon \fromto{\Xcal}{\Ycal} $ to the functor
	\begin{equation*}
		\Pt(\flowerstar) \colon \Pt(\Xcal) \to \Pt(\Ycal) \comma \qquad \xupperstar \mapsto \xupperstar \fupperstar 
	\end{equation*}
	with the obvious functoriality in natural transformations.
\end{notation}

\begin{example}[{\cite[Exposé VIII, Théorème 7.9]{MR50:7131}}]\label{ex:Pt_of_Xet}
	Let $ X $ be a qcqs scheme.
	The Grothendieck School  computed the category of points of $ \Xet $.
	The objects of $ \Pt(\Xet) $ are geometric points $ \fromto{\xbar}{X} $.
	Given geometric points $ \fromto{\xbar}{X} $ and $ \fromto{\ybar}{X} $, the set of morphisms $ \fromto{\xbar}{\ybar} $ is the set
	\begin{equation*}
		\Hom_{\Pt(\Xet)}(\xbar,\ybar) = \Hom_{X}(\Spec(\Ocal_{X,y}^{\sh}),\Spec(\Ocal_{X,x}^{\sh}))
	\end{equation*}
	of morphisms of $ X $-schemes from the strictly henselian local scheme associated to $ \ybar $ to that of $ \xbar $. 
	Composition in $ \Pt(\Xet) $ is given by composition of morphisms of $ X $-schemes.
\end{example}

\begin{nul}
	An important consequences of \Cref{ex:Pt_of_Xet} are that for each geometric point $ \fromto{\xbar}{X} $ with image $ x \in X $, the set of endomorphisms of $ \xbar $ in $ \Pt(\Xet) $ is isomorphic to the absolute Galois group $ \Gal(\upkappa(\xbar)/\upkappa(x)) $.
	In particular, every endomorphism of $ \xbar $ in $ \Pt(\Xet) $ is an automorphism.
\end{nul}

A few times throughout this paper, we need the following observation.
In general, the functor
\begin{equation*}
	(-)_{\et} \colon \Sch \to \RTop
\end{equation*}
does not preserve fiber products (see \cite[Remark 2.6]{MR4027830}).
However, it does preserve pullbacks along points:

\begin{lemma}\label{rec:taking_topoi_preserves_fiber_products}
	Let $ f \colon \fromto{X}{S} $ be a morphism between qcqs schemes and $ s \in S $.
	Then the square of étale topoi 
	\begin{equation*}
      \begin{tikzcd}[sep=2.25em]
	       X_{s,\et} \arrow[d] \arrow[r] & X_{\et} \arrow[d, "\flowerstar"] \\ 
	       \Spec(\upkappa(s))_{\et} \arrow[r] & S_{\et}
      \end{tikzcd}
    \end{equation*}
    is a pullback square in $ \RTop $.
\end{lemma}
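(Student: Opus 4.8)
The plan is to construct the canonical comparison morphism into the pullback and then prove it is an equivalence by reducing to the case where $s$ is a closed point, where the étale topos of the fiber can be recognized as a closed subtopos via the open--closed recollement. By the universal property of the pullback, the two projections $X_s \to X$ and $X_s \to \Spec(\upkappa(s))$ — which agree over $S$ — induce a geometric morphism
$
c \colon X_{s,\et} \to X_{\et} \cross_{S_{\et}} \Spec(\upkappa(s))_{\et},
$
and it suffices to show $c$ is an equivalence.

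\emph{Reduction to an affine, then local, base.} First I would restrict to an affine open neighbourhood $U_0 \ni s$. This is harmless: the fiber $X_s$ is unchanged, the morphism $\Spec(\upkappa(s))_{\et} \to S_{\et}$ factors through the open subtopos $U_{0,\et}$ because $s \in U_0$, and pullback along an open subtopos inclusion recovers the étale topos of the preimage, so the pullback over $S_{\et}$ agrees with the one over $U_{0,\et}$. Thus we may assume $S = \Spec(A)$ is affine. Next I would write $\Spec(\Ocal_{S,s}) = \lim_{g \notin \mathfrak p} D(g)$ as a cofiltered limit of principal affine opens with affine transition maps. Their preimages $f^{-1}(D(g))$ are the non-vanishing loci of global functions on the qcqs scheme $X$, hence are themselves qcqs, and the transition maps between them are again affine. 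Since limits commute with the pullback, continuity of the étale topos along cofiltered limits of qcqs schemes with affine transition morphisms gives $X_{\et} \cross_{S_{\et}} \Spec(\Ocal_{S,s})_{\et} \isomorphic \lim_g (f^{-1}(D(g)))_{\et} \isomorphic (X \cross_S \Spec(\Ocal_{S,s}))_{\et}$, so we may replace $S$ by $\Spec(\Ocal_{S,s})$ and $X$ by its base change. After this reduction $s$ is the unique closed point of $S$, so $\{s\}$ is closed and $U \colonequals S \setminus \{s\}$ is open.

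\emph{The closed point case.} With $s$ closed, the inclusion $\Spec(\upkappa(s)) \hookrightarrow S$ is a closed immersion, and by the open--closed recollement for the étale topos, $\Spec(\upkappa(s))_{\et}$ is the closed subtopos of $S_{\et}$ complementary to the open subtopos $U_{\et}$. Closed subtopos inclusions are stable under pullback in $\RTop$, and since $\fupperstar$ sends the subterminal object $U$ to the subterminal object $f^{-1}(U)$, the pullback $X_{\et} \cross_{S_{\et}} \Spec(\upkappa(s))_{\et}$ is the closed subtopos of $X_{\et}$ complementary to $(f^{-1}(U))_{\et}$, i.e. the one supported on $f^{-1}(s)$. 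Applying the same recollement to the closed immersion $f^{-1}(s)_{\red} \hookrightarrow X$ identifies this closed subtopos with $(f^{-1}(s)_{\red})_{\et}$. Finally, the underlying space of $X_s = X \cross_S \Spec(\upkappa(s))$ is $f^{-1}(s)$ and its reduction is $f^{-1}(s)_{\red}$, so by topological invariance of the étale topos \stacks{03SI} we have $(f^{-1}(s)_{\red})_{\et} \isomorphic X_{s,\et}$; unwinding the identifications shows that $c$ is an equivalence.

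\emph{Main obstacle.} The delicate point is the reduction making $s$ closed: one must simultaneously arrange that $\Spec(\upkappa(s))_{\et}$ becomes \emph{literally} a closed subtopos and that the pullback topos is controlled through the cofiltered limit, and then verify that the recollement descriptions on $S$ and on $X$ are compatible with pullback in $\RTop$. It is essential here that we base change along the residue field of an \emph{actual point} of $S$: for a general morphism $\Spec(K) \to S$ the analogous square fails to be a pullback (this is exactly the failure of $(-)_{\et}$ to preserve arbitrary fiber products), and the argument above breaks down because $\Spec(K)_{\et}$ is then no longer a closed subtopos of $S_{\et}$.
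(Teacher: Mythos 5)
Your proof is correct and takes essentially the same approach as the paper: the paper proves this lemma by running the argument of \cite[Proposition 2.3]{zbMATH07795380} with the strictly henselian local ring replaced by $\Ocal_{S,s}$, written as the cofiltered limit of affine opens containing $s$ — which is exactly your combination of continuity of étale topoi along cofiltered limits with affine transition maps (plus commuting the pullback past the limit) and the open--closed recollement identifying the fiber topos once $s$ has become a closed point. The only cosmetic difference is your use of principal opens $D(g)$ after an initial affine reduction instead of all affine opens containing $s$, which changes nothing.
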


\begin{proof}
	The same argument as in \cite[Proposition 2.3]{zbMATH07795380}, replacing the strictly henselian local ring by the local ring $ \Ocal_{S,s} $ and using that $ \Spec(\Ocal_{S,s}) $ is isomorphic to the limit of the cofitered diagram of affine open subschemes of $ S $ containing $ s $.
\end{proof}


\subsection{Background on Jacobson schemes}\label{subsec:background_on_Jacobson_schemes}

We now recall the basics about Jacobson schemes.
Being Jacobson is a topological property of schemes.
The idea is that `the closed points see everything about the topology of a Jacobson space'.

\begin{notation}
	Given a topological space $ T $, we write $ T_{\cl} \subset T $ for the subset of closed points.
	For a scheme $ X $, we write $ \Xcl \colonequals |X|_{\cl} $.
\end{notation}

\begin{recollection}
	A topological space $ T $ is \defn{Jacobson} if every closed subset $ Z \subset T $ is the closure of $ Z \intersect T_{\cl} $.
	We say that a scheme $ X $ is \defn{Jacobson} if the underlying topological space $ |X| $ is Jacobson.
	See \cite[Remark 3.34]{MR4225278} for a number of equivalent characterizations of Jacobson spaces.
\end{recollection}

\begin{lemma}\label{lem:schemes_locally_topologically_of_finite_type_over_Jacobson_schemes_are_Jacobson}
	If $ f \colon \fromto{X}{S} $ is a morphism of schemes that is locally topologically of finite type and $ S $ is Jacobson, then $ X $ is also Jacobson.
\end{lemma}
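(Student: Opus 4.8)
The plan is to reduce the claim to the already-known statement that a scheme locally of finite type over a Jacobson scheme is Jacobson, exploiting the fact that the Jacobson condition is purely topological. First I would unwind \Cref{def:topologically_of_finite_type}: since $ f \colon \fromto{X}{S} $ is locally topologically of finite type, there is a universal homeomorphism of $ S $-schemes $ g \colon \fromto{X}{Y} $ with $ Y $ locally of finite type over $ S $. Because $ g $ is a universal homeomorphism, the induced map on underlying spaces $ |g| \colon \fromto{|X|}{|Y|} $ is a homeomorphism of topological spaces.

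Since whether a scheme is Jacobson depends only on its underlying topological space, and $ |X| $ and $ |Y| $ are homeomorphic, it suffices to prove that $ Y $ is Jacobson. This is exactly the classical fact that a scheme locally of finite type over a Jacobson base is Jacobson \stacks{01T6}, applied to the structure morphism $ \fromto{Y}{S} $ together with the hypothesis that $ S $ is Jacobson. Assembling the homeomorphism $ |X| \cong |Y| $ with the Jacobsonness of $ Y $ then gives the conclusion for $ X $.

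There is essentially no genuine obstacle here beyond combining these inputs: the topological invariance of the Jacobson property and the fact that a universal homeomorphism is a homeomorphism on underlying spaces are both immediate, and the finite-type case is standard. The only point requiring mild care is locating the correct reference for the locally-of-finite-type statement; should one prefer to avoid citing it, one could instead reduce to affine opens $ \fromto{\Spec(B)}{\Spec(A)} $ with $ B $ a finitely generated $ A $-algebra and $ A $ Jacobson, and invoke the ring-theoretic Nullstellensatz-type statement that finite type algebras over Jacobson rings are Jacobson. I \emph{expect} no serious difficulty, as the substance of the lemma is entirely carried by the reduction along $ g $.
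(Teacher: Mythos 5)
Your proof is correct and is essentially identical to the paper's: the paper also factors $ f $ through a universal homeomorphism onto a scheme locally of finite type over $ S $, invokes the standard fact that such a scheme is Jacobson (cited there as \stacks{02J5}), and concludes via the homeomorphism of underlying spaces. No gaps; only the Stacks tag you cite differs from the paper's.
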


\begin{proof}
	Factor $ f $ as a composite of a universal homeomorphsm $ \fromto{X}{X'} $ followed by a morphism $ \fromto{X'}{S} $ locally of finite type.
	Since $ S $ is Jacobson, by \stacks{02J5}, the scheme $ X' $ is Jacobson.
	Since $ |X| $ is homeomorphic to $ |X'| $, the scheme $ X $ is also Jacobson.
\end{proof}

\begin{example}\label{ex:schemes_locally_topologically_of_finite_type_over_fields_are_Jacobson}
	If $ X $ is a scheme locally topologically of finite type over a field, then $ X $ is Jacobson.
\end{example}

As with many topological properties of schemes, there are more algebro-geometric characterizations of Jacobson schemes.
To state them, we need the following definition.

\begin{definition}
	Let $ X $ be a scheme.
	We say that $ x \in X $ is a \defn{finite type point} if the natural morphism $ \fromto{\Spec(\upkappa(x))}{X} $ is of finite type.
	We write $ \Xftpts \subset |X| $ for the subset of finite type points.
\end{definition}

\begin{proposition}[{\stacks{01TB}}]\label{prop:characterization_of_Jacobson_schemes}
	Let $ X $ be a scheme.
	The following are equivalent:
	\begin{enumerate}[label=\stlabel{prop:characterization_of_Jacobson_schemes}, ref=\arabic*]
		\item The scheme $ X $ is Jacobson.

		\item The set $ \Xftpts $ of finite type points of $ X $ is the set $ \Xcl $ of closed points of $ X $.

		\item Every morphism $ \fromto{X'}{X} $ locally of finite type carries closed points of $ X' $ to closed points of $ X $.

		\item For every morphism $ f \colon \fromto{X'}{X} $ locally of finite type and closed point $ x' \in X' $, the point $ f(x') $ is closed and the field extension $ \upkappa(x') \supset \upkappa(f(x')) $ is finite.
	\end{enumerate}
\end{proposition}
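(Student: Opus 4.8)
The plan is to prove the four conditions equivalent by running the cycle $(2)\Rightarrow(1)\Rightarrow(4)\Rightarrow(3)\Rightarrow(2)$ together with the converse $(1)\Rightarrow(2)$. Since being a finite type point, being closed, and being Jacobson are all testable on an affine open, I would first assemble three local inputs, checked on an affine open $\Spec(A)\ni x$ with $x$ corresponding to a prime $\mathfrak p$. \textbf{(i)} Every closed point is a finite type point: a closed point corresponds to a maximal ideal, so its residue field is a quotient ring of $A$, hence a finitely generated $A$-algebra, so $\Spec(\upkappa(x))\to X$ is of finite type. \textbf{(ii)} The general form of the Nullstellensatz: if a field $K$ is finitely generated as an algebra over a domain $B$, then $B$ is a Goldman (i.e.\ $G$-) domain---there is a nonzero $f\in B$ with $B_f=\operatorname{Frac}(B)$ a field---and $K$ is a finite extension of $\operatorname{Frac}(B)$. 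Applied to $B=A/\mathfrak p$ and $K=\upkappa(x)$, this shows $x$ is a finite type point if and only if $A/\mathfrak p$ is a Goldman domain, which happens exactly when $\{x\}$ is locally closed in $X$. \textbf{(iii)} A density statement: every nonempty locally closed subset $W\subseteq X$ contains a finite type point; this holds because $W$ contains a nonempty quasicompact basic affine open piece, which (being quasicompact) has a closed point, and that point is a finite type point of $X$ by \textbf{(i)}.

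With these in hand, $(1)\Leftrightarrow(2)$ becomes purely topological. For $(1)\Rightarrow(2)$ I note $\Xcl\subseteq\Xftpts$ always by \textbf{(i)}; conversely, if $x$ is a finite type point then by \textbf{(ii)} $\{x\}$ is locally closed, so $\{x\}$ is open in the closed set $Z=\overline{\{x\}}$. Jacobsonness makes $Z\cap\Xcl$ dense in $Z$, forcing a closed point into the nonempty open $\{x\}$, i.e.\ $x\in\Xcl$. For $(2)\Rightarrow(1)$, take a closed $Z$ and a nonempty relatively open $V\subseteq Z$ (so $V$ is locally closed in $X$); input \textbf{(iii)} produces a finite type point of $V$, which by $(2)$ is closed, so $Z\cap\Xcl$ meets every nonempty open of $Z$ and is therefore dense, giving $Z=\overline{Z\cap\Xcl}$.

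The remaining implications link the morphism-theoretic conditions to $(2)$. For $(1)\Rightarrow(4)$, let $f\colon X'\to X$ be locally of finite type and $x'\in X'$ closed. By \textbf{(i)}, $x'$ is a finite type point of $X'$, so $\Spec(\upkappa(x'))\to X'$ is of finite type; composing with $f$ (the source is affine, hence the composite is of finite type), I get that $\upkappa(x')$ is a finitely generated algebra over $A/\mathfrak p$, where $\mathfrak p$ is the prime of $x=f(x')$ in an affine open $\Spec(A)\ni x$. Now \textbf{(ii)} shows $x$ is a finite type point of $X$ and that $\upkappa(x')/\upkappa(x)$ is finite, and $(2)$ upgrades ``finite type point'' to ``closed point'', yielding $(4)$. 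The implication $(4)\Rightarrow(3)$ is immediate, since $(4)$ already asserts $f(x')$ is closed. Finally, for $(3)\Rightarrow(2)$ I apply $(3)$ to the finite type morphism $\Spec(\upkappa(x))\to X$ attached to a finite type point $x$: the source is a one-point scheme whose unique closed point maps to $x$, so $(3)$ forces $x$ to be closed; combined with $\Xcl\subseteq\Xftpts$ this gives $\Xftpts=\Xcl$.

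The main obstacle is the commutative-algebra heart, namely input \textbf{(ii)} and the bookkeeping it entails: one must move cleanly between the three descriptions of a finite type point (finiteness of $\Spec(\upkappa(x))\to X$, the Goldman condition on $A/\mathfrak p$, and local closedness of $\{x\}$), and in $(1)\Rightarrow(4)$ track the residue field extension through the factorization $\Spec(\upkappa(x'))\to\Spec(\upkappa(x))\to X$. A secondary subtlety is input \textbf{(iii)} for non-quasicompact locally closed subsets, which I would handle exactly as indicated, by shrinking to a basic affine open where the existence of a closed point is guaranteed.
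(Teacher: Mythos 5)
Your proposal is correct, and it takes essentially the same route as the paper's source: the paper offers no proof of its own but cites \stacks{01TB}, whose argument your write-up reproduces — the same three ingredients (points closed in an affine chart are finite type points; the generalized Nullstellensatz identifying finite type points with Goldman quotients, i.e.\ with points whose singleton is locally closed; density of finite type points in nonempty locally closed subsets) assembled into the same web of implications. I see no gaps; the only cosmetic blemish is in your input (iii), where the ``quasicompact basic affine open piece'' should read that $W$ meets some affine open $\Spec(A)$ in a nonempty \emph{closed} subset of it, which is quasicompact and hence contains a point closed in $\Spec(A)$, to which your argument (i) then applies.
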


To conclude this subsection, we prove that morphisms from reduced Jacobson schemes are determined by their values on closed points.
For this, we need the following lemma.

\begin{lemma}\label{lem:locally_closed_subspaces_of_Jacobson_spaces_containing_all_closed_points}
	Let $ T $ be a Jacobson topological space and let $ S \subset T $ be a locally closed subspace. 
	If $ S $ contains all of the closed points of $ T $, then $ S = T $.
\end{lemma}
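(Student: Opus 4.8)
The plan is to apply the Jacobson hypothesis twice: once to deduce that $ S $ is in fact closed in $ T $, and once to exploit the density of the closed points.

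First I would record the special case of the Jacobson condition applied to the closed subset $ Z = T $ itself: it gives $ T = \overline{T \intersect T_{\cl}} = \overline{T_{\cl}} $, so that the closed points are dense in $ T $.

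Next, since $ S $ is locally closed, it is open in its closure $ \overline{S} $; hence the set $ C \colonequals \overline{S} \sminus S $ is closed in $ \overline{S} $, and therefore closed in $ T $ because $ \overline{S} $ is itself closed in $ T $. Applying the Jacobson property to the closed set $ C $ yields $ C = \overline{C \intersect T_{\cl}} $. By hypothesis $ T_{\cl} \subset S $, while by construction $ C \intersect S = \emptyset $, so $ C \intersect T_{\cl} = \emptyset $ and thus $ C = \emptyset $. This shows $ S = \overline{S} $, i.e.\ that $ S $ is closed in $ T $.

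Finally, $ S $ is now a closed subset of $ T $ containing $ T_{\cl} $, so $ \overline{T_{\cl}} \subset S $; combined with the density $ \overline{T_{\cl}} = T $ established in the first step, this gives $ T \subset S $ and hence $ S = T $. I do not expect any genuine obstacle: the one point to get right is that ``locally closed'' means precisely that $ S $ is open in $ \overline{S} $, which is what makes $ C = \overline{S} \sminus S $ closed in $ T $; everything else is a direct application of the definition of a Jacobson space.
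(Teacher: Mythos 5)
Your proof is correct, but it takes a different route from the paper. The paper disposes of the lemma in one line by citing the fact (Stacks Project, Tag 005Z) that for a Jacobson space $ T $ the assignment $ W \mapsto W \cap T_{\cl} $ is a \emph{bijection} from locally closed subsets of $ T $ to locally closed subsets of $ T_{\cl} $: since $ S \cap T_{\cl} = T_{\cl} = T \cap T_{\cl} $, injectivity forces $ S = T $. You instead give a direct, self-contained argument, and every step checks out: density of $ T_{\cl} $ in $ T $ is the Jacobson condition applied to $ Z = T $; the identification of ``locally closed'' with ``open in its closure'' makes $ C = \overline{S} \setminus S $ closed in $ T $; and since $ C $ misses $ T_{\cl} \subset S $, the Jacobson condition applied to $ C $ gives $ C = \overline{C \cap T_{\cl}} = \emptyset $, so $ S $ is closed and then $ T = \overline{T_{\cl}} \subset S $. (In fact, once you know $ T_{\cl} \subset S $, density already gives $ \overline{S} = T $, so your middle step simultaneously shows $ S $ is open and dense with closed complement avoiding all closed points.) What each approach buys: yours is elementary and needs no external reference, and it essentially reproduces the injectivity argument hiding inside the cited bijection; the paper's citation is shorter and records the stronger bijectivity statement, though for this lemma only the injectivity on the two specific locally closed sets $ S $ and $ T $ is used.
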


\begin{proof}	
	The claim is immediate from the fact that the map
	\begin{align*}
		\set{\textup{locally closed subsets of } T}
		&\to
		\set{\textup{locally closed subsets of } T_{\cl}}
		\\
		W &\mapsto W \intersect T_{\cl}
	\end{align*}
	is bijective \stacks{005Z}.
\end{proof}

\begin{corollary}\label{cor:locally_closed_subschemes_of_Jacobson_schemes_containing_all_closed_points}
	Let $ X $ be a reduced Jacobson scheme and let $ Z \subset X $ be a locally closed subscheme. 
	If $ Z $ contains all of the closed points of $ X $, then $ Z = X $ as schemes.
\end{corollary}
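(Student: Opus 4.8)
The plan is to reduce the scheme-theoretic statement to the topological one already established in \Cref{lem:locally_closed_subspaces_of_Jacobson_spaces_containing_all_closed_points}, and then upgrade from an equality of underlying sets to an equality of subschemes using that $X$ is reduced. First I would observe that a locally closed subscheme $Z \subset X$ has underlying space $|Z|$ a locally closed subset of $|X|$, and that the closed points of $X$ are precisely the closed points of the topological space $|X|$. Since $X$ is Jacobson, $|X|$ is a Jacobson topological space by definition, and the hypothesis that $Z$ contains all closed points of $X$ says exactly that $|Z|$ contains all closed points of $|X|$. Applying \Cref{lem:locally_closed_subspaces_of_Jacobson_spaces_containing_all_closed_points} to $T = |X|$ and $S = |Z|$ then gives $|Z| = |X|$.

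At this point I know the inclusion $Z \hookrightarrow X$ is a bijection on underlying spaces, indeed a homeomorphism since $Z$ is locally closed and now carries all of $|X|$. The remaining step is to promote this to an isomorphism of schemes, and this is where the reducedness of $X$ enters. A locally closed immersion that is a homeomorphism on underlying spaces is in particular a closed immersion onto its image (once $|Z|=|X|$, the locally closed immersion is a closed immersion $Z \to X$). A closed immersion is surjective on structure sheaves with kernel an ideal sheaf $\Ical$, and surjectivity on underlying spaces forces $\Ical$ to be contained in the nilradical of $\Ocal_X$. Since $X$ is reduced, the nilradical vanishes, so $\Ical = 0$ and the closed immersion is an isomorphism. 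Thus $Z = X$ as schemes.

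The one point deserving a little care, which I expect to be the main (though minor) obstacle, is justifying that the locally closed immersion becomes a closed immersion once $|Z| = |X|$, and that a scheme-theoretically dense closed subscheme of a reduced scheme is everything. The cleanest formulation is that for a reduced scheme $X$, a closed subscheme whose underlying closed subset is all of $|X|$ must equal $X$: any closed subscheme is cut out by a quasicoherent ideal sheaf, and the support of the structure sheaf of the closed subscheme being all of $X$ forces the ideal to be nilpotent, hence zero by reducedness. I would phrase the argument to avoid any subtlety about the scheme structure on $Z$ by noting that a locally closed immersion which is surjective factors as a closed immersion (the locally closed immersion into an open subscheme, which is then all of $X$), and then invoke the reducedness as above. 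No deep input is required beyond \Cref{lem:locally_closed_subspaces_of_Jacobson_spaces_containing_all_closed_points} and the standard fact that reduced schemes have no nontrivial infinitesimal thickenings inside themselves.
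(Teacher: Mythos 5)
Your proof is correct and follows exactly the paper's route: apply \Cref{lem:locally_closed_subspaces_of_Jacobson_spaces_containing_all_closed_points} to get $|Z| = |X|$, then use reducedness of $X$ to conclude $Z = X$ as schemes. The paper leaves the second step as a one-line deduction; you have merely spelled out the standard details (the locally closed immersion becomes a closed immersion once $|Z|=|X|$, and the ideal sheaf lies in the vanishing nilradical), which is a faithful expansion rather than a different argument.
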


\begin{proof}
	By \Cref{lem:locally_closed_subspaces_of_Jacobson_spaces_containing_all_closed_points}, we see that $ |Z| = |X| $ as topological spaces.
	Since $ X $ is reduced, we deduce that $ Z = X $ as schemes.
\end{proof}

\begin{lemma}\label{lemma:map_from_points_is_epi_for_Jacobson_schemes}
	Let $ X $ be a reduced Jacobson scheme, and for each closed point $ x \in \Xcl $ choose a field extension $ L_x \supset \upkappa(x) $.
	Then the induced morphism
	\begin{equation*}
		\fromto{\coprod_{x \in \Xcl} \Spec(L_x)}{X}
	\end{equation*}
	is an epimorphism in the category of schemes.
\end{lemma}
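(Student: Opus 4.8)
The plan is to prove the statement directly from the defining universal property of an epimorphism, reducing it to \Cref{cor:locally_closed_subschemes_of_Jacobson_schemes_containing_all_closed_points}. Concretely, writing $ e \colon \coprod_{x \in \Xcl} \Spec(L_x) \to X $ for the morphism in question, I would fix an arbitrary scheme $ Z $ together with two morphisms $ g, h \colon \fromto{X}{Z} $ satisfying $ g \of e = h \of e $, and aim to conclude that $ g = h $. Since a morphism out of a coproduct is determined by its restrictions to the summands, the hypothesis $ g \of e = h \of e $ says precisely that $ g $ and $ h $ agree after precomposition with each structure map $ \fromto{\Spec(L_x)}{X} $, for every closed point $ x \in \Xcl $.

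Next I would form the equalizer $ E $ of the pair $ (g,h) $, realized as the fiber product
\begin{equation*}
	E \isomorphic X \cross_{(g,h),\, Z \cross Z,\, \Delta_Z} Z \comma
\end{equation*}
where $ \Delta_Z \colon \fromto{Z}{Z \cross Z} $ is the diagonal; by its universal property, a morphism $ \fromto{T}{X} $ factors (necessarily uniquely) through $ E $ if and only if its composites with $ g $ and $ h $ agree. The essential geometric input is that the diagonal of any scheme is an immersion \stacks{01KJ}; since immersions are stable under base change, the projection $ \fromto{E}{X} $ is an immersion, i.e.\ $ E $ is a locally closed subscheme of $ X $. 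I emphasize that $ Z $ is \emph{not} assumed separated, so $ E $ need not be closed --- this is exactly why \Cref{cor:locally_closed_subschemes_of_Jacobson_schemes_containing_all_closed_points} is phrased for locally closed subschemes.

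Finally, by the conclusion of the previous paragraph, each map $ \fromto{\Spec(L_x)}{X} $ factors through $ E $, so every closed point $ x \in \Xcl $ lies in $ E $; thus the locally closed subscheme $ E \subset X $ contains all closed points of the reduced Jacobson scheme $ X $. \Cref{cor:locally_closed_subschemes_of_Jacobson_schemes_containing_all_closed_points} then forces $ E = X $, which means the identity morphism of $ X $ equalizes $ g $ and $ h $, i.e.\ $ g = h $, as desired. The only genuinely nontrivial ingredient is the passage from ``contains all closed points'' to ``is everything'', and that is precisely the content of the cited corollary (resting on the Jacobson property through \Cref{lem:locally_closed_subspaces_of_Jacobson_spaces_containing_all_closed_points}, and on reducedness to upgrade the topological equality $ |E| = |X| $ to an equality of schemes). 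The main point to be careful about is therefore not a hard step but a bookkeeping one: claiming the equalizer only as a locally closed subscheme, since $ Z $ may fail to be separated.
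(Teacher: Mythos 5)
Your proof is correct and follows essentially the same route as the paper: form the equalizer as a locally closed subscheme of $X$ via base change of the diagonal, observe it contains all closed points, and invoke \Cref{cor:locally_closed_subschemes_of_Jacobson_schemes_containing_all_closed_points}. The only (harmless) difference is that the paper first reduces to the case $L_x = \upkappa(x)$ using that $\Spec(L) \to \Spec(k)$ is an epimorphism, whereas your factorization of each $\Spec(L_x) \to X$ through the equalizer makes that reduction unnecessary.
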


\begin{proof}
	Note that for each field extension $ L \supset k $, the induced morphism of schemes $ \fromto{\Spec(L)}{\Spec(k)} $ is an epimorphism.
	So it suffices to prove the claim when $ L_x = \upkappa(x) $ for each $ x \in \Xcl $.
	Let $f,g\colon X \to Y$ be morphisms that become equal after precomposition with the natural morphism
	\begin{equation*}
		c_{X} \colon \fromto{\coprod_{x \in \Xcl} \Spec(\upkappa(x))}{X} \period
	\end{equation*}
	Consider the equalizer subscheme $\eq(f,g) \subset X$, which is locally closed.
	By assumption, $ \eq(f,g) $ contains all of the closed points of $ X $.
	Since $ X $ is Jacobson and reduced, \Cref{cor:locally_closed_subschemes_of_Jacobson_schemes_containing_all_closed_points} shows that $ \eq(f,g) = X $ as schemes.
	Hence $ f = g $.
\end{proof}


\subsection{Pinned morphisms and the étale reconstruction property}\label{subsec:pinned_morphisms_and_the_etale_reconstruction_property}

Let $ k $ be a field, and let $ X $ and $ Y $ be schemes locally topologically of finite type over $ k $.
In light of \Cref{prop:characterization_of_Jacobson_schemes}, every morphism of $ k $-schemes $ \fromto{X}{Y} $ carries closed points to closed points.
Not every geometric morphism $ \fromto{\Xet}{\Yet} $ over $ \Spec(k)_{\et} $ has this property (see \Cref{ex:a_geometric_morphism_that_isn't_pinned}).
Hence, in our reconstruction results, we impose this restriction on the morphisms of topoi.

\begin{definition}[{(pinned morphisms)}]\label{def:pinned_geometric_morphism}
	\hfill
	\begin{enumerate}[label=\stlabel{def:pinned_geometric_morphism}, ref=\arabic*]
		\item\label{def:pinned_geometric_morphism.1} A map of topological spaces $ f \colon \fromto{S}{T} $ is \defn{pinned} if $ f $ carries closed points of $ S $ to closed points of $ T $.

		\item\label{def:pinned_geometric_morphism.2} A morphism of schemes $ f \colon \fromto{X}{Y} $ is \defn{pinned} if the induced map on underlying topological spaces $ |f| \colon \fromto{|X|}{|Y|} $ is pinned.

		\item\label{def:pinned_geometric_morphism.3} A geometric morphism of topoi $ \flowerstar \colon \fromto{\Xcal}{\Ycal} $ is \defn{pinned} if the induced map of underlying topological spaces $ |\flowerstar| \colon \fromto{|\Xcal|}{|\Ycal|} $ is pinned.
	\end{enumerate}
\end{definition}

\begin{notation}\label{ntn:RToppin}
	Let $ \Bcal $ be a topos.
	We write \smash{$ \RToppin_{\Bcal} \subset \RTop_{\Bcal} $} for the subcategory with all objects, $ 1 $-morphisms the \textit{pinned} geometric morphisms, and $ 2 $-mor\-phisms all natural isomorphisms.
	Given topoi $ \Xcal,\Ycal \in \RTop_{\Bcal} $, write
	\begin{equation*}
		\Hompin_{\Bcal}(\Xcal,\Ycal) \subset \Hom_{\Bcal}(\Xcal,\Ycal)
	\end{equation*}
	for the full subgroupoid spanned by the pinned geometric morphisms.
	When $ \Bcal $ is the étale topos of a scheme, we make the same notational simplifications as in \Cref{ntn:HomB}.
\end{notation}

\begin{remark}[(Voevodsky's terminology)]
	Voevodsky uses the term `admissible' rather than `pinned', see \cite[p. 513]{MR1098621}.
	We find the term `pinned' more evocative: the requirement that the geometric morphism carries closed points to closed points `pins down' the morphism.
\end{remark}

\begin{proposition}[(rigidity of pinned morphisms)]\label{prop:rigidity}
	Let $ S $ be a scheme and let $ X $ and $ Y $ be schemes locally topologically of finite type over $ S $. 
	\begin{enumerate}[label=\stlabel{prop:rigidity}, ref=\arabic*]
		\item\label{prop:rigidity.1} Let $ (\flowerstar,\alpha) \colon \Xet \to \Yet $ be a pinned geometric morphism over $ S_{\et} $. 
		Then $(\flowerstar,\alpha)$ has no nontrivial automorphisms in \smash{$ \Hom_{S}(\Xet,\Yet) $}.
		
		\item\label{prop:rigidity.2} The groupoid \smash{$ \Hompin_{S}(\Xet,\Yet) $} is equivalent to a set.
	\end{enumerate}
\end{proposition}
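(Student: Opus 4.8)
First, \enumref{prop:rigidity}{2} follows formally from \enumref{prop:rigidity}{1}: the groupoid $\Hompin_S(\Xet,\Yet)$ is a full subgroupoid of $\Hom_S(\Xet,\Yet)$ with the same $2$-morphisms (\Cref{ntn:RToppin}), so once every pinned $(\flowerstar,\alpha)$ has trivial automorphism group, $\Hompin_S(\Xet,\Yet)$ is a groupoid all of whose automorphism groups are trivial, hence equivalent to its set of isomorphism classes. The real content is therefore \enumref{prop:rigidity}{1}, which I would approach through the category of points. Let $\gamma$ be an automorphism of $(\flowerstar,\alpha)$; by \Cref{def:B-topoi} the defining triangle forces $\qlowerstar\gamma = \mathrm{id}$. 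Passing to left adjoints, $\gamma$ is the same datum as a natural automorphism $\gamma^{*}\colon\fupperstar\Rightarrow\fupperstar$, and it suffices to prove $\gamma^{*} = \mathrm{id}$. Applying $\Pt$ (\Cref{ntn:category_of_points}) yields, for every geometric point $\xbar$ of $X$ with image point $y\in Y$, an automorphism $g_{\xbar} = \Pt(\gamma)_{\xbar}$ of $\Pt(\flowerstar)(\xbar)$; by \Cref{ex:Pt_of_Xet} this is an element $g_{\xbar}\in\Gal(\upkappa(y)^{\sep}/\upkappa(y))$, and its component at $F\in\Yet$ is the action of $g_{\xbar}$ on the stalk $(\fupperstar F)_{\xbar}$. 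In particular $\gamma^{*} = \mathrm{id}$ if and only if $g_{\xbar} = \mathrm{id}$ for all $\xbar$.

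Next I would reduce the vanishing of $\gamma^{*}$ to the single statement that $g_{\xbar} = \mathrm{id}$ at the \emph{closed} points of $X$. Using \Cref{rec:taking_topoi_preserves_fiber_products} to base change along the points of $S$, I may assume $S = \Spec(k)$ is the spectrum of a field, so that $X$ and $Y$ are Jacobson (\Cref{ex:schemes_locally_topologically_of_finite_type_over_fields_are_Jacobson}). Fix $F\in\Yet$ and set $a \colonequals \gamma^{*}_{F}\colon\fupperstar F\to\fupperstar F$. For a section $h\in(\fupperstar F)(U)$ over an étale morphism $U\to X$, the locus where the germs of $a(h)$ and $h$ agree is open, so its complement $W\subseteq U$ is closed. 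Now $U$ is itself Jacobson (\Cref{lem:schemes_locally_topologically_of_finite_type_over_Jacobson_schemes_are_Jacobson}) and its closed points map to closed points of $X$ (\Cref{prop:characterization_of_Jacobson_schemes}); at such a closed point $g_{\xbar} = \mathrm{id}$, so the germs of $a(h)$ and $h$ coincide there, and hence $W$ contains no closed point of $U$. A closed subset of a Jacobson space containing no closed point is empty, so $a(h) = h$. As $U$ and $h$ are arbitrary, $a$ is the identity on every stalk, hence $a = \mathrm{id}$ by conservativity of the geometric points of $\Xet$; therefore $\gamma^{*} = \mathrm{id}$.

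It remains to prove $g_{\xbar} = \mathrm{id}$ for $x\in X$ closed, and this is where pinnedness enters. Let $y = \lvert\flowerstar\rvert(x)$ and let $s\in S$ be its image. Applying $\Pt$ to the structure morphism $\qlowerstar$ and using $\qlowerstar\gamma = \mathrm{id}$, the element $g_{\xbar}$ maps to the identity under the restriction map $\Gal(\upkappa(y)^{\sep}/\upkappa(y))\to\Gal(\upkappa(s)^{\sep}/\upkappa(s))$. Because $\flowerstar$ is pinned (\Cref{def:pinned_geometric_morphism}), $y$ is a closed point of $Y$; since $Y\to S$ is locally topologically of finite type, $y$ is then closed in its fibre over $s$, so $\upkappa(y)/\upkappa(s)$ is a finite extension. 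For a finite extension one has $\upkappa(y)^{\sep} = \upkappa(y)\cdot\upkappa(s)^{\sep}$, and any automorphism of $\upkappa(y)^{\sep}$ fixing both $\upkappa(y)$ and $\upkappa(s)^{\sep}$ is the identity; thus the restriction map is injective and $g_{\xbar} = \mathrm{id}$, completing the proof.

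The crux of the argument is the passage from closed points to all of $\gamma$ in the second step, together with the role of pinnedness in the third. Pinnedness is exactly what guarantees that the closed points driving the Jacobson ``agreement spreads out'' argument have image points whose residue fields are finite over the base: without it, $y$ could be a non-closed point with $\upkappa(y)$ of positive transcendence degree over $\upkappa(s)$, the restriction map on Galois groups would acquire a large kernel coming from geometric monodromy, and genuinely nontrivial automorphisms $g_{\xbar}$ would survive. The principal obstacles I anticipate are therefore (i) checking that the hypotheses---pinnedness and local finite-typeness, together with the triangle over the base---descend to the fibres $X_{s,\et}\to Y_{s,\et}$ under the base change of \Cref{rec:taking_topoi_preserves_fiber_products}; and (ii) the field-theoretic injectivity at closed points in positive characteristic, where $\upkappa(y)/\upkappa(s)$ may be inseparable and the computation must be carried out through separable closures rather than through the residue fields themselves.
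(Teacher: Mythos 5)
Your proposal is correct and follows the same architecture as the paper's proof: deduce \enumref{prop:rigidity}{2} formally from \enumref{prop:rigidity}{1}; reduce to a field base by pulling back along the points of $S$ via \Cref{rec:taking_topoi_preserves_fiber_products}; over a field, reduce to showing the automorphism is the identity on stalks at closed points; and kill it there using pinnedness together with the Galois description of automorphism groups of points from \Cref{ex:Pt_of_Xet}. Your Jacobson ``spreading'' argument in the second step is a correct expansion of what the paper dispatches in one sentence (``a map in $\Xet$ is the identity if and only if it is the identity on stalks at closed points''). The one genuine divergence is the endgame: the paper first reduces, by descent, to $k$ separably closed, so that $\upkappa(y_0)$ is a purely inseparable extension of $k$ and $\Gup_{\upkappa(y_0)}$ is trivial; you instead stay over an arbitrary $k$ and exploit the $2$-morphism triangle of \Cref{def:B-topoi} (forcing $\qlowerstar\gamma = \mathrm{id}$) together with injectivity of the restriction map $\Gal(\upkappa(y)^{\sep}/\upkappa(y)) \to \Gal(k^{\sep}/k)$. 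Both work; your route avoids the paper's unelaborated descent step at the cost of a field-theoretic lemma. One small correction there: over an imperfect field a closed point of a scheme topologically of finite type can have an \emph{infinite} residue extension --- it is finite over $k$ only up to a purely inseparable extension, hence merely algebraic (compare \Cref{lemma:topologically_finite_type_has_finite_residue_fields}, which assumes $k$ perfect) --- but your compositum identity $\upkappa(y)^{\sep} = \upkappa(y)\cdot k^{\sep}$ and the resulting injectivity hold for algebraic extensions, so the argument survives unchanged. Finally, your anticipated obstacle (i), that pinnedness descends to the fibre morphisms $X_{s,\et} \to Y_{s,\et}$, is asserted without proof in the paper as well (the pullback $s^{*}\phi$ is claimed to live in $\Hompin_{\upkappa(s)}(X_{s,\et},Y_{s,\et})$), so flagging it rather than proving it leaves you at parity with the paper on that point.
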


\begin{proof}
	Item \enumref{prop:rigidity}{2} is simply a reformulation of \enumref{prop:rigidity}{1}, so we just need to prove \enumref{prop:rigidity}{1}.
	
	We first prove the claim under the additional assumption that $ S $ is the spectrum of a field $ k $.
	Note that, by descent, we can assume that $ k $ is separably closed. 
	Let $\phi \colon \fupperstar \equivalence \fupperstar$ be an automorphism. 
	The claim follows if we can show that for every \emph{closed} geometric point $x \to \Xet $ the composite $\xupperstar \phi \colon \xupperstar \fupperstar \equivalence \xupperstar \fupperstar$ is the identity. 
	This is clear because a map in $ \Xet $ is the identity if and only if it is the identity on stalks at closed points.
	Since $ \flowerstar $ is pinned, the composite $ \flowerstar \circ \xlowerstar $ also defines a closed geometric point of $ \Yet $.
	By \Cref{ex:Pt_of_Xet}, we may assume that $ \flowerstar \circ \xlowerstar $ is induced by a morphism of $ k $-schemes $ y \colon \Spec(\kbar) \to Y $ and $ \xupperstar \phi  $ defines an automorphism of $ y^* $.
	Let $ y_0 \in Y$ denote the image of $ y $.
	Again by \Cref{ex:Pt_of_Xet}, the group of automorphisms of $ y^* $ is isomorphic to the absolute Galois group $ \Gup_{\upkappa(y_0)} $ of $ \upkappa(y_0) $.
	Since $ y_0 $ is a closed point and $ k $ is separably closed it follows that $ \Gup_{\upkappa(y_0)} =\{ \id{} \} $ and thus $ \xupperstar \phi = \id{} $.
	
	For the case of a general base scheme $ S $, fix an automorphism $ \phi \colon \fupperstar \to \fupperstar  $ in \smash{$ \Hompin_S(\Xet,\Yet) $}.
	Using \cref{rec:taking_topoi_preserves_fiber_products}, we see that pulling back $ \phi $ along any point $s \colon \Spec(\upkappa(s)) \to S$ induces an automorphism $ s^* \phi $ of $ s^* \fupperstar $ in the groupoid $ \Hompin_{\upkappa(s)}(X_{s,\et},Y_{s,\et}) $, which we have already shown to be equivalent to a set.
	In particular $ s^*\phi = \id{} $.
	This in particular implies that $ \phi $ is the identity stalk-wise, so $ \phi = \id{} $.
\end{proof}

\begin{convention}\label{convention:regarding_Hompink_as_a_set}
	Let $ k $ be a field and let $ X $ and $ Y $ be schemes locally topologically of finite type over $ k $.
	Then using \Cref{prop:rigidity}, we tacitly regard the groupoid \smash{$ \Hompink(\Xet,\Yet) $} as a set, i.e., we identify \smash{$ \Hompink(\Xet,\Yet) $} with the set of \textit{isomorphism classes} of geometric morphisms $ \fromto{\Xet}{\Yet} $ over $ \Spec(k)_{\et} $.
\end{convention}

We are almost ready to formulate what it means for a field $ k $ to have the property that absolutely weakly normal schemes topologically of finite type over $ k $ can be reconstructed from their étale topoi.
First we show that every morphism between schemes \textit{topologically} of finite type is a pinned.

\begin{lemma}\label{lem:every_morphism_between_schemes_locally_topologically_of_finite_type_is_pinned}
	Let $ S $ be a Jacobson scheme and let $ X $ and $ Y $ be schemes locally topologically of finite type over $ S $.
	Then every morphism of $ S $-schemes $ f \colon \fromto{X}{Y} $ is pinned.
\end{lemma}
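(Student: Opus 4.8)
The plan is to reduce the assertion to the finite type models provided by \Cref{def:topologically_of_finite_type} and then to control residue fields. The essential difficulty is that $ f $ is only assumed to be a morphism of $ S $-schemes between schemes topologically of finite type over $ S $, so $ f $ itself need not be locally of finite type, and the Jacobson criteria of \Cref{prop:characterization_of_Jacobson_schemes} cannot be applied to $ f $ directly. To set up, I would choose universal homeomorphisms of $ S $-schemes $ a \colon \fromto{X}{X'} $ and $ b \colon \fromto{Y}{Y'} $ with $ X' $ and $ Y' $ locally of finite type over $ S $. As universal homeomorphisms are homeomorphisms on underlying spaces, $ a $ and $ b $ preserve and reflect closed points; hence it suffices to show that for every closed point $ x \in X $ the point $ b(f(x)) $ is closed in $ Y' $. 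Fix such an $ x $, put $ x' \colonequals a(x) $, $ y \colonequals f(x) $, and $ y' \colonequals b(y) $, and let $ s \in S $ be the common image of $ x $ and $ y $ under the structure morphisms, which agree because $ f $ is a morphism of $ S $-schemes.

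Next I would identify $ s $ and bound the relevant residue fields. Since $ X' \to S $ is locally of finite type and $ S $ is Jacobson, \Cref{prop:characterization_of_Jacobson_schemes} shows that the closed point $ x' $ maps to a closed point $ s $ of $ S $ and that $ \upkappa(x')/\upkappa(s) $ is finite. Because $ a $ is a universal homeomorphism, the extension $ \upkappa(x)/\upkappa(x') $ is purely inseparable, so $ \upkappa(x)/\upkappa(s) $ is algebraic. The morphism $ f $ induces a $ \upkappa(s) $-embedding $ \incto{\upkappa(y)}{\upkappa(x)} $, and since $ b $ is a universal homeomorphism there is a further $ \upkappa(s) $-embedding $ \incto{\upkappa(y')}{\upkappa(y)} $; consequently $ \upkappa(y')/\upkappa(s) $ is algebraic.

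The crucial point — and the reason for passing to the finite type model $ Y' $ — is that while $ \upkappa(y) $ itself may be an infinite (for instance perfect) algebraic extension of $ \upkappa(s) $, the residue field $ \upkappa(y') $ is a finitely generated field extension of $ \upkappa(s) $, because $ Y' $ is locally of finite type over $ S $ and $ y' $ lies over $ s $. A finitely generated algebraic field extension is finite, so $ \upkappa(y')/\upkappa(s) $ is finite. As $ S $ is Jacobson and $ s $ is closed, $ s $ is a finite type point of $ S $ by \Cref{prop:characterization_of_Jacobson_schemes}, so $ \Spec(\upkappa(s)) \to S $ is of finite type; composing with the finite morphism $ \Spec(\upkappa(y')) \to \Spec(\upkappa(s)) $ shows $ \Spec(\upkappa(y')) \to S $ is of finite type. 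By the cancellation property for morphisms locally of finite type, applied to the factorization $ \Spec(\upkappa(y')) \to Y' \to S $, the morphism $ \Spec(\upkappa(y')) \to Y' $ is locally of finite type, hence of finite type since its source is quasicompact; that is, $ y' $ is a finite type point of $ Y' $. Finally $ Y' $ is Jacobson by \Cref{lem:schemes_locally_topologically_of_finite_type_over_Jacobson_schemes_are_Jacobson}, and on a Jacobson scheme the finite type points are exactly the closed points (\Cref{prop:characterization_of_Jacobson_schemes}), so $ y' $ is closed in $ Y' $ and therefore $ f(x) $ is closed in $ Y $. The one genuine obstacle is the failure of $ f $ to be of finite type; it is circumvented precisely by replacing $ Y $ with $ Y' $, where the residue field to be controlled is finite rather than merely algebraic over $ \upkappa(s) $.
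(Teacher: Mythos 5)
Your proof is correct, and it takes a genuinely different route from the paper's. The paper chooses the same finite type models $ s \colon X \to X' $ and $ t \colon Y \to Y' $, but disposes of the problem that $ f $ itself need not be of finite type by a graph factorization: after an initial reduction making $ X \to S $ separated, it writes $ tf $ as the composite $ X \xrightarrow{\Gamma_f} X \times_S Y \xrightarrow{s \times t} X' \times_S Y' \xrightarrow{\pr_2} Y' $ and observes that each factor is pinned --- $ \Gamma_f $ because it is a closed immersion, $ s \times t $ because it is a universal homeomorphism, and $ \pr_2 $ because it is a locally finite type morphism into a Jacobson scheme, by \Cref{lem:schemes_locally_topologically_of_finite_type_over_Jacobson_schemes_are_Jacobson,prop:characterization_of_Jacobson_schemes}; since $ t $ is a homeomorphism, pinnedness of $ tf $ gives pinnedness of $ f $. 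You instead argue pointwise through residue fields: the chain $ \upkappa(s) \subset \upkappa(y') \subset \upkappa(y) \subset \upkappa(x) $ is algebraic because universal homeomorphisms are universally injective (so have purely inseparable residue extensions) and $ \upkappa(x')/\upkappa(s) $ is finite by the Jacobson property of $ S $, while $ \upkappa(y')/\upkappa(s) $ is finitely generated because $ Y' $ is locally of finite type over $ S $; finitely generated plus algebraic gives finite, whence $ y' $ is a finite type point of the Jacobson scheme $ Y' $ and therefore closed by \Cref{prop:characterization_of_Jacobson_schemes}. Both arguments ultimately rest on that characterization, but the trade-offs differ: the paper's factorization is shorter and purely formal, with no field theory and all the work pushed into the pinnedness of three standard classes of morphisms, whereas yours needs no localization or separatedness reduction to make a graph a closed immersion, works directly at a single closed point, and isolates the precise field-theoretic reason the image point is closed, namely that its residue field is finite, not merely algebraic, over $ \upkappa(s) $. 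Every auxiliary fact you invoke --- purely inseparable residue extensions of universal homeomorphisms, finite generation of residue fields for locally finite type morphisms, and cancellation for locally of finite type --- is standard, so I see no gap.
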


\begin{proof}
	We work locally on the target to assume that $ X $ and $ S $ are affine.
	Let $ s \colon X \to X' $ and $ t \colon Y \to Y' $ be universal homeomorphisms to finite type $ S $-schemes.
	Then $ tf $ factors as
	\begin{equation*}
		\begin{tikzcd}
			X \arrow[r, "\Gamma_f"] & X \times_{S} Y \arrow[r, "s \times t"] & X' \times_{S} Y' \arrow[r, "\pr_2"] & Y' \period
		\end{tikzcd}
	\end{equation*}
	Since $ \fromto{X}{S} $ is separated, the graph $ \Gamma_f $ is a closed immersion; in particular, $ \Gamma_f $ is pinned.
	Since $ s $ and $ t $ are universal homeomorphisms, $ s \times t $ is a universal homeomorphism; in particular, $ s \cross t $ is pinned.
	Since $ \pr_2 $ is a morphism of schemes locally of finite type over $ S $, \Cref{lem:schemes_locally_topologically_of_finite_type_over_Jacobson_schemes_are_Jacobson,prop:characterization_of_Jacobson_schemes} show that $ \pr_2 $ is pinned.
	It follows that $ tf $ is pinned.
	Since $ t $ is a universal homeomorphism, this also implies that $ f $ is pinned. 
\end{proof}

\noindent There are a number of equivalent formulations of what it means for a field $ k $ to have the property that absolutely weakly normal schemes topologically of finite type over $ k $ can be reconstructed from their étale topoi.
We prove their equivalence before stating the definition.

\begin{observation}\label{obs:absolute_weak_normalization_on_slices}
	Let $ S $ be a scheme. 
	Absolute weak normalization induces an adjunction
	\begin{equation*}
		\begin{tikzcd}[sep=4em]
			(\Schawn)_{/\Sawn} \arrow[r, shift left] & \Sch_{/S} \arrow[l, shift left, "(-)^{\awn}"] \period
		\end{tikzcd}
	\end{equation*}
	The left adjoint sends an absolutely weakly normal scheme $ X $ over $ \Sawn $ to the composite
	\begin{equation*}
		X \to \Sawn \to S 
	\end{equation*}
	of the structure morphism with the counit.
	In particular, if $ X $ is an absolutely weakly normal $ S $-scheme and $ Y $ is an $ S $-scheme, there is a natural bijection
	\begin{equation*}
		\Hom_{S}(X,Y) \isomorphic \Hom_{\Sawn}(X,\Yawn) \period
	\end{equation*} 
\end{observation}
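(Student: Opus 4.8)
The plan is to deduce the adjunction from the basic adjunction $\iota \dashv (-)^{\awn}$ between the inclusion $\iota \colon \Schawn \hookrightarrow \Sch$ and absolute weak normalization established in \Cref{thm:existence_of_absolute_weak_normalization_and_seminormalization}, using the general principle that an adjunction restricts to an adjunction on slice categories over a fixed object and its image. Taking the object to be $S \in \Sch$, whose image is $\Sawn$, the induced left adjoint $\Lambda$ sends an object $(X \to \Sawn)$ of $(\Schawn)_{/\Sawn}$ to the composite $(X \to \Sawn \xrightarrow{\varepsilon_S} S)$, where $\varepsilon_S \colon \Sawn \to S$ is the counit, and the induced right adjoint sends $(Y \xrightarrow{b} S)$ to $(\Yawn \xrightarrow{b^{\awn}} \Sawn)$ via functoriality of $(-)^{\awn}$. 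These match the functors in the statement, so it remains to produce the hom-bijection.

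Concretely, I would take the natural bijection $\phi_{X,Y} \colon \Hom_{\Sch}(X, Y) \isomorphic \Hom_{\Schawn}(X, \Yawn)$ coming from the adjunction (valid for $X$ absolutely weakly normal) and show that it restricts to a bijection between morphisms compatible with the structure maps to $S$ and to $\Sawn$, respectively. The essential observation is that, by naturality of $\phi$ in the target variable, for any $b \colon Y \to S$ one has $\phi_{X,S}(b \circ f) = b^{\awn} \circ \phi_{X,Y}(f)$; and that the structure map $\varepsilon_S \circ a \colon X \to S$ of the image of $(X \xrightarrow{a} \Sawn)$ under $\Lambda$ satisfies $\phi_{X,S}(\varepsilon_S \circ a) = a$, which is exactly one of the triangle identities (the inverse of $\phi_{X,S}$ is $g \mapsto \varepsilon_S \circ \iota(g)$, and here $a$ is already a morphism of absolutely weakly normal schemes, so $\iota(a) = a$). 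Combining these, a morphism $f \colon X \to Y$ lies over $S$ if and only if $b \circ f = \varepsilon_S \circ a$, if and only if $b^{\awn} \circ \phi_{X,Y}(f) = a$, i.e.\ if and only if $\phi_{X,Y}(f)$ lies over $\Sawn$. This gives the desired bijection $\Hom_{\Sch_{/S}}(\Lambda X, Y) \isomorphic \Hom_{(\Schawn)_{/\Sawn}}(X, \Yawn)$, and naturality is inherited from that of $\phi$.

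For the ``in particular'' statement, I would observe that when $X$ is an absolutely weakly normal $S$-scheme, its structure map $X \to S$ corresponds under $\phi_{X,S}$ to a canonical map $X \to \Sawn$, so $X$ is naturally an object of $(\Schawn)_{/\Sawn}$, and applying $\Lambda$ returns $X$ with its original $S$-structure (again because $\phi_{X,S}$ and its inverse $g \mapsto \varepsilon_S \circ \iota(g)$ are mutually inverse). The bijection of the adjunction then reads $\Hom_S(X, Y) \isomorphic \Hom_{\Sawn}(X, \Yawn)$. Since the whole argument is formal, the only real work---and the one point to get right---is the bookkeeping with the counit $\varepsilon_S \colon \Sawn \to S$: one must check that ``lying over $S$'' and ``lying over $\Sawn$'' are interchanged correctly by $\phi$, which is precisely where naturality in the target together with the triangle identity for $\varepsilon_S$ enters. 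I do not expect any genuine geometric input to be needed beyond the existence of the adjunction from \Cref{thm:existence_of_absolute_weak_normalization_and_seminormalization}.
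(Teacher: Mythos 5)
Your proposal is correct, and it is exactly the formal argument the paper has in mind: the statement is recorded as an \emph{Observation} with no written proof precisely because it follows from the coreflective adjunction of \Cref{thm:existence_of_absolute_weak_normalization_and_seminormalization} by the standard slice-category principle you invoke, with the counit $\Sawn \to S$ entering just as you describe. Your bookkeeping via naturality in the target and the triangle identity (equivalently, that $\phi^{-1}$ is $g \mapsto \varepsilon_S \circ \iota(g)$) is the right way to make the hom-bijection precise, and no geometric input is needed.
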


\begin{lemma}\label{lem:preliminaries_to_equivalent_considtions_for_reconstruction}
	Let $ k $ be a field and let $ X $ and $ Y $ be schemes locally topologically of finite type over $ k $.
	The following are equivalent:
	\begin{enumerate}[label=\stlabel{lem:preliminaries_to_equivalent_considtions_for_reconstruction}, ref=\arabic*]
		\item The natural map $ \Hom_{k}(\Xawn,Y) \to \Hompink(\Xetawn,\Yet) $ is bijective.

		\item The natural map $ \Hom_{\kawn}(\Xawn,\Yawn) \to \Hompin_{\kawn}(\Xetawn,\Yetawn) $ is bijective.
	\end{enumerate}
\end{lemma}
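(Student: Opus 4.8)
The plan is to prove the equivalence by identifying the sources of the two reconstruction maps with one another, identifying their targets with one another, and then checking that the resulting square commutes; bijectivity of one map is then equivalent to bijectivity of the other. The key point is that both identifications are induced by the counits of absolute weak normalization (of $ Y $ and of $ k $), which is exactly what forces the square to commute.

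For the sources, I would apply \Cref{obs:absolute_weak_normalization_on_slices} with $ S = \Spec(k) $, so that $ \Sawn = \Spec(\kawn) $. Since $ \Xawn $ is absolutely weakly normal, the observation produces a natural bijection $ \Hom_{k}(\Xawn,Y) \isomorphic \Hom_{\kawn}(\Xawn,\Yawn) $, which sends a $ k $-morphism $ f \colon \Xawn \to Y $ to its unique lift $ \tilde{f} \colon \Xawn \to \Yawn $ through the counit $ c_Y \colon \Yawn \to Y $.

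For the targets, the essential input is the topological invariance of the étale topos. The counit $ c_Y \colon \Yawn \to Y $ and the counit $ \Spec(\kawn) \to \Spec(k) $ are universal homeomorphisms by \Cref{thm:existence_of_absolute_weak_normalization_and_seminormalization}, so by \stacks{03SI} the induced geometric morphisms $ \Yetawn \to \Yet $ and $ \Spec(\kawn)_{\et} \to \Spec(k)_{\et} $ are equivalences of topoi. Slicing over the equivalent base topoi $ \Spec(k)_{\et} \simeq \Spec(\kawn)_{\et} $ and postcomposing with the inverse of $ \Yetawn \isomorphic \Yet $ yields an equivalence of groupoids $ \Hompink(\Xetawn,\Yet) \isomorphic \Hompin_{\kawn}(\Xetawn,\Yetawn) $, where $ \Xetawn $ is regarded over $ \Spec(k)_{\et} $ on the left and over $ \Spec(\kawn)_{\et} $ on the right via the same structure morphism. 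Since $ \Xawn $ and $ \Yawn $ are locally topologically of finite type over $ \kawn $ by \Cref{lem:absolute_weak_normalization_topologically_of_finite_type}, the rigidity of pinned morphisms (\Cref{prop:rigidity}) lets me regard both groupoids as sets, so this is a bijection.

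Finally I would verify that these two bijections intertwine the reconstruction maps. Applying $ (-)_{\et} $ to the factorization $ f = c_Y \circ \tilde{f} $ gives $ f_* = (c_Y)_* \circ \tilde{f}_* $; since the target identification is postcomposition with $ (c_Y)_*^{-1} $, the image of $ f_* $ is exactly $ \tilde{f}_* $, which is the image of $ \tilde{f} $ under the reconstruction map of \emph{(2)}. Hence the square commutes and the two maps are bijective together. The only delicate point is bookkeeping: one must arrange both the source identification and the target identification to be induced by the \emph{same} counits, and carefully track the change of base topos from $ \Spec(k)_{\et} $ to $ \Spec(\kawn)_{\et} $; once this is set up, commutativity is a formal consequence of the functoriality of $ (-)_{\et} $.
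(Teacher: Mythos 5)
Your proposal is correct and takes essentially the same route as the paper: the paper's proof likewise forms the commutative square induced by the counits $ \fromto{\Yawn}{Y} $ and $ \fromto{\Spec(\kawn)}{\Spec(k)} $, identifies the sources via \Cref{obs:absolute_weak_normalization_on_slices}, identifies the targets via the topological invariance of the étale topos, and uses \Cref{lem:absolute_weak_normalization_topologically_of_finite_type} together with \Cref{prop:rigidity} to regard the pinned mapping groupoids as sets. The only difference is cosmetic: you spell out the commutativity check (via the factorization $ f = c_Y \of \tilde{f} $ and functoriality of $ (-)_{\et} $) that the paper treats as immediate from the naturality of the counits.
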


\begin{proof}
	Let us first comment that since $ X $ and $ Y $ are locally topologically of finite type over $ k $, \Cref{lem:absolute_weak_normalization_topologically_of_finite_type} shows that both $ \Xawn $ and $ \Yawn $ are locally topologically of finite type over $ k $.
	Hence by \Cref{prop:rigidity}, the right-hand sides are sets, so the statements actually make sense.

	To prove the claim, notice that the counits $ \fromto{\Yawn}{Y} $ and $ \fromto{\Spec(\kawn)}{\Spec(k)} $ induce a commutative square
	\begin{equation}\label{sq:comparing_reconstruction_maps}
		\begin{tikzcd}[column sep=3em]
			\Hom_{\kawn}(\Xawn,\Yawn) \arrow[r] \arrow[d] & \Hompin_{\kawn}(\Xetawn,\Yetawn) \arrow[d] \\ 
			\Hom_{k}(\Xawn,Y) \arrow[r] & \Hompink(\Xetawn,\Yet) \period
		\end{tikzcd}
	\end{equation}
	By \Cref{obs:absolute_weak_normalization_on_slices}, the left-hand vertical map is bijective.
	By the topological invariance of the étale topos, for any scheme $ S $, the counit $ \fromto{\Sawn}{S} $ induces an equivalence of étale topoi $ \equivto{S_{\et}^{\awn}}{S_{\et}} $.
	Hence the right-hand vertical map is bijective.
	Thus the top map in \eqref{sq:comparing_reconstruction_maps} is bijective if and only if the bottom map is bijective, as claimed.
\end{proof} 

For the next result, recall \Cref{ntn:Schawntft,ntn:RToppin}.

\begin{proposition}\label{prop:equivalent_conditions_for_etale_reconstruction}
	Let $ k $ be a field.
	The following conditions are equivalent:
	\begin{enumerate}[label=\stlabel{prop:equivalent_conditions_for_etale_reconstruction}, ref=\arabic*]
		\item\label{prop:equivalent_conditions_for_etale_reconstruction.-1} The functor $ (-)_{\et} \colon \Schawntft_{k} \to \RToppin_{k} $ is fully faithful.

		\item\label{prop:equivalent_conditions_for_etale_reconstruction.0} The functor $ (-)_{\et} \colon \Schft_{k}[\UH^{-1}] \to \RToppin_{k} $ is fully faithful.

		\item\label{prop:equivalent_conditions_for_etale_reconstruction.1} For all finite type $ k $-schemes $ X $ and $ Y $, the natural map 
		\begin{equation*}
			\Hom_{k}(\Xawn,Y) \to \Hompink(\Xetawn,\Yet)
		\end{equation*}
		is bijective.

		\item\label{prop:equivalent_conditions_for_etale_reconstruction.2} For all schemes $ X $ and $ Y $ topologically of finite type over $ k $, the natural map 
		\begin{equation*}
			\Hom_{k}(\Xawn,Y) \to \Hompink(\Xetawn,\Yet)
		\end{equation*}
		is bijective.

		\item\label{prop:equivalent_conditions_for_etale_reconstruction.3} For all schemes $ X $ and $ Y $ topologically of finite type over $ k $ with $ X $ absolutely weakly normal, the natural map
		\begin{equation*}
			\Hom_{k}(X,Y) \to \Hompink(\Xet,\Yet)
		\end{equation*}
		is bijective.

		\item\label{prop:equivalent_conditions_for_etale_reconstruction.4} For all schemes $ X $ and $ Y $ topologically of finite type over $ k $, the natural map
		\begin{equation*}
			\Hom_{\kawn}(\Xawn,\Yawn) \to \Hompin_{\kawn}(\Xetawn,\Yetawn)
		\end{equation*}
		is bijective.
	\end{enumerate}
\end{proposition}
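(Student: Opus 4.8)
The plan is to show that, after two canonical identifications, all of conditions \enumref{prop:equivalent_conditions_for_etale_reconstruction}{-1}--\enumref{prop:equivalent_conditions_for_etale_reconstruction}{3} assert the bijectivity of one and the same family of maps, and then to handle \enumref{prop:equivalent_conditions_for_etale_reconstruction}{4} separately via \Cref{lem:preliminaries_to_equivalent_considtions_for_reconstruction}. The two identifications are: on targets, the topological invariance of the étale topos, which says the counit $ \fromto{\Yawn}{Y} $ induces an equivalence $ \equivto{\Yetawn}{\Yet} $ and hence a bijection $ \Hompink(\Xetawn,\Yet) \isomorphic \Hompink(\Xetawn,\Yetawn) $ (these are sets by the rigidity of \Cref{prop:rigidity}); and on sources, \Cref{obs:absolute_weak_normalization_on_slices}, which shows that postcomposition with the same counit is a bijection $ \Hom_{k}(\Xawn,\Yawn) \isomorphic \Hom_{k}(\Xawn,Y) $. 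The first step is to check that these two bijections form a commuting square with the comparison maps of \enumref{prop:equivalent_conditions_for_etale_reconstruction}{2} for the inputs $ (X,Y) $ and $ (\Xawn,\Yawn) $ — they do, because the comparison map sends $ g $ to the induced geometric morphism $ g_{\et} $, which is compatible with the counit. Consequently the comparison map $ \fromto{\Hom_{k}(\Xawn,Y)}{\Hompink(\Xetawn,\Yet)} $ is bijective if and only if the map $ \fromto{\Hom_{k}(\Xawn,\Yawn)}{\Hompink(\Xetawn,\Yetawn)} $ is, so its bijectivity depends only on the pair $ (\Xawn,\Yawn) $.

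The second step is to match up the index sets. By \Cref{lem:absolute_weak_normalization_topologically_of_finite_type} the absolute weak normalization of a (topologically) finite type $ k $-scheme is again absolutely weakly normal and topologically of finite type, and since $ (-)^{\awn} $ is idempotent, every absolutely weakly normal scheme topologically of finite type over $ k $ arises as some $ \Xawn $. Hence, as $ X $ and $ Y $ range over the finite type schemes of \enumref{prop:equivalent_conditions_for_etale_reconstruction}{1}, the topologically finite type schemes of \enumref{prop:equivalent_conditions_for_etale_reconstruction}{2}, or the pairs with $ X $ absolutely weakly normal of \enumref{prop:equivalent_conditions_for_etale_reconstruction}{3} (where $ \Xawn = X $), the pair $ (\Xawn,\Yawn) $ ranges over exactly the same class: all pairs of absolutely weakly normal schemes topologically of finite type over $ k $. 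Together with the first step this proves that conditions \enumref{prop:equivalent_conditions_for_etale_reconstruction}{1}, \enumref{prop:equivalent_conditions_for_etale_reconstruction}{2}, and \enumref{prop:equivalent_conditions_for_etale_reconstruction}{3} are equivalent.

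Next I would unwind the two full faithfulness statements. Regarding the $ 1 $-category $ \Schawntft_k $ as a $ (2,1) $-category, condition \enumref{prop:equivalent_conditions_for_etale_reconstruction}{-1} says precisely that $ \fromto{\Hom_{k}(X,Y)}{\Hompink(\Xet,\Yet)} $ is bijective for all $ X,Y \in \Schawntft_k $; here \Cref{lem:every_morphism_between_schemes_locally_topologically_of_finite_type_is_pinned} ensures the functor lands in $ \RToppin_k $, and \Cref{prop:rigidity} turns ``equivalence of $ \Hom $-groupoids'' into ``bijection on isomorphism classes''. For $ X,Y $ absolutely weakly normal this is again the comparison map evaluated at $ (\Xawn,\Yawn) = (X,Y) $, so \enumref{prop:equivalent_conditions_for_etale_reconstruction}{-1} is the common condition of the previous paragraph. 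For \enumref{prop:equivalent_conditions_for_etale_reconstruction}{0}, \Cref{lem:awn_is_localization_for_finite_type} gives an equivalence $ \equivto{\Schft_k[\UH^{-1}]}{\Schawntft_k} $ through which $ (-)_{\et} $ factors, since $ (-)_{\et} $ inverts universal homeomorphisms; thus the functor of \enumref{prop:equivalent_conditions_for_etale_reconstruction}{0} is the functor of \enumref{prop:equivalent_conditions_for_etale_reconstruction}{-1} precomposed with an equivalence, and the two conditions are equivalent. Finally, the equivalence of \enumref{prop:equivalent_conditions_for_etale_reconstruction}{2} and \enumref{prop:equivalent_conditions_for_etale_reconstruction}{4} is \Cref{lem:preliminaries_to_equivalent_considtions_for_reconstruction} applied to each pair $ (X,Y) $. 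This closes the loop.

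I expect the only delicate point to be the bookkeeping of the first step: verifying that the source and target identifications genuinely assemble into a square commuting with the comparison maps, so that bijectivity transfers, and keeping the base fields $ k $ and $ \kawn $ straight — one uses that a morphism of $ k $-schemes between absolutely weakly normal schemes is automatically a morphism of $ \kawn $-schemes, which is exactly what lets \Cref{obs:absolute_weak_normalization_on_slices} supply the source bijection over $ k $ and also what makes \Cref{lem:preliminaries_to_equivalent_considtions_for_reconstruction} interface cleanly with the $ \kawn $-linear statement \enumref{prop:equivalent_conditions_for_etale_reconstruction}{4}. Everything else is formal given the cited results.
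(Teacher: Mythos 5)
Your proposal is correct and takes essentially the same route as the paper: both reduce conditions \enumref{prop:equivalent_conditions_for_etale_reconstruction}{-1}--\enumref{prop:equivalent_conditions_for_etale_reconstruction}{3} to a single common statement for absolutely weakly normal, topologically of finite type pairs via the counit identifications (topological invariance of the étale topos on the target side, \Cref{obs:absolute_weak_normalization_on_slices} on the source side, together with the fact that universal homeomorphisms induce isomorphisms on absolute weak normalizations), obtain \enumref{prop:equivalent_conditions_for_etale_reconstruction}{-1} $\Leftrightarrow$ \enumref{prop:equivalent_conditions_for_etale_reconstruction}{0} from \Cref{lem:awn_is_localization_for_finite_type}, and settle \enumref{prop:equivalent_conditions_for_etale_reconstruction}{4} by \Cref{lem:preliminaries_to_equivalent_considtions_for_reconstruction}. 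The only cosmetic difference is that you stay over $ k $ throughout, using the automatic $ \kawn $-linearity of morphisms between absolutely weakly normal schemes, where the paper proves \enumref{prop:equivalent_conditions_for_etale_reconstruction}{1} $\Rightarrow$ \enumref{prop:equivalent_conditions_for_etale_reconstruction}{2} by applying \Cref{lem:preliminaries_to_equivalent_considtions_for_reconstruction} twice to pass through $ \kawn $ and back.
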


\begin{proof}
	It is an immediate consequence of \Cref{lem:awn_is_localization_for_finite_type} that \enumref{prop:equivalent_conditions_for_etale_reconstruction}{-1}, \enumref{prop:equivalent_conditions_for_etale_reconstruction}{0}, and \enumref{prop:equivalent_conditions_for_etale_reconstruction}{1} are equivalent.
	Clearly \enumref{prop:equivalent_conditions_for_etale_reconstruction}{2} $ \Rightarrow $ \enumref{prop:equivalent_conditions_for_etale_reconstruction}{1}.
	To see that \enumref{prop:equivalent_conditions_for_etale_reconstruction}{1} $ \Rightarrow $ \enumref{prop:equivalent_conditions_for_etale_reconstruction}{2}, let $ X $ and $ Y $ be schemes topologically of finite type over $ k $.
	Then there exist universal homeomorphisms $ \fromto{X}{X'} $ and $ \fromto{Y}{Y'} $ where $ X' $ and $ Y' $ are of finite type over $ k $.
	By assumption \enumref{prop:equivalent_conditions_for_etale_reconstruction}{1}, the natural map 
	\begin{equation*}
		\Hom_{k}((X')^{\awn},Y') \to \Hompink((X')_{\et}^{\awn},\Yet')
	\end{equation*}
	is bijective.
	By \Cref{lem:preliminaries_to_equivalent_considtions_for_reconstruction}, this is equivalent to the claim that the natural map
	\begin{equation*}
		\Hom_{\kawn}((X')^{\awn},(Y')^{\awn}) \to \Hompin_{\kawn}((X')_{\et}^{\awn},(Y')_{\et}^{\awn})
	\end{equation*}
	is bijective.
	Since the universal homeomorphisms $ \fromto{X}{X'} $ and $ \fromto{Y}{Y'} $ induce isomorphisms on absolute weak normalizations, we deduce that the natural map 
	\begin{equation*}
		\Hom_{\kawn}(\Xawn,\Yawn) \to \Hompin_{\kawn}(\Xetawn,\Yetawn)
	\end{equation*}
	is bijective.
	Applying \Cref{lem:preliminaries_to_equivalent_considtions_for_reconstruction} again, we deduce that the natural map 
	\begin{equation*}
		\Hom_{k}(\Xawn,Y) \to \Hompink(\Xetawn,\Yet)
	\end{equation*}
	is bijective, as desired.
	
	The equivalence \enumref{prop:equivalent_conditions_for_etale_reconstruction}{2} $ \Leftrightarrow $ \enumref{prop:equivalent_conditions_for_etale_reconstruction}{3} is immediate from the fact that the absolute weak normalization of a scheme topologically of finite type over $ k $ is also topologically of finite type over $ k $ (\Cref{lem:absolute_weak_normalization_topologically_of_finite_type}).
	The equivalence \enumref{prop:equivalent_conditions_for_etale_reconstruction}{2} $ \Leftrightarrow $ \enumref{prop:equivalent_conditions_for_etale_reconstruction}{4} is immediate from \Cref{lem:preliminaries_to_equivalent_considtions_for_reconstruction}.
\end{proof}

The following is the main definition of this section.

\begin{definition}[(étale reconstruction)]\label{def:etale_reconstruction}
	Let $ k $ be a field.
	We say that $ k $ \defn{satisfies étale reconstruction} if the equivalent conditions of \Cref{prop:equivalent_conditions_for_etale_reconstruction} are satisfied.
\end{definition}

%

\begin{example}\label{ex:a_geometric_morphism_that_isn't_pinned}
	The restriction to pinned morphisms in the étale reconstruction property is necessary.
	To see this, note that for a field of characteristic $ 0 $ the map of Galois groups $ \Gup_{k(t)} \to \Gup_k $ admits a continuous section.
	This section induces a morphism $ \fromto{\Spec(k)_{\et}}{\Spec(k(t))_{\et}} $ in $ \RTop_{k} $.
	Composing this with the geometric morphism induced by the morphism \smash{$ \Spec(k(t)) \to \PP^1_k $} picking out the generic point, we get a morphism
	\begin{equation*}
		\Spec(k)_{\et} \to \PP^1_{k,\et}
	\end{equation*}
	in $ \RTop_{k} $ whose underlying map on topological spaces hits the generic point.
	Thus this geometric morphism is not induced be a morphism of schemes \smash{$ \Spec(k) \to \PP^1_k $}.
\end{example}

\begin{example}
	If $ k $ is an algebraically closed field of characteristic $ 0 $, then $ k $ does \textit{not} satisfy étale reconstruction.
	If $ C $ and $ C' $ are smooth proper curves over $ k $ of the same genus, then the étale topoi of $ C $ and $ C' $ are equivalent (over the terminal topos $ \Spec(k)_{\et} \equivalent \Set $).
	See \cite{MO:38466}.
\end{example}

In characteristic $ 0 $, étale reconstruction has another reformulation.

\begin{corollary}\label{cor:etale_reconstruction_for_characteristic_0_fields}
	Let $ k $ be a field of characteristic $ 0 $.
	Then the following are equivalent:	
	\begin{enumerate}[label=\stlabel{cor:etale_reconstruction_for_characteristic_0_fields}, ref=\arabic*]
		\item The field $ k $ satisfies étale reconstruction.

		\item For all schemes finite type $ k $-schemes $ X $ and $ Y $ with $ X $ seminormal, the natural map
		\begin{equation*}
			\fromto{\Hom_k(X,Y)}{\Hompink(\Xet,\Yet)} 
		\end{equation*}
		is bijective.
	\end{enumerate}
\end{corollary}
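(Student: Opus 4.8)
The plan is to read off both implications directly from the list of equivalent reformulations of étale reconstruction in \Cref{prop:equivalent_conditions_for_etale_reconstruction}, exploiting the two characteristic-$0$ simplifications available over $ k $. Since $ k $ has characteristic $ 0 $, every $ k $-scheme is a $ \QQ $-scheme, so by \Cref{thm:absolutely_weakly_normal_schemes_over_fields} a $ k $-scheme is absolutely weakly normal precisely when it is seminormal; by \Cref{cor:absolute_weak_normalization_over_fields} we have $ \Xawn = \Xsn $; and by \Cref{prop:when_is_seminormalization_finite} the seminormalization $ \Xsn $ is again of finite type over $ k $ whenever $ X $ is. I will also use that $ \Xetawn $ denotes, by definition, the étale topos $ (\Xawn)_{\et} $, and that the various ``natural maps'' appearing are all instances of the single comparison transformation $ \Hom_k(\blank,\blank) \to \Hompink((\blank)_{\et},(\blank)_{\et}) $, so that they agree after these identifications.

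For the implication $ (1) \Rightarrow (2) $, I would invoke the formulation in \enumref{prop:equivalent_conditions_for_etale_reconstruction}{3}: for all schemes $ X,Y $ topologically of finite type over $ k $ with $ X $ absolutely weakly normal, the map $ \Hom_k(X,Y) \to \Hompink(\Xet,\Yet) $ is bijective. Given finite type $ k $-schemes $ X $ and $ Y $ with $ X $ seminormal, the scheme $ X $ is absolutely weakly normal by \Cref{thm:absolutely_weakly_normal_schemes_over_fields}, and both $ X $ and $ Y $ are in particular topologically of finite type over $ k $. Applying \enumref{prop:equivalent_conditions_for_etale_reconstruction}{3} then yields exactly the bijectivity asserted in $ (2) $.

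For the converse $ (2) \Rightarrow (1) $, it suffices to verify the formulation in \enumref{prop:equivalent_conditions_for_etale_reconstruction}{1}: for all finite type $ k $-schemes $ X $ and $ Y $, the map $ \Hom_k(\Xawn,Y) \to \Hompink(\Xetawn,\Yet) $ is bijective. Fixing such $ X $ and $ Y $, the scheme $ \Xawn = \Xsn $ is seminormal and again of finite type over $ k $ by \Cref{cor:absolute_weak_normalization_over_fields,prop:when_is_seminormalization_finite}. Applying hypothesis $ (2) $ to the pair $ (\Xawn,Y) $ of finite type $ k $-schemes, whose source $ \Xawn $ is seminormal, shows that $ \Hom_k(\Xawn,Y) \to \Hompink((\Xawn)_{\et},\Yet) $ is bijective; since $ (\Xawn)_{\et} = \Xetawn $, this is precisely \enumref{prop:equivalent_conditions_for_etale_reconstruction}{1}, so $ k $ satisfies étale reconstruction.

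The proof is essentially bookkeeping, and the only point requiring care — the closest thing to an obstacle — is keeping straight the two pairs of notions (``finite type'' versus ``topologically of finite type'', and ``seminormal'' versus ``absolutely weakly normal''), together with checking that the natural maps in $ (2) $, in \enumref{prop:equivalent_conditions_for_etale_reconstruction}{1}, and in \enumref{prop:equivalent_conditions_for_etale_reconstruction}{3} are literally the same after the identifications above. In characteristic $ 0 $ both pairs collapse by \Cref{thm:absolutely_weakly_normal_schemes_over_fields,lem:locally_of_finite_type_for_seminormal_schemes_over_a_field_of_characteristic_0,prop:when_is_seminormalization_finite}, so no genuinely new input beyond \Cref{prop:equivalent_conditions_for_etale_reconstruction} is required; the agreement of the comparison maps is immediate from naturality of the transformation $ \Hom_k(\blank,\blank) \to \Hompink((\blank)_{\et},(\blank)_{\et}) $.
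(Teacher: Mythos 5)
Your proposal is correct and follows essentially the same route as the paper: the paper's proof is exactly the observation that in characteristic $0$ absolute weak normalization and seminormalization coincide (\Cref{cor:absolute_weak_normalization_over_fields}) and that seminormal finite type $k$-schemes are precisely the seminormal schemes topologically of finite type over $k$ (\Cref{lem:locally_of_finite_type_for_seminormal_schemes_over_a_field_of_characteristic_0}, which rests on \Cref{prop:when_is_seminormalization_finite}, the result you invoke directly), after which both directions are read off from the formulations in \Cref{prop:equivalent_conditions_for_etale_reconstruction} exactly as you do.
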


\begin{proof}
	Immediate from \Cref{lem:locally_of_finite_type_for_seminormal_schemes_over_a_field_of_characteristic_0} and the fact that in characteristic $ 0 $, absolute weak normalization and seminormalization coincide (\Cref{cor:absolute_weak_normalization_over_fields}).
\end{proof}


\subsection{Consequences of étale reconstruction}\label{subsec:consequences_of_etale_reconstruction}

We conclude this section by recording some immediate consequences of the étale reconstruction property (see \Cref{prop:consequences_of_etale_reconstruction}).
First we fix some notation.

\begin{notation}\label{ntn:EquivB}
	Let $ \Bcal $ be a topos.
	Given $ \Xcal,\Ycal \in \RTop_{\Bcal} $, write
	\begin{equation*}
		\Equiv_{\Bcal}(\Xcal,\Ycal) \subset \Hom_{\Bcal}(\Xcal,\Ycal)
	\end{equation*}
	for the full subgroupoid spanned by the equivalences of topoi $ \equivto{\Xcal}{\Ycal} $ over $ \Bcal $.
	When $ \Bcal $ is the étale topos of a scheme, we make the same notational simplifications as in \Cref{ntn:HomB}.
\end{notation}

\begin{convention}\label{convention:regarding_Equivk_as_a_set}
	Let $ k $ be a field and $ X $ and $ Y $ schemes topologically of finite type over $ k $.
	Then
	\begin{equation*}
		\Equiv_{k}(\Xet,\Yet) \subset \Hompink(\Xet,\Yet) \period
	\end{equation*}
	Following \Cref{convention:regarding_Hompink_as_a_set}, we tacitly identify the groupoid \smash{$ \Equiv_{k}(\Xet,\Yet) $} with the set of \textit{isomorphism classes} of equivalences $ \equivto{\Xet}{\Yet} $ over $ \Spec(k)_{\et} $.
\end{convention}


\begin{lemma}\label{lem:consequence_of_conceptual_completeness_for_etale_topoi}
	Let $ X $ and $ Y $ be topologically noetherian schemes.
	Then a geometric morphism $ \flowerstar \colon \fromto{\Xet}{\Yet} $ is an equivalence if and only if $ \Pt(\flowerstar) \colon \fromto{\Pt(\Xet)}{\Pt(\Yet)} $ is an equivalence. 
\end{lemma}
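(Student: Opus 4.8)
The plan is to derive this from \emph{conceptual completeness} for coherent topoi, with the ``only if'' direction being immediate: $ \Pt \colon \RTop \to \Cat $ is a functor (\Cref{ntn:category_of_points}), so it carries equivalences of topoi to equivalences of categories. For the substantive ``if'' direction, I would first reduce to the coherent setting. A topologically noetherian scheme is quasicompact and quasiseparated, since its underlying space is noetherian and hence every open is quasicompact; consequently the étale topoi $ \Xet $ and $ \Yet $ are coherent topoi, which is exactly the regime where conceptual completeness applies.

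The main input is the theorem of Makkai--Reyes, in the form provided by Lurie's theory of ultracategories: the assignment sending a coherent topos to its ultracategory of points is fully faithful, and a fully faithful $2$-functor reflects equivalences. Explicitly, a geometric morphism of coherent topoi is an equivalence if and only if the induced ultrafunctor on ultracategories of points is an equivalence of ultracategories. Since $ \flowerstar $ is a geometric morphism, $ \Pt(\flowerstar) $ acquires a canonical ultrastructure and is thus not merely a functor but an ultrafunctor; the strategy is then to verify the hypothesis of this criterion for it.

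The one real gap -- and what I expect to be the main obstacle -- is that the hypothesis only supplies an equivalence of the \emph{underlying} categories of points, whereas conceptual completeness asks for an equivalence of \emph{ultracategories}. I would bridge this by invoking the formal fact that an ultrafunctor whose underlying functor is an equivalence of categories is automatically an equivalence in the $2$-category of ultracategories: a quasi-inverse of the underlying functor carries a unique compatible ultrastructure, transported along the unit and counit isomorphisms, which are then automatically ultra-natural. Granting this, $ \Pt(\flowerstar) $ is an equivalence of ultracategories, and conceptual completeness yields that $ \flowerstar $ is an equivalence of topoi, completing the proof.
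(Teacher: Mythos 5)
There is a genuine gap, and it sits precisely in the sentence ``Since $ \flowerstar $ is a geometric morphism, $ \Pt(\flowerstar) $ acquires a canonical ultrastructure and is thus not merely a functor but an ultrafunctor.'' That is false for an arbitrary geometric morphism between coherent topoi: the ultraproduct point $ \int_{\mu} x_i $ of a coherent topos is characterized by the formula $ \paren{\int_{\mu} x_i}^{\!*}(\Fcal) \isomorphic \int_{\mu} x_i^*(\Fcal) $ only on \emph{coherent} objects $ \Fcal $ (on general objects it is determined by left Kan extension, since the naive ultraproduct formula is not cocontinuous). Hence $ \Pt(\flowerstar) $ is compatible with ultraproducts exactly when $ \fupperstar $ carries coherent objects to coherent objects, i.e., when $ \flowerstar $ is a \emph{coherent} geometric morphism --- and this coherence is a hypothesis in every form of conceptual completeness you could cite (Makkai--Reyes, Lurie's ultracategory version, and \SAG{Theorem}{A.9.0.6} alike). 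Coherence of the topoi $ \Xet $ and $ \Yet $, which is all you verified, does not give coherence of $ \flowerstar $; this is the actual content of the hypothesis that the schemes are \emph{topologically noetherian}, which you used only to conclude that they are qcqs.

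The missing step is supplied by \Cref{cor:every_geometric_morphism_between_topologically_noetherian_schemes_is_coherent}: since $ |X| $ is noetherian, every subset of $ |X| $ is quasicompact, so the map $ |\flowerstar| \colon \fromto{|X|}{|Y|} $ is automatically quasicompact; by \Cref{lem:coherence_for_geometric_morphisms_between_etale_topoi} one can then pull back a finite stratification witnessing constructibility of a sheaf on $ Y $ to one on $ X $, so $ \fupperstar $ preserves constructible sheaves, which are exactly the coherent objects of the étale topos of a qcqs scheme (\Cref{ex:coherent_objects_of_the_etale_topos}). With coherence of $ \flowerstar $ in hand, the paper's proof concludes immediately. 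One further remark: the step you identified as ``the one real gap'' --- upgrading an equivalence of underlying categories of points to an equivalence of ultracategories --- is a non-issue: your transport argument for the ultrastructure on a quasi-inverse is correct (the structure maps of an ultrafunctor are isomorphisms, so this is formal), but it is also unnecessary, since the standard statements of conceptual completeness take as hypothesis a plain equivalence of \emph{categories} of points of a coherent geometric morphism. So your proposal resolves a superfluous difficulty while omitting the one verification the lemma genuinely requires.
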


\begin{proof}
	Since $ X $ and $ Y $ are topologically noetherian schemes, the topoi $ \Xet $ and $ \Yet $ are coherent; moreover, \Cref{cor:every_geometric_morphism_between_topologically_noetherian_schemes_is_coherent} shows that every geometric morphism $ \flowerstar \colon \fromto{\Xet}{\Yet} $ is coherent. 
	The claim is now an immediate consequence of the Makkai--Reyes Conceptual Completeness Theorem (see \cites[Corollary 2.3.5]{Ultracategories}[Theorem 7.1.8]{MR0505486} or \SAG{Theorem}{A.9.0.6}).
\end{proof}

We now state the main result of this subsection.

\begin{proposition}\label{prop:consequences_of_etale_reconstruction}
	Let $ k $ be a field that satisfies étale reconstruction and let $ X $ and $ Y $ be absolutely weakly normal schemes topologically of finite type over $ k $.
	Then:
	\begin{enumerate}[label=\stlabel{prop:consequences_of_etale_reconstruction}, ref=\arabic*]
		\item\label{prop:consequences_of_etale_reconstruction.2} The natural map $ \fromto{\Isom_k(X,Y)}{\Equiv_k(\Xet,\Yet)} $ is bijective.

		\item\label{prop:consequences_of_etale_reconstruction.3} A morphism of $ k $-schemes $ f \colon \fromto{X}{Y} $ is an isomorphism if and only if $ \Pt(\flowerstar) $ is an equivalence of categories. 
 	\end{enumerate}
\end{proposition}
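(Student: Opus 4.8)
The plan is to obtain both statements formally from two facts already in hand: the full faithfulness of $(-)_{\et}$ recorded in condition \enumref{prop:equivalent_conditions_for_etale_reconstruction}{-1} (which holds because $k$ satisfies étale reconstruction), and the conceptual completeness statement \Cref{lem:consequence_of_conceptual_completeness_for_etale_topoi}. Before starting I would pin down two bookkeeping points. First, $X$ and $Y$ are topologically noetherian: each is topologically of finite type over $k$, hence admits a universal homeomorphism to a finite type $k$-scheme, and the latter has noetherian underlying space while a universal homeomorphism is a homeomorphism of underlying spaces. Second, every equivalence of topoi induces a homeomorphism of underlying spaces and is therefore pinned; consequently $\Equiv_k(\Xet,\Yet) \subseteq \Hompink(\Xet,\Yet)$, and $\Equiv_k(\Xet,\Yet)$ is exactly the set of isomorphisms from $\Xet$ to $\Yet$ in the $(2,1)$-category $\RToppin_k$.

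For \enumref{prop:consequences_of_etale_reconstruction}{2} I would use that a fully faithful functor reflects isomorphisms, so a morphism $f$ of $\Schawntft_k$ is an isomorphism if and only if $f_{\et}$ is an isomorphism in $\RToppin_k$. Because $X$ and $Y$ are absolutely weakly normal and topologically of finite type over $k$, isomorphisms between them in $\Schawntft_k$ are precisely isomorphisms of $k$-schemes, while isomorphisms between $\Xet$ and $\Yet$ in $\RToppin_k$ are precisely the elements of $\Equiv_k(\Xet,\Yet)$ by the remark above. Hence $(-)_{\et}$ restricts to the desired bijection $\Isom_k(X,Y) \to \Equiv_k(\Xet,\Yet)$: injectivity is faithfulness, and for surjectivity I would lift an equivalence $\flowerstar \colon \Xet \to \Yet$ to a unique $k$-morphism $f$ by fullness and observe that $f$ is an isomorphism since $(-)_{\et}$ reflects isomorphisms (the pseudo-inverse of $\flowerstar$ descends, via the bijections on $\Hom$-sets, to a two-sided inverse of $f$).

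For \enumref{prop:consequences_of_etale_reconstruction}{3} the forward direction is immediate: if $f$ is an isomorphism then $f_{\et}$ is an equivalence, and applying the functor $\Pt$ shows that $\Pt(\flowerstar)$ is an equivalence of categories. For the converse I would run the two remaining inputs in sequence: assuming $\Pt(\flowerstar)$ is an equivalence, \Cref{lem:consequence_of_conceptual_completeness_for_etale_topoi} (applicable since $X$ and $Y$ are topologically noetherian) upgrades this to the statement that $f_{\et}$ is an equivalence of topoi, and then \enumref{prop:consequences_of_etale_reconstruction}{2} (equivalently, the reflection of isomorphisms) forces $f$ to be an isomorphism of $k$-schemes. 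The substance of the argument is entirely borrowed — from étale reconstruction on one side and the Makkai--Reyes conceptual completeness theorem underlying \Cref{lem:consequence_of_conceptual_completeness_for_etale_topoi} on the other — so the only thing requiring care is verifying the hypotheses of those two results, namely that equivalences are automatically pinned and that $X$ and $Y$ are topologically noetherian; I do not expect any genuine obstacle beyond this bookkeeping.
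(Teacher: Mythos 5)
Your proposal is correct and follows essentially the same route as the paper: item \enumref{prop:consequences_of_etale_reconstruction}{2} is read off from the full-faithfulness formulation \enumref{prop:equivalent_conditions_for_etale_reconstruction}{-1} of étale reconstruction, and item \enumref{prop:consequences_of_etale_reconstruction}{3} follows by combining item \enumref{prop:consequences_of_etale_reconstruction}{2} with \Cref{lem:consequence_of_conceptual_completeness_for_etale_topoi}, after noting that $X$ and $Y$ are topologically noetherian. Your extra bookkeeping (universal homeomorphisms are homeomorphisms on underlying spaces, and equivalences of topoi are automatically pinned, so $\Equiv_k(\Xet,\Yet) \subset \Hompink(\Xet,\Yet)$) is exactly what the paper leaves implicit, the latter point being recorded in \Cref{convention:regarding_Equivk_as_a_set}.
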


\begin{proof}
	Item \enumref{prop:consequences_of_etale_reconstruction}{2} is immediate from formulation \enumref{prop:equivalent_conditions_for_etale_reconstruction}{-1} of the étale reconstruction property.
	For \enumref{prop:consequences_of_etale_reconstruction}{3}, note that our assumptions guarantee that $ X $ and $ Y $ are topologically noetherian.
	Hence the claim follows from \enumref{prop:consequences_of_etale_reconstruction}{2} and \Cref{lem:consequence_of_conceptual_completeness_for_etale_topoi}.
\end{proof}

\begin{remark}[(reconstruction from condensed categories of points)]
	Let $ \Xcal $ be a coherent topos.
	The work of Barwick--Glasman--Haine \cite{arXiv:1807.03281} and Lurie \cite{Ultracategories} provides two natural refinements of the category $ \Pt(\Xcal) $ to a \textit{condensed} category (in the sense of condensed mathematics \cite{Scholze:condensednotes}).
	Let $ k $ be a field satisfying étale reconstruction.
	As a consequence of \Cref{prop:consequences_of_etale_reconstruction}, \cite[Theorem 9.3.1 \& Proposition 13.5.2]{arXiv:1807.03281}, and \cite[Remark 2.2.9 \& Theorem 4.3.3]{Ultracategories}, the functor sending an absolutely weakly normal scheme $ X $ topologically of finite type over $ k $ to either of these condensed categories of points of $ \Xet $ is fully faithful.
\end{remark}


\section{Proof of injectivity}\label{sec:proof_of_injectivity}

Let $ S $ be a qcqs Jacobson scheme and let $ X $ and $ Y $ be schemes topologically of finite type over $ S $.
In this section, we prove that if $ X $ is reduced and all fibers of $ \fromto{X}{S} $ are reduced, then the natural map
\begin{equation}\label{eq:sec_intro_want_to_show_is_injective}
	\fromto{\Hom_{S}(X,Y)}{\Hompin_{S}(\Xet,\Yet)}
\end{equation}
is injective.
See \Cref{cor:injectivity_part_of_reconstruction_for_Jacobson_schemes}.
This is the `easy part' of the reconstruction theorem; in particular, it requires minimal conditions on the base scheme $ S $.

By taking fibers over $ S $, it is straightforward to reduce to the case where $ S $ is a field.
The crux of the argument is that since morphisms from reduced schemes topologically of finite type over a field are determined by their values on closed points (\Cref{lemma:map_from_points_is_epi_for_Jacobson_schemes}), we can reduce the claim that \eqref{eq:sec_intro_want_to_show_is_injective} is injective to the case where $ X $ is the spectrum of a finite extension of $ k $.

We start by observing that reconstructing a morphism of schemes from a morphism of étale topoi is easy if the target is a finite separable field extension of $ k $.

\begin{recollection}\label{recollection:claim_is_obvious_for_domain_etale}
	Let $ k $ be a field, let $ L \supset k $ be a finite separable field extension, and write $ Y = \Spec(L) $.
	We conflate notation and also write $ Y \in \Spec(k)_{\et}$ for the associated representable sheaf.
	Then there is a natural equivalence $ \equivto{\Yet}{(\Spec(k)_{\et})_{/Y}} $ of topoi over $ \Spec(k)_{\et} $.
	Let $ \qlowerstar \colon \fromto{\Spec(L)_{\et}}{\Spec(k)_{\et}} $ denote the natural geometric morphism.
	In this case, the geometric morphisms into $ \Yet $ over $ k $ are easy to compute.
	Indeed by \cite[Exposé IV, Proposition 5.12]{MR50:7130}, for any $ k $-scheme $ p \colon \fromto{X}{\Spec(k)} $, the functor
	\begin{equation*}
		\Hom_{k}(\Xet,\Spec(L)_{\et}) \to 
		\Hom_{k}(X,\Spec(L))
	\end{equation*}
	that sends a geometric morphism $ (\tlowerstar,\alpha) $ to the morphism of schemes given by the composite
	\begin{equation*}
		\begin{tikzcd}[sep=3.5em]
			X \cong t^*(Y) \arrow[r, "t^*(\Delta)"] & \displaystyle t^*(Y \cross_{\Spec(k)} Y) = \tupperstar \qupperstar(Y) \isomorphic \pupperstar(Y) = X \cross_{\Spec(k)} Y \arrow[r, "\pr_2"] & Y
		\end{tikzcd}
	\end{equation*}
	is an equivalence of groupoids.
	Furthermore it is easy to check that this isomorphism defines an inverse to the obvious map
	\begin{equation*}
		\Hom_{k}(X,\Spec(L)) \to \Hom_{k}(\Xet,\Spec(L)_{\et}) \period
	\end{equation*}
\end{recollection}

\begin{recollection}[(closed subtopoi)]
	Let $ \Xcal $ be a topos and $ U \in \Open(\Xcal) $ a subterminal object.
	Recall that the \textit{closed complement} $ \Zcal \subset \Xcal $ of the open subtopos $ \Xcal_{/U} $ is the full subcategory of $ \Xcal $ spanned by those $ \Fcal \in \Xcal $ such that the projection $ \pr_2 \colon \Fcal \times U \to U $ is an isomorphism.
	Furthermore if $ \elowerstar \colon \Wcal \to \Xcal $ is a geometric morphism such that $ \eupperstar(U) = \emptyset $ is the intial object, then there is a unique factorization
	\begin{equation*}
		\begin{tikzcd}
			\Wcal \arrow[r,"\elowerstar"] \arrow[dr, "\elowerstar'"', dotted] & \Xcal \\
			& \Zcal \arrow[u, "i_*"', hooked]
		\end{tikzcd}
	\end{equation*}
\end{recollection}

\begin{remark}\label{remark:closed_subtopoi_of_schemes}
	Let $ X $ be a scheme and let $ Z \subset X $ a closed subscheme with open complement $ U \subset X $.
	Then the closed complement of the open subtopos $ \Uet \subset \Xet $ is equivalent to the topos $ \Zet $.
	(Note that this statement is true for any closed subscheme structure on $ |Z| $.) 
	Furthermore a geometric morphism $ \elowerstar \colon \Wet \to \Xet $ factors through the closed complement $ \Zet $ if and only if the map of topological spaces $ |\elowerstar| \colon |W| \to |X| $ factors through $ |Z| \subset |X| $.
\end{remark}

\begin{lemma}\label{lemma:topologically_finite_type_has_finite_residue_fields}
	Let $ k $ be a perfect field and $ X $ a scheme locally topologically of finite type over $ k $.
	Then for any closed point $ x \in X $, the field extension $ \upkappa(x) \supset k $ is finite.
\end{lemma}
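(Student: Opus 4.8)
The plan is to transport the finiteness conclusion across the universal homeomorphism provided by the hypothesis, reducing to the classical Nullstellensatz. By \Cref{def:topologically_of_finite_type}, being locally topologically of finite type over $ k $ means that there is a universal homeomorphism of $ k $-schemes $ g \colon \fromto{X}{Y} $ with $ Y $ locally of finite type over $ k $. Since $ g $ is a homeomorphism on underlying spaces, the image $ y \colonequals g(x) $ of the closed point $ x $ is a closed point of $ Y $.

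First I would bound $ \upkappa(y) $. As $ \Spec(k) $ is Jacobson, applying \Cref{prop:characterization_of_Jacobson_schemes} to the structure morphism $ \fromto{Y}{\Spec(k)} $ (which is locally of finite type) and to the closed point $ y $ shows that the extension $ \upkappa(y) \supset k $ is finite. Next, since $ g $ is a universal homeomorphism it is universally injective, and hence the residue field extension $ \upkappa(x) \supset \upkappa(y) $ is purely inseparable.

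The crux, and the only place the perfectness of $ k $ enters, is to see that this purely inseparable extension is trivial; without perfectness the field $ \upkappa(x) $ could well be an \emph{infinite} purely inseparable extension of $ \upkappa(y) $, such as a perfection. We may assume $ \characteristic(k) = p > 0 $, the characteristic $ 0 $ case being immediate as purely inseparable extensions are then automatically trivial. Because $ \upkappa(y) $ is finite, hence algebraic, over the perfect field $ k $, the field $ \upkappa(y) $ is itself perfect. A perfect field admits no nontrivial purely inseparable extension: if $ \alpha \in \upkappa(x) $, then $ \alpha^{p^n} \in \upkappa(y) $ for some $ n \geq 0 $, and taking the unique $ p^n $-th root of $ \alpha^{p^n} $ in the perfect field $ \upkappa(y) $ together with the injectivity of Frobenius forces $ \alpha \in \upkappa(y) $. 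Therefore $ \upkappa(x) = \upkappa(y) $, which is finite over $ k $, as desired.
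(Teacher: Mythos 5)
Your proof is correct and follows essentially the same route as the paper: push $x$ across the universal homeomorphism to a closed point $y$ of a locally finite type $k$-scheme, get finiteness of $\upkappa(y) \supset k$ from \Cref{prop:characterization_of_Jacobson_schemes}, and use perfectness of $\upkappa(y)$ to kill the purely inseparable residue extension. The only cosmetic difference is that you derive pure inseparability from universal injectivity and check triviality by hand, where the paper invokes \stacks{0CNC}.
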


\begin{proof}
	By definition, there is a universal homeomorphism of $ k $-schemes $ f \colon \fromto{X}{X'} $ where $ X' $ is locally of finite type over $ k $.
	Then $ f(x) \in X' $ is closed and \Cref{prop:characterization_of_Jacobson_schemes} shows that the field extension $ \upkappa(f(x)) \supset k $ is finite.
	Since $ k $ is perfect, the finite extension $ \upkappa(f(x)) $ is also perfect.
	Since $ f $ is a universal homeomorphism, the induced extension on residue fields $ \upkappa(f(x)) \supset \upkappa(x) $ is a universal homeomorphism.
	Since $ \upkappa(f(x)) $ is perfect, \stacks{0CNC} shows that the field extension $ \upkappa(f(x)) \supset \upkappa(x) $ is an isomorphism.
\end{proof}

\begin{proposition}\label{prop:reconstruction_for_fields}
	Let $ k $ be a perfect field, $ L \supset k $ an algebraic field extension, and $ Y $ a scheme topologically of finite type over $ k $.
	Then the natural map
	\begin{equation*}
		Y(L) \to \Hompink(\Spec(L)_{\et},\Yet)
	\end{equation*}
	is bijective.
\end{proposition}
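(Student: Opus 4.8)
The plan is to show that both $ Y(L) $ and $ \Hompink(\Spec(L)_{\et},\Yet) $ decompose as disjoint unions indexed by the closed points of $ Y $, that the natural map respects this decomposition, and that it is a bijection on each piece. The key inputs will be \Cref{lemma:topologically_finite_type_has_finite_residue_fields} (closed points of $ Y $ have finite, hence finite separable, residue field over the perfect field $ k $), the closed-subtopos discussion of \Cref{remark:closed_subtopoi_of_schemes}, and \Cref{recollection:claim_is_obvious_for_domain_etale}, which handles the case of a finite separable target.

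First I would analyze the source. A $ k $-morphism $ \fromto{\Spec(L)}{Y} $ is the datum of a point $ y \in |Y| $ together with a $ k $-embedding $ \fromto{\upkappa(y)}{L} $, so $ Y(L) \isomorphic \coprod_{y \in |Y|} \Hom_k(\upkappa(y),L) $. Since $ L $ is algebraic over $ k $, the factor at $ y $ is nonempty only when $ \upkappa(y) $ is algebraic over $ k $, and I would argue that any such $ y $ is closed: choosing a universal homeomorphism $ \fromto{Y}{Y'} $ with $ Y' $ of finite type over $ k $ and letting $ y' $ be the image of $ y $, the field $ \upkappa(y') $ is a finitely generated extension of $ k $ which is algebraic (being a subextension of $ \upkappa(y) \supseteq k $ via $ \incto{\upkappa(y')}{\upkappa(y)} $), hence finite, so $ y' $ is closed and therefore so is $ y $ (\Cref{prop:characterization_of_Jacobson_schemes}). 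Thus $ Y(L) \isomorphic \coprod_{y \in Y_{\cl}} \Hom_k(\upkappa(y),L) $, and by \Cref{lemma:topologically_finite_type_has_finite_residue_fields} each $ \upkappa(y) $ is finite, hence finite separable, over $ k $.

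Next I would analyze the target. Given a pinned geometric morphism $ t \colon \fromto{\Spec(L)_{\et}}{\Yet} $ over $ k $, its underlying map of spaces carries the unique (closed) point of $ |\Spec(L)| $ to a point $ y \in |Y| $, which is closed by the pinned hypothesis. Writing $ U = Y \sminus \{y\} $ for the open complement, the subterminal object $ t^{*}(U) $ of $ \Spec(L)_{\et} $ must be initial, since $ |t| $ misses $ U $ and the only subterminal objects of $ \Spec(L)_{\et} $ are the initial and terminal ones ($ |\Spec(L)| $ being a one-point space); hence by \Cref{remark:closed_subtopoi_of_schemes} the morphism $ t $ factors uniquely through the closed subtopos, which for the reduced structure on $ \{y\} $ is $ \Spec(\upkappa(y))_{\et} $. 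Because $ \upkappa(y) $ is finite separable over $ k $, \Cref{recollection:claim_is_obvious_for_domain_etale} identifies the groupoid of geometric morphisms $ \fromto{\Spec(L)_{\et}}{\Spec(\upkappa(y))_{\et}} $ over $ k $ with the set $ \Hom_k(\Spec(L),\Spec(\upkappa(y))) = \Hom_k(\upkappa(y),L) $. Consequently $ \Hompink(\Spec(L)_{\et},\Yet) \isomorphic \coprod_{y \in Y_{\cl}} \Hom_k(\upkappa(y),L) $, and unwinding the identifications shows this matches the decomposition of $ Y(L) $ compatibly with the natural map, yielding bijectivity.

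The main obstacle is this target analysis: producing the factorization of a pinned geometric morphism through $ \Spec(\upkappa(y))_{\et} $ and checking that, under \Cref{recollection:claim_is_obvious_for_domain_etale}, it corresponds to an honest morphism of schemes compatibly with the natural comparison map. This is precisely where perfectness of $ k $ enters, via \Cref{lemma:topologically_finite_type_has_finite_residue_fields}, to guarantee that $ \upkappa(y) $ is finite \emph{separable} over $ k $ so that the recollection applies. Note also that one does not need the rigidity statement \Cref{prop:rigidity} here (which would require $ \Spec(L) $ to be topologically of finite type, and can fail when $ L \supset k $ is infinite algebraic), since the recollection already exhibits the relevant mapping groupoid as a discrete set directly.
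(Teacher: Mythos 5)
Your proof is correct and takes essentially the same route as the paper's: both fiber the two sides over the closed points of $ Y $, use \Cref{remark:closed_subtopoi_of_schemes} to identify the fiber on the topos side with geometric morphisms into $ \Spec(\upkappa(y))_{\et} $, and conclude via \Cref{recollection:claim_is_obvious_for_domain_etale} once \Cref{lemma:topologically_finite_type_has_finite_residue_fields} together with perfectness of $ k $ guarantees that $ \upkappa(y) $ is finite \emph{separable} over $ k $. The only difference is that you spell out steps the paper leaves implicit — that $ L $-points of $ Y $ necessarily land on closed points, and that $ t^{*}(U) $ is initial so the pinned morphism factors through the closed subtopos — which is added detail rather than a different argument.
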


\begin{proof}
	Consider the commutative triangle
	\begin{equation*}
		\begin{tikzcd}
			Y(L) \arrow[rr] \arrow[dr] & & \Hompink(\Spec(L)_{\et},\Yet) \arrow[dl] \\
			& \{\text{closed points of } Y \} & \phantom{\Hompink(\Spec(L)_{\et},\Yet)} \period
		\end{tikzcd}
	\end{equation*}
	It suffices to see that the horizontal map becomes a bijection after taking fibers over each closed point $ y_0 \in Y $.
	By \cref{remark:closed_subtopoi_of_schemes}, the map on fibers is the natural map
	\begin{equation*}
		\Hom_{k}(\Spec(L),\Spec(\upkappa(y_0))) \to 	\Hom_{k}(\Spec(L)_{\et},\Spec(\upkappa(y_0))_{\et}) \period
	\end{equation*}
	By \Cref{lemma:topologically_finite_type_has_finite_residue_fields}, the residue field $ \upkappa(y_0) $ is a finite field extension of $ k $; thus the claim follows from \cref{recollection:claim_is_obvious_for_domain_etale}.
\end{proof}

\begin{observation}\label{observation:reduce_to_perfect_fields}
	Let $ S $ be a scheme and $ f,g \colon \fromto{X}{Y} $ morphisms of $ S $-schemes.
	Let $ \fromto{T}{S} $ be a faithfully flat map.
	Then by fpqc descent, $ f = g $ if and only if $ f_T = g_T $ as morphisms of $ T $-schemes $ \fromto{X_T}{Y_T} $.

	In particular, since every extension of fields $ K \supset k $ is faithfully flat, given morphisms of $ k $-schemes  $ f,g \colon \fromto{X}{Y} $, we have $ f = g $ if and only if $ f_K = g_K $.
\end{observation}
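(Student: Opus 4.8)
The forward implication is trivial, since base change is functorial and hence carries the equation $f = g$ to $f_T = g_T$. All of the content is in the converse, and the plan is to deduce it from the single fact that a faithfully flat morphism of schemes is an \emph{epimorphism} in $\Sch$.

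First I would record the naturality of base change. Writing $p_X \colon \fromto{X_T}{X}$ and $p_Y \colon \fromto{Y_T}{Y}$ for the two projections, one has $p_Y \circ f_T = f \circ p_X$ and $p_Y \circ g_T = g \circ p_X$. Thus if $f_T = g_T$, post-composing with $p_Y$ gives $f \circ p_X = g \circ p_X$ as morphisms $\fromto{X_T}{Y}$. Next I would note that $p_X$ is faithfully flat, being the base change of the faithfully flat map $\fromto{T}{S}$ along $\fromto{X}{S}$. It therefore suffices to know that $p_X$ is an epimorphism in $\Sch$, since then $f \circ p_X = g \circ p_X$ forces $f = g$.

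For the epimorphism property I would argue directly with locally ringed spaces rather than invoke the full descent formalism. Faithful flatness makes $|p_X|$ surjective, so two morphisms out of $X$ that agree after precomposition with $p_X$ already agree on underlying spaces; call the common map $h$. For the sheaf maps, fix $x \in X$ and choose a point $x'$ of $X_T$ above it; the induced map $\fromto{\Ocal_{X,x}}{\Ocal_{X_T,x'}}$ is a flat local homomorphism of local rings, hence faithfully flat and in particular injective. Since the stalk maps of $f \circ p_X$ and $g \circ p_X$ at $x'$ agree, cancelling this injection shows that the stalk maps $\fromto{\Ocal_{Y,h(x)}}{\Ocal_{X,x}}$ induced by $f$ and $g$ coincide; as $x$ was arbitrary, $f = g$.

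Finally, the ``in particular'' clause is the special case $S = \Spec(k)$ and $T = \Spec(K)$: every field extension $K \supset k$ makes $\fromto{\Spec(K)}{\Spec(k)}$ faithfully flat, so the general statement applies verbatim. I do not expect a genuine obstacle here, as every step is standard commutative algebra; the only point meriting care is the epimorphism property, which one may alternatively cite as faithfully flat descent for morphisms rather than reprove by hand.
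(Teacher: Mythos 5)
Your proof is correct. The paper itself gives no written argument for this Observation beyond the words ``by fpqc descent,'' i.e., it invokes the uniqueness (separation) half of descent of morphisms along a faithfully flat cover; you instead isolate exactly that half as the statement that the projection $p_X \colon \fromto{X_T}{X}$ is an epimorphism in $\Sch$, and prove it by hand: surjectivity of $|p_X|$ handles the underlying maps, and injectivity of the flat local homomorphisms $\fromto{\Ocal_{X,x}}{\Ocal_{X_T,x'}}$ cancels the stalk maps. This is a genuinely more elementary and in fact slightly more general route: the full fpqc descent formalism is usually stated for fpqc \emph{coverings}, which carry quasi-compactness conditions beyond bare faithful flatness, whereas your stalkwise argument needs only ``faithfully flat'' exactly as in the statement (in the ``in particular'' case $\fromto{\Spec(K)}{\Spec(k)}$ is quasicompact anyway, so either reading works there). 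The one step worth phrasing carefully is the reduction of equality of scheme morphisms to equality of stalk maps: this is cleanest in the adjoint form, where the structure map of $f$ is a morphism $\fromto{f^{-1}\Ocal_Y}{\Ocal_X}$ of sheaves on $X$ with stalk at $x$ equal to $\fromto{\Ocal_{Y,f(x)}}{\Ocal_{X,x}}$, so that agreement of these maps for all $x \in X$ does imply $f = g$; the pushforward formulation would not cancel stalkwise so cleanly. Your argument, read this way, is complete.
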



We now prove injectivity in general.

\begin{notation}
	Given a groupoid $ \Gcal $, we write $ \uppi_0(\Gcal) $ for the set of isomorphism classes of objects of $ \Gcal $.
\end{notation}

\begin{theorem}\label{thm:injectivity_part_of_reconstruction}
	Let $ S $ be a qcqs scheme and let $ X $ and $ Y $ be schemes topologically of finite type over $ S $.
	If $ X $ is reduced and all fibers of $ X $ over $ S $ are reduced, then the natural map
	\begin{equation*}
		\fromto{\Hom_{S}(X,Y)}{\uppi_0(\Hom_{S}(\Xet,\Yet))}
	\end{equation*}
	is injective.
\end{theorem}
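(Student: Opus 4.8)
The plan is to reduce, in two stages, to the field-to-field statement already established in \Cref{prop:reconstruction_for_fields}. First I would reduce to the case where $ S = \Spec(k) $ is a field. Suppose $ f,g \colon \fromto{X}{Y} $ have the same image in $ \uppi_0(\Hom_S(\Xet,\Yet)) $, i.e. the induced geometric morphisms $ \flowerstar,\glowerstar $ are isomorphic over $ S_{\et} $. Fix $ s \in S $ and pull back along it: by \Cref{rec:taking_topoi_preserves_fiber_products} the squares relating $ X_{s,\et},\Xet $ and $ Y_{s,\et},\Yet $ are cartesian, so $ \flowerstar $ and $ \glowerstar $ restrict to isomorphic geometric morphisms $ \fromto{X_{s,\et}}{Y_{s,\et}} $ over $ \Spec(\upkappa(s))_{\et} $. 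The fibers $ X_s $ are reduced by hypothesis, and $ X_s,Y_s $ remain topologically of finite type over $ \upkappa(s) $ by \Cref{obs:morphism_locally_topologically_of_finite_type_are_stable_under_pullback}, so the field case would give $ f_s = g_s $ for every $ s $. To globalize, consider the equalizer $ \eq(f,g) \subset X $, which is a locally closed subscheme since it is the pullback of the diagonal $ \fromto{Y}{Y \times_S Y} $, an immersion. Equalizers commute with base change, so $ \eq(f,g) \times_S \Spec(\upkappa(s)) = \eq(f_s,g_s) = X_s $; hence $ \eq(f,g) $ contains every point of $ X $, i.e. $ \lvert\eq(f,g)\rvert = \lvert X\rvert $. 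As $ X $ is reduced, a locally closed subscheme with full support equals $ X $, so $ f = g $.

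For the field case, $ X $ is reduced and topologically of finite type over $ k $, hence Jacobson (\Cref{ex:schemes_locally_topologically_of_finite_type_over_fields_are_Jacobson}). By \Cref{lemma:map_from_points_is_epi_for_Jacobson_schemes}, the morphism $ \fromto{\coprod_{x \in \Xcl}\Spec(\upkappa(x))}{X} $ is an epimorphism of schemes, so the restriction map $ \fromto{\Hom_k(X,Y)}{\prod_{x \in \Xcl}\Hom_k(\Spec(\upkappa(x)),Y)} $ is injective. Precomposing a geometric morphism with each point $ \fromto{\Spec(\upkappa(x))_{\et}}{\Xet} $ yields a commuting square, and isomorphic geometric morphisms remain isomorphic after precomposition; so it suffices to prove that $ \fromto{\Hom_k(\Spec(\upkappa(x)),Y)}{\uppi_0\Hom_k(\Spec(\upkappa(x))_{\et},\Yet)} $ is injective for each closed point $ x $. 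Since $ x $ is a closed point of a scheme topologically of finite type over the field $ k $, its residue field $ \upkappa(x) $ is algebraic over $ k $ (reduce to the finite type case along a universal homeomorphism and apply \Cref{prop:characterization_of_Jacobson_schemes}). Thus I have reduced to the case $ X = \Spec(L) $ with $ L \supset k $ algebraic.

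Finally I would make the base field perfect and invoke \Cref{prop:reconstruction_for_fields}. Choose an embedding $ L \hookrightarrow \Omega \colonequals \kbar $; then $ \Omega $ is perfect, contains $ \kperf $, and $ \fromto{\Spec(\Omega)}{\Spec(L)} $ is an epimorphism of schemes (being faithfully flat; cf.\ \Cref{observation:reduce_to_perfect_fields}), so $ f = g $ as soon as they agree after precomposition with it, while on topoi the two precomposites $ \fromto{\Spec(\Omega)_{\et}}{\Yet} $ stay isomorphic. Because $ \Omega \supset \kperf $, a $ k $-morphism $ \fromto{\Spec(\Omega)}{Y} $ is the same datum as a $ \kperf $-morphism $ \fromto{\Spec(\Omega)}{Y \times_k \kperf} $, and the universal homeomorphism $ \fromto{Y \times_k \kperf}{Y} $ (the base change of $ \fromto{\Spec(\kperf)}{\Spec(k)} $) induces an equivalence of étale topoi, under which isomorphic geometric morphisms over $ k $ correspond to isomorphic ones over $ \kperf $. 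Now $ \kperf $ is perfect, $ \Omega \supset \kperf $ is algebraic, and $ Y \times_k \kperf $ is topologically of finite type over $ \kperf $, so \Cref{prop:reconstruction_for_fields} shows $ \fromto{(Y \times_k \kperf)(\Omega)}{\Hompin_{\kperf}(\Spec(\Omega)_{\et},(Y \times_k \kperf)_{\et})} $ is a bijection; composing with $ \Hompin_{\kperf} \hookrightarrow \uppi_0\Hom_{\kperf} $ gives the required injectivity and hence $ f = g $.

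The main obstacle is precisely this passage to a perfect base field. One cannot simply base change $ X $ along $ \fromto{\Spec(\kperf)}{\Spec(k)} $: for imperfect $ k $, tensoring a reduced $ k $-algebra with $ \kperf $ can create nilpotents, and $ \fromto{X_{\red}}{X} $ is not an epimorphism, so reducedness cannot be recovered. This is exactly why the closed-point reduction—which first replaces $ X $ by spectra of fields—must be carried out before one is allowed to enlarge and perfect the ground field.
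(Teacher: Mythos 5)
Your proof is correct and follows the same overall strategy as the paper's — fiberwise reduction to a field base via \Cref{rec:taking_topoi_preserves_fiber_products} and an equalizer/full-support argument, then the Jacobson closed-point epimorphism of \Cref{lemma:map_from_points_is_epi_for_Jacobson_schemes}, then \Cref{prop:reconstruction_for_fields} — but you sequence the perfect-field reduction differently, and the difference is substantive. The paper passes to perfect $ k $ \emph{first}, citing only fpqc descent (\Cref{observation:reduce_to_perfect_fields}), and only afterwards runs the closed-point argument, which requires $ X $ reduced; as you observe, $ X \times_k \kperf $ can acquire nilpotents when $ k $ is imperfect, so the paper's ``we may furthermore assume that $ k $ is perfect,'' read literally, glosses over exactly the point you raise. (The paper's step is repairable without your reordering: since $ X $ is reduced, $ A \to (A \otimes_k \kperf)_{\red} $ is injective on affines, so $ (X_{\kperf})_{\red} \to X $ is a surjective, schematically dominant — hence epimorphic — universal homeomorphism and one may replace $ X $ by $ (X_{\kperf})_{\red} $; but that is more than the cited observation supplies.) Your order — closed points first, then perfection implemented on $ \Omega $-points rather than by base-changing $ X $ — sidesteps the issue cleanly, at the harmless cost that the residue fields $ \upkappa(x) $ are only algebraic rather than finite over $ k $, which \Cref{prop:reconstruction_for_fields} accommodates. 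Two minor points: your $ \Omega $ must be an \emph{algebraic} closure (the paper elsewhere uses $ \kbar $ for a separable closure, which over imperfect $ k $ is not perfect and need not contain $ L $), and your final step correctly notes that the bijection of \Cref{prop:reconstruction_for_fields} lands in the pinned morphisms, whose isomorphism classes inject into $ \uppi_0 $ of the full $ \Hom $-groupoid because $ \Hompin $ is a full subgroupoid.
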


\begin{proof}
	Since $ X $ is reduced, the natural morphism $ \coprod_{s \in S} X_s \to X $ is an epimorphism.
	Notice that we have a commutative square
	\begin{equation*}
		\begin{tikzcd}
			\Hom_{S}(X,Y) \arrow[r] \arrow[d, hooked] & \uppi_0(\Hom_{S}(\Xet,\Yet)) \arrow[dd] \\
			\Hom_{S}(\coprod_{s \in S} X_s,Y) \arrow[d, "\wr"{xshift=-0.25ex}] & & \\
			\displaystyle \prod_{s \in S}\Hom_{\upkappa(s)}(X_s,Y_s) \arrow[r] & \displaystyle \prod_{s \in S}\uppi_0 (\Hom_{\upkappa(s)}(X_{s,\et},Y_{s,\et})) \comma
		\end{tikzcd}
	\end{equation*}
	where the right-hand vertical map is given by pulling back along the morphisms $ \Spec(\upkappa(s))_{\et} \to S_{\et} $, using \cref{rec:taking_topoi_preserves_fiber_products}.
	Since the left-hand vertical map is injective, to show that the top horizontal map is injective, it suffices to show that the bottom horizontal map is injective.
	Since all fibers of $ X $ over $ S $ are reduced, we may reduce to the case where $ S = \Spec(k) $ is the spectrum of a field.

	By \cref{observation:reduce_to_perfect_fields}, we may furthermore assume that $ k $ is perfect.
	Since $ X $ is reduced and topologically of finite type over $ k $, the natural morphism
	\begin{equation*}
		\coprod_{x \in X_{\cl}} \Spec(\upkappa(x)) \to X
	\end{equation*}
	is an epimorphism (\Cref{lemma:map_from_points_is_epi_for_Jacobson_schemes}).
	Thus \Cref{lemma:topologically_finite_type_has_finite_residue_fields} shows that we can reduce to the case where $ X = \Spec(L) $ for some finite field extension $ L \supset k $.
	In this case the claim follows from \Cref{prop:reconstruction_for_fields}.
\end{proof}

If $ S $ is furthermore Jacobson, \Cref{prop:rigidity,lem:every_morphism_between_schemes_locally_topologically_of_finite_type_is_pinned} imply the following variant:

\begin{corollary}\label{cor:injectivity_part_of_reconstruction_for_Jacobson_schemes}
	Let $ S $ be a qcqs Jacobson scheme and let $ X $ and $ Y $ be schemes topologically of finite type over $ S $.
	If $ X $ is reduced and all fibers of $ X $ over $ S $ are reduced, then the natural map
	\begin{equation*}
		\fromto{\Hom_{S}(X,Y)}{\Hompin_{S}(\Xet,\Yet)}
	\end{equation*}
	is injective.
\end{corollary}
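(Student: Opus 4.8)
The plan is to deduce this as a formal consequence of \Cref{thm:injectivity_part_of_reconstruction}, with the only extra input being the two cited facts about pinned morphisms. The theorem already gives injectivity of the map into $ \uppi_0(\Hom_{S}(\Xet,\Yet)) $; the task is to upgrade this to injectivity of the map into the \emph{pinned} $ \Hom $-set, and to check that this latter map is even well-defined.

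First I would verify that the natural map $ \fromto{\Hom_{S}(X,Y)}{\Hompin_{S}(\Xet,\Yet)} $ in the statement makes sense. Since $ S $ is Jacobson and $ X,Y $ are (locally) topologically of finite type over $ S $, \Cref{lem:every_morphism_between_schemes_locally_topologically_of_finite_type_is_pinned} shows that every morphism of $ S $-schemes $ \fromto{X}{Y} $ is pinned. Hence the induced geometric morphism $ \fromto{\Xet}{\Yet} $ is pinned, and the assignment $ \goesto{f}{\flowerstar} $ indeed lands in $ \Hompin_{S}(\Xet,\Yet) $. By \Cref{prop:rigidity} the groupoid $ \Hompin_{S}(\Xet,\Yet) $ is equivalent to a set, so (following \Cref{convention:regarding_Hompink_as_a_set}) I may identify it with $ \uppi_0(\Hompin_{S}(\Xet,\Yet)) $.

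Next I would compare the two targets. Because $ \Hompin_{S}(\Xet,\Yet) $ is a \emph{full} subgroupoid of $ \Hom_{S}(\Xet,\Yet) $, two pinned geometric morphisms are isomorphic in $ \Hompin_{S} $ if and only if they are isomorphic in $ \Hom_{S} $; consequently the induced map on isomorphism classes
\begin{equation*}
	\fromto{\uppi_0(\Hompin_{S}(\Xet,\Yet))}{\uppi_0(\Hom_{S}(\Xet,\Yet))}
\end{equation*}
is injective. I would then assemble the commuting triangle whose legs are the map of the corollary, this inclusion of $ \uppi_0 $'s, and the map appearing in \Cref{thm:injectivity_part_of_reconstruction}. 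Since the composite $ \fromto{\Hom_{S}(X,Y)}{\uppi_0(\Hom_{S}(\Xet,\Yet))} $ is injective by that theorem, and it factors through the map of the corollary, the map of the corollary is injective as well.

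There is essentially no serious obstacle here: all of the genuine content is in \Cref{thm:injectivity_part_of_reconstruction}. The only points requiring care are bookkeeping ones, namely confirming that the image really lies in the pinned subgroupoid (via \Cref{lem:every_morphism_between_schemes_locally_topologically_of_finite_type_is_pinned}) and that passing from $ \Hompin_{S} $ to $ \Hom_{S} $ does not collapse isomorphism classes (fullness of the inclusion, together with the rigidity of \Cref{prop:rigidity} that lets us treat both $ \Hom $-groupoids as sets). I expect the write-up to be only a few lines.
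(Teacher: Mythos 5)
Your proposal is correct and matches the paper's argument: the paper states the corollary as an immediate consequence of \Cref{thm:injectivity_part_of_reconstruction} together with \Cref{prop:rigidity} (rigidity, so $\Hompin_{S}(\Xet,\Yet)$ is a set) and \Cref{lem:every_morphism_between_schemes_locally_topologically_of_finite_type_is_pinned} (so the map is well-defined), which is exactly the bookkeeping you spell out. Your observation that fullness of the inclusion $\Hompin_{S}(\Xet,\Yet) \subset \Hom_{S}(\Xet,\Yet)$ gives injectivity on isomorphism classes is the correct way to make the factorization through $\uppi_0(\Hom_{S}(\Xet,\Yet))$ precise.
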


\begin{remark}
	\Cref{cor:injectivity_part_of_reconstruction_for_Jacobson_schemes} shows that in the equivalent formulations of étale reconstruction in \Cref{prop:equivalent_conditions_for_etale_reconstruction}, we can replace the word `bijective' by `surjective' and these statements are still equivalent to étale reconstruction.
\end{remark}


\section{Reduction to regular source and target \texorpdfstring{$\GGm$}{𝔾ₘ}}\label{sec:reduction_to_regular_source_and_target_GGm}

Let $ k $ be a field.
The goal of this section is to prove that $ k $ satisfies étale reconstruction in the sense of \Cref{def:etale_reconstruction} if and only if for each affine, regular, connected, finite type $ k $-scheme $ X $ such that $ X(k) \neq \emptyset $, the natural map 
\begin{equation*}
	\Hom_{k}(\Xawn,\GGm) \to \Hompink(\Xetawn,\GGmet)
\end{equation*}
is bijective.
We accomplish this as follows.
In \cref{subsec:h-descent_for_morphisms_from_absolutely_weakly_normal_schemes}, we prove that for a fixed $ k $-scheme $ Y $, the functor
\begin{equation*}
	X \mapsto \Hom_k(\Xawn,Y) 
\end{equation*}
satisfies descent for Voevodsky's \textit{\htopology} \cites{MR2687724}{MR1403354}.
In \cref{subsec:h-descent_for_the_etale_topos}, we prove that the étale topos also satisfies \hdescent.
This implies that the functor
\begin{equation*}
	X \mapsto \Hompink(\Xetawn,\Yet)
\end{equation*}
is an \hsheaf (\Cref{cor:pinned_homomorphisms_are_an_h-sheaf}).
By the theory of alterations, every scheme of finite type over a field admits an \hhypercover by regular finite type schemes; this allows us to reduce reconstruction to the case where $ X $ an affine, regular, finite type $ k $-scheme.

Since one can extract the Zariski topological space of $ X $ from the étale topos $ \Xet $, in order to reconstruct $ X $, it suffices to reconstruct the structure sheaf $ \Ocal_X $, i.e., to prove reconstruction when $ Y = \AA_{k}^1 $.
With a little more work, we show that it actually suffices to treat the case $ Y = \GGm $ and $ X $ is connected and has a rational point. 
\Cref{subsec:reduction_to_the_regular_case} makes this informal argument precise.
There we also show the more refined statement that $ k $ satisfies étale reconstruction if and only if for each a pinned geometric morphism \smash{$ \philowerstar \colon \fromto{\Xetawn}{\GGmet} $}, there is a morphism of $ k $-schemes \smash{$ \fromto{\Xawn}{\GGm} $} that agrees with $ \philowerstar $ on $ \kbar $-points.

We note that the idea that it suffices to prove reconstruction for $ X $ affine and $ Y = \GGm $ appeared in Voevodsky's work \cite{MR1098621}.


\subsection{\hdescent for morphisms from absolutely weakly normal schemes}\label{subsec:h-descent_for_morphisms_from_absolutely_weakly_normal_schemes}

The purpose of this subsection is to prove the following:

\begin{theorem}\label{thm:h-descent_for_morphisms_from_Xawn}
	Let $ S $ be a scheme and let $ Y $ be an $ S $-scheme.
	Then the presheaf $ \Sch_{S}^{\op} \to \Set $ given by
	\begin{equation*}
		X \mapsto \Hom_S(\Xawn,Y)
	\end{equation*}
	is an \hsheaf.
\end{theorem}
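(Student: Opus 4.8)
The plan is to reduce the sheaf condition for $ F(X) \colonequals \Hom_S(X^{\awn}, Y) $ to a single effective-descent statement for absolutely weakly normal schemes, exploiting that $ \awn $ is a right adjoint. \emph{First reductions:} by \Cref{obs:absolute_weak_normalization_on_slices} there is a natural identification $ F(X) \isomorphic \Hom_{S^{\awn}}(X^{\awn}, Y^{\awn}) $, so $ F $ factors as $ \Sch_S \xrightarrow{(-)^{\awn}} (\Schawn)_{/\Sawn} \xrightarrow{\Hom_{\Sawn}(-,Y^{\awn})} \Set $. Since absolute weak normalization is computed affine-locally it preserves disjoint unions; hence $ F $ carries finite coproducts to products and $ F(\emptyset) = \ast $. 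Consequently the sheaf condition for a finite covering family $ \{U_i \to X\} $ is equivalent to the Čech condition for the single morphism $ \coprod_i U_i \to X $, and as every h-cover is refined by a finitely presented universally submersive morphism it suffices to prove, for each single such cover $ p \colon U \to X $ with Čech nerve $ U_\bullet $, that $ F(X) \to \lim_{\Delta} F(U_\bullet) $ is a bijection.

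\emph{Reformulation as effective descent.} As $ \awn $ is a right adjoint, it preserves the fibre products forming $ U_\bullet $, so $ U_\bullet^{\awn} $ is exactly the Čech nerve of $ p^{\awn} \colon U^{\awn} \to X^{\awn} $ in $ (\Schawn)_{/\Sawn} $. The left adjoint $ (\Schawn)_{/\Sawn} \to \Sch_S $ of \Cref{obs:absolute_weak_normalization_on_slices} preserves colimits, and $ \Hom_{\Sch_S}(-, Y) $ is representable, hence sends colimits of schemes to limits of sets; therefore the desired bijection holds for \emph{every} $ Y $ as soon as $ X^{\awn} $ is the colimit of $ U_\bullet^{\awn} $. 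In a $ 1 $-category this colimit is the coequalizer of the kernel pair, so the whole theorem is equivalent to the following \emph{Main Lemma}: for every finitely presented universally submersive morphism $ p \colon U \to X $, the morphism $ p^{\awn} \colon U^{\awn} \to X^{\awn} $ is an effective epimorphism in $ \Schawn $.

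\emph{Proving the Main Lemma.} The counits $ X^{\awn} \to X $ and $ U^{\awn} \to U $ are universal homeomorphisms and submersiveness is a topological condition, so $ p^{\awn} $ is again universally submersive and now lies between absolutely weakly normal schemes. That such a morphism is a universal strict epimorphism is precisely the effective-descent property for which absolute weak normalization was introduced by Rydh \cite{MR2679038}. To establish it I would test the coequalizer against an arbitrary $ Y $, reduce Zariski-locally on $ X^{\awn} $ and on $ Y $ to the affine case $ Y = \Spec(B) $ — where the assertion becomes descent for the structure presheaf $ X \mapsto \Gamma(X^{\awn}; \Ocal) $ — and then use Voevodsky's structure theorem \cite{MR1403354} to refine $ p $ by a composite of a Zariski cover, a finite surjective morphism, and an abstract blow-up. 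The Zariski case follows from \Cref{lem:absolute_weak_normalization_of_open_immersions} and the Zariski-sheaf property of global sections, and the étale case because absolute weak normality is étale-local.

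\emph{Main obstacle.} The technical heart is the remaining finite surjective and abstract blow-up (proper cdh) cases, where absolute weak normality is genuinely used: seminormality is exactly the property that lets one glue a regular function across the identifications produced by collapsing a proper cover, while condition \enumref{def:absolute_weak_normality}{2} governs the purely inseparable phenomena appearing in positive characteristic. Making these gluings effective — i.e. verifying that $ p^{\awn} $ really is an effective epimorphism, rather than merely a topological one — is the crux, and it is here that Rydh's effective-descent theorem (or a direct seminormality computation in the spirit of Swan and Traverso) carries the argument; the rest of the proof is formal.
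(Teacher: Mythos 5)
Your formal reductions are all sound: the reduction to a single cover via finite coproducts, the identification $\Hom_S(\Xawn,Y) \cong \Hom_{\Sawn}(\Xawn,\Yawn)$ from \Cref{obs:absolute_weak_normalization_on_slices}, the observation that $(-)^{\awn}$, being a right adjoint into $\Schawn$, carries the Čech nerve of $p$ to the kernel pair of $p^{\awn}$ in $\Schawn$, and the left-adjointness of the inclusion, which upgrades the coequalizer statement to descent against \emph{every} $Y$. Your Main Lemma — that \hcovers become effective epimorphisms after absolute weak normalization — is indeed equivalent to the theorem. The gap is that you never prove it: the two cases you defer as the ``main obstacle'' (finite surjective morphisms and abstract blow-ups) are the entire content of the statement, and the authorities you invoke do not supply them. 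Rydh's effective-descent theorem in \cite{MR2679038} concerns descent of \emph{étale morphisms} along universally submersive maps — a different fibered category — and says nothing about effectivity of descent data for morphisms into an arbitrary $Y$, equivalently for regular functions; the ``direct seminormality computation in the spirit of Swan and Traverso'' is not carried out, and it is genuinely nontrivial: injectivity of $A^{\awn} \to \mathrm{eq}\big( B^{\awn} \rightrightarrows (B \otimes_A B)^{\awn}\big)$ follows easily from reducedness and surjectivity of the cover, but surjectivity — that a function which descends topologically becomes regular after $\awn$ — is the hard step. A secondary issue: Voevodsky's structure theorem for \hcovers, on which your refinement into Zariski, finite surjective, and blow-up pieces relies, is a Noetherian statement, whereas the theorem is over an arbitrary base $S$; in this generality one must instead argue via Raynaud--Gruson flatification or the V-topology.

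For comparison, the paper makes precisely the closing move you are missing: it isolates the case $Y = \AA^1_S$, i.e.\ \hdescent for $X \mapsto \Gamma(\Xawn;\Ocal_{\Xawn})$, as \Cref{prop:h-descent_for_A1}, citing \stacks{0EVU} — where the finite/proper/valuative work is actually done, in full non-Noetherian generality — and then deduces the general case formally: by \Cref{lem:absolute_weak_normalization_of_open_immersions} one works Zariski-locally to assume $S$, $Y$, $X$, and $W$ affine, and for $Y = \Spec(B_0)$ the sheaf condition is immediate from the ring-level equalizer $A^{\awn} \cong \mathrm{eq}\big(B^{\awn} \rightrightarrows (B \otimes_A B)^{\awn}\big)$. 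Your skeleton would close up into essentially this proof if, at the crux, you replaced the appeal to Rydh's theorem by a citation of \stacks{0EVU}; as written, the proposal reformulates the theorem in an equivalent shape and leaves its substance unproved.
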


For finite type schemes over a noetherian scheme of characteristic $ 0 $ this was proven in \cite[Theorem 3.2.9]{zbMATH00912445}.
The general statement is an easy consequence of the case $ Y = \AA_S^1 $:

\begin{proposition}[{\stacks{0EVU}}]\label{prop:h-descent_for_A1}
	Let $ S $ be a scheme.
	Then the presheaf $ \Sch_{S}^{\op} \to \Set $ given by
	\begin{equation*}
		X \mapsto \Hom_S(\Xawn,\AA_S^1)
	\end{equation*}
	is an \hsheaf.
\end{proposition}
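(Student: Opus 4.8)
\emph{Reformulation.} Since $ \AA^1_S \isomorphic S \cross_{\Spec(\ZZ)} \AA^1_{\ZZ} $, an $ S $-morphism $ \Xawn \to \AA^1_S $ is the same datum as a global function on $ \Xawn $, so the presheaf in question is
\[
	F(X) \colonequals \Hom_S(\Xawn,\AA^1_S) \isomorphic \Gamma(\Xawn;\Ocal_{\Xawn}) \period
\]
Two formal properties are immediate. First, $ F $ sends finite coproducts to products: $ \coprod_i X_i^{\awn} $ is absolutely weakly normal and the canonical map $ \coprod_i X_i^{\awn} \to \coprod_i X_i $ is a universal homeomorphism, hence is the absolute weak normalization by \Cref{thm:existence_of_absolute_weak_normalization_and_seminormalization}, and global sections split over disjoint unions. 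Second, $ F $ is a Zariski sheaf: by \Cref{lem:absolute_weak_normalization_of_open_immersions} a Zariski cover $ \{U_i \to X\} $ induces a Zariski cover $ \{U_i^{\awn} \to \Xawn\} $ with $ U_i^{\awn} \cross_{\Xawn} U_j^{\awn} \isomorphic (U_i \cross_X U_j)^{\awn} $, so the sheaf condition for $ F $ on $ \{U_i\} $ is exactly the Zariski sheaf condition for $ \Ocal_{\Xawn} $ on $ \{U_i^{\awn}\} $.

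\emph{Reduction to two descents.} The substance therefore lies in the non-Zariski generators of the h-topology. By the structure theory of h-coverings, every standard h-covering of a qcqs scheme is refined by a composite of an fppf covering with a proper surjective, finitely presented covering; using flatification by blow-ups (Raynaud--Gruson) and Noetherian induction, descent along a general proper surjective morphism reduces in turn to the fppf case and to descent along blow-ups, i.e. abstract blow-up squares. Combined with the two formal properties above, it thus suffices to check that $ F $ satisfies \textup{(a)}~descent along faithfully flat, finitely presented morphisms, and \textup{(b)}~descent for abstract blow-up squares. For the Noetherian reduction needed in \textup{(b)}, one uses that $ F $ commutes with cofiltered limits of qcqs schemes along affine transition maps, because the conditions of \Cref{def:absolute_weak_normality} are finitary and their witnessing elements are unique; hence one reduces to $ X $ of finite type over $ \ZZ $.

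\emph{The fppf case.} For \textup{(a)} I would reduce to affines and show that for any faithfully flat, finitely presented ring map $ A \to B $ the diagram $ A^{\awn} \to B^{\awn} \rightrightarrows (B \otimes_A B)^{\awn} $ is an equalizer. The starting point is the classical faithfully flat descent $ A = \eq(B \rightrightarrows B \otimes_A B) $ for the structure sheaf. The new content is that absolute weak normalization is compatible with this equalizer, which I would deduce from the fact that the defining equations of \Cref{def:absolute_weak_normality} (namely $ x^3 = y^2 $ and $ \el^{\el}x = y^{\el} $) can be solved after the faithfully flat base change to $ B $, while the uniqueness of the solutions (\Cref{prop:basic_properties_of_seminormal_rings}) forces them to descend; this is the ring-level reflection of the flat-descent properties of absolute weak normalization going back to Rydh \cite{MR2679038}.

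\emph{The blow-up case, and the main obstacle.} For \textup{(b)}, given a proper $ p \colon X' \to X $ that is an isomorphism over the complement of a closed subscheme $ Z \hookrightarrow X $, with preimage $ Z' \colonequals Z \cross_X X' $, one must show
\[
	F(X) \isomorphic F(X') \cross_{F(Z')} F(Z) \period
\]
This is where I expect the main difficulty: the hard direction is to glue a function on $ (X')^{\awn} $ and a function on $ Z^{\awn} $ that agree on $ (Z')^{\awn} $ into a global function on $ \Xawn $. This is exactly the point at which the cusp-type and prime-power equations of absolute weak normalization are used: a compatible pair assembles to an integral, ``continuous'' function on $ X $, and the conditions of \Cref{def:semi_normality,def:absolute_weak_normality} are precisely what guarantee that such a function already defines a section of $ \Ocal_{\Xawn} $. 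Carrying this out rigorously — for instance by working inside the total ring of fractions and analyzing the behaviour along the exceptional locus — is the crux. Once \textup{(a)} and \textup{(b)} are established, $ F $ satisfies the sheaf condition for every generating h-covering and is therefore an h-sheaf.
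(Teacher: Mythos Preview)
The paper does not give a proof of this proposition: it is stated with the citation \stacks{0EVU} and then used as a black box in the proof of \Cref{thm:h-descent_for_morphisms_from_Xawn}. So there is no ``paper's own proof'' to compare against beyond the Stacks reference.

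Your outline has the right overall shape --- reformulate as $ X \mapsto \Gamma(\Xawn;\Ocal_{\Xawn}) $, check the Zariski condition via \Cref{lem:absolute_weak_normalization_of_open_immersions}, and then reduce to fppf descent plus descent for proper surjections (or abstract blow-up squares) --- and this is indeed how the Stacks argument is organized. But two places are not yet proofs. In the fppf step, the slogan ``solve the defining equations of \Cref{def:absolute_weak_normality} after base change and use uniqueness to descend'' does not directly give $ A^{\awn} = \eq(B^{\awn} \rightrightarrows (B \otimes_A B)^{\awn}) $: the absolute weak normalization is built by a transfinite iteration of adjoining such roots, and you need an argument that the equalizer of the absolutely weakly normal rings is itself absolutely weakly normal and receives a universal homeomorphism from $ \Spec $ of it to $ \Spec(A) $. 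More seriously, in the proper/blow-up step you yourself flag the gap: producing a section of $ \Ocal_{\Xawn} $ from compatible data on $ (X')^{\awn} $ and $ Z^{\awn} $ is exactly the heart of the matter, and ``the conditions of \Cref{def:semi_normality,def:absolute_weak_normality} are precisely what guarantee\ldots'' is an assertion, not an argument. The Stacks proof handles this by a direct valuation-ring computation (reducing via the valuative criterion to the case of a single valuation ring, where one can verify by hand that compatible functions glue), and that is the missing idea here.
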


\begin{proof}[Proof of \Cref{thm:h-descent_for_morphisms_from_Xawn}]
	We want to show that for any \hcover $ W \to X $, the natural map
	\begin{equation*}
		\Hom_S(\Xawn,Y) \to \lim\big\lparen \Hom_S(\Wawn,Y) \rightrightarrows \Hom_S((W \times_X W)^{\awn},Y) \big\rparen
	\end{equation*}
	is an isomorphism.
	Using \Cref{lem:absolute_weak_normalization_of_open_immersions}, we may work Zariski locally on $ Y $ and therefore assume that $ Y $ and $ S $ are both affine.
	Furthermore, since absolute weak normalization preserves Zariski covers (\Cref{lem:absolute_weak_normalization_of_open_immersions}), the functor
	\begin{equation*}
		X \mapsto \Hom_S(\Xawn,Y)
	\end{equation*}
	is a Zariski sheaf.
	Hence we may assume that $ X = \Spec(A) $ and $ W = \Spec(B) $ are affine.
	Now \Cref{prop:h-descent_for_A1} implies that the natural ring homomorphism
	\begin{equation*}
		A^{\awn} \to \lim \big\lparen B^{\awn} \rightrightarrows (B \tensor_A B)^{\awn} \big\rparen
	\end{equation*}
	is an isomorphism, which implies our claim.
\end{proof}

\begin{example}\label{ex:Hom_Xawn_GGm_is_an_h-sheaf}
	Let $ S $ be a scheme.
	Then the presheaf $ \Sch_{S}^{\op} \to \Set $ given by $ X \mapsto \Hom_S(\Xawn,\GGm) $ is an \hsheaf.
\end{example}


\subsection{\hdescent for the étale topos}\label{subsec:h-descent_for_the_etale_topos}

We now prove that the étale topos satisfies \hdescent.
We make use of the following general result.

\begin{notation}
	We write $ \Cat $ for the $ (2,1) $-category with objects locally small categories, $ 1 $-mor\-phisms functors, and $ 2 $-morphisms natural isomorphisms.
	Write $ \Gpd \subset \Cat $ for the full subcategory spanned by the groupoids.
	We write $ \LTop $ for the $ (2,1) $-category with objects topoi, $ 1 $-morphisms left exact left adjoints, and $ 2 $-morphisms natural isomorphisms.
\end{notation}

\begin{recollection}
	The inclusion $ \fromto{\LTop}{\Cat} $ preserves limits.
	See \HTT{Proposition}{6.3.2.3} for the proof of this result in the setting of \topoi.
	In the setting of ordinary topoi, the proof is verbatim the same.
\end{recollection}

\begin{proposition}\label{prop:HA.4.7.5.3}
	Let $ \Ccal^{\bullet} \colon \fromto{\Deltaplus}{\Cat} $ be an augmented cosimplicial category.
	Let $ G \colon \fromto{\Ccal^{-1}}{\Ccal^0} $ denote the agumentation.
	Assume that:
	\begin{enumerate}[label=\stlabel{prop:HA.4.7.5.3}, ref=\arabic*]
		\item\label{prop:HA.4.7.5.3.1} The category $ \Ccal^{-1} $ admits limits of $ G $-split equalizer diagrams, and those equalizers are preserved by $ G $.

		\item\label{prop:HA.4.7.5.3.2} For every morphism $ \alpha \colon \fromto{[m]}{[n]} $ in $ \Deltaplus $, the square
		\begin{equation*}
			\begin{tikzcd}
				\Ccal^m \arrow[r, "d^0"] \arrow[d, "\alphaupperstar"'] & \Ccal^{m+1} \arrow[d, "{([0] \star \alpha)\upperstar}"] \\
				\Ccal^n \arrow[r, "d^0"'] & \Ccal^{n+1}
			\end{tikzcd}
		\end{equation*}
		is horizontally right adjointable.
		(In particular, this requires that the coface functors $ d^0 $ be left adjoints.)
	\end{enumerate}
	Then the natural functor \smash{$ \theta \colon \fromto{\Ccal^{-1}}{\lim_{[n] \in \DDelta} \Ccal^n} $} admits a fully faithful right adjoint.
	Moreover, if $ G $ is conservative, then $ \theta $ is an equivalence of categories.
\end{proposition}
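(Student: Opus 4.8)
The plan is to recognize this statement as the $(2,1)$-categorical incarnation of Lurie's totalization form of the Barr--Beck theorem, and to deduce it from \HA{Proposition}{4.7.5.3}. Viewing each category $\Ccal^{n}$ as an $\infty$-category via its nerve, the $\infty$-categorical limit of the diagram of nerves agrees with the $(2,1)$-categorical (pseudo)limit $\lim_{[n]\in\DDelta}\Ccal^{n}$, so the functor $\theta$ is identified with the comparison functor of that proposition. Hypothesis \enumref{prop:HA.4.7.5.3}{2} is verbatim the right-adjointability hypothesis there, and hypothesis \enumref{prop:HA.4.7.5.3}{1} supplies exactly what is needed for totalizations of $G$-split cosimplicial objects of $\Ccal^{-1}$: for such a cosimplicial object the totalization collapses to the equalizer of its two bottom coface maps, so requiring $\Ccal^{-1}$ to admit, and $G$ to preserve, $G$-split equalizer diagrams is the same demand. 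Granting these translations, \HA{Proposition}{4.7.5.3} yields both the fully faithful right adjoint and, under conservativity of $G$, the equivalence.

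For a self-contained argument, I would instead construct the right adjoint $\theta^{R}$ by hand. By \enumref{prop:HA.4.7.5.3}{2} the augmentation $G = d^{0}\colon \Ccal^{-1}\to\Ccal^{0}$ is a left adjoint; write $G^{R}$ for its right adjoint. An object of $\lim_{[n]\in\DDelta}\Ccal^{n}$ amounts to an object $M\in\Ccal^{0}$ equipped with coherent descent data relating its images under the various coface functors. The unit of $G\dashv G^{R}$ together with these descent data organize $M$ into a cosimplicial object of $\Ccal^{-1}$, and the right-adjointability squares of \enumref{prop:HA.4.7.5.3}{2} identify the relevant Beck--Chevalley comparison maps as isomorphisms; this is precisely what exhibits the cosimplicial object as $G$-split, with splitting recovering $M$. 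I would then set $\theta^{R}(M)$ to be its totalization, which by $G$-splitness is the equalizer of a canonical pair of maps in $\Ccal^{-1}$ built from the descent datum and which exists by \enumref{prop:HA.4.7.5.3}{1}.

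It then remains to verify the triangle identities and analyze the unit and counit. Because $G$ preserves the defining equalizer by \enumref{prop:HA.4.7.5.3}{1} and the cosimplicial object is $G$-split, applying $G$ to $\theta^{R}(M)$ turns this equalizer into a split equalizer that recomputes $M$; hence the counit $\theta\theta^{R}\Rightarrow\id{}$ is an isomorphism on underlying objects, and tracking the descent data shows it is an isomorphism in $\lim_{[n]\in\DDelta}\Ccal^{n}$. This proves $\theta^{R}$ is fully faithful. When $G$ is conservative, the same split-equalizer computation shows that $G$ sends the unit $\id{}\Rightarrow\theta^{R}\theta$ to an isomorphism, and conservativity upgrades this to an isomorphism in $\Ccal^{-1}$, so $\theta$ is an equivalence.

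The hard part is the cosimplicial bookkeeping in the $(2,1)$-categorical setting: one must produce the splitting and check $G$-splitness coherently, carrying the coherence isomorphisms (the descent and cocycle data) along at each simplicial level and confirming that the maps supplied by the adjointability hypothesis are exactly those witnessing the splitting. Packaging away this coherence is precisely what the $\infty$-categorical formulation accomplishes, which is why reducing to \HA{Proposition}{4.7.5.3} is the cleaner route.
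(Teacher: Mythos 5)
Your first paragraph is exactly the paper's proof: the authors deduce the statement from \HA{Corollary}{4.7.5.3}, justifying the identification of the $(2,1)$-categorical limit with the $\infty$-categorical one by observing that the inclusion $\incto{\Cat}{\Catinfty}$ is a right adjoint and hence preserves limits, which is the same point as your nerve-limit comparison (and your reduction of totalizations of $G$-split cosimplicial objects to $G$-split equalizers is the correct, if implicit, translation of the hypotheses). The hands-on construction you sketch afterward is a reasonable but unnecessary supplement; the reduction to Lurie is the paper's entire argument.
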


\begin{proof}
	The \categorical version of this statement is the content of \HA{Corollary}{4.7.5.3}.
	Since the inclusion $ \incto{\Cat}{\Catinfty} $ of ordinary categories into \categories is a right adjoint, the statement here is an immediate consequence of the \categorical statement.
\end{proof}

\begin{remark}
	\Cref{prop:HA.4.7.5.3} is also a consequence of old results of Beck (ubpublished) and Bénabou--Roubaud \cite{MR255631}.
	See \cite[p. 270]{MR1285884}.
	However, these works do not give the explicit statement we use; Lurie's result is the \categorical generalization of the exact result we use.
\end{remark}

\begin{proposition}\label{prop:X_et_is_an_h-sheaf}
	The functor $ (-)_{\et} \colon \fromto{\Sch^{\op}}{\LTop} $ with pullback functoriality is an \hsheaf.
\end{proposition}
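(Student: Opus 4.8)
The plan is to deduce the \hsheaf property from the descent criterion of \Cref{prop:HA.4.7.5.3}. Fix an \hcover $ f \colon \fromto{W}{X} $ and let $ W^{\bullet} $ be its Čech nerve, so that $ W^{n} = W \times_{X} \dotsb \times_{X} W $ with $ n+1 $ factors and the face maps are the projections. Applying $ (-)_{\et} $ with pullback functoriality turns $ W^{\bullet} $ into an augmented cosimplicial topos $ \Ccal^{\bullet} \colon \fromto{\Deltaplus}{\LTop} $ with $ \Ccal^{-1} = \Xet $, $ \Ccal^{n} = (W^{n})_{\et} $, and augmentation $ G = \fupperstar \colon \fromto{\Xet}{\Wet} $. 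Since the inclusion $ \fromto{\LTop}{\Cat} $ preserves limits, it suffices to show that the comparison functor $ \theta \colon \Xet \to \lim_{[n] \in \DDelta} (W^{n})_{\et} $ is an equivalence, and for this I would verify the hypotheses of \Cref{prop:HA.4.7.5.3}.

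Two of the three inputs are formal. Hypothesis \enumref{prop:HA.4.7.5.3}{1} holds because $ \Xet $ is a topos, hence has all small limits, while $ G = \fupperstar $ is the inverse image of a geometric morphism, so it is left exact and in particular preserves equalizers. For conservativity of $ G $: an \hcover is surjective, and for a surjective morphism the functor $ \fupperstar $ is conservative, since the stalk of $ \fupperstar\Fcal $ at a geometric point $ \bar{w} $ of $ W $ is the stalk of $ \Fcal $ at $ f(\bar{w}) $, and every geometric point of $ X $ arises this way. Thus everything reduces to hypothesis \enumref{prop:HA.4.7.5.3}{2}.

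Unwinding the coface maps, the square attached to $ \alpha \colon \fromto{[m]}{[n]} $ has underlying Cartesian square of schemes
\begin{equation*}
	\begin{tikzcd}
		W^{m+1} \arrow[r] \arrow[d] & W^{m} \arrow[d] \\
		W^{n+1} \arrow[r] & W^{n}
	\end{tikzcd}
\end{equation*}
in which the horizontal maps are base changes of the cover $ f $ and the coface functor $ d^{0} $ is pullback along such a projection. Each $ d^{0} $ is a left adjoint, being an inverse image functor, so the required right adjointability is exactly the assertion that pushforward along a base change of $ f $ commutes with the restriction functors $ \alpha^{*} $; that is, that the Beck--Chevalley transformation attached to the displayed Cartesian square is an isomorphism. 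In short, hypothesis \enumref{prop:HA.4.7.5.3}{2} is precisely base change for the cover $ f $. To check it, I would pass to a generating pretopology: descent is stable under composition and refinement of covers, so it suffices to establish the \hsheaf property for covers in a class generating the \htopology, and by Voevodsky's analysis \cites{MR2687724}{MR1403354} one may take this class to consist of étale covers and proper surjective morphisms. For an étale cover every map in the Čech nerve is étale, base change along étale morphisms is automatic, and descent is the classical comonadicity of a topos along an effective epimorphism (here $ \Wet \isomorphic (\Xet)_{/W} $). For a proper surjective $ f $ the horizontal legs above are proper, and the Beck--Chevalley isomorphism is the degree-$ 0 $ proper base change theorem. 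In either case \enumref{prop:HA.4.7.5.3}{2} holds, and \Cref{prop:HA.4.7.5.3} then gives that $ \theta $ is an equivalence.

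The hard part is the final step. Although $ (-)_{\et} $ does not preserve fiber products, the descent criterion only demands the weaker Beck--Chevalley (base change) property for the specific Cartesian squares occurring in a Čech nerve, rather than that $ (-)_{\et} $ carry them to pullbacks of topoi; so there is no conflict with the failure of $ (-)_{\et} $ to be left exact. The delicate points are therefore, first, establishing proper base change in degree $ 0 $ at the level of the full étale topos with sheaf-of-sets coefficients for the proper generators (the case where $ f $ is even finite is unproblematic, since then $ f_{*} $ commutes with arbitrary base change), and second, justifying that verifying descent for the generating pretopology of étale and proper surjective covers is legitimate for a presheaf valued in categories rather than in sets.
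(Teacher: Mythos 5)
Your proposal is correct and takes essentially the same route as the paper: reduce to a generating class of covers (the paper uses Zariski covers where you use étale ones, an immaterial difference) and verify the hypotheses of \Cref{prop:HA.4.7.5.3} for the Čech nerve of a proper surjection, with conservativity coming from surjectivity and the horizontal right adjointability from degree-$0$ proper base change for sheaves of sets. The input you flag as delicate is exactly \cite[Exposé XII, Théorème 5.1(i)]{MR50:7131}, which the paper cites directly, so no further work is needed there.
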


\begin{proof}
	Since the assignment $ \goesto{X}{\Xet} $ is a Zariski sheaf, all that remains to be verified is that $ \goesto{X}{\Xet} $ satisfies descent for proper surjections.
	Let $ f \colon \fromto{X}{Y} $ be a proper surjective morphism of schemes; we need to show that the induced augmented cosimplicial diagram of categories
	\begin{equation}\label{diagam:et_of_Cech_nerve}
		\begin{tikzcd}[sep=1.5em]
		    \Yet \arrow[r, "\fupperstar"] & \Xet \arrow[r, shift left=0.75ex] \arrow[r, shift right=0.75ex] & (X \cross_Y X)_{\et} \arrow[l] \arrow[r] \arrow[r, shift left=1.5ex] \arrow[r, shift right=1.5ex] &  \cdots  \arrow[l, shift left=0.75ex] \arrow[l, shift right=0.75ex] 
		\end{tikzcd}
	\end{equation}
	is a limit diagram.
	We prove this by showing that \eqref{diagam:et_of_Cech_nerve} satisfies the hypotheses of \Cref{prop:HA.4.7.5.3}.
	For \enumref{prop:HA.4.7.5.3}{1}, note that as a pullback functor, $ \fupperstar $ is left exact.
	Moreover, since $ f $ is surjective, the functor $ \fupperstar $ is conservative, so all that remains to be shown is \enumref{prop:HA.4.7.5.3}{2}.

	For \enumref{prop:HA.4.7.5.3}{2}, notice that relevant square of categories is given by applying $ (-)_{\et} $ to the pullback square of schemes 
	\begin{equation*}
		\begin{tikzcd}
			X \crosslimits_{Y} X^{\cross_Y [n]} \arrow[r, "\pr_2"] \arrow[d, "\id{X} \cross \alpha"'] & X^{\cross_Y [n]} \arrow[d, "\alpha"] \\
			X \crosslimits_{Y} X^{\cross_Y [m]} \arrow[r, "\pr_2"'] & X^{\cross_Y [m]} \period
		\end{tikzcd}
	\end{equation*}
	Here the vertical morphisms are induced by $ \alpha \colon \fromto{[m]}{[n]} $.
	Also notice that since $ f $ is proper and proper morphisms are stable under pullback, for each $ \el \geq -1 $, the projection morphism
	\begin{equation*}
		\pr_2 \colon \fromto{X \cross_{Y} X^{\cross_Y [\el]}}{X^{\cross_Y [\el]}}
	\end{equation*}
	is proper.
	By the proper basechange theorem for étale sheaves of \textit{sets} \cite[Exposé XII, Théorème 5.1(i)]{MR50:7131}, the induced square of étale topoi
	\begin{equation*}
		\begin{tikzcd}
			(X^{\cross_Y [m]})_{\et}  \arrow[r, "\prupperstar_2"] \arrow[d, "(\id{X} \cross \alpha)\upperstar"'] & (X \crosslimits_{Y} X^{\cross_Y [m]})_{\et} \arrow[d, "\alphaupperstar"] \\
			(X \crosslimits_{Y} X^{\cross_Y [n]})_{\et} \arrow[r, "\prupperstar_2"'] & (X^{\cross_Y [n]})_{\et} \period
		\end{tikzcd}
	\end{equation*}
	is horizontally right adjointable, as desired.
\end{proof}

\begin{remark}
	Rydh has proven that the quasicompact étale site and quasicompact separated étale site both satisfy \hdescent \cite[Corollaries 5.15 \& 5.16]{MR2679038}; one can use this to show that the étale topos satisfies \hdescent.
	However, our proof method is quite different and, given \Cref{prop:HA.4.7.5.3}, more direct.
\end{remark}

\begin{remark}[{(\arcdescent for the étale topos)}]
	One can also show that the the étale topos satisfies descent for the finer \textit{\arctopology} introduced by Bhatt and Mathew \cite{MR4278670}.
	Given \Cref{prop:X_et_is_an_h-sheaf}, the proof of this is exactly the same as the proof of the analogous result for the derived \category of constructible étale sheaves \cite[Theorem 5.13]{MR4278670}.
\end{remark}

\begin{corollary}\label{cor:pinned_homomorphisms_are_an_h-sheaf}
	Let $ k $ be a field and let $ Y $ be a $ k $-scheme.
	Then the functor $ \Sch_k^{\op} \to \Gpd $ given by 
	\begin{equation*}
		X \mapsto \Hompink(\Xet, \Yet)
	\end{equation*}
	is an \hsheaf.
	Furthermore, when restricted to schemes topologically of finite type over $ k $, it is an \hsheaf of sets.
\end{corollary}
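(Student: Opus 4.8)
The plan is to obtain the groupoid-valued assertion from the descent already available for $\goesto{X}{\Xet}$, and then to cut down to pinned morphisms. First I would rephrase \Cref{prop:X_et_is_an_h-sheaf}: since the inclusion $\fromto{\LTop}{\Cat}$ preserves limits, for every \hcover $\fromto{W}{X}$ the étale topos $\Xet$ is the limit in $\LTop$ of the cosimplicial diagram $\goesto{[n]}{(W^{\cross_X [n]})_{\et}}$. Passing to left-adjoint geometric morphisms, this says that $\Xet$ is the \emph{colimit} in $\RTop$ of the simplicial diagram $\goesto{[n]}{(W^{\cross_X [n]})_{\et}}$; as the whole diagram lies over $\Spec(k)_{\et}$ and colimits in the slice $\RTop_k$ are created by the forgetful functor to $\RTop$, the same colimit presentation holds in $\RTop_k$. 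Mapping out of a colimit turns it into a limit of mapping groupoids, so for fixed $Y$ the functor $\goesto{X}{\Hom_k(\Xet,\Yet)}$ sends this colimit to a limit; hence it is an \hsheaf valued in $\Gpd$.

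The real work is to show that the full subgroupoid $\Hompink(\Xet,\Yet) \subset \Hom_k(\Xet,\Yet)$ is compatible with this descent. Concretely, I would prove that for an \hcover $g \colon \fromto{W}{X}$, a geometric morphism $\flowerstar \colon \fromto{\Xet}{\Yet}$ over $k$ is pinned if and only if the restriction $\flowerstar \circ g_{*} \colon \fromto{\Wet}{\Yet}$ is pinned. Using the identification $|\Xet| \isomorphic |X|$, this is a statement about the map $|\flowerstar| \colon \fromto{|X|}{|Y|}$, and the crux is the nontrivial direction: I claim every closed point $x \in X$ has a closed point of $W$ lying over it. Indeed, the fiber $W_x \colonequals W \cross_X \Spec(\upkappa(x))$ is again an \hcover of $\Spec(\upkappa(x))$, hence surjective, so $W_x$ is non-empty; being qcqs it has a closed point $w$. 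Since $x$ is closed, the fiber $g^{-1}(x)$, which is homeomorphic to $|W_x|$, is closed in $|W|$, so $w$ is closed in $W$. Then $|\flowerstar|(x)$ equals the image of the closed point $w$ under the pinned map $\fromto{|W|}{|Y|}$, and is therefore closed. This shows that pinnedness is detected on, and glues along, \hcovers, yielding the \hsheaf $\Hompink(-,\Yet)$.

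Finally, over schemes topologically of finite type over $k$ I would upgrade this to sets. On this subcategory every morphism is pinned by \Cref{lem:every_morphism_between_schemes_locally_topologically_of_finite_type_is_pinned}, so the face maps of the Čech nerve of any \hcover are pinned and $\Hompink(-,\Yet)$ is a genuine subpresheaf with well-defined restriction maps; moreover, by the rigidity statement \Cref{prop:rigidity}, each groupoid $\Hompink(\Xet,\Yet)$ is equivalent to a set. Since a limit of a cosimplicial diagram of discrete groupoids on sets is again a set, the \hsheaf of groupoids of the previous step restricts to an \hsheaf of sets here. I expect the middle paragraph to be the main obstacle: the formal descent for $\Hom_k(-,\Yet)$ and the passage to sets via rigidity are routine, whereas verifying that \emph{pinnedness} descends along \hcovers is the one genuinely geometric input, resting on the closed-point lifting property of \hcovers established above.
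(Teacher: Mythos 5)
Your proof is correct and follows essentially the same route as the paper: deduce \hdescent for the mapping groupoids $\Hom_k((-)_{\et},\Yet)$ from \Cref{prop:X_et_is_an_h-sheaf}, check that pinnedness is detected along an \hcover because the fiber over each closed point is nonempty and qcqs, hence contains a point that is moreover closed in $W$, and invoke \Cref{prop:rigidity} for the set-valued refinement. Your additions are purely expository — the colimit phrasing in $\RTop_k$ and the remark that \Cref{lem:every_morphism_between_schemes_locally_topologically_of_finite_type_is_pinned} makes the restriction maps of $\Hompink((-)_{\et},\Yet)$ well defined on schemes topologically of finite type — and merely spell out details the paper's proof leaves implicit.
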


\begin{proof}
	By \Cref{prop:X_et_is_an_h-sheaf}, it follows that $\Hom_{k}((-)_{\et}, \Yet)$ is an \hsheaf of groupoids.
	Since $\Hompink((-)_{\et}, \Yet)$ is a subfunctor of the former, we only need to show that a geometric morphism $ \flowerstar \colon \Xet \to \Yet $ is pinned if there is an \hcover $p \colon W \to X $ such that $ (f \circ p)_* $ is pinned.
	For this it suffices to see that $ p $ is surjective on closed points.
	This holds since the fiber over every closed point is nonempty an qcqs and therefore has a closed point.
	The `furthermore' part is a consequence of \Cref{prop:rigidity}.
\end{proof}


\subsection{Reduction to the regular case}\label{subsec:reduction_to_the_regular_case}

In this subsection, we show that a field $ k $ satisfies étale reconstruction if and only if for each affine, regular, connected, finite type $ k $-scheme $ X $ with a rational point and pinned geometric morphism \smash{$ \philowerstar \colon \fromto{\Xetawn}{\GGmet} $}, there exists a morphism of $ k $-schemes \smash{$ \fromto{\Xawn}{\GGmet} $} that agrees with $ \philowerstar $ on \smash{$ \kbar $}-points. 

We begin with a technical proposition needed in our proof.
To prove this, we state some facts about the étale topos proven in \Cref{app:morphisms_of_etale_sites_and_etale_topoi}.

\begin{nul}
	Let $ X $ be a scheme.
	We write $ \Etqcs_X \subset \Et_X $ for the full subcategory spanned by those $ p \colon \fromto{X'}{X} $ such that $ p $ is quasicompact and separated.
	If $ X $ is quasiseparated, then restriction along the inclusion defines an equivalence of categories
	\begin{equation*}
		\equivto{\Sh_{\et}(\Et_X)}{\Sh_{\et}(\Etqcs_X)}
	\end{equation*}
	See \Cref{lem:separated_and_separated_finitely_presented_etale_sites_are_bases}.
	Moreover, if $ X $ and $ Y $ are topologically noetherian schemes, then for every geometric morphism $ \flowerstar \colon \fromto{\Xet}{\Yet} $, the functor $ \fupperstar $ sends $ \Etqcs_Y $ to $ \Etqcs_X $.
	Hence $ \flowerstar $ is induced by the morphism of sites \smash{$ \fupperstar \colon \fromto{\Etqcs_Y}{\Etqcs_X} $}.
	See \Cref{cor:every_geometric_morphism_between_topologically_noetherian_schemes_is_coherent}.
\end{nul}

The following is a slight generalization of \cite[Proposition 2.3]{MR1098621}. For convenience of the reader we also spell out a proof.

\begin{proposition}\label{prop:Suffices_to_reconstruct_on_points}
	Let $ k $ be a field and let $ X $ and $ Y $ be schemes topologically of finite type over $ k $; also assume that  $ X $ is reduced.
	Let $ (\flowerstar,\alpha) \colon \Xet \to \Yet $ be a pinned geometric morphism of topoi over $ \Spec(k)_{\et} $.
	Assume that for each $ V \in \Etqcs_Y $, there exists a morphism of $ k $-schemes $ \phi_V \colon \fupperstar(V) \to V $ such that for any morphism of $ k $-schemes $ x \colon \Spec(\kbar) \to \fupperstar(V) $ we have 
	\begin{equation*}
		(\restrict{f}{V})_* \circ \xlowerstar \isomorphic \phi_{V,*} \circ \xlowerstar
	\end{equation*}
	in $ \Hompink(\Spec(\kbar)_{\et}, \Yet) $.
	Then $ \flowerstar \isomorphic \phi_{Y,*} $ in $ \Hompink(\Xet, \Yet) $.
\end{proposition}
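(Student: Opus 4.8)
The plan is to take the candidate morphism of schemes to be $\phi_Y$ itself and then to identify $\flowerstar$ with $\phi_{Y,*}$ by comparing the morphisms of étale sites that induce them. First note that, since $Y$ is the terminal object of $\Et_Y$ and $\fupperstar$ is left exact, $\fupperstar(Y) \cong X$; hence $\phi_Y \colon X \to Y$ is an honest morphism of $k$-schemes. Because $X$ and $Y$ are topologically of finite type over a field they are topologically noetherian, so by \Cref{lem:every_morphism_between_schemes_locally_topologically_of_finite_type_is_pinned} the induced geometric morphism $\phi_{Y,*}$ is pinned and thus defines an element of the set $\Hompink(\Xet,\Yet)$ (\Cref{prop:rigidity}). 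Moreover, by the results recalled just before the statement (\Cref{cor:every_geometric_morphism_between_topologically_noetherian_schemes_is_coherent}), both geometric morphisms are induced by morphisms of the quasicompact separated étale sites: $\flowerstar$ by the functor $V \mapsto \fupperstar(V)$ and $\phi_{Y,*}$ by $V \mapsto X \times_Y V$. It therefore suffices to produce a natural isomorphism $\fupperstar(V) \cong X \times_Y V$ of étale $X$-schemes compatible with these two functors, for then $\flowerstar \cong \phi_{Y,*}$.

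The comparison map is built from the given data. For each $V \in \Etqcs_Y$, the structure morphism $\fupperstar(V) \to X$ and the morphism $\phi_V \colon \fupperstar(V) \to V$ together define an $X$-morphism $\psi_V \colon \fupperstar(V) \to X \times_Y V$, provided the two composites $\fupperstar(V) \to X \xrightarrow{\phi_Y} Y$ and $\fupperstar(V) \xrightarrow{\phi_V} V \to Y$ coincide. To check this I would use injectivity: the scheme $\fupperstar(V)$ is étale over the reduced scheme $X$, hence reduced, and is topologically of finite type over $k$ (using topological invariance of the small étale site). Thus by \Cref{cor:injectivity_part_of_reconstruction_for_Jacobson_schemes}, together with \Cref{lemma:map_from_points_is_epi_for_Jacobson_schemes} and \Cref{observation:reduce_to_perfect_fields}, two $k$-morphisms out of $\fupperstar(V)$ agree as soon as they agree on all $\kbar$-points. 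On a $\kbar$-point $x \colon \Spec(\kbar) \to \fupperstar(V)$ with image $x' \colon \Spec(\kbar) \to X$, the hypothesis applied to $V$ identifies $(\restrict{f}{V})_* \circ \xlowerstar$ with $\phi_{V,*} \circ \xlowerstar$, while the hypothesis applied to $Y$ identifies $\flowerstar \circ x'_*$ with $\phi_{Y,*} \circ x'_*$; pushing the first identity forward along $V \to Y$ and comparing yields the required equality of $\kbar$-points, so the two composites agree and $\psi_V$ exists.

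It remains to see that each $\psi_V$ is an isomorphism. Since $\fupperstar(V)$ and $X \times_Y V$ are étale over $X$, the map $\psi_V$ is a morphism of sheaves on $\Xet$, and it suffices to verify that it is an isomorphism on stalks at every closed geometric point, since closed geometric points are jointly conservative on the étale topos of a topologically noetherian scheme, exactly as used in the proof of \Cref{prop:rigidity}. The stalk of $\fupperstar(V)$ at a closed geometric point $x' \colon \Spec(\kbar) \to X$ is $(\flowerstar \circ x'_*)^{*} V$ and that of $X \times_Y V$ is $(\phi_{Y,*} \circ x'_*)^{*} V$, i.e. the fibres of the étale $Y$-scheme $V$ at the two geometric points $\flowerstar \circ x'_*$ and $\phi_{Y,*} \circ x'_*$ of $Y$ (\Cref{ex:Pt_of_Xet}); running the hypothesis over the lifts of $x'$ to $\fupperstar(V)$ shows that $\psi_V$ realizes a bijection between them. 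Hence $\psi_V$ is a stalkwise isomorphism, and so an isomorphism. Naturality of the family $\{\psi_V\}$ in $V$, and its compatibility with the Grothendieck topologies, follow by the same injectivity and conservativity arguments, producing a natural isomorphism between the two inducing morphisms of sites and therefore $\flowerstar \cong \phi_{Y,*}$ in $\Hompink(\Xet,\Yet)$.

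The main obstacle is the isomorphism check for $\psi_V$: one must correctly identify the stalk $(x')^{*}\fupperstar(V)$ with the fibre of $V$ at the image point $\flowerstar \circ x'_*$, match this identification with the component $\phi_V$ so that the hypothesis delivers exactly the bijection realized by $\psi_V$, and confirm that agreement at closed geometric points alone suffices, which rests on the conservativity of closed points established in the proof of \Cref{prop:rigidity}. Once this is in place, the construction of $\psi_V$ via injectivity and the verification of naturality are routine.
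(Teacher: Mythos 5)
Your overall architecture is the same as the paper's: identify $\fupperstar(Y)\cong X$, take $\phi_Y$ as the candidate, build comparison maps $\psi_V \colon \fupperstar(V) \to X \times_Y V$ via the universal property of the pullback (using reducedness of $\fupperstar(V)$ and agreement on geometric points to get commutativity of the outer square), and then try to show each $\psi_V$ is an isomorphism by checking geometric fibers over closed points. However, the step you yourself flag as the main obstacle contains a genuine gap. The hypothesis, applied at a single $V$ and a single lift $x$ of a closed geometric point $x'$, only says that $\phi_{V,*}\circ\xlowerstar$ and $(\restrict{f}{V})_*\circ\xlowerstar$ are \emph{isomorphic} as geometric morphisms. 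By the Galois-theoretic description of points (\Cref{ex:Pt_of_Xet}), the point $\flowerstar \circ x'_*$ of $\Yet$ has a nontrivial automorphism group in general (the Galois group of the residue field of its closed image point), and this group acts on the stalk $(x')^*\fupperstar(V)$; two distinct lifts lying in the same orbit have isomorphic composite points. So ``running the hypothesis over the lifts of $x'$'' does not rule out that $\phi_V$ collapses a Galois orbit: it constrains $\phi_V\circ x$ only up to this action, and hence shows neither injectivity nor surjectivity of the specific map $(x')^*\psi_V$ — only that source and target have the same finite cardinality. The paper closes exactly this gap with an extra idea you are missing: it first establishes that the $\psi_V$ (its $\gamma_V$) are natural in $V$, so that at each geometric point $x$ the transformation $\xupperstar\gamma$, composed with the hypothesis isomorphism $\xupperstar\fupperstar \cong (x\circ\phi_Y)^*$ coming from the case $V = Y$, becomes a natural \emph{endomorphism of a point} of $\Yet$; since every endomorphism of a point of an étale topos is an automorphism (\Cref{ex:Pt_of_Xet}), $\xupperstar\gamma$ is invertible. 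Without this naturality-plus-rigidity argument, the stalkwise bijectivity simply does not follow from the pointwise hypothesis.

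A second omission: the conclusion is an isomorphism in $\Hompink(\Xet,\Yet)$, i.e., over $\Spec(k)_{\et}$, so your isomorphism of the underlying morphisms of sites must in addition be compatible with the given structure isomorphism $\alpha$ and the canonical one for $\phi_{Y,*}$ (the $2$-morphism condition in \Cref{def:B-topoi}). You assert this follows ``by the same injectivity and conservativity arguments,'' but it is not automatic — an absolute isomorphism $\flowerstar \cong \phi_{Y,*}$ need not respect the triangles over the base — and the paper spends the entire second half of its proof on this point, reducing it via geometric points to the separate \Cref{lemma:technical_stuff}. Your proposal would need both the endomorphism-of-a-point argument and this $\alpha$-compatibility check to be complete.
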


\begin{proof}
	First note that, as just explained, $ \fupperstar(V) $ is representable by an étale $ X $-scheme.
	Now observe that by assumption the square
	\begin{equation}\label{sq:desired_naturality}
		\begin{tikzcd}
			\fupperstar(V) \arrow[r, "\phi_V"] \arrow[d] & V \arrow[d] \\
			X \arrow[r, "\phi_Y"'] & Y
		\end{tikzcd}
	\end{equation}
	commutes after precomposing with any geometric point $ \Spec(\kbar) \to \fupperstar(V)$.
	Since $ X $ is reduced, $ \fupperstar(V) $ is also reduced; hence it follows that the square \eqref{sq:desired_naturality} commutes.
	The universal property of the pullback induces a morphism of étale $ X $-schemes
	\begin{equation*}
		\gamma_V \colon \fupperstar(V) \to \phiupperstar_Y(V) \period
	\end{equation*}
	In particular, $ \gamma_V $ is quasicompact and étale.
	The same reasoning shows that \smash{$ (\gamma_V)_{V \in \Etqcs_Y} $} defines a natural transformation \smash{$ \gamma \colon \fupperstar \to \phiupperstar_Y $}.

	We claim that each $ \gamma_V $ is an isomorphism.
	For this it suffices to see that $ \gamma_V $ induces an isomorphism on geometric fibers over closed points of $ X $.
	Indeed, because $ X $ is Jacobson, this implies that $ \gamma_V $ is an étale surjective monomorphism and thus an isomorphism by {\stacks{06NC}}.
	But for a morphism $ x \colon \Spec(\kbar) \to X $ we have by assumption $ \xupperstar \fupperstar \cong (x \circ \phi_Y)^* $ and therefore $ \gamma $ induces an endomorphism of the stalk functor $ (x \circ \phi_Y)^* $.
	But by the description of the category of points given recalled in \Cref{ex:Pt_of_Xet}, any endomorphism of a point of the étale topos is an isomorphism.
	In particular, $ \gamma $ is an isomorphism.
	
	To conclude the proof, we only need to check that the isomorphism $ \gamma $ is compatible with $ \alpha $, i.e., that $ \gamma $ defines an $ 2 $-isomorphism in $ \LTop_k $.
	Explicitly we have to check that for any finite separable field extension $ E $ of $ k $  the triangle of $ X $-schemes
	\begin{equation}\label{triang:desired_compatibility_of_gamma_with_alpha}
		\begin{tikzcd}
			& \fupperstar(Y_E) \arrow[dr, "\gamma_{Y_E}"] & \\
			X_E \arrow[ur, "\alpha_E"] \arrow[rr, "\sim"{yshift=-0.25ex}] & & \phiupperstar_Y(Y_E)
		\end{tikzcd}
	\end{equation}
	commutes.
	It again suffices to check that for any geometric point $ x \colon \Spec(\kbar) \to X $ whose image is a closed point of $ Y $, the triangle \eqref{triang:desired_compatibility_of_gamma_with_alpha} commutes after applying $ \xupperstar $.
	The induced triangle reads
	\begin{equation*}
		\begin{tikzcd}
			& (f \circ x)^*(Y_E) \arrow[dr, "\xupperstar \gamma_{Y_E}"] & \\
			\Spec(\kbar)_E \arrow[rr, "\sim"{yshift=-0.25ex}] \arrow[ur, "\xupperstar\alpha_E"] & & \xupperstar\phiupperstar_Y(Y_E) \period
		\end{tikzcd}
	\end{equation*}
	Therefore, it suffices to see that the natural isomorphism $ \xupperstar \gamma $ defines a natural isomorphism in $ \LTop_k $.
	For this we apply \Cref{lemma:technical_stuff} below to the geometric morphism $ \flowerstar \circ \xlowerstar $, the map of schemes $ \phi_Y \circ x $, the natural isomorphism $ \gamma_x \colon \isomto{\xupperstar \circ \fupperstar}{\xupperstar \phiupperstar_Y} $ that we are given by assumption, and the collection of maps
	\begin{equation*}
		\begin{tikzcd}
			\phitilde_V \colon \xupperstar \fupperstar(V) \arrow[r] & \fupperstar(V) \arrow[r, "\phi_V"] & V \comma
		\end{tikzcd}
	\end{equation*}
	where the left-hand map is the projection from the pullback.
	For this we have to verify that the assumptions of \Cref{lemma:technical_stuff} below are satisfied.
	The only nontrivial point that we have to verify is that there are natural isomorphisms $ \xupperstar \restrict{f}{V}^* \isomorphic \phitilde_V^{*} $ for any $ V \in \Etqcs_Y $.
	But $ \xupperstar \fupperstar(V) $ is just a disjoint union of spectra of separably closed fields, and by assumption we have such a natural isomorphism for every connected component, giving us the desired natural transformation in general.
	Thus \Cref{lemma:technical_stuff} implies that $ \xupperstar \gamma = \gamma_x $.
	In particular $ \xupperstar \gamma $ is a natural isomorphism in $ \LTop_k $, because $ \gamma_x $ is by assumption.
\end{proof}

\begin{lemma}\label{lemma:technical_stuff}
	Let $ k $ be a field and let $ Y $ be scheme topologically of finite type over $ k $.
	Let $ \flowerstar \colon \Spec(\kbar)_{\et} \to \Yet $ be a pinned geometric morphism, $ x \colon \Spec(\kbar) \to Y $ a morphism of $ k $-schemes and $ \gamma \colon \isomto{\xlowerstar}{\flowerstar} $ an isomorphism in \smash{$ \Hompink(\Spec(\kbar)_{\et},\Yet) $}.
	Assume that we are given a natural collection of maps \smash{$ \paren{\phi_V \colon \fupperstar(V) \to V}_{V \in \Etqcs_Y} $} such that:
	\begin{enumerate}[label=\stlabel{lemma:technical_stuff}, ref=\arabic*]
		\item\label{lemma:technical_stuff.1} We have $ \phi_Y = x $.

		\item\label{lemma:technical_stuff.2} We have natural isomorphisms $ \restrict{f}{V}^* \isomorphism \phiupperstar_V$ in $ \Hompink(\fupperstar(V)_{\et},\Vet) $.
	\end{enumerate}
	Then for each $ V \in \Etqcs_Y $, the map $ \gammatilde_V \colon \fupperstar(V) \to \xupperstar(V) $ induced by the naturality square
	\begin{equation*}
		\begin{tikzcd}[column sep=3em]
			\fupperstar(V) \arrow[r, "\phi_V"] \arrow[d] & V \arrow[d] \\
			\Spec(\kbar) \arrow[r, "\phi_Y= x"'] & Y		
		\end{tikzcd}
	\end{equation*} 
	and the universal property of the pullback agrees with $ \gamma_V \colon \isomto{\fupperstar(V)}{\xupperstar(V)} $.
\end{lemma}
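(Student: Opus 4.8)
The plan is to verify the equality $\gamma_V = \gammatilde_V$ pointwise. Since $V \to Y$ is étale and $\kbar$ is separably closed, both $\fupperstar(V)$ and $\xupperstar(V) \isomorphic V \times_{Y,x} \Spec(\kbar)$ are representable by étale $\kbar$-schemes, hence are disjoint unions of copies of $\Spec(\kbar)$; consequently any two morphisms of $\kbar$-schemes $\fupperstar(V) \to \xupperstar(V)$ coincide as soon as they agree on $\kbar$-points. So I would fix a $\kbar$-point $s \colon \Spec(\kbar) \to \fupperstar(V)$ and aim to show $\gamma_V \circ s = \gammatilde_V \circ s$. As $\xupperstar(V)$ is the fiber product of $V$ with $\Spec(\kbar)$ over $Y$ along $x$, and the projection of any $\kbar$-point to $\Spec(\kbar)$ is forced to be the identity, it suffices to compare the two composites with the projection $\pr_V \colon \xupperstar(V) \to V$, i.e.\ to prove that $\pr_V \circ \gamma_V \circ s = \pr_V \circ \gammatilde_V \circ s$ as $\kbar$-points of $V$.

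The $\gammatilde_V$ side is immediate from the construction: by definition of $\gammatilde_V$ through the universal property of the pullback, $\pr_V \circ \gammatilde_V = \phi_V$, so $\pr_V \circ \gammatilde_V \circ s = \phi_V \circ s$. (Here assumption \enumref{lemma:technical_stuff}{1}, namely $\phi_Y = x$, guarantees that the relevant square commutes and that $\phi_V \circ s$ indeed lifts $x$ to $V$.) It therefore remains to identify $\pr_V \circ \gamma_V \circ s$ with $\phi_V \circ s$, and this is where assumption \enumref{lemma:technical_stuff}{2} enters.

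For the $\gamma_V$ side, I would pass to slices. The $\kbar$-point $s$ determines a point $\hat{s}$ of the slice topos $\fupperstar(V)_{\et} \isomorphic (\Spec(\kbar)_{\et})_{/\fupperstar(V)}$, and the restricted geometric morphism $\restrict{f}{V} \colon \fupperstar(V)_{\et} \to \Vet$ carries $\hat{s}$ to a geometric point of $V$. Computing this image in two ways gives the result. On one hand, assumption \enumref{lemma:technical_stuff}{2} supplies an isomorphism $\restrict{f}{V} \isomorphic \phi_{V,*}$, and the geometric morphism $\phi_{V,*}$ sends the point $s$ to its scheme-theoretic image $\phi_V \circ s$; hence the image of $\hat{s}$ is $\phi_V \circ s$. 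On the other hand, the isomorphism $\gamma \colon \isomto{\xlowerstar}{\flowerstar}$ induces an isomorphism of pullback functors with component $\gamma_V$ at $V$, and $\gamma$ is compatible with slicing over $V$: it intertwines $\restrict{f}{V}$ with $\restrict{x}{V}$ and matches the component of $\hat{s}$ with that of $\gamma_V \circ s$. Since $\restrict{x}{V}$ is the geometric morphism induced by the base-changed scheme map $\pr_V \colon V \times_{Y,x} \Spec(\kbar) \to V$, the image of $\gamma_V \circ s$ under $\restrict{x}{V}$ is $\pr_V \circ \gamma_V \circ s$. Equating the two computations yields $\pr_V \circ \gamma_V \circ s = \phi_V \circ s$, which completes the argument by the first paragraph.

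The main obstacle is the last compatibility: verifying that the isomorphism $\gamma$ is compatible with the slice construction, so that evaluating $\restrict{f}{V}$ at $\hat{s}$ really does correspond, under $\gamma$, to evaluating $\restrict{x}{V}$ at $\gamma_V \circ s$. This is a formal $2$-categorical bookkeeping statement about how induced geometric morphisms on slices and their mates interact, and unwinding it carefully—rather than any geometric input—is the part that requires attention; everything else reduces, via the separably closed base field, to elementary statements about $\kbar$-points and the functoriality of $\Pt$ recalled in \Cref{ex:Pt_of_Xet}.
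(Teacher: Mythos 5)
Your outline runs parallel to the paper's proof — the paper also reduces, via the universal property of the pullback, to the single equation $\phi_V = \restrict{x}{V} \circ \gamma_V$, and the slice compatibility you single out as the ``main obstacle'' is essentially the natural isomorphism $\delta \colon \isomto{\restrict{f}{V}^*}{\gamma_V^* \circ \restrict{x}{V}^*}$ that the paper constructs from the naturality squares of $\gamma$. But there is a genuine gap at your final step, ``equating the two computations yields $\pr_V \circ \gamma_V \circ s = \phi_V \circ s$''. The morphism $\restrict{f}{V}$ is an abstract geometric morphism, so ``the image of $\hat{s}$'' is a point of $\Vet$ well defined only up to isomorphism; your two computations therefore produce an \emph{isomorphism} of points $(\pr_V \circ \gamma_V \circ s)_* \isomorphic (\phi_V \circ s)_*$, not an equality of morphisms of schemes. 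Isomorphism of points does not pin down a geometric point: take $Y = \Spec(k)$ with $x \colon \Spec(\kbar) \to \Spec(k)$ the canonical map and $V = \Spec(L)$ for a nontrivial finite separable extension $L \supset k$; any two distinct $k$-embeddings $L \to \kbar$ are lifts of $x$ along the étale map $V \to Y$ that are Galois conjugate, hence isomorphic as points of $\Vet$, yet unequal as scheme maps. So the ``elementary statements about $\kbar$-points'' you invoke at the end are false at exactly the point where you need them.

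The missing ingredient is rigidity over the base. You must verify that every isomorphism in your chain — the one supplied by assumption \enumref{lemma:technical_stuff}{2} and your slice-compatibility isomorphism induced by $\gamma$ — is a $2$-morphism in $\RTop_k$, i.e., compatible with the structure maps to $\Spec(k)_{\et}$, so that the composite is an isomorphism in $\Hompink(\Spec(\kbar)_{\et},\Vet)$; only then does \Cref{prop:reconstruction_for_fields} (the bijectivity of $V(\kbar) \to \Hompink(\Spec(\kbar)_{\et},\Vet)$, resting on the rigidity of \Cref{prop:rigidity}) upgrade ``isomorphic over $k$'' to equality of $\kbar$-points. This appeal is precisely how the paper closes its proof: it reduces the scheme-level equation $\phi_V = \restrict{x}{V}\circ\gamma_V$ to exhibiting a natural isomorphism in $\LTop_k$ and then cites \Cref{prop:reconstruction_for_fields}. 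Without the over-$k$ bookkeeping and this rigidity input your argument fails as written; with them, your pointwise version becomes a correct, slightly more roundabout, variant of the paper's argument.
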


\begin{proof}
	By the universal property of the pullback we only have to check that the diagram of schemes
	\begin{equation*}
		\begin{tikzcd}
			\fupperstar(V) \arrow[dr, "\gamma_V"{description}] \arrow[drr, "\phi_V", bend left=20] \arrow[ddr, bend right=20] \\
			& \xupperstar(V) \arrow[r, "\restrict{x}{V}"'] \arrow[d] & V \arrow[d] \\
			& \Spec(\kbar) \arrow[r, "x"'] & Y
		\end{tikzcd}
	\end{equation*}
	commutes.
	The only non-obvious equation we have to check for this is $ \phi_V = \restrict{x}{V}\circ \gamma_V $.
	For this, by \Cref{prop:reconstruction_for_fields} it suffices to show that the associated diagram in $ \RTop_{k} $ commutes.

	By assumption \enumref{lemma:technical_stuff}{2} on the maps $ \phi_V $, it suffices to provide a natural isomorphism
	\begin{equation*}
		\delta \colon \isomto{\restrict{f}{V}^*}{\gamma_V^* \circ \restrict{x}{V}^*}
	\end{equation*}
	in $ \LTop_k $.
	For any quasicompact separated étale morphism $ T \to V $, the commutative square
	\begin{equation*}
		\begin{tikzcd}
			\fupperstar(T) \arrow[r, "\gamma_T", "\sim"'{yshift=0.25ex}] \arrow[d] & \xupperstar(T) \arrow[d] \\
			\fupperstar(V) \arrow[r, "\gamma_V"', "\sim"{yshift=-0.25em}] & \xupperstar(V)
		\end{tikzcd}
	\end{equation*}
	induces an isomorphism
	\begin{equation*}
		\delta_T \colon \fupperstar(T) \to \fupperstar(V) \times_{\xupperstar(V)} \xupperstar(T) \period
	\end{equation*}
	Finally, it is easy to check that the morphisms \smash{$ (\delta_{T})_{T \in \Etqcs_V} $} define a natural isomorphism in $ \LTop_k $, because $ \gamma $ does.
\end{proof}

We are now ready to prove the main result of this section.
Our proof makes use of the \hdescent results of \cref{subsec:h-descent_for_morphisms_from_absolutely_weakly_normal_schemes,subsec:h-descent_for_the_etale_topos} and the following observation.


\begin{theorem}[{(reduction to regular source and target $ \GGm $)}]\label{thm:reduction_to_the_regular_case}
	Let $ k $ be a field. 
	Then the following statements are equivalent:
	\begin{enumerate}[label=\stlabel{thm:reduction_to_the_regular_case}, ref=\arabic*]
		\item\label{thm:reduction_to_the_regular_case.1} The field $ k $ \emph{satisfies étale reconstruction} in the sense of \Cref{def:etale_reconstruction}.
		That is, for all finite type $ k $-schemes $ X $ and $ Y $, the natural map 
		\begin{equation*}
			\Hom_{k}(\Xawn,Y) \to \Hompink(\Xetawn,\Yet)
		\end{equation*}
		is bijective.

		\item\label{thm:reduction_to_the_regular_case.2} For all finite type $ k $-schemes $ X $ and $ Y $, such that $ X $ is affine, regular, connected, and $ X(k) \neq \emptyset $, the natural map 
		\begin{equation*}
			\Hom_{k}(\Xawn,Y) \to \Hompink(\Xetawn,\Yet)
		\end{equation*}
		is bijective.
		
		\item \label{thm:reduction_to_the_regular_case.3} Let $ X $ and $ Y $ be finite type $ k $-schemes, such that $ X $ is affine, regular, connected, and $ X(k) \neq \emptyset $.
		Then for any pinned geometric morphism $ \philowerstar \colon \Xet \to \Yet $ over $ \Spec(k)_{\et} $, there exists a morphism of $ k $-schemes $ f \colon \Xawn \to Y $ such that the induced map $ f(\kbar) \colon \Xawn(\kbar) \to Y(\kbar) $ agrees with $ \philowerstar(\kbar) $.

		\item\label{thm:reduction_to_the_regular_case.4} Let $ X $ be an affine, regular, connected, finite type $ k $-scheme such that $ X(k) \neq \emptyset $.
		Then for any pinned geometric morphism $ \philowerstar \colon \Xetawn \to \AA_{\et}^1 $ over $ \Spec(k)_{\et} $, there exists a morphism of $ k $-schemes $ f \colon \Xawn \to \AA^1 $ such that the induced map
		\begin{equation*}
			f(\kbar) \colon \Xawn(\kbar) \to \kbar
		\end{equation*}
		agrees with $ \philowerstar(\kbar) $.

		\item\label{thm:reduction_to_the_regular_case.5} Let $ X $ be an affine, regular, connected, finite type $ k $-scheme such that $ X(k) \neq \emptyset $.
		Then for any pinned geometric morphism $ \philowerstar \colon \Xetawn \to \GGmet $ over $ \Spec(k)_{\et} $, there exists a morphism of $ k $-schemes $ f \colon \Xawn \to  \GGm $ such that the induced map 
		\begin{equation*}
			f(\kbar) \colon \Xawn(\kbar)\to \kbar^{\units}
		\end{equation*}
		agrees with $ \philowerstar(\kbar) $.
	\end{enumerate}
\end{theorem}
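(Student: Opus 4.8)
The plan is to prove the cyclic chain $(1)\Rightarrow(2)\Rightarrow(3)\Rightarrow(4)\Rightarrow(5)\Rightarrow(1)$, with essentially all the work concentrated in the last implication. The first four implications are formal. Indeed, $(1)\Rightarrow(2)$ is just a restriction to a subclass of source schemes; $(2)\Rightarrow(3)$ holds because under (2) the reconstruction map is bijective, so the given pinned $\varphi_{*}$ is induced by an honest $k$-morphism, which a fortiori agrees with $\varphi_{*}$ on $\kbar$-points; $(3)\Rightarrow(4)$ is the special case $Y=\AA^{1}$ (using $X_{\et}\simeq\Xetawn$); and for $(4)\Rightarrow(5)$ one composes a pinned $\varphi_{*}\colon\Xetawn\to\GGmet$ with the pinned open immersion $\GGm\hookrightarrow\AA^{1}$, applies (4) to obtain $f\colon\Xawn\to\AA^{1}$ agreeing with it on $\kbar$-points, and observes that the open locus $f^{-1}(\GGm)$ then contains every closed point of the reduced Jacobson scheme $\Xawn$, hence is all of $\Xawn$ by \Cref{cor:locally_closed_subschemes_of_Jacobson_schemes_containing_all_closed_points}; thus $f$ factors through $\GGm$.

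The substance is $(5)\Rightarrow(1)$. Since injectivity holds over any field (\Cref{cor:injectivity_part_of_reconstruction_for_Jacobson_schemes}), it suffices to prove surjectivity: every pinned $\varphi_{*}\colon\Xetawn\to\Yet$ is induced by a $k$-morphism. By \Cref{prop:Suffices_to_reconstruct_on_points} it is enough, for each $V\in\Etqcs_{Y}$, to produce a $k$-morphism $\varphi^{*}(V)\to V$ agreeing with the restricted geometric morphism on $\kbar$-points, with \Cref{lemma:technical_stuff} guaranteeing that these fit together into the required $2$-morphism. I would then reduce the target: covering $Y$ and each étale $V$ by affine opens, embedding an affine target as a closed subscheme of some $\AA^{n}$, and composing with the coordinate projections $\AA^{n}\to\AA^{1}$ --- all of these maps being pinned, since they are finite type maps between Jacobson schemes (\Cref{prop:characterization_of_Jacobson_schemes}) --- reduces the on-points problem to the target $\AA^{1}$; covering $\AA^{1}$ by the two translates of $\GGm$ reduces it to the target $\GGm$. (The source $\varphi^{*}(V)$ is reduced and topologically of finite type over $k$, so the reconstructed partial morphisms glue to genuine morphisms landing in the closed $V$.)

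It remains to solve the on-points problem for target $\GGm$ with an \emph{arbitrary} source topologically of finite type over $k$. Here I would invoke \hdescent: both $X\mapsto\Hom_{k}(\Xawn,\GGm)$ and $X\mapsto\Hompink(\Xetawn,\GGmet)$ are \hsheaves of sets (\Cref{thm:h-descent_for_morphisms_from_Xawn,cor:pinned_homomorphisms_are_an_h-sheaf}), so bijectivity of the reconstruction map --- and hence, together with the injectivity already in hand, the sought surjectivity --- may be checked on the terms of an \hhypercover. By de Jong's alterations every finite type $k$-scheme admits such a hypercover whose terms are disjoint unions of regular, affine, connected, finite type $k$-schemes, which reduces the source to precisely the class appearing in (5). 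Applying (5) to each term then finishes the argument --- provided a rational point is present.

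The rational point hypothesis $X(k)\neq\emptyset$ is the main obstacle, and it is genuinely delicate: it cannot be produced by the descent of the previous step, since any $k$-morphism $W\to X$ with $W(k)\neq\emptyset$ already forces $X(k)\neq\emptyset$, so no \hcover of a rational-point-free $X$ carries rational points. The way I would try to circumvent this is to exploit that conditions (3)--(5) are \emph{on-$\kbar$-points} statements, which are insensitive to the residue field: for a regular affine connected source without a rational point one chooses a closed point with finite residue field $L$, so that $X_{L}\to X$ is a finite \hcover and $X_{L}$ acquires an $L$-point, solves the on-points problem after this base change, and descends the resulting morphism along $X_{L}\to X$, using that morphisms out of reduced schemes are pinned down by their $\kbar$-points (\Cref{lemma:map_from_points_is_epi_for_Jacobson_schemes}) and that the $\kbar$-point data coming from $\varphi_{*}$ is Galois-equivariant. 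Making this base-field descent of the on-points statement precise --- in particular controlling the residue field extension and reconciling the descent with the gluing of \Cref{prop:Suffices_to_reconstruct_on_points,lemma:technical_stuff} --- is where I expect the real difficulty to lie.
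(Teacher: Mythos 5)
Your architecture matches the paper's in ingredients but not in orientation: the paper proves \enumref{thm:reduction_to_the_regular_case}{1} $\Leftrightarrow$ \enumref{thm:reduction_to_the_regular_case}{2} by \hdescent plus alterations, \enumref{thm:reduction_to_the_regular_case}{2} $\Leftrightarrow$ \enumref{thm:reduction_to_the_regular_case}{3} via \Cref{cor:injectivity_part_of_reconstruction_for_Jacobson_schemes} and \Cref{prop:Suffices_to_reconstruct_on_points}, then closes the cycle through \enumref{thm:reduction_to_the_regular_case}{5} $\Rightarrow$ \enumref{thm:reduction_to_the_regular_case}{4} (covering $\AA^1$ by two copies of $\GGm$ and using agreement on $\kbar$-points over the overlap) and \enumref{thm:reduction_to_the_regular_case}{4} $\Rightarrow$ \enumref{thm:reduction_to_the_regular_case}{3} (the ring homomorphism $\psi \colon B \to \Aawn$ built one element of $B$ at a time, with additivity and multiplicativity checked on $\kbar$-points). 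Your two local deviations are both sound: your \enumref{thm:reduction_to_the_regular_case}{4} $\Rightarrow$ \enumref{thm:reduction_to_the_regular_case}{5} via the pinned open immersion $\GGm \hookrightarrow \AA^1$ and \Cref{cor:locally_closed_subschemes_of_Jacobson_schemes_containing_all_closed_points} is a valid replacement for the paper's opposite direction, and your target reduction (embed affine $Y$ in $\AA^n$, reconstruct coordinatewise, land in the closed subscheme because the source is reduced Jacobson) is the paper's $\psi$-construction in geometric form and even spares the explicit ring-axiom check.

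The genuine gap is the one you flag yourself: in your \enumref{thm:reduction_to_the_regular_case}{5} $\Rightarrow$ \enumref{thm:reduction_to_the_regular_case}{1}, the rational-point hypothesis cannot be manufactured by \hdescent over $k$ (your observation that no \hcover of a rational-point-free $X$ has a $k$-point is correct), and your proposed base-field-change fix is left as a sketch. The paper compresses exactly this difficulty into the phrase ``passing to a finite extension of $k$ if necessary'' in its proof of \enumref{thm:reduction_to_the_regular_case}{2} $\Rightarrow$ \enumref{thm:reduction_to_the_regular_case}{1}, and the reason it is cheaper there is the ordering: the paper first upgrades the on-points statements to the \emph{bijectivity} statement for the restricted class (\enumref{thm:reduction_to_the_regular_case}{4} $\Rightarrow$ \enumref{thm:reduction_to_the_regular_case}{3} $\Rightarrow$ \enumref{thm:reduction_to_the_regular_case}{2}) and only then descends. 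Along the finite \hcover $X_L \to X$ (with $L$ the residue field of a closed point, or its normal closure so that every component of $X_L$ acquires an $L$-point), one has $\Hom_k((X_L)^{\awn},Y) \isomorphic \Hom_L((X_L)^{\awn},Y_L)$ and its pinned analogue; since injectivity holds for \emph{arbitrary} reduced sources (\Cref{cor:injectivity_part_of_reconstruction_for_Jacobson_schemes}), only surjectivity needs checking on the cover, and the two pullbacks of a candidate section to $X_L \times_X X_L$ agree automatically by injectivity (equivalently, because morphisms from reduced Jacobson schemes are detected on $\kbar$-points, \Cref{lemma:map_from_points_is_epi_for_Jacobson_schemes}) --- so the Galois-equivariance bookkeeping you anticipate is absorbed by the \hsheaf property you already invoked (\Cref{thm:h-descent_for_morphisms_from_Xawn}, \Cref{cor:pinned_homomorphisms_are_an_h-sheaf}). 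Do note, though, that this maneuver applies the input statement over the finite extension $L$ rather than over $k$; to make your cycle airtight you should either quantify conditions \enumref{thm:reduction_to_the_regular_case}{2}--\enumref{thm:reduction_to_the_regular_case}{5} over all finite extensions of $k$ or observe, as the intended application does, that this costs nothing since the hypotheses being verified there are stable under finite extensions (\Cref{lem:amenable_is_stable_under_finite_extension}).
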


\begin{proof}
	Clearly \enumref{thm:reduction_to_the_regular_case}{1} $ \Rightarrow $ \enumref{thm:reduction_to_the_regular_case}{2}.
	To see that \enumref{thm:reduction_to_the_regular_case}{2} $ \Rightarrow $ \enumref{thm:reduction_to_the_regular_case}{1}, note that by \Cref{thm:h-descent_for_morphisms_from_Xawn,cor:pinned_homomorphisms_are_an_h-sheaf} both the source and target of the map in \enumref{thm:reduction_to_the_regular_case}{1} satisfy \hdescent in $ X $.
	Thus we may work \hlocally and by the theory of alterations \cites[Theorem 1.1]{Illusie:On_Gabbers_refined_uniformization}[Exposé IX, Théorème 1.1]{MR3309086}[Theorem 4.4]{MR3730515}[Theorem 1.2.5]{MR3665001} we may reduce to the case where $ X $ is regular.
	By further working Zariski locally, we may reduce to the case where $ X $ is affine.
	Passing to a finite extension of $ k $ if necessary, we may assume that $ X $ is connected and $ X(k) \neq \emptyset$.
	This completes the proof that \enumref{thm:reduction_to_the_regular_case}{2} $ \Rightarrow $ \enumref{thm:reduction_to_the_regular_case}{1}.

	Clearly \enumref{thm:reduction_to_the_regular_case}{2} implies \enumref{thm:reduction_to_the_regular_case}{3} and the converse follows from \Cref{cor:injectivity_part_of_reconstruction_for_Jacobson_schemes} combined with \Cref{prop:Suffices_to_reconstruct_on_points}.
	Clearly \enumref{thm:reduction_to_the_regular_case}{3} implies \enumref{thm:reduction_to_the_regular_case}{4} and \enumref{thm:reduction_to_the_regular_case}{5}.
	Since we can cover $ \AA^1 $ by two copies of $ \GGm $, it follows that if we manage to construct the desired morphism of $ k $-schemes over both copies of $ \GGm $, we can reconstruct it over $ \AA^1 $: since the two maps agree on $ X(\kbar) $ by assumption, they will automatically agree over the intersection.
	Thus \enumref{thm:reduction_to_the_regular_case}{5} implies \enumref{thm:reduction_to_the_regular_case}{4}.
	
	To finish the proof it thus suffices to show that \enumref{thm:reduction_to_the_regular_case}{4} $ \Rightarrow $ \enumref{thm:reduction_to_the_regular_case}{3}.
	We may work Zariski locally on both the source and target to assume that $ X = \Spec(A) $ and $ Y = \Spec(B) $ are affine.
	Let $ \philowerstar \colon \Xet \to \Yet$ be a pinned geometric morphism.
	Define a map of sets
	\begin{equation*}
		\psi \colon B = \Hom_k(Y,\AA^1) \longrightarrow \Hom_k(\Xawn,\AA^1) = \Aawn
	\end{equation*}
	by sending $ b \colon Y \to \AA_{k}^1 $ to the unique element $ \psi(b) \colon \Xawn \to \AA^1 $ such that $ \psi(b)(\kbar) = \blowerstar \circ  \philowerstar (\kbar) $.
	Indeed, we assumed that such an element $ \psi(b) $ exists and it is clearly unique.
	We claim that $ \psi \colon B \to A^{\awn}  $ is a ring homomorphism.
	In other words we have to see that the square
	\begin{equation*}
		\begin{tikzcd}[column sep=3.5em]
			\Hom_k(Y,\AA^1 \times \AA^1) \arrow[r,"\mult_*"] \arrow[d, "\psi \times\psi"'] & \Hom_k(Y,\AA^1) \arrow[d, "\psi"] \\
			\Hom_k(\Xawn,\AA^1 \times \AA^1) \arrow[r, "\mult_*"'] & \Hom_k(\Xawn,\AA^1)
		\end{tikzcd}
	\end{equation*}
	where $ \mult \colon \AA^1 \times \AA^1 \to \AA^1 $ is the multiplication map, commutes and similarly for the addition map.
	Both cases are exactly the same, so we just explain the case of the multiplication map.

	For this, by \Cref{prop:characterization_of_Jacobson_schemes,lemma:map_from_points_is_epi_for_Jacobson_schemes} we may assume that $ X = \Spec(\kbar) $.
	Now observe that if we show that the square
	\begin{equation}\label{sq:comparing_psi_and_phi}
		\begin{tikzcd}
			\Hom_k(Y,\AA^1\times \AA^1) \arrow[r] \arrow[d, "\psi \times\psi"'] & \Hompink(\Yet,(\AA^1 \times \AA^1)_{\et}) \arrow[d, "- \of \philowerstar"] \\
			\Hom_k(\Spec(\kbar),\AA^1 \times \AA^1) \arrow[r, "\sim"{yshift=-0.25em}] & \Hompink(\Spec(\kbar)_{\et},(\AA^1 \times \AA^1)_{\et})
		\end{tikzcd}
	\end{equation}
	commutes, then the claim easily follows from \Cref{cor:injectivity_part_of_reconstruction_for_Jacobson_schemes}.
	A consequence of \Cref{prop:reconstruction_for_fields} the canonical map
	\begin{equation*}
		\Hompink(\Spec(\kbar)_{\et},(\AA^1 \times \AA^1)_{\et}) \to \Hompink(\Spec(\kbar)_{\et},\AA_{\et}^1) \cross \Hompink(\Spec(\kbar)_{\et},\AA_{\et}^1)
	\end{equation*}
	induced by the projections is an isomorphism.
	Thus it suffices to see that the square \eqref{sq:comparing_psi_and_phi} commutes after composing with the two projections $ \fromto{\AA^1 \times \AA^1}{\AA^1} $.
	This is immediate from the definition of $ \psi $.
	
	It follows that $ \psi $ is indeed a map of rings and therefore induces a morphism of schemes $ f \colon \Xawn \to Y $.
	All that is left to show is that for any morphism of $ k $-schemes $ x \colon \Spec(\kbar) \to \Xawn $. we have $ \philowerstar \circ \xlowerstar \isomorphic \flowerstar \circ \xlowerstar $.
	In other words, we have to show that the square
	\begin{equation*}
		\begin{tikzcd}
			\Hom_k(\Spec(\kbar),\Xawn) \arrow[r, "\sim"{yshift=-0.25em}] \arrow[d, "f \of -"'] & \Hompink(\Spec(\kbar)_{\et},\Xet) \arrow[d, "\philowerstar \of -"] \\
			\Hom_k(\Spec(\kbar),Y) \arrow[r, "\sim"{yshift=-0.25em}] & \Hompink(\Spec(\kbar)_{\et},\Yet)
		\end{tikzcd}
	\end{equation*}
	commutes.
	Note that since $ Y $ is affine it suffices to prove this after composing with any morphism of $ k $-schemes $ b \colon Y \to \AA^1 $.
	Then the claim is clear from the construction of $ \psi $.
\end{proof}

\begin{remark}
	For the proof of \cref{intro_thm:etale_reconstruction}, it is not necessary to use the theory alterations.
	Indeed, using that any finite type $ k $-scheme admits a finite surjection from its normalization, the proof of \Cref{thm:reduction_to_the_regular_case} also shows that it suffices to prove the conclusion of \enumref{thm:reduction_to_the_regular_case}{5} for normal schemes.
 	But by \Cref{prop:finite_generation_of_Pic_of_normal_schemes_over_finitely_generated_fields} the proof given in \cref{sec:proof_of_the_reconstruction_theorem} works more generally for normal schemes over finitely generated fields.
\end{remark}


\section{Amenable fields}\label{sec:amenable_fields}

In \cref{sec:reduction_to_regular_source_and_target_GGm}, we proved that a field $ k $ satisfies étale reconstruction if and only if for each affine, regular, connected, finite type $ k $-scheme $ X $ with a rational point and pinned geometric morphism \smash{$ \philowerstar \colon \fromto{\Xetawn}{\GGmet} $}, there exists a morphism of $ k $-schemes \smash{$ \fromto{\Xawn}{\GGmet} $} that agrees with $ \philowerstar $ on $ \kbar $-points. 
In \cref{sec:proof_of_the_reconstruction_theorem}, we will check this criterion for infinite fields that are finitely generated over their prime fields.
However, the argument we give works over more general fields.
The idea is to relate both sides to the étale cohomology group
\begin{equation*}
	\Het^1(X;\GGm) \isomorphic \Pic(X)
\end{equation*}
and prove that the desired map is surjective by an an obstruction-theoretic argument.
The purpose of this section is to introduce the class of fields that for which our argument works; we call these \textit{amenable} fields.

In order to formulate the condition of amenability properly, we need to avoid some bad behavior that happens at the characteristic.
To do this, in \cref{sec:prime-to-p_Kummer_theory}, we introduce variants of divisibility, profinite completion, Tate modules, and étale cohomology with coefficients in $ \ZZhat(1) $ that are all away from a prime.
In \cref{sec:the_definition_and_stability_properties}, we introduce amenable fields and show that finitely generated fields are amenable.


\subsection{Prime-to-\texorpdfstring{$p$}{p} Kummer theory}\label{sec:prime-to-p_Kummer_theory}

We begin by introducing prime-to-$ p $ divisibility.

\begin{notation}
	Let $ p $ be a prime number or $ 1 $.
	We write $ \NNpprime \subset \NN $ for the set of positive integers $ n $ such that $ \gcd(n,p) = 1 $.
	Throughout, we regard $ \NNpprime $ as a poset ordered by divisibility.
\end{notation}

\begin{definition}
	Let $ p $ be a prime number or $ 1 $, and let $ A $ be an abelian group.
	We say that an element $ a \in A $ is \defn{$ p' $-divisible} (or \defn{prime-to-$ p $-divisible}) if for each $ n \in \NNpprime $, there exists an element $ b \in A $ such that $ nb = a $.
	We say that $ A $ is \defn{$ p' $-divisible} if every element of $ A $ is $ p' $-divisible.
\end{definition}

\begin{observation}
	If $ p = 1 $, then a $ p' $-divisible abelian group is exactly a \textit{divisible} abelian group.
\end{observation}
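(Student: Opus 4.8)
The plan is to unwind the definitions, since the statement is purely formal. Recall that by definition $ \NN_{1'} \subset \NN $ consists of those positive integers $ n $ with $ \gcd(n,1) = 1 $. The key point is that $ \gcd(n,1) = 1 $ holds for \emph{every} positive integer $ n $, so that $ \NN_{1'} = \NN $ is the set of all positive integers.

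With this identification in hand, I would compare the two notions of divisibility directly. By definition, an element $ a $ of an abelian group $ A $ is $ 1' $-divisible precisely when for each $ n \in \NN_{1'} = \NN $ there exists $ b \in A $ with $ nb = a $; that is, $ a $ is divisible by every positive integer. This is verbatim the definition of $ a $ being a divisible element, so $ A $ is $ 1' $-divisible if and only if $ A $ is divisible.

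There is no genuine obstacle here: the equality $ \NN_{1'} = \NN $ is immediate from the convention $ \gcd(n,1) = 1 $, and the rest is a matter of matching the quantifier in the definition of $ p' $-divisibility against the standard quantifier defining a divisible group. The observation serves only to record that the prime-to-$ p $ theory developed in this subsection recovers ordinary divisibility in the degenerate case $ p = 1 $, which is the case relevant to characteristic $ 0 $.
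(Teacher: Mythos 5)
Your proof is correct and matches the paper's (implicit) reasoning exactly: the paper states this as an unproved observation precisely because it follows from unwinding the definitions, namely that $\gcd(n,1)=1$ for every positive integer $n$, so $\NNpprime = \NN$ when $p=1$ and the two divisibility conditions coincide verbatim.
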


\begin{notation}\label{ntn:n-torsion_and_Tate_module}
	Let $ A $ be an abelian group (or group scheme).
	\begin{enumerate}[label=\stlabel{ntn:n-torsion_and_Tate_module}, ref=\arabic*]
		\item For each integer $ n \geq 1 $, write $ A[n] \subset A $ for the kernel of multiplication by $ n $, i.e., the \defn{$ n $-torsion} in $ A $.

		\item Let $ p $ be a prime number or $ 1 $.
		The \defn{$ p' $-Tate module} of $ A $ is the limit
		\begin{equation*}
			\Tatep(A) \colonequals \lim_{n \in (\NNpprime)^{\op}} A[n] \period
		\end{equation*}
		For $ m $ dividing $ n $, the transition map $ \fromto{A[n]}{A[m]} $ is given by multiplication by $ n/m $.
	\end{enumerate} 
\end{notation}

\begin{observation}\label{obs:if_there_are_no_p'-torsion_elements_then_the_p'-Tate_module_vanishes}
	Let $ p $ be a prime number or $ 1 $.
	If an abelian group $ A $ has no nonzero $ p' $-divisible torsion elements, then $ \Tatep(A) = 0 $.
\end{observation}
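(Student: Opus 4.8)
The plan is to unwind the definition of the $ p' $-Tate module as a cofiltered limit and to show that every component of a compatible system is simultaneously torsion and $ p' $-divisible, hence zero by hypothesis.

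First I would note that, by definition, an element of $ \Tatep(A) = \lim_{n \in (\NNpprime)^{\op}} A[n] $ is a compatible family $ (a_n)_{n \in \NNpprime} $ with $ a_n \in A[n] $ (so that $ n a_n = 0 $) subject to the compatibility condition that the transition map $ \fromto{A[n]}{A[m]} $ associated to a divisibility $ m \mid n $, which is multiplication by $ n/m $, carries $ a_n $ to $ a_m $; that is, $ (n/m) a_n = a_m $ whenever $ m \mid n $. Our goal is to show that any such family is trivial.

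Fix such a family and fix $ m \in \NNpprime $. Each $ a_m $ is torsion, since $ m a_m = 0 $. The key point is that $ a_m $ is moreover $ p' $-divisible. Indeed, given any $ n \in \NNpprime $, the product $ mn $ again lies in $ \NNpprime $: here we use that $ p $ is prime (or $ 1 $), so $ p \nmid m $ and $ p \nmid n $ force $ p \nmid mn $. Since $ m \mid mn $, the corresponding transition map $ \fromto{A[mn]}{A[m]} $ is multiplication by $ mn/m = n $, and compatibility of the family gives $ a_m = n \cdot a_{mn} $. Thus $ a_m $ is divisible by $ n $, with witness $ a_{mn} $. As $ n \in \NNpprime $ was arbitrary, $ a_m $ is a $ p' $-divisible torsion element of $ A $.

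By hypothesis, the only $ p' $-divisible torsion element of $ A $ is $ 0 $, so $ a_m = 0 $ for every $ m \in \NNpprime $, and therefore the family $ (a_n)_n $ is trivial. This proves $ \Tatep(A) = 0 $. There is no genuine obstacle in this argument; it is a direct unwinding of the limit, and the only subtlety worth flagging is the closure of $ \NNpprime $ under multiplication, which is precisely where the hypothesis that $ p $ is prime or $ 1 $ is used.
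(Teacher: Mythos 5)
Your proof is correct, and it is precisely the argument the paper has in mind: the paper states this as an unproved \emph{Observation}, and the intended justification is exactly your direct unwinding of the limit, in which each component $ a_m $ of a compatible family in $ \Tatep(A) $ is torsion (as $ m a_m = 0 $) and $ p' $-divisible (as $ a_m = n\, a_{mn} $ for every $ n \in \NNpprime $, using that $ \NNpprime $ is closed under multiplication), hence zero by hypothesis. Your flagging of where primality of $ p $ enters — closure of $ \NNpprime $ under products — is the right subtlety to note, and nothing further is needed.
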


\begin{lemma}\label{lem:inverting_p_and_Tate_away_from_p}
	Let $ p $ be a prime number or $ 1 $ and let $ A $ be an abelian group.
	Then:
	\begin{enumerate}[label=\stlabel{lem:inverting_p_and_Tate_away_from_p}, ref=\arabic*]
		\item\label{lem:inverting_p_and_Tate_away_from_p.1} For each $ n \in \NNpprime $, the natural homomorphism $ \fromto{A}{A[\nicefrac{1}{p}]} $ induces an isomorphism
		\begin{equation*}
			\isomto{A[n]}{(A[\nicefrac{1}{p}])[n]}
		\end{equation*}
		on $ n $-torsion subgroups.

		\item\label{lem:inverting_p_and_Tate_away_from_p.2} The natural homomorphism $ \fromto{A}{A[\nicefrac{1}{p}]} $ induces an isomorphism
		\begin{equation*}
			\isomto{\Tatep(A)}{\Tatep(A[\nicefrac{1}{p}])} \period
		\end{equation*}
	\end{enumerate}
\end{lemma}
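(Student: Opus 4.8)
The plan is to deduce part \enumref{lem:inverting_p_and_Tate_away_from_p}{2} formally from part \enumref{lem:inverting_p_and_Tate_away_from_p}{1}, so that the real content lies in the statement about $ n $-torsion. Throughout I regard $ A[\nicefrac{1}{p}] $ as the localization $ A \otimes_{\ZZ} \ZZ[\nicefrac{1}{p}] $. When $ p = 1 $ the localization map $ \fromto{A}{A[\nicefrac{1}{p}]} $ is the identity and there is nothing to prove, so I will assume $ p $ is prime.

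For \enumref{lem:inverting_p_and_Tate_away_from_p}{1}, the key point is that localization at $ p $ is exact (since $ \ZZ[\nicefrac{1}{p}] $ is a flat $ \ZZ $-module), hence commutes with the formation of kernels. Applying $ -\otimes_{\ZZ}\ZZ[\nicefrac{1}{p}] $ to multiplication by $ n $ on $ A $ therefore identifies the kernel $ (A[\nicefrac{1}{p}])[n] $ with $ A[n] \otimes_{\ZZ} \ZZ[\nicefrac{1}{p}] = A[n][\nicefrac{1}{p}] $. Now I exploit that $ n \in \NNpprime $: the group $ A[n] $ is a module over $ \ZZ/n $, and since $ \gcd(n,p) = 1 $ the element $ p $ is already a unit in $ \ZZ/n $, so multiplication by $ p $ acts invertibly on $ A[n] $. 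Consequently the localization map $ \fromto{A[n]}{A[n][\nicefrac{1}{p}]} $ is an isomorphism, and composing with the identification above yields the desired isomorphism $ \isomto{A[n]}{(A[\nicefrac{1}{p}])[n]} $ induced by $ \fromto{A}{A[\nicefrac{1}{p}]} $.

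Alternatively one can argue by hand using the colimit presentation $ A[\nicefrac{1}{p}] = \colim\big(\, A \xrightarrow{p} A \xrightarrow{p} \cdots \big) $. Injectivity holds because an $ n $-torsion element whose image vanishes is killed by some $ p^{k} $, hence killed by $ \gcd(n,p^{k}) = 1 $ via a Bézout relation; surjectivity holds by clearing the denominator $ p^{k} $ of an $ n $-torsion class and using the same Bézout relation to correct it to a genuine $ n $-torsion lift. This elementary route avoids flatness but is essentially the same computation.

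For \enumref{lem:inverting_p_and_Tate_away_from_p}{2}, I simply pass to the limit. The isomorphisms of \enumref{lem:inverting_p_and_Tate_away_from_p}{1} are all restrictions of the single homomorphism $ \fromto{A}{A[\nicefrac{1}{p}]} $, so they are automatically natural in $ n $ and in particular compatible with the transition maps (multiplication by $ n/m $) defining the $ p' $-Tate module. Taking $ \lim $ over $ n \in (\NNpprime)^{\op} $ then gives the isomorphism $ \isomto{\Tatep(A)}{\Tatep(A[\nicefrac{1}{p}])} $. I do not expect a genuine obstacle anywhere; the only point requiring care is the bookkeeping in \enumref{lem:inverting_p_and_Tate_away_from_p}{1} that the identification of kernels really is the map induced by localization, which is immediate since both maps are restrictions of $ \fromto{A}{A[\nicefrac{1}{p}]} $.
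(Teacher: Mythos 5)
Your proof is correct and matches the paper's argument essentially verbatim: part (1) via flatness of $\ZZ[\nicefrac{1}{p}]$ over $\ZZ$ to commute localization with $n$-torsion, plus the observation that $p$ is a unit in $\ZZ/n$ so it acts invertibly on $A[n]$; part (2) by passing to the limit over $n \in (\NNpprime)^{\op}$ using compatibility with the transition maps. Your alternative colimit argument and the naturality bookkeeping are fine additions but not needed beyond what the paper records.
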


\begin{proof}
	For \enumref{lem:inverting_p_and_Tate_away_from_p}{1}, observe that since $ \ZZ[\nicefrac{1}{p}] $ is flat over $ \ZZ $, we have
	\begin{equation*}
		(A[n])[\nicefrac{1}{p}] \isomorphic A[n] \tensor_{\ZZ} \ZZ[\nicefrac{1}{p}] \isomorphic (A[\nicefrac{1}{p}])[n] \period
	\end{equation*}
	So it suffices to show that $ p $ acts invertibly on $ A[n] $.
	For this, note that $ A[n] $ is a $ \ZZ/n $-module and since $ p $ is coprime to $ n $, the class of $ p $ in $ \ZZ/n $ is a unit.
	Item \enumref{lem:inverting_p_and_Tate_away_from_p}{2} is an immediate consequence of \enumref{lem:inverting_p_and_completing_away_from_p}{1} and the definition of the $ p' $-Tate module.
\end{proof}

Now we study completions of abelian groups away from a prime.

\begin{notation}\label{ntn:p'-completion}
	Let $ p $ be a prime number or $ 1 $.
	For an abelian group $ A $, the \defn{$ p' $-completion} of $ A $ is the limit
	\begin{equation*}
		A\pprimecomp \colonequals \lim_{n \in (\NNpprime)^{\op}} A/n \comma
	\end{equation*}
	where the transition maps are the quotient maps.
	That is, \smash{$ A\pprimecomp $} is the maximal prime-to-$ p $ quotient of the profinite completion of $ A $.
	We write
	\begin{equation*}
		\ZZpprime \colonequals \ZZ\pprimecomp \isomorphic \prod_{\substack{\textup{primes} \\ \ell \neq p}} \ZZell \period
	\end{equation*}
\end{notation}

\begin{lemma}\label{lem:inverting_p_and_completing_away_from_p}
	Let $ p $ be a prime number or $ 1 $ and let $ A $ be an abelian group.
	Then:
	\begin{enumerate}[label=\stlabel{lem:inverting_p_and_completing_away_from_p}, ref=\arabic*]
		\item\label{lem:inverting_p_and_completing_away_from_p.1} For each $ n \in \NNpprime $, the natural homomorphism $ \fromto{A}{A[\nicefrac{1}{p}]} $ induces an isomorphism $ \isomto{A/n}{A[\nicefrac{1}{p}]/n} $.

		\item\label{lem:inverting_p_and_completing_away_from_p.2} The natural homomorphism $ \fromto{A}{A[\nicefrac{1}{p}]} $ induces an isomorphism $ \isomto{A\pprimecomp}{A[\nicefrac{1}{p}]\pprimecomp} $.
	\end{enumerate}
\end{lemma}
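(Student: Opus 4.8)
The plan is to reduce \enumref{lem:inverting_p_and_completing_away_from_p}{2} to \enumref{lem:inverting_p_and_completing_away_from_p}{1} by passing to a cofiltered limit, so that the real content is part \enumref{lem:inverting_p_and_completing_away_from_p}{1}. For that I would argue exactly as in the proof of \Cref{lem:inverting_p_and_Tate_away_from_p}, but replacing the kernel $ A[n] $ by the cokernel $ A/n $; this is if anything easier, since it uses only right exactness of the tensor product rather than flatness of $ \ZZ[\nicefrac{1}{p}] $.

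For \enumref{lem:inverting_p_and_completing_away_from_p}{1}, first I would identify $ A/n \isomorphic A \tensor_{\ZZ} \ZZ/n $ and, using associativity of the tensor product, $ A[\nicefrac{1}{p}]/n \isomorphic A \tensor_{\ZZ} (\ZZ[\nicefrac{1}{p}] \tensor_{\ZZ} \ZZ/n) $. The key point is then that $ p $ acts invertibly on the $ \ZZ/n $-module $ A/n $: since $ \gcd(n,p) = 1 $, the class of $ p $ is a unit in $ \ZZ/n $. Consequently the localization map $ \fromto{A/n}{(A/n)[\nicefrac{1}{p}]} $ is an isomorphism. Unwinding the identification $ (A/n)[\nicefrac{1}{p}] \isomorphic A[\nicefrac{1}{p}]/n $ (equivalently, $ \ZZ[\nicefrac{1}{p}] \tensor_{\ZZ} \ZZ/n \isomorphic \ZZ/n $, as one sees by writing $ \ZZ[\nicefrac{1}{p}] $ as the colimit of $ \ZZ \xrightarrow{p} \ZZ \xrightarrow{p} \cdots $ and noting every transition map becomes invertible modulo $ n $), this localization map is precisely the natural map $ \fromto{A/n}{A[\nicefrac{1}{p}]/n} $ induced by $ \fromto{A}{A[\nicefrac{1}{p}]} $. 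Hence the latter is an isomorphism.

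For \enumref{lem:inverting_p_and_completing_away_from_p}{2}, I would take the limit over $ n \in (\NNpprime)^{\op} $ of the isomorphisms from part \enumref{lem:inverting_p_and_completing_away_from_p}{1}. By definition $ A\pprimecomp = \lim_n A/n $ and $ A[\nicefrac{1}{p}]\pprimecomp = \lim_n A[\nicefrac{1}{p}]/n $, and since every isomorphism $ \isomto{A/n}{A[\nicefrac{1}{p}]/n} $ is induced by the single homomorphism $ \fromto{A}{A[\nicefrac{1}{p}]} $, they commute with the quotient transition maps and thus form an isomorphism of inverse systems. Passing to limits gives the desired isomorphism. I do not expect a genuine obstacle here; the only point meriting care is exactly this naturality in $ n $ in part \enumref{lem:inverting_p_and_completing_away_from_p}{1} — that the isomorphism is the one \emph{induced} by $ \fromto{A}{A[\nicefrac{1}{p}]} $, rather than merely an abstract isomorphism — which is what makes the limit in part \enumref{lem:inverting_p_and_completing_away_from_p}{2} legitimate, and which is immediate since all maps in sight are induced by $ \fromto{A}{A[\nicefrac{1}{p}]} $.
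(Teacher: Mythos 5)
Your proof is correct and follows essentially the same route as the paper's: both identify $ A[\nicefrac{1}{p}]/n \isomorphic (A/n)[\nicefrac{1}{p}] $ via right exactness of the tensor product, observe that $ p $ is a unit in $ \ZZ/n $ so that $ p $ acts invertibly on $ A/n $, and then deduce \enumref{lem:inverting_p_and_completing_away_from_p}{2} by passing to the limit over $ n \in (\NNpprime)^{\op} $. Your explicit check that the isomorphisms in \enumref{lem:inverting_p_and_completing_away_from_p}{1} are natural in $ n $ (which the paper leaves implicit in the phrase ``immediate consequence of the definition of $ p' $-completion'') is a welcome, if minor, extra precision.
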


\begin{proof}
	For \enumref{lem:inverting_p_and_completing_away_from_p}{1}, observe that 
	\begin{equation*}
		A[\nicefrac{1}{p}]/n \isomorphic (A/n) \tensor_{\ZZ} \ZZ[\nicefrac{1}{p}] \isomorphic (A/n)[\nicefrac{1}{p}] \comma
	\end{equation*}
	so it suffices to show that $ p $ acts invertibly on $ A/n $.
	For this, note that $ A/n $ is a $ \ZZ/n $-module and since $ p $ is coprime to $ n $, the class of $ p $ in $ \ZZ/n $ is a unit.
	Item \enumref{lem:inverting_p_and_completing_away_from_p}{2} is an immediate consequence of \enumref{lem:inverting_p_and_completing_away_from_p}{1} and the definition of $ p' $-completion.
\end{proof}

\begin{nul}
	Let $ k $ be a field of exponential characteristic $ p $ and let $ X $ be a $ k $-scheme.
	Then for any $n \in \NNpprime$, the Kummer sequence provides a natural short exact sequence
	\begin{equation}\label{eq:n-Kummer_sequence}
		\begin{tikzcd}
			0 \arrow[r] & \Units(X)/n \arrow[r] & \Het^1(X;\mun) \arrow[r] & \Pic(X)[n] \arrow[r] & 0 \period
		\end{tikzcd}
	\end{equation}
	Here, $\mun$ is the étale sheaf of $n$-th roots of unity. 
\end{nul}

\begin{definition}[{(the $ p' $-Kummer map)}]
	Let $ k $ be a field of exponential characteristic $ p $ and let $ X $ be a $ k $-scheme.
	Write
	\begin{equation*}
		\Het^1(X;\ZZpprime(1)) \colonequals \lim_{n \in (\NNpprime)^{\op}} \Het^1(X;\mun) \period
	\end{equation*}
	We then define the \defn{$ p' $-Kummer map}
	\begin{equation*}
		\Kum \colon \Units(X)\pprimecomp \longrightarrow \Het^1(X;\ZZpprime(1))
	\end{equation*}
	as the limit of the maps $\Units(X)/n \to \Het^1(X;\mun) $ for $n \in \NNpprime $, coming from the Kummer sequence.
\end{definition}

\begin{observation}[{(functoriality of $ \Het^1(X;\ZZpprime(1)) $)}]\label{obs:functoriality_of_Het1}
	Let $ k $ be a field of exponential characteristic $ p $ and let $ X $ and $ Y $ be $ k $-schemes.
	Then for each positive integer $ n $ coprime to $ p $, the sheaves $ \mubf_{n,X} $ and $ \mubf_{n,Y} $ are the pullbacks of $ \mubf_{n,\Spec(k)} $.
	As a consequence, for any geometric morphism $ \flowerstar \colon \fromto{\Xet}{\Yet} $ over $ \Spec(k)_{\et} $ (not necessarily induced by a morphism of schemes), we have $ \fupperstar(\mubf_{n,Y}) \isomorphic \mubf_{n,X} $.
	Hence for each $ i \geq 0 $, the assignment $ \goesto{X}{\Het^i(X;\mun)} $ is functorial in geometric morphisms of étale topoi over $ \Spec(k)_{\et} $.
	Thus the assignment
	\begin{equation*}
		\goesto{X}{\Het^1(X;\ZZpprime(1))}
	\end{equation*}
	is also functorial in geometric morphisms of étale topoi over $ \Spec(k)_{\et} $.  
\end{observation}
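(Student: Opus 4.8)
The plan is to deduce the functoriality from a single geometric input — that away from the characteristic the sheaf of $ n $-th roots of unity is pulled back from the base — together with the formal functoriality of sheaf cohomology in the pair consisting of a topos and an abelian coefficient sheaf. I would begin with the opening assertion: for $ n $ coprime to $ p $ the group scheme $ \mun = \Spec\bigl(k[t]/(t^n-1)\bigr) $ is finite étale over $ \Spec(k) $ and its formation commutes with arbitrary base change, so that, writing $ \pi_X^{*} $ for the pullback functor of the structure geometric morphism $ \fromto{\Xet}{\Spec(k)_{\et}} $, one obtains a canonical isomorphism $ \mubf_{n,X} \isomorphic \pi_X^{*}\mubf_{n,\Spec(k)} $ of étale sheaves, and likewise for $ Y $.

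Next I would treat the key point, namely that the pullback of $ \mun $ is governed entirely at the level of topoi, so that the conclusion persists for geometric morphisms not arising from morphisms of schemes. By \Cref{def:B-topoi}, a $ 1 $-morphism $ (\flowerstar,\alpha) $ in $ \RTop_{k} $ from $ (\Xet,\pi_X) $ to $ (\Yet,\pi_Y) $ carries a natural isomorphism $ \alpha $ witnessing compatibility of the structure morphisms over $ \Spec(k)_{\et} $; passing to the mate on left adjoints yields a natural isomorphism $ \pi_X^{*} \isomto \fupperstar\pi_Y^{*} $. Splicing this with the base-change isomorphisms for $ X $ and $ Y $ gives
\[
	\fupperstar\mubf_{n,Y} \isomorphic \fupperstar\pi_Y^{*}\mubf_{n,\Spec(k)} \isomorphic \pi_X^{*}\mubf_{n,\Spec(k)} \isomorphic \mubf_{n,X},
\]
which is the second assertion; its content is precisely that $ \mubf_{n,X} $ depends only on the structure geometric morphism $ \fromto{\Xet}{\Spec(k)_{\et}} $ and not on the scheme $ X $ itself.

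For the functoriality of cohomology I would recall that, for any geometric morphism $ \flowerstar \colon \fromto{\Xet}{\Yet} $ and any abelian sheaf $ \Fcal $ on $ \Yet $, the unit $ \fromto{\Fcal}{\flowerstar\fupperstar\Fcal} $ induces on derived global sections a pullback map $ \fromto{\Het^i(\Yet;\Fcal)}{\Het^i(\Xet;\fupperstar\Fcal)} $, compatibly with composition of geometric morphisms. Taking $ \Fcal = \mubf_{n,Y} $ and inserting the isomorphism $ \fupperstar\mubf_{n,Y}\isomorphic\mubf_{n,X} $ of the previous step produces the desired map $ \fromto{\Het^i(Y;\mun)}{\Het^i(X;\mun)} $ for every geometric morphism over $ \Spec(k)_{\et} $. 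The point requiring the most care — and the step I expect to be the genuine obstacle — is to verify that these maps compose correctly, i.e.\ that one obtains an honest functor on the $ (2,1) $-category $ \RTop_{k} $ rather than a mere assignment on objects and morphisms. This reduces to checking that the identifications $ \fupperstar\mubf_{n,Y}\isomorphic\mubf_{n,X} $ satisfy the cocycle condition with respect to the natural isomorphisms $ \alpha $ encoding composition; the clean way to arrange this is to factor $ \Het^i(-;\mun) $ as the composite of two manifestly functorial assignments — first the functor sending a topos $ \Xcal $ over $ \Spec(k)_{\et} $ to the pullback of $ \mubf_{n,\Spec(k)} $ along its structure map, and then the contravariant functoriality of $ \Het^i $ in the pair $ (\Xcal,\Fcal) $.

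Finally, for the last assertion I would note that $ \Het^1(X;\ZZpprime(1)) $ is by definition the limit over $ n\in(\NNpprime)^{\op} $ of the functors $ \goesto{X}{\Het^1(X;\mun)} $, the transition maps for $ m\mid n $ being induced by the power maps $ \fromto{\mun}{\mum} $. Since these power maps are themselves pulled back from $ \Spec(k) $, they are compatible with all of the identifications above; hence the transition maps are natural in geometric morphisms of étale topoi over $ \Spec(k)_{\et} $, and the limit is again such a functor.
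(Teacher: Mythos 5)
Your proposal is correct and takes essentially the same approach as the paper: the Observation's own text \emph{is} the paper's proof, resting on the single fact that $ \mubf_{n,X} $ is the pullback of $ \mubf_{n,\Spec(k)} $ along the structure morphism, so that any geometric morphism over $ \Spec(k)_{\et} $ satisfies $ \fupperstar(\mubf_{n,Y}) \isomorphic \mubf_{n,X} $ and the cohomology functoriality follows formally. Your write-up simply makes explicit what the paper leaves implicit — the mate construction producing $ \pi_X^{*} \isomto \fupperstar\pi_Y^{*} $ from the $ 2 $-morphism datum $ \alpha $, the coherence of the identifications under composition, and the compatibility of the power-map transition maps with the limit defining $ \Het^1(X;\ZZpprime(1)) $ — all of which are sound.
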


\begin{observation}\label{obs:p'-Kummer_exact_sequence}
	Let $ k $ be a field of exponential characteristic $ p $ and let $ X $ be a $ k $-scheme.
	For $ m $ dividing $ n $, the transition map $ \fromto{\Units(X)/n}{\Units(X)/m} $ is surjective.
	Hence the cofiltered system defining the $ p' $-completion of $ \Units(X) $ is a Mittag-Leffler system.
	So \stacks{0598} shows that taking limits over the sequences \eqref{eq:n-Kummer_sequence}, we obtain an exact sequence
	\begin{equation}\label{eq:pprime-Kummer_sequence}
		\begin{tikzcd}[sep=3em]
			0 \arrow[r] & \Units(X)\pprimecomp \arrow[r, "\Kum"] & \Het^1(X;\ZZpprime(1)) \arrow[r] & \Tatep(\Pic(X)) \arrow[r] & 0 \period
		\end{tikzcd}
	\end{equation}
	In particular, the $ p' $-Kummer map is an isomorphism if and only if $ \Tatep(\Pic(X)) = 0 $.
\end{observation}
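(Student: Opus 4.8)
The plan is to produce \eqref{eq:pprime-Kummer_sequence} by passing to the inverse limit over $ n \in (\NNpprime)^{\op} $ in the Kummer sequences \eqref{eq:n-Kummer_sequence}. First I would assemble the sequences \eqref{eq:n-Kummer_sequence} into a short exact sequence of inverse systems indexed by $ \NNpprime $ ordered by divisibility. For $ m \mid n $, writing $ n = md $, the transition maps are induced by the evident map of Kummer sequences of étale sheaves: the sheaf sequence $ 0 \to \mun \to \GGm \to \GGm \to 0 $ maps to $ 0 \to \mubf_m \to \GGm \to \GGm \to 0 $ via $ \zeta \mapsto \zeta^{d} $ on $ \mun $, the $ d $-th power map on the middle $ \GGm $, and the identity on the right-hand $ \GGm $. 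The key bookkeeping is to compute what this induces on the three terms of \eqref{eq:n-Kummer_sequence}: on the left term the transition becomes the quotient map $ \Units(X)/n \to \Units(X)/m $ (coming from the identity on the right-hand $ \GGm $), while on the right term it becomes multiplication by $ d = n/m $ on $ \Pic(X) = \Het^1(X;\GGm) $, restricted to a map $ \Pic(X)[n] \to \Pic(X)[m] $ (coming from the $ d $-th power map on the middle $ \GGm $). I expect this transition-map computation to be the main obstacle, since it is exactly what identifies $ \lim_{n} \Pic(X)[n] $ with the $ p' $-Tate module $ \Tatep(\Pic(X)) $, whose transition maps are multiplication by $ n/m $ by the convention of \Cref{ntn:n-torsion_and_Tate_module}.

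With the systems in place, the next step is to observe that the transition maps $ \Units(X)/n \to \Units(X)/m $ are surjective, so the system $ \{\Units(X)/n\}_{n} $ is Mittag--Leffler. Since $ \NNpprime $ admits a cofinal sequence under divisibility (for instance the sequence of $ p' $-parts of the factorials $ k! $), the usual vanishing of $ \lim^{1} $ for Mittag--Leffler systems applies, and \stacks{0598} yields a short exact sequence
\begin{equation*}
	0 \to \lim_{n} \Units(X)/n \to \lim_{n} \Het^1(X;\mun) \to \lim_{n} \Pic(X)[n] \to 0 \period
\end{equation*}
It then remains only to identify the three limits: the left is $ \Units(X)\pprimecomp $ by the definition of $ p' $-completion (\Cref{ntn:p'-completion}), the middle is $ \Het^1(X;\ZZpprime(1)) $ by definition, and the right is $ \Tatep(\Pic(X)) $ by the transition-map computation above. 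This gives \eqref{eq:pprime-Kummer_sequence}, with the left-hand injection being precisely the limit $ \Kum $ of the maps $ \Units(X)/n \to \Het^1(X;\mun) $.

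The final assertion is then immediate from \eqref{eq:pprime-Kummer_sequence}: the $ p' $-Kummer map $ \Kum $ is the left-hand injection, hence always injective, and it is an isomorphism precisely when its cokernel $ \Tatep(\Pic(X)) $ vanishes.
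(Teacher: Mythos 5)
Your proposal is correct and takes essentially the same route as the paper, whose proof is inlined in the observation itself: surjectivity of the transition maps $ \fromto{\Units(X)/n}{\Units(X)/m} $ makes the system Mittag--Leffler, and \stacks{0598} applied to the inverse system of Kummer sequences \eqref{eq:n-Kummer_sequence} yields \eqref{eq:pprime-Kummer_sequence}. The details you supply beyond the paper's telegraphic argument --- the map of sheaf-level Kummer sequences identifying the transition maps as the quotient map on $ \Units(X)/n $ and multiplication by $ n/m $ on $ \Pic(X)[n] $ (matching the Tate-module convention of \Cref{ntn:n-torsion_and_Tate_module}), and the passage to a cofinal sequence in $ \NNpprime $ so that \stacks{0598}, stated for systems indexed by $ \NN $, actually applies --- are precisely the bookkeeping the paper leaves implicit, and you carry them out correctly.
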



\subsection{Amenability}\label{sec:the_definition_and_stability_properties}

The following is the main definition of this section.

\begin{definition}\label{def:amenable}
	Let $ k $ be a field of exponential characteristic $ p $.
	We say that $ k $ is \defn{amenable} if the following conditions hold: 
	\begin{enumerate}[label=\stlabel{def:amenable}, ref=\arabic*]
		\item\label{def:amenable.1} For each torus $ T $ over $ k $, the abelian group $T(k)$ has no nonzero $ p' $-divisible elements.

		\item\label{def:amenable.2} For each regular finite type $ k $-scheme $ X $, we have $ \Tatep(\Pic(X)) = 0 $.
	\end{enumerate}
\end{definition}

For future use, we record the following consequences of \Cref{obs:if_there_are_no_p'-torsion_elements_then_the_p'-Tate_module_vanishes,obs:p'-Kummer_exact_sequence}:

\begin{observation}\label{obs:consequences_of_amenable}
	Let $ k $ be of exponential characteristic $ p $.
	\begin{enumerate}[label=\stlabel{obs:consequences_of_amenable}, ref=\arabic*]
		\item If condition \enumref{def:amenable}{1} is satisfied, then for any torus $ T $ over $ k $, the Kummer map
		\begin{equation*}
			T(k)  \to \lim_{n \in (\NNpprime)^{\op}} \Het^1(k;T[n])
		\end{equation*}
		is injective.

		\item If for each regular finite type $ k $-scheme $ X $, the group $ \Pic(X) $ has no $ p' $-divisible torsion elements, then condition \enumref{def:amenable}{2} is satisfied.

		\item Condition \enumref{def:amenable}{2} is satisfied if and only if for each regular finite type $ k $-scheme $ X $, the $ p' $-Kummer map $ \Kum \colon \fromto{\Units(X)\pprimecomp}{\Het^1(X;\ZZpprime(1))} $ is an isomorphism.
	\end{enumerate}
\end{observation}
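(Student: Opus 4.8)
The plan is to prove the three parts separately; only the first requires real work, while the other two are immediate consequences of the results recalled just above. For part (1), I would start from the Kummer sequence for the torus $T$. For each $n \in \NNpprime$ the integer $n$ is invertible in $k$, so multiplication by $n$ is a finite étale surjection $T \to T$, and hence there is a short exact sequence of étale sheaves $0 \to T[n] \to T \xrightarrow{n} T \to 0$ on $\Spec(k)$. Its long exact cohomology sequence produces an injection $T(k)/n \hookrightarrow \Het^1(k;T[n])$. As $n$ ranges over $\NNpprime$ these injections are compatible with the transition maps (on the left the quotient maps $T(k)/n \to T(k)/m$, on the right the maps induced by the multiplication-by-$(n/m)$ map $T[n]\to T[m]$), so they assemble into a morphism of inverse systems. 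Since limits are left exact, passing to the limit gives an injection $T(k)\pprimecomp = \lim_n T(k)/n \hookrightarrow \lim_n \Het^1(k;T[n])$.

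Next I would observe that the Kummer map in the statement factors as the composite $T(k) \to T(k)\pprimecomp \to \lim_n \Het^1(k;T[n])$, with the second arrow the injection just constructed. Hence the kernel of the Kummer map equals the kernel of the completion map $T(k) \to T(k)\pprimecomp$. By the definition of $p'$-completion, this kernel is $\bigcap_{n \in \NNpprime} n\,T(k)$, which is precisely the subgroup of $p'$-divisible elements of $T(k)$. Condition \enumref{def:amenable}{1} asserts that this subgroup is trivial, so the Kummer map is injective, as claimed.

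For part (2), the statement is immediate: if $\Pic(X)$ has no nonzero $p'$-divisible torsion elements for every regular finite type $k$-scheme $X$, then \Cref{obs:if_there_are_no_p'-torsion_elements_then_the_p'-Tate_module_vanishes} applied to $A = \Pic(X)$ gives $\Tatep(\Pic(X)) = 0$, which is exactly condition \enumref{def:amenable}{2}. For part (3), I would invoke the $p'$-Kummer exact sequence \eqref{eq:pprime-Kummer_sequence} of \Cref{obs:p'-Kummer_exact_sequence}: for any $k$-scheme $X$ it exhibits $\Tatep(\Pic(X))$ as the cokernel of $\Kum$, so $\Kum$ is an isomorphism if and only if $\Tatep(\Pic(X)) = 0$. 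Quantifying over all regular finite type $k$-schemes $X$ yields the asserted equivalence with condition \enumref{def:amenable}{2}.

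The only point requiring genuine care is in part (1): one must use the coprimality of $n$ with $p$ to ensure that multiplication by $n$ is an epimorphism of étale sheaves (so that the Kummer sequence is exact and yields $T(k)/n \hookrightarrow \Het^1(k;T[n])$), and one must correctly identify the kernel of the completion map with the $p'$-divisible subgroup rather than with some a priori larger subgroup. Everything else is a formal consequence of left exactness of limits and the two observations recalled above.
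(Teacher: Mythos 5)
Your proposal is correct and takes essentially the same route as the paper, which records this statement as an immediate consequence of \Cref{obs:if_there_are_no_p'-torsion_elements_then_the_p'-Tate_module_vanishes,obs:p'-Kummer_exact_sequence}: parts (2) and (3) are exactly those two observations applied to $ \Pic(X) $, and your Kummer-sequence argument for part (1) — exactness of $ 0 \to T[n] \to T \xrightarrow{n} T \to 0 $ for $ n \in \NNpprime $, left exactness of limits, and the identification of $ \ker\lparen T(k) \to T(k)\pprimecomp \rparen $ with $ \bigcap_{n \in \NNpprime} nT(k) $, i.e.\ the $ p' $-divisible elements — is precisely the routine verification the paper leaves implicit.
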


\noindent We now deduce analogous facts about absolute weak normalizations.

\begin{lemma}\label{lem:units_in_Xawn}
	Let $ k $ be a field of exponential characteristic $ p $ and let $ X $ be a qcqs seminormal $ k $-scheme.
	Then the natural homomorphism $ \fromto{\Units(X)}{\Units(\Xawn)} $ induces an isomorphism 
	\begin{equation*}
		\isomto{\Units(X)[\nicefrac{1}{p}]}{\Units(\Xawn)} \period
	\end{equation*}
\end{lemma}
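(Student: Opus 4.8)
The plan is to treat the two cases of the exponential characteristic separately, since the geometry of $\Xawn$ is completely different in the two cases. If $\characteristic(k) = 0$, then $p = 1$, so inverting $1$ changes nothing and the assertion reduces to the claim that $\fromto{\Units(X)}{\Units(\Xawn)}$ is an isomorphism. In this case $X$ is a $\QQ$-scheme, so by \Cref{cor:absolute_weak_normalization_over_fields} we have $\Xawn = \Xsn$; and since $X$ is seminormal, it lies in $\Schsn$, so the counit $\fromto{\Xsn}{X}$ is an isomorphism by \Cref{thm:existence_of_absolute_weak_normalization_and_seminormalization}. Hence $\Xawn = X$ and the map in question is the identity.

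The content is therefore the case $\characteristic(k) = p > 0$, where $X$ is an $\FFp$-scheme and $\Xawn = \Xperf$ by \Cref{cor:absolute_weak_normalization_over_fields}. I would first reduce to the case that $X$ is affine. Indeed $\goesto{U}{\Units(U)}$ is a Zariski sheaf, and by \Cref{lem:absolute_weak_normalization_of_open_immersions} perfection commutes with restriction to opens, so for each affine open $\incto{U}{X}$ one has $U^{\perf} \isomorphic U \cross_X \Xperf$. Choosing a finite affine cover of $X$ with finite affine covers of the pairwise intersections (possible since $X$ is qcqs), the group $\Units(\Xperf)$ is computed by a \emph{finite} limit of the groups $\Units(U^{\perf})$. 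Since $-[\nicefrac{1}{p}] = -\tensor_{\ZZ}\ZZ[\nicefrac{1}{p}]$ is a filtered colimit and filtered colimits commute with finite limits, it suffices to establish the isomorphism $\isomto{\Units(U)[\nicefrac{1}{p}]}{\Units(U^{\perf})}$ for each affine $U$; that is, we may assume $X = \Spec(A)$.

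For $X = \Spec(A)$ affine, the absolute Frobenius is affine, so $A^{\perf} \isomorphic \colim\bigl(A \xrightarrow{\Frob} A \xrightarrow{\Frob} \cdots\bigr)$ is the colimit along the $p$-th power map. Taking units commutes with filtered colimits of rings, whence $\Units(\Xperf) \isomorphic \colim\bigl(\Units(A) \xrightarrow{\Frob} \Units(A) \to \cdots\bigr)$, where each transition map is the $p$-th power map $\goesto{u}{u^p}$. Written additively this transition map is multiplication by $p$, and the colimit of $\Units(A) \xrightarrow{p} \Units(A) \xrightarrow{p} \cdots$ is exactly $\Units(A)[\nicefrac{1}{p}]$, with the canonical map out of the first term being the localization map. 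Since the counit $\fromto{\Xperf}{X}$ induces precisely this canonical map $\fromto{\Units(A)}{\Units(A^{\perf})}$, the resulting map $\fromto{\Units(A)[\nicefrac{1}{p}]}{\Units(\Xperf)}$ is the asserted isomorphism.

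This is essentially a bookkeeping computation rather than a deep argument; the two points that require care are verifying that the identification of the colimit with $\Units(A)[\nicefrac{1}{p}]$ is compatible with the natural (localization) map coming from the counit, and justifying the reduction from qcqs to affine schemes, where the key input is that for a qcqs scheme the Zariski sheaf condition is a finite limit and hence commutes with the filtered colimit $-[\nicefrac{1}{p}]$.
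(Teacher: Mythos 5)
Your proof is correct, and its core computation is the same as the paper's: for $ p > 1 $, $ \Xawn = \Xperf $ is the limit along Frobenius, units commute with the resulting filtered colimit of rings, the transition map on units is (additively) multiplication by $ p $, so the colimit is $ \Units(X)[\nicefrac{1}{p}] $ with the counit inducing the localization map; the $ p = 1 $ case is handled identically in both arguments. The only real divergence is how the qcqs hypothesis enters. The paper applies \stacks{01Z0} directly to the cofiltered limit of schemes $ \Xperf \isomorphic \lim(\cdots \xrightarrow{\Frob_X} X \xrightarrow{\Frob_X} X) $, which yields $ \Gamma(\Xperf;\Ocal_{\Xperf}) \isomorphic \colim \Gamma(X;\Ocal_X) $ in one stroke for any qcqs $ X $ and collapses the whole proof to two lines. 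You instead run the colimit computation only in the affine case — where it is simply the definition of the perfection of a ring — and globalize by hand: a finite affine cover with finitely covered pairwise intersections expresses $ \Units(-) $ as a finite limit, \Cref{lem:absolute_weak_normalization_of_open_immersions} identifies $ U^{\perf} \isomorphic U \cross_X \Xperf $ so the cover and its intersections perfect correctly, and the filtered colimit $ -[\nicefrac{1}{p}] $ commutes with finite limits. Your route is more elementary and self-contained, avoiding the scheme-theoretic limit machinery entirely and making explicit that quasicompactness and quasiseparatedness are used exactly to make the sheaf condition a finite limit; the paper's citation buys brevity by delegating precisely that commutation to a standard result.
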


\begin{proof}
	If $ p = 1 $, then since $ X $ is seminormal, $ \Xawn = X $ and there is nothing to show. 
	If $ p > 1 $, then $ \Xawn $ is the perfection of $ X $, thus applying \stacks{01Z0} we deduce that
	\begin{align*}
		\Units(\Xawn) &\isomorphic \colim \paren{ \Units(X) \xrightarrow{\cdot p} \Units(X) \xrightarrow{\cdot p}  \cdots } \\ 
		&= \Units(X)[\nicefrac{1}{p}] \period \qedhere
	\end{align*} 
\end{proof}

\begin{lemma}[{(perfection and the Picard group \cite[Lemma 3.5]{MR3674218})}]\label{lem:perfection_inverts_p_in_Pic}
	Let $ X $ be a qcqs $ \FFp $-scheme.
	Then pullback along the counit $ \fromto{\Xperf}{X} $ induces an isomorphism
	\begin{equation*}
		\isomto{\Pic(X)[\nicefrac{1}{p}]}{\Pic(\Xperf)} \period
	\end{equation*}
\end{lemma}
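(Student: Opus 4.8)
The plan is to combine the realization of $\Xperf$ as a cofiltered limit with the computation that the absolute Frobenius acts by multiplication by $p$ on Picard groups.

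First I would use that $\Xperf \isomorphic \lim_n X$ is the limit of the tower $\cdots \xrightarrow{\Frob_X} X \xrightarrow{\Frob_X} X$, whose transition maps are all the absolute Frobenius and hence affine (indeed integral). Since $X$ is qcqs, the standard limit theorems for qcqs schemes apply: line bundles are finitely presented, so they descend to a finite stage and the identifications are compatible. This yields a natural isomorphism
\begin{equation*}
	\colim \paren{\Pic(X) \xrightarrow{\Frob_X^*} \Pic(X) \xrightarrow{\Frob_X^*} \cdots} \isomorphic \Pic(\Xperf) \comma
\end{equation*}
under which the canonical map from the initial copy of $ \Pic(X) $ is pullback along the counit $ \fromto{\Xperf}{X} $.

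Second, I would identify each transition map $ \Frob_X^* $ with multiplication by $ p $. This is a local check: the absolute Frobenius is the identity on the underlying space and raises sections of $ \Ocal_X $ to their $ p $th powers, so on a trivializing cover it sends a line bundle with transition cocycle $ (g_{ij}) $ to the one with cocycle $ (g_{ij}^p) $; that is, $ \Frob_X^* L \isomorphic L^{\otimes p} $. Combining the two steps, the colimit becomes the colimit of
\begin{equation*}
	\Pic(X) \xrightarrow{\cdot p} \Pic(X) \xrightarrow{\cdot p} \cdots \comma
\end{equation*}
which is exactly $ \Pic(X)[\nicefrac{1}{p}] = \Pic(X) \tensor_{\ZZ} \ZZ[\nicefrac{1}{p}] $; tracing through the identifications, the resulting isomorphism $ \isomto{\Pic(X)[\nicefrac{1}{p}]}{\Pic(\Xperf)} $ is induced by pullback along the counit, as claimed.

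The main obstacle I anticipate is the first step: justifying cleanly that the Picard group commutes with this cofiltered limit of qcqs schemes with affine transition maps. This rests on the limit formalism for finitely presented quasi-coherent sheaves rather than on anything specific to Frobenius; once it is in hand, the remaining steps are the elementary Frobenius computation and the identification of a telescoping colimit with a localization.
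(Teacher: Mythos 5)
Your proposal is correct and is essentially the same argument as in \cite[Lemma 3.5]{MR3674218}, which is the source the paper cites for this lemma rather than proving it itself. The limit step you flag as the main obstacle is precisely the standard fact \stacks{0B8W} that $ \Pic $ commutes with cofiltered limits of qcqs schemes along affine transition morphisms (this is where the qcqs hypothesis enters), and your cocycle computation $ \Frob_X^* L \isomorphic L^{\otimes p} $ correctly identifies the transition maps with multiplication by $ p $, so the telescoping colimit is $ \Pic(X)[\nicefrac{1}{p}] $ as claimed.
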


\begin{lemma}\label{lem:alternative_description_of_H1(Xawn;ZZpprime(1))_for_amenable_fields}
	Let $ k $ be an amenable field and let $ X $ be a regular finite type $ k $-scheme.
	Then: 
	\begin{enumerate}[label=\stlabel{lem:alternative_description_of_H1(Xawn;ZZpprime(1))_for_amenable_fields}, ref=\arabic*]
		\item\label{lem:alternative_description_of_H1(Xawn;ZZpprime(1))_for_amenable_fields.1} Pullback along the counit $ \fromto{\Xawn}{X} $ induces an isomorphism
		\begin{equation*}
			\isomto{\Tatep(\Pic(X))}{\Tatep(\Pic(\Xawn))} \period
		\end{equation*}

		\item\label{lem:alternative_description_of_H1(Xawn;ZZpprime(1))_for_amenable_fields.2} The $ p' $-Kummer map \smash{$ \Kum \colon \Units(\Xawn)\pprimecomp \to \Het^1(\Xawn;\ZZpprime(1)) $} is an isomorphism.

		\item\label{lem:alternative_description_of_H1(Xawn;ZZpprime(1))_for_amenable_fields.3} We have $ \Tatep(\Pic(\Xawn)) = 0 $.
	\end{enumerate}
\end{lemma}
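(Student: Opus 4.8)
The plan is to prove the three parts in the order (1), (3), (2): part (3) is an immediate consequence of (1) combined with amenability, and part (2) then follows formally from the $p'$-Kummer exact sequence. Throughout I would distinguish the two cases by the exponential characteristic. If $p = 1$, then $k$ has characteristic $0$ and, since $X$ is regular, it is normal and hence seminormal (\enumref{prop:basic_properties_of_seminormal_rings}{4}); thus the counit $\fromto{\Xawn}{X}$ is an isomorphism and part (1) is trivial. The content is in the case $p > 1$, where $\Xawn = \Xperf$.

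For part (1) in positive characteristic, the key input is \Cref{lem:perfection_inverts_p_in_Pic}, which says that pullback along the counit $\fromto{\Xperf}{X}$ identifies $\Pic(\Xawn)$ with $\Pic(X)[\nicefrac{1}{p}]$. I would then apply \enumref{lem:inverting_p_and_Tate_away_from_p}{2} to the abelian group $A = \Pic(X)$: this says precisely that the localization map $\fromto{A}{A[\nicefrac{1}{p}]}$ induces an isomorphism on $p'$-Tate modules. Since both identifications are induced by pullback along the counit, composing them yields the desired isomorphism $\isomto{\Tatep(\Pic(X))}{\Tatep(\Pic(\Xawn))}$.

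Part (3) is then immediate: because $X$ is a regular finite type $k$-scheme and $k$ is amenable, condition \enumref{def:amenable}{2} gives $\Tatep(\Pic(X)) = 0$, and part (1) transports this vanishing to $\Tatep(\Pic(\Xawn))$. Finally, for part (2) I would invoke \Cref{obs:p'-Kummer_exact_sequence} applied to the $k$-scheme $\Xawn$ (rather than to $X$): it produces the short exact sequence whose cokernel is $\Tatep(\Pic(\Xawn))$ and asserts that the $p'$-Kummer map is an isomorphism exactly when this Tate module vanishes, which part (3) supplies.

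The argument is short, and the only point that requires care—really a matter of bookkeeping rather than a genuine obstacle—is making sure that \Cref{obs:p'-Kummer_exact_sequence} is legitimately available for $\Xawn$, since in positive characteristic $\Xawn$ is typically neither of finite type nor noetherian (\Cref{prop:the_only_perfect_noetherian_schemes_are_products_of_fields}). Here I would emphasize that the observation only requires $\Xawn$ to be a $k$-scheme, so no finiteness of $\Xawn$ is ever used, and the conclusion goes through verbatim.
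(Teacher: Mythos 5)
Your proposal is correct and follows essentially the same route as the paper: part (1) via \Cref{lem:perfection_inverts_p_in_Pic} combined with \enumref{lem:inverting_p_and_Tate_away_from_p}{2}, part (3) from amenability condition \enumref{def:amenable}{2} transported along (1), and part (2) from the $p'$-Kummer exact sequence of \Cref{obs:p'-Kummer_exact_sequence} applied to $\Xawn$ (the paper phrases this as the equivalence (2) $\Leftrightarrow$ (3)). Your two extra remarks—justifying $\Xawn = X$ in characteristic $0$ via regular $\Rightarrow$ normal $\Rightarrow$ seminormal, and noting that \Cref{obs:p'-Kummer_exact_sequence} needs no finiteness hypotheses on $\Xawn$—are accurate points the paper leaves implicit.
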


\begin{proof}
	For \enumref{lem:alternative_description_of_H1(Xawn;ZZpprime(1))_for_amenable_fields}{1}, note that if $ p = 1 $, then $ \Xawn = X $.
	Hence \Cref{lem:perfection_inverts_p_in_Pic} shows that with no conditions on $ p $, pullback along the counit induces an isomorphism 
	\begin{equation*}
		\isomto{\Pic(X)[\nicefrac{1}{p}]}{\Pic(\Xawn)} \period
	\end{equation*}
	By \Cref{lem:inverting_p_and_Tate_away_from_p} we see that pullback along the counit induces isomorphisms 
	\begin{equation*}
		\Tatep(\Pic(X)) \isomorphism \Tatep(\Pic(X)[\nicefrac{1}{p}]) \isomorphism \Tatep(\Pic(\Xawn)) \period
	\end{equation*}
	To conclude, note that \Cref{obs:p'-Kummer_exact_sequence} shows that \enumref{lem:alternative_description_of_H1(Xawn;ZZpprime(1))_for_amenable_fields}{2} $ \Leftrightarrow $ \enumref{lem:alternative_description_of_H1(Xawn;ZZpprime(1))_for_amenable_fields}{3}.
	Moreover, \Cref{obs:consequences_of_amenable} shows that $ \Tatep(\Pic(X)) = 0 $; so item \enumref{lem:alternative_description_of_H1(Xawn;ZZpprime(1))_for_amenable_fields}{1} implies \enumref{lem:alternative_description_of_H1(Xawn;ZZpprime(1))_for_amenable_fields}{3}. 
\end{proof}

The property of being amenable is stable under finite field extensions:

\begin{lemma}\label{lem:amenable_is_stable_under_finite_extension}
	Let $ k $ be an amenable field and let $ L \supset k $ be a finite field extension. 
	Then $ L $ is also amenable.
\end{lemma}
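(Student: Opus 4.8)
The plan is to verify the two conditions of \Cref{def:amenable} for $ L $ separately; note first that $ L $ has the same exponential characteristic $ p $ as $ k $, so the notion of $ p' $-divisibility is unchanged. Condition \enumref{def:amenable}{2} comes essentially for free: if $ X $ is a regular finite type $ L $-scheme, then since $ L/k $ is finite, $ X $ is also a regular finite type $ k $-scheme, and the group $ \Tatep(\Pic(X)) $ depends only on the scheme $ X $ and not on the chosen base field. Hence $ \Tatep(\Pic(X)) = 0 $ by condition \enumref{def:amenable}{2} for $ k $. This argument works for an arbitrary finite extension, so all of the real work lies in condition \enumref{def:amenable}{1}.

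To handle condition \enumref{def:amenable}{1}, I would factor the extension as $ k \subseteq k' \subseteq L $, where $ k' $ is the separable closure of $ k $ in $ L $, so that $ k'/k $ is finite separable and $ L/k' $ is finite purely inseparable. It then suffices to propagate amenability across each of these two kinds of extension in turn. For the finite separable extension $ k'/k $, condition \enumref{def:amenable}{1} follows from Weil restriction: given a torus $ T' $ over $ k' $, the Weil restriction $ \Res_{k'/k}(T') $ is again a torus over $ k $ (as $ k'/k $ is étale), and $ \Res_{k'/k}(T')(k) \isomorphic T'(k') $. Since condition \enumref{def:amenable}{1} for $ k $ says $ \Res_{k'/k}(T')(k) $ has no nonzero $ p' $-divisible elements, neither does $ T'(k') $. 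Combined with the remark on condition \enumref{def:amenable}{2} above, this shows $ k' $ is amenable.

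The main obstacle is the purely inseparable extension $ L/k' $, and here is where I expect the genuine content to lie. I would use that a purely inseparable extension is a universal homeomorphism, hence induces an isomorphism on absolute Galois groups, so that base change is an equivalence from tori over $ k' $ to tori over $ L $; thus any torus over $ L $ may be written as $ T_L $ for a torus $ T $ over $ k' $, and $ T_L(L) = T(L) $. Choose $ m $ with $ L^{p^m} \subseteq k' $. The crucial algebraic input, which I expect to be the hardest step, is that the cokernel of the (injective) map $ T(k') \to T(L) $ is killed by $ p^m $, i.e.\ $ p^m T(L) \subseteq T(k') $. This I would prove using the standard description $ T(F) = \paren{N \tensor_{\ZZ} (F^{\mathrm{sep}})^{\units}}^{\mathrm{Gal}} $, where $ N $ is the cocharacter lattice, together with the fact that $ L^{\mathrm{sep}}/(k')^{\mathrm{sep}} $ is purely inseparable of exponent $ \le m $, so that $ (L^{\mathrm{sep}})^{\units}/((k')^{\mathrm{sep}})^{\units} $ is killed by $ p^m $.

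Granting $ p^m T(L) \subseteq T(k') $, let $ a \in T(L) $ be $ p' $-divisible, and for each $ n \in \NNpprime $ pick $ a_n $ with $ n a_n = a $. Then $ b \colonequals p^m a \in T(k') $, and $ b = n\paren{p^m a_n} $ with $ p^m a_n \in p^m T(L) \subseteq T(k') $; hence $ b $ is $ p' $-divisible in $ T(k') $, so $ b = 0 $ by condition \enumref{def:amenable}{1} for $ k' $. Thus $ a $ is $ p^m $-torsion, and since a torus over a field of characteristic $ p $ has no nonzero $ p^m $-torsion rational points (because $ \mubf_{p^m}(\overline{L}) = 0 $), we conclude $ a = 0 $. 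This establishes condition \enumref{def:amenable}{1} for $ L $ and completes the proof. The separable step and condition \enumref{def:amenable}{2} are routine; the inseparable comparison $ p^m T(L) \subseteq T(k') $ is the only point requiring care.
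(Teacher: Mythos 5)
Your proof is correct, and its skeleton matches the paper's: condition \enumref{def:amenable}{2} is immediate because a regular finite type $L$-scheme is a regular finite type $k$-scheme, the extension is factored into a separable part handled by Weil restriction (using $\Res_{k'/k}(T')(k) \isomorphic T'(k')$) and a purely inseparable part handled by pushing $p$-power powers down into the subfield. Where you genuinely diverge is the purely inseparable step. The paper first reduces, ``by Galois descent,'' to the single torus $\GGm$ over $L$ and then runs the elementary argument: if $a \in L^{\units}$ is $p'$-divisible and $L^{p^n} \subset k$, then $a^{p^n} \in k^{\units}$ is $p'$-divisible, hence $a^{p^n} = 1$ by amenability of $k$, hence $a = 1$ since a field of characteristic $p$ has no nontrivial $p$-power roots of unity. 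You instead prove the statement for an arbitrary torus directly, via the comparison $p^m\, T(L) \subseteq T(k')$, which you extract from the cocharacter-lattice description $T(F) = \paren{N \tensor_{\ZZ} (F^{\sep})^{\units}}^{\Gal}$ together with the fact that $(L^{\sep})^{\units}/((k')^{\sep})^{\units}$ is killed by $p^m$; the endgame (kill the resulting $p^m$-torsion using that $T(L)$ has no nontrivial $p$-power torsion in characteristic $p$) is the same mechanism as the paper's last line, transported to general $T$. Your route buys something real: the paper's reduction to $\GGm$ is terse, and as literally stated it reduces to $\GGm$ over a separable splitting field of the torus rather than over $L$ itself, which requires a small bootstrap (amenability of the separable part of that splitting field) that the paper leaves implicit; your lattice argument sidesteps this entirely and isolates the clean structural fact $p^m\, T(L) \subseteq T(k')$. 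The cost is invoking the classification of tori by Galois lattices (including the equivalence of tori over $k'$ and over $L$ along the purely inseparable extension), where the paper's $\GGm$-argument needs nothing beyond field theory. Both proofs are complete; yours is the more general and self-contained at the key step, the paper's the more elementary.
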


\begin{proof}
	If $ X $ is regular and of finite type over $ L $, it is also regular and of finite type over $ k $.  
	Thus, we only need to check the condition on tori.
	If $L \supset k$ is separable, then this follows by noting that for any torus $T $ over $ L $, the Weil restriction $\Res_{L/k}(T)$ is a torus over $ k $ with the property that
	\begin{equation*}
		\Res_{L/k}(T)(k) = T(L) \period
	\end{equation*}  

	In the case that $L \supset k$ is finite purely inseparable, $\Res_{L/k}$ does not carry tori to tori, so we cannot reason in the same way. 
	In this case, note that by Galois descent, it is enough to check that our statement is true for the torus $\GGm$ over $ L $. 
	Since $L \supset k$ is finite purely inseparable, there is some $n>0$ such that $L^{p^n} \subset k $. 
	If $a \in L^{\units}$ is $ p' $-divisible, then so is $a^{p^n} \in k$.
	By our assumption on $ k $, this implies that $a^{p^n} = 1$.
	Thus $a = 1$, so the statement holds for tori. 
\end{proof}

\begin{definition}
	We say that a field $ k $ is \defn{finitely generated} if $ k $ is a finitely generated field extension of its prime field. 
\end{definition}

\noindent We now show that finitely generated fields are amenable. 
We first recall the the following fundamental result on the finite generation of Picard groups.

\begin{proposition}[{\cite[Proposition 6.1]{MR1399035}}]\label{prop:finite_generation_of_Pic_of_normal_schemes_over_finitely_generated_fields}
	Let $ k $ be a finitely generated field and let $ X $ be normal finite type $ k $-scheme.
	Then $ \Pic(X) $ is finitely generated.
\end{proposition}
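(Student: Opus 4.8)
The plan is to reduce, in several steps, to the case of a smooth projective geometrically integral variety, where the result follows from two deep inputs: the theorem of the base and the Lang--Néron theorem. Throughout I use that finite generation of abelian groups passes to subgroups and quotients, and that an extension of a finitely generated group by a finitely generated group is finitely generated.

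\emph{First reductions.} Since $ X $ has finitely many connected components, I may assume $ X $ is integral; as $ X $ is normal, the Picard group embeds into the Weil divisor class group, $ \Pic(X) \hookrightarrow \Cl(X) $, so it suffices to show $ \Cl(X) $ is finitely generated. Choosing a normal projective compactification $ \bar{X} \supseteq X $ (close up in projective space and normalize) and using the excision sequence $ \bigoplus_{i} \ZZ[D_i] \to \Cl(\bar{X}) \to \Cl(X) \to 0 $, whose sum runs over the finitely many codimension-one components $ D_i $ of $ \bar{X} \sminus X $, I reduce to proving $ \Cl(\bar{X}) $ finitely generated for $ \bar{X} $ normal projective integral. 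After a finite extension of $ k $ — which keeps $ k $ finitely generated, and along which finite generation descends by a finite Galois-descent argument (the relevant Galois cohomology of a finite group acting on a finitely generated module is finite) — I may assume $ \bar{X} $ is geometrically integral.

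\emph{From normal projective to smooth projective.} By de Jong's theory of alterations there is a proper surjective generically finite morphism $ \pi \colon Y \to \bar{X} $ of some degree $ d $ with $ Y $ smooth projective over a finite extension of $ k $. Proper pushforward of Weil divisors gives $ \pi_{*} \colon \Cl(Y) \to \Cl(\bar{X}) $; for each prime divisor $ D $ on $ \bar{X} $ a component of $ \pi^{-1}(D) $ dominating $ D $ pushes to $ e_D D $ with $ e_D \mid d $, so the image of $ \pi_{*} $ contains $ d \cdot \Cl(\bar{X}) $. Since $ \Cl(Y) = \Pic(Y) $ is finitely generated (smooth projective case below), this shows $ \Cl(\bar{X}) $ has finite rank. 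It then remains to bound the torsion subgroup: the prime-to-$ p $ torsion embeds, via the Kummer sequence, into the finite groups $ \Het^2(\bar{X}_{\kbar};\mun)^{\Gup_k} $ cut out of the finitely generated $ \ell $-adic cohomology $ \Het^2(\bar{X}_{\kbar};\ZZell(1)) $, while the $ p $-primary torsion must be treated separately. Granting this, $ \Cl(\bar{X}) $ is finitely generated.

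\emph{The smooth projective case.} For $ Y $ smooth projective and geometrically integral over $ k $, the Picard functor $ \Picsch_{Y/k} $ is representable, $ \Pic(Y) \hookrightarrow \Picsch_{Y/k}(k) $, and there is an exact sequence
\begin{equation*}
	0 \to \Picsch^0_{Y/k}(k) \to \Picsch_{Y/k}(k) \to \NS(Y) \period
\end{equation*}
The Néron--Severi group $ \NS(Y) $ is finitely generated by the theorem of the base (via $ \NS(Y_{\kbar}) \hookrightarrow \Het^2(Y_{\kbar};\ZZell(1)) $), and $ \Picsch^0_{Y/k}(k) $ is the group of rational points of the abelian variety underlying the Picard variety, finitely generated by the Lang--Néron theorem because $ k $ is finitely generated over its prime field. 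Hence $ \Picsch_{Y/k}(k) $, and thus $ \Pic(Y) = \Cl(Y) $, is finitely generated.

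\emph{Main obstacle.} The conceptual heart is the smooth projective case, which rests entirely on the two external inputs (theorem of the base, Lang--Néron). The genuinely delicate technical point is the positive-characteristic reduction to that case: because only an alteration (and not a resolution) is available, one must control the $ d $-torsion introduced by $ \pi_{*} $ and, more seriously, rule out infinitely divisible torsion in $ \Cl(\bar{X}) $ — precisely the phenomenon the authors flag in their footnote. This torsion bound, obtained from the finiteness of torsion in $ \ell $-adic cohomology together with a separate treatment of $ p $-torsion (and the analogous care needed for inseparable field extensions), is where the real work lies.
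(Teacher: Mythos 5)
The paper itself contains no proof of this proposition: it is imported verbatim from the literature (\cite[Proposition 6.1]{MR1399035}), so your attempt can only be judged on its own terms rather than against an in-paper argument. Your skeleton — reduce by excision to a normal projective compactification, use alterations to reduce to the smooth projective case, and there invoke the theorem of the base together with Lang--Néron — is the standard one, and the smooth projective case is essentially correct, modulo the usual characteristic-$p$ caveat that $\Picsch^0_{Y/k}$ may be nonreduced and $(\Picsch^0_{Y/k})_{\red}$ need not be an abelian variety over an imperfect field (fixable by a finite purely inseparable extension before applying Lang--Néron).

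However, there is a genuine gap at the step you yourself identify as the crux, and the one mechanism you write down for it points the wrong way. The Kummer sequence yields
\begin{equation*}
	0 \to \Units(X)/n \to \Het^1(X;\mun) \to \Pic(X)[n] \to 0
	\qquad \text{and} \qquad
	0 \to \Pic(X)/n \to \Het^2(X;\mun) \comma
\end{equation*}
so the $n$-torsion $\Pic(X)[n]$ is a \emph{quotient} of $\Het^1$, while what embeds into $\Het^2$ is $\Pic(X)/n$ — and an $n$-divisible torsion class is by definition zero in $\Pic(X)/n$. The embedding you invoke is therefore blind to exactly the infinitely divisible torsion you must exclude; as a bound on torsion, the step is circular. (A correct route for the prime-to-$p$ part: by normality $\Cl(\bar{X}) = \Pic(\bar{X}^{\mathrm{reg}})$, the kernel of $\Pic(\bar{X}^{\mathrm{reg}}) \to \Pic(\bar{X}^{\mathrm{reg}}_{\kbar})$ is finite by Hochschild--Serre, Hilbert 90, and Rosenlicht's unit theorem, and $\Pic(\bar{X}^{\mathrm{reg}}_{\kbar})[n]$ is a quotient of the finite group $\Het^1(\bar{X}^{\mathrm{reg}}_{\kbar};\mun)$.) Beyond this, the $p$-primary $d$-torsion — unavoidable, since the alteration degree $d$ is in general divisible by $p$ — is only ``treated separately''/``granted'', and your preliminary reduction to geometric integrality by a finite extension of $k$ can genuinely fail in characteristic $p$: if $\Kup(\bar{X})$ is inseparable over the algebraic closure of $k$ in it, no finite extension of the base makes $\bar{X}$ geometrically reduced, and then $\bar{X}^{\mathrm{reg}}_{\kbar}$ may even be nonreduced, undermining the geometric computation just sketched. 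As written, your argument establishes the characteristic-$0$ case and, in characteristic $p$, only that $\Cl(\bar{X})$ has finite rank; the proposition itself, in the positive-characteristic case the present paper actually needs, remains unproven.
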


\begin{lemma}\label{lem:torus_part_of_amenability_is_stable_under_passing_to_function_fields}
	Let $ L $ be a field of exponential characteristic $ p $.
	If for any torus $ T $ over $ k $, the abelian group $T(L)$ has no nonzero $ p' $-divisible elements, then the function field $ L(t) $ has the same property.
\end{lemma}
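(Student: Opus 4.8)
The plan is to reduce to the multiplicative group of a finite separable extension of $ L(t) $ and then run a divisor argument on the associated curve. Concretely, I must show that for every torus $ T $ over $ K \colonequals L(t) $ the group $ T(K) $ has no nonzero $ p' $-divisible element, using the hypothesis that $ T'(L) $ has no nonzero $ p' $-divisible element for every torus $ T' $ over $ L $.

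First I would choose a finite separable extension $ K'/K $ over which $ T $ splits, so that $ T_{K'} \isomorphic \GGm^r $ and hence $ T(K') \isomorphic (K'^{\units})^r $. Because $ K \to K' $ is a field extension, $ T(K) \to T(K') $ is injective, and a $ p' $-divisible element of $ T(K) $ stays $ p' $-divisible in $ T(K') $: a witness $ b $ to $ nb = a $ in $ T(K) $ is also a witness in $ T(K') $. Since $ p' $-divisibility in $ (K'^{\units})^r $ is checked coordinatewise, it suffices to prove that $ K'^{\units} $ has no $ p' $-divisible element other than $ 1 $.

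Next I would set up the geometry. Let $ L' \subset K' $ be the algebraic closure of $ L $ in $ K' $; it is a finite extension of $ L $ and is the field of constants of $ K' $. I would verify that $ L'/L $ is \emph{separable}: for $ \alpha \in L' $ with minimal polynomial $ m \in L[x] $, the extension $ L(t)/L $ is purely transcendental, so $ m $ remains irreducible over $ L(t) $ and is thus also the minimal polynomial of $ \alpha $ over $ L(t) $; as $ K'/L(t) $ is separable, $ m $ is separable, so $ \alpha $ is separable over $ L $. Writing $ C $ for the smooth projective curve over $ L' $ with function field $ K' $, we have $ H^0(C;\Ocal_C) = L' $ because $ L' $ is algebraically closed in $ K' $. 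Now if $ a \in K'^{\units} $ is $ p' $-divisible, then for each $ n \in \NNpprime $ writing $ a = b^n $ gives $ \operatorname{div}(a) = n \operatorname{div}(b) $, so every coefficient of $ \operatorname{div}(a) $ is divisible by $ n $; letting $ n $ range over $ \NNpprime $ forces $ \operatorname{div}(a) = 0 $. Hence $ a $ and $ a^{-1} $ are regular on all of $ C $, so $ a \in L'^{\units} $; the same computation shows every $ n $-th root $ b $ of $ a $ satisfies $ \operatorname{div}(b) = 0 $, so $ b \in L'^{\units} $ and $ a $ is already $ p' $-divisible in $ L'^{\units} $.

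To conclude, since $ L'/L $ is finite separable, $ \Res_{L'/L}(\GGm) $ is a torus over $ L $ with $ \Res_{L'/L}(\GGm)(L) = L'^{\units} $, so the hypothesis on $ L $ forces $ a = 1 $. This shows $ K'^{\units} $ has no $ p' $-divisible element besides $ 1 $, completing the reduction. The main obstacle is the reduction of the first two paragraphs — in particular, arranging that the splitting field $ K' $, and hence its field of constants $ L' $, is \emph{separable} over $ L $, since separability is exactly what makes $ \Res_{L'/L}(\GGm) $ a torus and thereby lets us invoke the hypothesis; the divisor computation is then routine function-field theory.
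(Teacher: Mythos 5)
Your proof is correct, and it is in fact considerably more complete than the paper's own argument, which is a three-line sketch: the paper asserts that it suffices to treat the standard torus $\GGm$ over $L(t)$, observes that a $p'$-divisible element of $L(t)^{\units}$ is constant, and concludes from the hypothesis on $L$. The reduction the paper leaves implicit is exactly what you carry out in full: a torus over $L(t)$ splits over a finite separable extension $K'$, and $K'$ is in general the function field of a positive-genus curve over a finite separable extension $L'$ of $L$ — not a rational function field — so testing $p'$-divisibility after splitting genuinely requires your divisor computation on the curve $C$ with constant field $L'$, together with the Weil-restriction step $\Res_{L'/L}(\GGm)(L) = (L')^{\units}$ to bring the hypothesis (which concerns tori over $L$) to bear; this last device is the same one the paper uses in its proof that amenability is stable under finite extensions, and your separability check for $L'/L$ is correct and is precisely what makes $\Res_{L'/L}(\GGm)$ a torus. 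Two micro-remarks: smoothness of $C$ is more than you need, since the normal projective model already suffices — $\operatorname{div}(a)=0$ on a normal proper integral curve with $\Gamma(C;\Ocal_C)=L'$ forces $a \in (L')^{\units}$ — although smoothness does hold here because $t$ is a separating transcendence basis of $K'/L'$; and the step killing the coefficients of $\operatorname{div}(a)$ uses that $\NNpprime$ is unbounded, which holds for every exponential characteristic $p$, including $p=1$.
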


\begin{proof}
	It suffices to show the statement for the standard torus $\GG_{\mup,L(t)}$, so assume that $f \in L(t)^{\units} $ is $ p' $-divisible. 
	We then immediately see that $ f $ is constant, and by the assumption on $ L $, this shows that $ f = 1 $. 
\end{proof}

\begin{proposition}\label{prop:finitely_generated_fields_are_amenable}
	If $ k $ is a finitely generated field, then $ k $ is amenable.
\end{proposition}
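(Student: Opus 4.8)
The plan is to verify the two defining conditions of amenability (\Cref{def:amenable}) for a finitely generated field $ k $ of exponential characteristic $ p $. Condition \enumref{def:amenable}{2} is the easier one and in fact holds for every finitely generated field. If $ X $ is a regular, hence normal, finite type $ k $-scheme, then \Cref{prop:finite_generation_of_Pic_of_normal_schemes_over_finitely_generated_fields} shows that $ \Pic(X) $ is finitely generated. The $ p' $-Tate module of a finitely generated abelian group $ A $ vanishes: for $ n \in \NNpprime $ the group $ A[n] $ only depends on the prime-to-$ p $ torsion subgroup $ A' \subset A $ (the free part and the $ p $-primary part contribute no $ n $-torsion for $ n $ coprime to $ p $), so $ \Tatep(A) \isomorphic \Tatep(A') $; and $ A' $ is finite of order coprime to $ p $, hence has no nonzero $ p' $-divisible elements, so $ \Tatep(A') = 0 $ by \Cref{obs:if_there_are_no_p'-torsion_elements_then_the_p'-Tate_module_vanishes}. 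Applying this to $ A = \Pic(X) $ gives condition \enumref{def:amenable}{2}.

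It remains to establish the torus condition \enumref{def:amenable}{1}, namely that for every torus $ T $ over $ k $ the group $ T(k) $ has no nonzero $ p' $-divisible elements. I would prove this by induction on the transcendence degree $ d $ of $ k $ over its prime field $ k_0 $. Writing $ k $ as a finite extension of the purely transcendental extension $ k_0(t_1,\dots,t_d) $, I would iterate \Cref{lem:torus_part_of_amenability_is_stable_under_passing_to_function_fields} to propagate the torus condition from $ k_0 $ up through $ k_0(t_1), k_0(t_1,t_2), \dots, k_0(t_1,\dots,t_d) $, and then apply \Cref{lem:amenable_is_stable_under_finite_extension} for the final finite extension. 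At each stage the field is again finitely generated, so condition \enumref{def:amenable}{2} is available by the previous paragraph and the intermediate fields are genuinely amenable, which is what is needed to invoke \Cref{lem:amenable_is_stable_under_finite_extension}. This reduces the whole statement to the base case of the prime field $ k_0 $.

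For the base case I would use the following reduction, valid over any field $ F $: a torus $ T $ splits over a finite separable extension $ L $, and the base-change map $ T(F) \hookrightarrow T(L) \isomorphic (L^{\units})^{\dim T} $ is injective and carries $ p' $-divisible elements to $ p' $-divisible elements, so it suffices to know that $ L^{\units} $ has no nonzero $ p' $-divisible elements. When $ k_0 = \FFp $, every such $ L $ is a finite field, so $ L^{\units} $ is finite of order coprime to $ p $ and the claim is immediate. When $ k_0 = \QQ $ (so $ p = 1 $ and $ p' $-divisible means divisible), every such $ L $ is a number field; here the divisor map $ L^{\units} \to \bigoplus_v \ZZ $ over finite places kills any divisible element (a free abelian group has no nonzero divisible elements), so a divisible element lies in the unit group of the ring of integers, which is finitely generated by Dirichlet's unit theorem and hence has no nonzero divisible elements. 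This establishes the torus condition for $ k_0 $ and completes the induction.

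The main obstacle is the torus condition \enumref{def:amenable}{1}. Granting the two stability lemmas, the crux of the argument is the base-case analysis over the prime field—in particular the reduction of an arbitrary torus over $ \QQ $ to $ \GGm $ over its number-field splitting field, together with the arithmetic input that the multiplicative group of a number field contains no nonzero divisible element.
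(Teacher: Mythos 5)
Your proof is correct and follows essentially the same route as the paper's: condition \enumref{def:amenable}{2} via \Cref{prop:finite_generation_of_Pic_of_normal_schemes_over_finitely_generated_fields}, and the torus condition by reducing to the prime field through \Cref{lem:amenable_is_stable_under_finite_extension,lem:torus_part_of_amenability_is_stable_under_passing_to_function_fields}. The only difference is one of completeness: you spell out why the $p'$-Tate module of a finitely generated abelian group vanishes and, more substantially, you supply the prime-field base case (finite fields for $\FFp$; the splitting-field reduction to $\GGm$ together with the divisor map and Dirichlet's unit theorem for $\QQ$), steps the paper's proof leaves implicit.
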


\begin{proof}
	Given \Cref{prop:finite_generation_of_Pic_of_normal_schemes_over_finitely_generated_fields}, we only have to show that tori over $ k $ have no $ p' $-divisible points. 
	Since any finitely generated field is a finite extension of a purely transcendental extension of its prime field, by \Cref{lem:amenable_is_stable_under_finite_extension}, it is enough to show that if $ L $ has the property that no $ L $-torus has $ p' $-divisible elements, then $L(t)$ has the same property.
	This is the content of \Cref{lem:torus_part_of_amenability_is_stable_under_passing_to_function_fields}. 
\end{proof}



\section{Reconstruction for amenable discretely valued fields}\label{sec:proof_of_the_reconstruction_theorem}

The goal of this section is to prove the main result of this paper: if $ k $ is an amenable field that admits a nontrivial discrete valuation, then $ k $ satisfies étale reconstruction (\Cref{thm:reconstruction_for_amenable_fields}). 
In particular, we deduce that infinite finitely generated fields satisfy étale reconstruction (\Cref{cor:finitely_generated_fields_satisfy_etale_reconstruction}).

In \cref{subsec:the_cohomological_argument} we provide a `cohomological' criterion for amenable fields to satisfy étale reconstruction (\Cref{prop:discreteness_of_chi_top_implies_étale_reconstruction}).
\Cref{subsec:finite_generation_and_units} proves some preparatory lemmas needed to check this criterion in the presence of a discrete valuation.
In \cref{subsec:amenable_discretely_valued_fields_satisfy_etale_reconstruction}, we prove the main result.


\subsection{A cohomological criterion for reconstructing amenable fields}\label{subsec:the_cohomological_argument}

To state our cohomological criterion for étale reconstruction, we begin by fixing some notation.

\begin{notation}[{(the canonical class in $ \Het^1(\GGm;\ZZpprime(1)) $)}]\label{ntn:class_in_GGm}
	Let $ k $ be a field of exponential characteristic $ p $.
	For each positive integer $ n $ coprime to $ p $, note that the Kummer sequence provides a canonical class
	\begin{equation*}
		c_{1,n} \in \Het^1(\GGm;\mun) \period
	\end{equation*}
	Note that for $ m $ dividing $ n $, the homomorphism $ \fromto{\mun}{\mum} $ given by raising to the $ n/m $-th power induces a homomorphism
	\begin{equation*}
		\fromto{\Het^1(\GGm;\mun)}{\Het^1(\GGm;\mum)}
	\end{equation*}
	that sends $ c_{1,n} $ to $ c_{1,m} $.
	We write $ c_1 $ for the class
	\begin{equation*}
		(c_{1,n})_{n \in (\NNpprime)^{\op}} \in \Het^1(\GGm;\ZZpprime(1)) \period
	\end{equation*}
\end{notation}

\begin{notation}
	Let $ k $ be a field of exponential characteristic $ p $ and let $ X $ be a $ k $-scheme.
	Following Voevodsky we write
	\begin{equation*}
		\OTop(X) \colonequals \Hompink(\Xet,\GGmet) \period
	\end{equation*}
	Define a map $ \chitop' $ by the assignment
	\begin{align*}
		\chitop' \colon \OTop(X) &\to \Het^1(X; \ZZpprime(1)) \\ 
		\fupperstar &\mapsto \fupperstar(c_1) \period
	\end{align*}
	Here, note that the class $ \fupperstar(c_1) $ is well-defined by the functoriality of the assignment $ \goesto{X}{\Het^1(X;\ZZpprime(1))} $ in geometric morphisms of étale topoi over $ \Spec(k)_{\et} $ explained in \Cref{obs:functoriality_of_Het1}.
\end{notation}

\begin{nul}
	By construction, the square
	\begin{equation}\label{eq:square_relating_chitop_and_Kum}
		\begin{tikzcd}[column sep=3em]
			\Units(X) \arrow[r, "\can"] \arrow[d] & \Units(X)\pprimecomp \arrow[d, "\Kum"] \\ 
			\OTop(X) \arrow[r, "\chitop'"'] & \Het^1(X;\ZZpprime(1))
		\end{tikzcd}
	\end{equation}
	commutes.
\end{nul}	

\begin{construction}[{($ \chitop $)}]\label{const:chitop_with_desired_target}
	Let $ k $ be an amenable field of exponential characteristic $ p $ and let $ X $ be a regular finite type $ k $-scheme.
	Recall from \Cref{lem:alternative_description_of_H1(Xawn;ZZpprime(1))_for_amenable_fields} that the $ p' $-Kummer map
	\begin{equation*}
		\Kum \colon \Units(\Xawn)\pprimecomp \longrightarrow \Het^1(\Xawn;\ZZpprime(1))
	\end{equation*} 
	is an isomorphism.
	We write $ \chitop $ for the composite
	\begin{equation*}
		\begin{tikzcd}[sep=3em]
			\OTop(\Xawn) \arrow[r, "\chitop'"] & \Het^1(\Xawn; \ZZpprime(1)) \arrow[r, "\Kum^{-1}", "\sim"'{yshift=0.25ex}] & \Units(\Xawn)\pprimecomp \period
		\end{tikzcd}
	\end{equation*}
\end{construction}

Our goal is to show that if the image of $ \chitop $ is contained in the image of the natural homomorphsim $ \fromto{\Units(\Xawn)}{\Units(\Xawn)\pprimecomp} $, then $ k $ satisfies étale reconstruction.
To prove this criterion, we need a construction and a couple of observations.

\begin{construction}\label{const:psi_map}
	Let $ k $ be a field of exponential characteristic $ p $, let $ Y $ be a $ \kawn $-scheme, and let $ E \supset \kawn $ be a finite field extension.
	For any morphism of $ \kawn $-schemes $ x \colon \Spec(E) \to Y $, we obtain a pullback map
	\begin{equation*}
		\xupperstar \colon \Het^1(Y;\ZZpprime(1)) \longrightarrow \Het^1(\Spec(E);\ZZpprime(1)) \isomorphic (E^{\units})\pprimecomp \period
	\end{equation*}
	Thus for any fixed class $ \xi \in \Het^1(Y;\ZZpprime(1)) $ we obtain a map
	\begin{equation*}
		Y(E) \to (E^{\units})\pprimecomp \comma \qquad x \mapsto \xupperstar(\xi) \period
	\end{equation*}
	Passing to the colimit over all finite extensions of $\kawn$, we obtain a map
	\begin{equation*}
		\psi_\xi \colon Y(\kbar) \to \colim_{E} (E^{\units})\pprimecomp \period
	\end{equation*}
	We write
	\begin{equation*}
		\evhat(-,-) \colon \fromto{Y(\kbar) \cross \Het^1(Y;\ZZpprime(1))}{\colim_{E} (E^{\units})\pprimecomp}
	\end{equation*}
	for the pairing corresponding to the assignment that sends a class $ \xi \in \Het^1(Y;\ZZpprime(1)) $ to the map $ \psi_\xi $.
\end{construction}

\begin{notation}
	Let $ k $ be an amenable field of exponential characteristic $ p $ and let $ X $ be a regular finite type $ k $-scheme.
	We also write $ \evhat $ for the composite
	\begin{equation}\label{eq:pairing}
		\begin{tikzcd}[sep=4em]
			\Xawn(\kbar) \cross \Units(\Xawn)\pprimecomp \arrow[r, "\id{} \cross \Kum", "\sim"'{yshift=0.25ex}] & \Xawn(\kbar) \cross \Het^1(\Xawn;\ZZpprime(1)) \arrow[r, "\evhat"] & \colim_{E} (E^{\units})\pprimecomp \period
		\end{tikzcd}
	\end{equation}
\end{notation}

\begin{observation}\label{obs:can_map_injective}
	Let $ k $ be an amenable field of exponential characteristic $ p $.
	Then for any separable field extension $ E \supset k $, the canonical map
	\begin{equation*}
		E^{\units} \to (E^{\units})\pprimecomp
	\end{equation*}
	is injective because it may be identified with the Kummer map.
	Since $ (E^{\awn})^{\units} = E^{\units}[\nicefrac{1}{p}] $ and $ p $ is invertible on the right hand side, we also deduce that the natural map
	\begin{equation*}
		(E^{\awn})^{\units} \to (E^{\awn,\times})\pprimecomp
	\end{equation*}
	is injective.
	Passing to the colimit over all finite separable field extensions, we therefore obtain an injective map
	\begin{equation*}
		i \colon \kbar^{\units} \to \colim_{E} (E^{\awn,\times})\pprimecomp \period
	\end{equation*}
\end{observation}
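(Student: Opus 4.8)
The plan is to prove the three injectivity assertions in sequence, reducing each to amenability of \emph{finite} separable extensions (which is all the final colimit requires). First I would treat a finite separable extension $E \supset k$ and the map $\fromto{E^{\units}}{(E^{\units})\pprimecomp}$. By \Cref{lem:amenable_is_stable_under_finite_extension}, $E$ is again amenable. The kernel of the canonical map $\fromto{A}{A\pprimecomp}=\lim_{n \in (\NNpprime)^{\op}} A/n$ is exactly $\bigcap_{n \in \NNpprime} nA$, the subgroup of $p'$-divisible elements; applying this to $A = E^{\units} = \GGm(E)$, injectivity is precisely condition \enumref{def:amenable}{1} for $E$. (Equivalently, as the statement indicates, the canonical map is the first factor of the Kummer map $\fromto{E^{\units}}{\Het^1(E;\ZZpprime(1))}$, which is injective by \Cref{obs:consequences_of_amenable}.)

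Next I would upgrade this to absolute weak normalization. By \Cref{lem:units_in_Xawn} applied to the seminormal scheme $\Spec(E)$ we have $(E^{\awn})^{\units} = E^{\units}[\nicefrac{1}{p}]$, and by \Cref{lem:inverting_p_and_completing_away_from_p} inverting $p$ induces an isomorphism $\isomto{(E^{\units})\pprimecomp}{((E^{\awn})^{\units})\pprimecomp}$. These fit into a commutative square whose top arrow $\fromto{E^{\units}}{(E^{\units})\pprimecomp}$ is injective by the previous step and whose right arrow is this isomorphism. Since $p$ acts invertibly on $(E^{\awn})^{\units}=E^{\units}[\nicefrac{1}{p}]$, it acts invertibly on the $p'$-completion as well, so the bottom arrow is $\ZZ[\nicefrac{1}{p}]$-linear. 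Given $a$ in its kernel, I would choose $m$ with $a^{p^m}$ in the image of the canonical map $\fromto{E^{\units}}{(E^{\awn})^{\units}}$, say $a^{p^m}$ is the image of $b \in E^{\units}$; commutativity together with the right-hand isomorphism shows that $b$ dies in $(E^{\units})\pprimecomp$, so $b = 1$ by injectivity of the top arrow, whence $a^{p^m} = 1$ and, $p$ acting injectively, $a = 1$. This gives injectivity of $\fromto{(E^{\awn})^{\units}}{(E^{\awn,\times})\pprimecomp}$. (When the exponential characteristic is $p = 1$ this step collapses to the first one, since $E^{\awn} = E$.)

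Finally I would pass to the filtered colimit over all finite separable extensions $E \supset k$. Filtered colimits of abelian groups preserve injections, so the colimit of the maps just constructed is injective. To identify its source, note $\colim_E (E^{\awn})^{\units} = \colim_E E^{\units}[\nicefrac{1}{p}] = \bigl((k^{\sep})^{\units}\bigr)[\nicefrac{1}{p}]$; applying \Cref{lem:units_in_Xawn} to $\Spec(k^{\sep})$ and using that a perfect separably closed field is algebraically closed, so that $(k^{\sep})^{\awn} = \kbar$, this equals $\kbar^{\units}$. Hence the induced map $i \colon \fromto{\kbar^{\units}}{\colim_E (E^{\awn,\times})\pprimecomp}$ is injective, as claimed. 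The one genuinely nonformal point is the middle step: I expect the main obstacle to be ensuring that passing from $E^{\units}$ to $(E^{\awn})^{\units}=E^{\units}[\nicefrac{1}{p}]$ does not destroy injectivity into the $p'$-completion, and this is exactly the compatibility of $p'$-completion with inverting $p$ furnished by \Cref{lem:inverting_p_and_completing_away_from_p}.
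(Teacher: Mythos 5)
Your proof is correct and follows essentially the same route as the paper's own (terse) justification: establish the first injectivity from amenability of $E$ (via \Cref{lem:amenable_is_stable_under_finite_extension}, or equivalently the Kummer-map identification), transfer it through $(E^{\awn})^{\units} = E^{\units}[\nicefrac{1}{p}]$ using the compatibility of $p'$-completion with inverting $p$ from \Cref{lem:inverting_p_and_completing_away_from_p}, and conclude by exactness of filtered colimits. Your restriction to \emph{finite} separable extensions is the right reading of the statement---for infinite separable extensions such as $k^{\sep}$ the group $E^{\units}$ can contain nontrivial $p'$-divisible elements, so the first map need not be injective---and, as you note, finite extensions are all that the colimit defining $i$ requires.
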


\begin{observation}
	If $ Y = \GGm $ in \Cref{const:psi_map} and $ c_1 \in \Het^1(\GGm;\ZZpprime(1)) $ is the canonical class, then the induced map
	\begin{equation*}
		\psi_{c_1} = \evhat(-,c_1) \colon \kbar^{\units} = \GGm(\kbar) \to \colim_{E} (E^{\units})\pprimecomp
	\end{equation*}
	is the canonical map $ i $.
\end{observation}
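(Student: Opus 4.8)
The plan is to reduce everything to the naturality of the $ p' $-Kummer map together with an explicit description of the canonical class $ c_1 $. The crucial point is that $ c_1 \in \Het^1(\GGm;\ZZpprime(1)) $ is nothing but the $ p' $-Kummer class of the tautological coordinate. Concretely, writing $ \GGm = \Spec(k[t,t^{-1}]) $ and letting $ t \in \Units(\GGm) $ denote the coordinate unit, unwinding \Cref{ntn:class_in_GGm} shows that for each $ n \in \NNpprime $ the class $ c_{1,n} $ is the image of $ t $ under the Kummer boundary map $ \Units(\GGm)/n \to \Het^1(\GGm;\mun) $; passing to the limit over $ n $ gives $ c_1 = \Kum(\can(t)) $, where $ \can \colon \Units(\GGm) \to \Units(\GGm)\pprimecomp $ is the canonical completion map.

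First I would fix a point $ x \in \GGm(\kbar) = \kbar^{\units} $ together with a finite extension $ E \supset \kawn $ through which it factors, so that $ x $ is given by an element $ u = x^*(t) \in E^{\units} $; this $ u $ is exactly the value of $ x $ regarded as an element of $ \kbar^{\units} $. By \Cref{const:psi_map}, $ \psi_{c_1}(x) $ is the image of the pullback $ x^*(c_1) \in \Het^1(\Spec(E);\ZZpprime(1)) $ under the identification $ \Het^1(\Spec(E);\ZZpprime(1)) \isomorphic (E^{\units})\pprimecomp $, which is the inverse of the Kummer isomorphism $ \Kum $ supplied by \Cref{obs:p'-Kummer_exact_sequence} (valid here since $ \Pic(\Spec(E)) = 0 $ forces $ \Tatep(\Pic(\Spec(E))) = 0 $).

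The naturality of the Kummer map in the scheme then finishes the computation: since $ c_1 = \Kum(\can(t)) $, functoriality gives $ x^*(c_1) = \Kum(\can(x^*(t))) = \Kum(\can(u)) $, so applying $ \Kum^{-1} $ yields $ \psi_{c_1}(x) = \can(u) \in (E^{\units})\pprimecomp $. This is precisely the image of $ u = x $ under the canonical completion map $ E^{\units} \to (E^{\units})\pprimecomp $, which, after passing to the colimit over $ E $, is by definition the map $ i $ of \Cref{obs:can_map_injective}; hence $ \psi_{c_1} = i $. I expect no genuine obstacle here, as the statement is essentially a definition chase; the only bookkeeping worth verifying is that the colimit indexing the target of $ \psi_{c_1} $ (finite extensions of the perfect field $ \kawn $) agrees with the one defining $ i $ (perfections $ E^{\awn} $ of finite separable extensions $ E \supset k $). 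This is immediate once one observes that every finite extension of the perfect field $ \kawn $ is itself perfect, so the two cofiltered systems are identified and the two canonical maps coincide.
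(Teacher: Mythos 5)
Your proof is correct and is essentially the argument the paper intends: the paper states this as an unproved observation, and your identification $ c_1 = \Kum(\can(t)) $ for the tautological unit $ t $, combined with the naturality of the $ p' $-Kummer map under pullback along $ x \colon \Spec(E) \to \GGm $, is exactly the definition chase it leaves implicit. Your closing reconciliation of the two colimit indexings (finite extensions of the perfect field $ \kawn $ versus perfections of finite separable extensions of $ k $) addresses the one genuine bookkeeping discrepancy between the construction of $ \psi_{c_1} $ and the definition of $ i $.
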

	
\begin{observation}\label{obs:naturality_of_evaluation}
	For any pinned geometric morphism $ \flowerstar \colon \Xet \to \Yet $ between schemes topologically of finite type over $\kawn$, by construction, the square
	\begin{equation*}
		\begin{tikzcd}[row sep=3em, column sep=4.5em]
			X(\kbar) \times \Het^1(Y;\ZZpprime(1)) \arrow[r, "\flowerstar \cross \id{}"] \arrow[d, "\id{} \cross \fupperstar"'] & Y(\kbar) \times \Het^1(Y;\ZZpprime(1)) \arrow[d, "\evhat"] \\
			X(\kbar) \times \Het^1(X;\ZZpprime(1)) \arrow[r, "\evhat"'] & \displaystyle \colim_{E} (E^{\units})\pprimecomp
		\end{tikzcd}
	\end{equation*}
	commutes.
	In particular, taking $ Y = \GGm $ and restricting to
	\begin{equation*}
		\Xawn(\kbar) \cross \{c_1\} \subset \Xawn(\kbar) \cross \Het^1(\GGm;\ZZpprime(1)) \comma
	\end{equation*}
	it follows that for any $ \flowerstar \in \OTop(\Xawn) $ the triangle
	\begin{equation*}
		\begin{tikzcd}[row sep=3em]
			\Xawn(\kbar) \arrow[rr, "\flowerstar"] \arrow[dr, "\psi_{\chitop(\flowerstar)}"'] & &  \kbar^{\units} \arrow[dl, "i"] \\
			& \displaystyle \colim_{E} \, (E^{\units})\pprimecomp &
		\end{tikzcd}
	\end{equation*}
	commutes.
\end{observation}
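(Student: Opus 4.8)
The plan is to prove the square by unwinding the definition of $ \evhat $ and invoking the functoriality of $ \Het^1(-;\ZZpprime(1)) $ from \Cref{obs:functoriality_of_Het1}; the triangle will then be a direct specialization. First I would recall from \Cref{const:psi_map} that for a $ \kbar $-point represented by $ y \colon \Spec(E) \to Y $ with $ E \supset \kawn $ finite and a class $ \xi \in \Het^1(Y;\ZZpprime(1)) $, the pairing is literally $ \evhat(y,\xi) = \yupperstar(\xi) $, computed under the identification $ \Het^1(\Spec(E);\ZZpprime(1)) \isomorphic (E^{\units})\pprimecomp $ and then passed to the colimit over $ E $. Next I would spell out the induced map on $ \kbar $-points: a point $ x \in X(\kbar) $ gives a point $ \xupperstar \in \Pt(\Xet) $ via \Cref{ex:Pt_of_Xet}, and the map labeled $ \flowerstar $ in the statement sends $ x $ to $ \Pt(\flowerstar)(\xupperstar) = \xupperstar\fupperstar $ (\Cref{ntn:category_of_points}); since $ \flowerstar $ is pinned, this point is again a closed, hence $ \kbar $-rational, point of $ Y $, so $ \flowerstar(x) \in Y(\kbar) $ is well defined.

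With these two unwindings in hand, commutativity of the square amounts to the identity
\[
	(\flowerstar(x))\upperstar(\xi) = \xupperstar(\fupperstar(\xi))
\]
for all $ x \in X(\kbar) $ and $ \xi \in \Het^1(Y;\ZZpprime(1)) $. This is exactly the functoriality of the assignment $ \goesto{X}{\Het^1(X;\ZZpprime(1))} $ along geometric morphisms over $ \Spec(k)_{\et} $ recorded in \Cref{obs:functoriality_of_Het1}: the point $ \flowerstar(x) $ is the composite of $ \fupperstar $ with $ \xupperstar $, so pulling $ \xi $ back along it agrees with first applying $ \fupperstar $ and then $ \xupperstar $. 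I would note that one works at a finite level $ \Spec(E) $ and that these pullbacks are compatible with the transition maps defining the colimit, so the identity descends to $ \colim_{E}(E^{\units})\pprimecomp $.

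For the triangle I would take the square with its $ X $ equal to $ \Xawn $ and its $ Y $ equal to $ \GGm $, set $ \xi = c_1 $, and restrict to $ \Xawn(\kbar) \cross \{c_1\} $. The right-hand vertical $ \evhat(-,c_1) $ on $ \GGm(\kbar) = \kbar^{\units} $ is the canonical map $ i $ by the observation immediately preceding the statement, so the top-then-right composite is $ x \mapsto i(\flowerstar(x)) $. The left-then-bottom composite sends $ x $ to $ \evhat(x,\fupperstar(c_1)) $; since $ \chitop'(\flowerstar) = \fupperstar(c_1) $ and hence $ \chitop(\flowerstar) = \Kum^{-1}(\fupperstar(c_1)) $ by \Cref{const:chitop_with_desired_target}, and since the refined pairing $ \evhat $ on $ \Xawn(\kbar) \cross \Units(\Xawn)\pprimecomp $ is by definition $ \evhat \of (\id{} \cross \Kum) $, this composite is precisely $ \psi_{\chitop(\flowerstar)} $. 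Thus the already-proven square delivers exactly the claimed commutativity of the triangle.

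I do not anticipate a genuine obstacle, as the statement is essentially definitional — the author's phrase ``by construction'' is apt. The only point requiring care is the compatibility between the category-of-points description of $ \flowerstar(x) $ (\Cref{ex:Pt_of_Xet,ntn:category_of_points}) and the pullback on $ \Het^1(-;\ZZpprime(1)) $: one must check that the topos-theoretic composite $ \xupperstar\fupperstar $, once identified with a geometric point of $ Y $, induces on first cohomology the honest composite of pullbacks, and that the identification $ \Het^1(\Spec(E);\ZZpprime(1)) \isomorphic (E^{\units})\pprimecomp $ used in \Cref{const:psi_map} is natural enough for this. Both follow from the naturality built into \Cref{obs:functoriality_of_Het1}, so the verification is bookkeeping rather than mathematics.
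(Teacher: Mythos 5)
Your proof is correct and is exactly the definitional unwinding that the paper intends by the phrase ``by construction'': the square reduces to the identity $(\flowerstar(x))^{*}(\xi) = \xupperstar\fupperstar(\xi)$, which is the functoriality recorded in \Cref{obs:functoriality_of_Het1}, and the triangle follows by specializing to $\xi = c_1$, using $\psi_{c_1} = i$ and the definition of $\chitop$ as $\Kum^{-1}\circ\chitop'$ together with the extension of $\evhat$ in \eqref{eq:pairing}. The one point you rightly flag --- identifying the composite point $\flowerstar\circ\xlowerstar$ with an honest $\kbar$-point of $Y$ and matching its scheme-theoretic pullback with the topos-theoretic one --- is precisely what \Cref{prop:reconstruction_for_fields} (with \Cref{lemma:topologically_finite_type_has_finite_residue_fields}) supplies, just as you indicate.
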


\begin{proposition}\label{prop:discreteness_of_chi_top_implies_étale_reconstruction}
	For an amenable field $ k $ of exponential characteristic $ p $, the following are equivalent:
	\begin{enumerate}[label=\stlabel{prop:discreteness_of_chi_top_implies_étale_reconstruction}, ref=\arabic*]
		\item\label{prop:discreteness_of_chi_top_implies_étale_reconstruction.1} The field $ k $ satisfies étale reconstruction.

		\item\label{prop:discreteness_of_chi_top_implies_étale_reconstruction.2} For each affine, regular, connected, finite type $ k $-scheme $ X $ with $ X(k) \neq \emptyset $, we have
		\begin{equation*}
			\image\paren{ \chitop \colon \fromto{\OTop(\Xawn)}{\Units(\Xawn)\pprimecomp} } \subset \image\paren{ \can \colon \fromto{\Units(\Xawn)}{\Units(\Xawn)\pprimecomp} } \period
		\end{equation*}
	\end{enumerate}
\end{proposition}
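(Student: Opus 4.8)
The plan is to deduce the equivalence from the reduction already achieved in \Cref{thm:reduction_to_the_regular_case}, comparing each condition with its formulation \enumref{thm:reduction_to_the_regular_case}{5}, which says that for every affine, regular, connected, finite type $ k $-scheme $ X $ with $ X(k) \neq \emptyset $ and every pinned geometric morphism $ \philowerstar \in \OTop(\Xawn) $, there is a morphism of $ k $-schemes $ f \colon \Xawn \to \GGm $ whose map on $ \kbar $-points agrees with $ \philowerstar(\kbar) $. Fixing such an $ X $, write $ \iota \colon \Units(\Xawn) \to \OTop(\Xawn) $ for the natural map sending a unit $ f $ (i.e.\ a morphism $ \Xawn \to \GGm $) to its induced geometric morphism. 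The first bookkeeping step is to record the commuting triangle
\[
	\can = \chitop \circ \iota \colon \Units(\Xawn) \longrightarrow \Units(\Xawn)\pprimecomp \period
\]
This is immediate from the commuting square \eqref{eq:square_relating_chitop_and_Kum} (applied with $ \Xawn $ in place of $ X $) together with the definition $ \chitop = \Kum^{-1} \circ \chitop' $ from \Cref{const:chitop_with_desired_target}, using that $ \Kum $ is an isomorphism on $ \Xawn $ by \Cref{lem:alternative_description_of_H1(Xawn;ZZpprime(1))_for_amenable_fields}. In particular $ \image(\can) \subset \image(\chitop) $ always holds, so condition \enumref{prop:discreteness_of_chi_top_implies_étale_reconstruction}{2} is equivalent to the equality of images $ \image(\chitop) = \image(\can) $.

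For \enumref{prop:discreteness_of_chi_top_implies_étale_reconstruction}{1} $ \Rightarrow $ \enumref{prop:discreteness_of_chi_top_implies_étale_reconstruction}{2} the argument is purely formal. If $ k $ satisfies étale reconstruction, then by formulation \enumref{prop:equivalent_conditions_for_etale_reconstruction}{1} of \Cref{prop:equivalent_conditions_for_etale_reconstruction} (taking $ Y = \GGm $) the map $ \iota $ is bijective, hence surjective. Thus every $ \philowerstar \in \OTop(\Xawn) $ is of the form $ \iota(f) $, so $ \chitop(\philowerstar) = \chitop(\iota(f)) = \can(f) \in \image(\can) $ by the triangle above; this gives the desired inclusion (indeed equality).

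The substance is in \enumref{prop:discreteness_of_chi_top_implies_étale_reconstruction}{2} $ \Rightarrow $ \enumref{prop:discreteness_of_chi_top_implies_étale_reconstruction}{1}, where I verify \enumref{thm:reduction_to_the_regular_case}{5}. Given $ \philowerstar \in \OTop(\Xawn) $, condition \enumref{prop:discreteness_of_chi_top_implies_étale_reconstruction}{2} produces a unit $ f \in \Units(\Xawn) $ with $ \can(f) = \chitop(\philowerstar) $, and by the triangle $ \chitop(\iota(f)) = \can(f) = \chitop(\philowerstar) $. To upgrade this equality of $ p' $-completed units into agreement on $ \kbar $-points I invoke the evaluation pairing: \Cref{obs:naturality_of_evaluation} gives, for every $ \glowerstar \in \OTop(\Xawn) $, the identity $ i \circ \glowerstar(\kbar) = \psi_{\chitop(\glowerstar)} $ of maps $ \Xawn(\kbar) \to \colim_E (E^{\units})\pprimecomp $. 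Applying this to $ \glowerstar = \iota(f) $ and to $ \glowerstar = \philowerstar $, and using $ \chitop(\iota(f)) = \chitop(\philowerstar) $, yields $ i \circ \iota(f)(\kbar) = i \circ \philowerstar(\kbar) $; since $ i $ is injective by \Cref{obs:can_map_injective} and $ \iota(f) $ induces the same map on $ \kbar $-points as $ f $, we conclude $ f(\kbar) = \philowerstar(\kbar) $, which is exactly \enumref{thm:reduction_to_the_regular_case}{5}. Hence $ k $ satisfies étale reconstruction by \Cref{thm:reduction_to_the_regular_case}.

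The main obstacle is conceptual rather than computational: it lies in arranging the definitions so that the single equality $ \can(f) = \chitop(\philowerstar) $ in $ \Units(\Xawn)\pprimecomp $ translates into the geometric statement that $ f $ and $ \philowerstar $ induce the same map on $ \kbar $-points. This translation is precisely what the pairing $ \evhat $ and the injectivity of $ i $ are engineered to supply, and the amenability hypotheses enter only through the isomorphism $ \Kum $ that makes $ \chitop $ well-defined and through the injectivity of $ i $. It is worth emphasizing that the more substantive direction $ \enumref{prop:discreteness_of_chi_top_implies_étale_reconstruction}{2} \Rightarrow \enumref{prop:discreteness_of_chi_top_implies_étale_reconstruction}{1} $ needs \emph{no} injectivity of the assignment $ \xi \mapsto \psi_\xi $ itself — only of $ i $ — because condition \enumref{prop:discreteness_of_chi_top_implies_étale_reconstruction}{2} always hands us a scheme-theoretic candidate $ f $ against which to compare.
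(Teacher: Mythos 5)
Your proposal is correct and follows essentially the same route as the paper's proof: both directions reduce to \Cref{thm:reduction_to_the_regular_case}, produce the unit $ f $ with $ \can(f) = \chitop(\philowerstar) $ from condition \enumref{prop:discreteness_of_chi_top_implies_étale_reconstruction}{2}, and compare $ f(\kbar) $ with $ \philowerstar(\kbar) $ via the pairing $ \evhat $ of \Cref{obs:naturality_of_evaluation} and the injectivity of $ i $ from \Cref{obs:can_map_injective}. Your only (harmless) additions are making the identity $ \can = \chitop \circ \iota $ explicit—which the paper leaves implicit in the square \eqref{eq:square_relating_chitop_and_Kum}—and the observation that \enumref{prop:discreteness_of_chi_top_implies_étale_reconstruction}{2} is in fact an equality of images.
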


\begin{proof}
	To see that \enumref{prop:discreteness_of_chi_top_implies_étale_reconstruction}{1} $ \Rightarrow $ \enumref{prop:discreteness_of_chi_top_implies_étale_reconstruction}{2}, note that if $ k $ satisfies étale reconstruction, then for each regular finite type $ k $-scheme $ X $, the natural map
	\begin{equation*}
		\fromto{\Units(\Xawn)}{\OTop(\Xawn)}
	\end{equation*}
	is an isomorphism.
	So the commutativity of the square \eqref{eq:square_relating_chitop_and_Kum} shows that $ \image(\chitop) \subset \image(\can) $.

	For the implication \enumref{prop:discreteness_of_chi_top_implies_étale_reconstruction}{2} $ \Rightarrow $ \enumref{prop:discreteness_of_chi_top_implies_étale_reconstruction}{1}, choose a separable closure $ \kbar $ of $ \kawn $, and let \smash{$ \philowerstar \in \OTop(\Xawn) $} be a pinned geometric morphism \smash{$ \fromto{\Xetawn}{\GGmet} $} over $ k $.
	\Cref{thm:reduction_to_the_regular_case} shows that it suffices to construct an element $ f \in \Units(\Xawn) $ that agrees with $ \philowerstar $ on $ \Xawn(\kbar)$.
	By assumption, there exists an element $ f \in \Units(\Xawn) $ mapping to $ \chitop(\philowerstar) $ under the natural map
	\begin{equation*}
		\can \colon \Units(\Xawn) \to \Units(\Xawn)\pprimecomp \period
	\end{equation*}
	It follows from \Cref{obs:naturality_of_evaluation} that for any $ \xbar \in \Xawn(\kbar) $ we have
	\begin{equation*}
		i(f(\xbar)) = \evhat(\xbar,\can(f)) = \evhat(\xbar,\chitop(\philowerstar)) = \psi_{\chitop(\philowerstar)}(\xbar) = i (\philowerstar(\xbar)) \period
	\end{equation*}
	Because $ k $ is amenable, $ i $ is injective (\Cref{obs:can_map_injective}), so $ f(\xbar) = \philowerstar(\xbar) $, as desired.
\end{proof}


\subsection{Finite generation \& units}\label{subsec:finite_generation_and_units}

The rest of this section is dedicated to verifying the hypotheses of \Cref{prop:discreteness_of_chi_top_implies_étale_reconstruction} when $ k $ is amenable and admits a nontrivial discrete valuation.
Our argument relies on understanding the kernel of the homomorphism $ \fromto{\Units(\Xawn)}{(\kawn)^{\units}} $ induced by a rational point $ x \in X(k) $.
It will be useful to know that this kernel is finitely generated and free; this subsection proves this.

\begin{recollection}
	A field extension $ K \supset k $ is \defn{regular} if it is separable and $ k $ is algebraically closed in $ K $.
\end{recollection}

\begin{example}[{\cite[Proposition 5.51]{MR4225278}}]\label{ex:geometrically_integral_implies_regular}
	Let $ k $ be a field and let $ X $ be an integral $ k $-scheme.
	Then the field extension $ \Kup(X) \supset k $ is regular if and only if $ X $ is geometrically integral over $ k $.
\end{example}

\begin{lemma}\label{lem:K/k_torsion-free}
	Let $ \incto{A}{B} $ be an injective ring homomorphism.
	If $ A $ is integrally closed in $ B $, then the abelian group $ B^{\units}/A^{\units} $ is torsion-free.
\end{lemma}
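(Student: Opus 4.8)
The plan is to reduce the torsion-freeness of $ B^{\units}/A^{\units} $ to a statement about integral elements and then invoke the hypothesis that $ A $ is integrally closed in $ B $. Viewing $ A $ as a subring of $ B $ via the injection, we have $ A^{\units} \subseteq B^{\units} $, and an element of the quotient $ B^{\units}/A^{\units} $ is represented by some $ b \in B^{\units} $. Such an element is torsion precisely when there is an integer $ n \geq 1 $ with $ b^n \in A^{\units} $. Thus the entire lemma amounts to the claim: \emph{if} $ b \in B^{\units} $ satisfies $ b^n \in A^{\units} $ for some $ n \geq 1 $, \emph{then} $ b \in A^{\units} $.

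To prove this, first observe that $ b^n \in A^{\units} \subseteq A $, so $ b $ is a root of the monic polynomial $ t^n - b^n \in A[t] $; hence $ b $ is integral over $ A $. Since $ A $ is integrally closed in $ B $ and $ b \in B $, we conclude $ b \in A $. The same reasoning applies to $ b^{-1} $: because $ b^n $ is a \emph{unit} of $ A $, its inverse $ b^{-n} = (b^n)^{-1} $ also lies in $ A $, so $ b^{-1} $ is a root of the monic polynomial $ t^n - b^{-n} \in A[t] $ and is therefore integral over $ A $; again by integral closedness, $ b^{-1} \in A $. With both $ b $ and $ b^{-1} $ in $ A $, we obtain $ b \in A^{\units} $, as desired.

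There is essentially no hard step here; the one point worth emphasizing is why one needs $ b^n $ to be a \emph{unit} of $ A $ rather than merely an element of $ A $. If we only knew $ b^n \in A $, then integrality would give $ b \in A $, but would say nothing about $ b^{-1} $, so $ b $ need not be a unit of $ A $. The hypothesis $ b^n \in A^{\units} $ is exactly what lets us run the integrality argument symmetrically on $ b $ and $ b^{-1} $, which is the crux of the whole statement.
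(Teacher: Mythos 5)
Your proof is correct and follows essentially the same route as the paper: reduce torsion-freeness to the claim that $ b \in B^{\units} $ with $ b^n \in A^{\units} $ lies in $ A^{\units} $, and use the monic equation $ t^n - b^n \in A[t] $ together with integral closedness to get $ b \in A $. The only (cosmetic) difference is at the last step, where the paper gets unitality directly from $ b \in A $ and $ b^n \in A^{\units} $ (since $ b^{-1} = b^{n-1}(b^n)^{-1} \in A $), whereas you run the integrality argument a second time on $ b^{-1} $ -- both are fine.
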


\begin{proof}
	Let $ b \in B^{\units} $ be an element such that $ b^n = a $ for some $ a \in A^{\units} $. 
	Then the polynomial $ t^n - a \in A[t] $ is an integral equation for $ b $.
	Since $ A $ is integrally closed in $ B $, we deduce that $ b \in A $.
	Moreover, since $ b^n $ is a unit in $ A $, we deduce that $ b $ is also a unit in $ A $.
	Hence the class of $ b $ is the identity in $ B^{\units}/A^{\units} $.
\end{proof}

\begin{proposition}\label{prop:R/k_finitely_generated}
	Let $ k $ be a field and $ K \supset k $ a finitely generated regular field extension.
	Let $ R \subset K $ be a $ k $-subalgebra that is finitely generated as a $ k $-algebra.
	Then:
	\begin{enumerate}[label=\stlabel{prop:R/k_finitely_generated}, ref=\arabic*]
		\item\label{prop:R/k_finitely_generated.1} The abelian group $ R^{\units}/k^{\units} $ is finitely generated and free.

		\item\label{prop:R/k_finitely_generated.2} For any $ k $-algebra augmentation $ \epsilon \colon \fromto{R}{k} $, there is an isomorphism $ \ker(\epsilon^{\units}) \isomorphic R^{\units}/k^{\units} $. 
		In particular, $ \ker(\epsilon^{\units}) $ is finitely generated and free.
	\end{enumerate}
\end{proposition}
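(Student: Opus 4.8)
The plan is to deduce \enumref{prop:R/k_finitely_generated}{2} formally from \enumref{prop:R/k_finitely_generated}{1}, and to prove \enumref{prop:R/k_finitely_generated}{1} by combining the torsion-freeness supplied by \Cref{lem:K/k_torsion-free} with a valuation-theoretic proof of finite generation on a normal projective model. Part \enumref{prop:R/k_finitely_generated}{2} is formal: since $\epsilon$ is a homomorphism of $k$-algebras it carries $R^{\units}$ into $k^{\units}$, and the composite $k^{\units}\hookrightarrow R^{\units}\xrightarrow{\epsilon^{\units}}k^{\units}$ is the identity. Hence $\epsilon^{\units}$ is a retraction of the inclusion $k^{\units}\subseteq R^{\units}$, so that $R^{\units}\isomorphic k^{\units}\times\ker(\epsilon^{\units})$ and the composite $\ker(\epsilon^{\units})\hookrightarrow R^{\units}\to R^{\units}/k^{\units}$ is an isomorphism. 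Thus \enumref{prop:R/k_finitely_generated}{2} follows once \enumref{prop:R/k_finitely_generated}{1} is established.

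For \enumref{prop:R/k_finitely_generated}{1}, I would first replace $K$ by $\operatorname{Frac}(R)$: a subextension of a finitely generated regular extension is again finitely generated and regular, so we may assume $K=\operatorname{Frac}(R)$. Because $K\supset k$ is regular, $k$ is algebraically closed in $K$, hence integrally closed in $R$; applying \Cref{lem:K/k_torsion-free} to the inclusion $k\hookrightarrow R$ then shows that $R^{\units}/k^{\units}$ is torsion-free. The remaining point is finite generation, and for this I would pass to a normal model. Since $R$ is of finite type over a field, its normalization $R\hookrightarrow R^{\norm}$ is finite, so $R^{\norm}$ is a normal finitely generated $k$-algebra with $\operatorname{Frac}(R^{\norm})=K$; as $R^{\units}\subseteq (R^{\norm})^{\units}$ and subgroups of finitely generated abelian groups are finitely generated, it suffices to treat the normal ring $R^{\norm}$.

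So assume $R$ normal and put $U=\Spec(R)$. Choose a normal integral projective $k$-scheme $\bar X$ with function field $K$ containing $U$ as a dense open subscheme (normalize a projective closure of $U$; this is an isomorphism over the already-normal $U$). Let $D_1,\dots,D_r$ be the finitely many prime divisors of $\bar X$ lying in $\bar X\sminus U$. For $u\in R^{\units}$ both $u$ and $u^{-1}$ are regular on $U$, so the Weil divisor $\operatorname{div}_{\bar X}(u)$ is supported on $D_1,\dots,D_r$; this defines a homomorphism $\operatorname{div}\colon R^{\units}\to\bigoplus_{i=1}^{r}\ZZ\cdot D_i$. Its kernel consists of the $u$ with neither zeros nor poles in codimension one, which by normality (algebraic Hartogs) are exactly the units of $H^0(\bar X,\Ocal_{\bar X})$; since $\bar X$ is integral and proper over $k$, this $H^0$ is a finite field extension of $k$ contained in $K$, hence equals $k$ by regularity, so $\ker(\operatorname{div})=k^{\units}$. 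Therefore $R^{\units}/k^{\units}$ embeds into $\ZZ^{r}$ and is finitely generated and free, which completes \enumref{prop:R/k_finitely_generated}{1}.

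I expect the main obstacle to be the construction and bookkeeping of the normal projective model: the finiteness of normalization, the realization of $U$ as an open of a normal projective $\bar X$ with the same function field, and the kernel computation $\ker(\operatorname{div})=k^{\units}$, which rests on the identity $H^0(\bar X,\Ocal_{\bar X})=k$. I note that the divisorial embedding into $\ZZ^{r}$ already yields freeness directly, so \Cref{lem:K/k_torsion-free} is strictly needed only if one prefers to separate the finite-generation and torsion-freeness steps.
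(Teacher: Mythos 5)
Your proof is correct, and part \enumref{prop:R/k_finitely_generated}{2} coincides exactly with the paper's argument (the splitting lemma applied to $1 \to \ker(\epsilon^{\units}) \to R^{\units} \to k^{\units} \to 1$, split by the augmentation's section). For part \enumref{prop:R/k_finitely_generated}{1}, however, you take a genuinely different route: the paper simply cites Lang's generalized unit theorem \cite[Chapter 2, Corollary 7.3]{MR715605} for finite generation of $R^{\units}/k^{\units}$, then obtains torsion-freeness by embedding $R^{\units}/k^{\units}$ into $K^{\units}/k^{\units}$ and invoking \Cref{lem:K/k_torsion-free}, whereas you reprove the cited black box from scratch: reduce to $\operatorname{Frac}(R)$, pass to the (finite) normalization, compactify to a normal integral projective model $\bar{X}$, and embed $R^{\units}/k^{\units}$ into the free group on the codimension-one boundary components via the divisor map, computing the kernel as $H^0(\bar{X},\Ocal_{\bar{X}})^{\units} = k^{\units}$ by algebraic Hartogs together with the fact that $k$ is algebraically closed in $K$ (this is where regularity enters, in both your kernel computation and your torsion-freeness step). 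Your divisorial argument is essentially the classical proof of Rosenlicht's unit theorem; it buys self-containedness and, as you correctly observe, delivers freeness in one stroke (a subgroup of $\ZZ^{r}$ is free), rendering \Cref{lem:K/k_torsion-free} logically superfluous for this proposition, at the cost of the model-theoretic bookkeeping the paper avoids by citation. Two minor remarks: all your reduction steps check out ($k$ algebraically closed in $K$ is inherited by $\operatorname{Frac}(R)$ and makes $k$ integrally closed in $R$; normalization is finite since $R$ is of finite type over a field; the normalization of the projective closure is an isomorphism over the normal open $U$), but your appeal to the nontrivial theorem that subextensions of finitely generated field extensions are finitely generated is unnecessary, since $\operatorname{Frac}(R)$ is visibly generated over $k$ by the finitely many algebra generators of $R$.
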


\begin{proof}
	For \enumref{prop:R/k_finitely_generated}{1}, the claim that $ R^{\units}/k^{\units} $ is finitely generated is proven in \cite[Chapter 2, Corollary 7.3]{MR715605}.
	Hence it suffices to show that the group $ R^{\units}/k^{\units} $ is torsion-free. 
	Notice that the incusion $ R \subset K $ induces an injection $ \incto{R^{\units}/k^{\units}}{K^{\units}/k^{\units}} $.
	By \Cref{lem:K/k_torsion-free}, the group $ K^{\units}/k^{\units} $ is torsion-free; hence the subgroup $ R^{\units}/k^{\units} $ is also torsion-free.

	For \enumref{prop:R/k_finitely_generated}{2}, consider the short exact sequence of abelian groups
	\begin{equation*}
		\begin{tikzcd}
			1 \arrow[r] & \ker(\epsilon^{\units}) \arrow[r, hooked] & R^{\units} \arrow[r, "\epsilon^{\units}"] & k^{\units} \arrow[r] & 1 \period
		\end{tikzcd}
	\end{equation*}
	Since $ \epsilon $ is an augmentation, $ \epsilon^{\units} \colon \fromto{R^{\units}}{k^{\units}} $ admits a section.
	The splitting lemma completes the proof.
\end{proof}

\begin{notation}\label{ntn:Units_1}
	Let $ k $ be a field and let $ X $ be a $ k $-scheme with fixed rational point $ x \in X(k) $.
	Denote by $ \Unitsone(X) \subset \Units(X) $ the subgroup spanned by those $ f $ such that $ f(x) = 1 $.
	That is, $ \Unitsone(X) $ is the kernel of the map $ \fromto{\Units(X)}{k^{\units}} $ induced by the $ k $-point $ x $.
	By functoriality, the $ k $-point $ x $ induces a $ \kawn $-point $x^{\awn}$ of $ \Xawn $.
	We also write $\Unitsone(\Xawn) \subset \Units(\Xawn)$ for the subgroup spanned by those $ f $ such that $f(x^{\awn}) = 1 $.
\end{notation}

\begin{corollary}\label{cor:Units_1_finitely_generated}
	Let $ k $ be a field and let $ X $ be an affine, geometrically integral, finite type $ k $-scheme.
	Then for each $ x \in X(k) $, the abelian group $ \Unitsone(X) $ is finitely generated and free.
\end{corollary}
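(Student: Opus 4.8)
The plan is to deduce this directly from \Cref{prop:R/k_finitely_generated}, with the bulk of the work being the verification of its hypotheses. Since $ X $ is affine and of finite type over $ k $, I would write $ X = \Spec(R) $ for a finitely generated $ k $-algebra $ R $. As $ X $ is integral, $ R $ is a domain, so setting $ K \colonequals \Kup(X) $ to be its fraction field exhibits $ R \subset K $ as a $ k $-subalgebra that is finitely generated as a $ k $-algebra.

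Next I would check that $ K \supset k $ satisfies the hypotheses of \Cref{prop:R/k_finitely_generated}. Finite generation of $ K $ over $ k $ is immediate from the fact that $ R $ is a finitely generated $ k $-algebra. The regularity of the extension is exactly where the geometric integrality of $ X $ enters: by \Cref{ex:geometrically_integral_implies_regular}, since $ X $ is geometrically integral, the field extension $ \Kup(X) \supset k $ is regular. With both conditions in hand, \Cref{prop:R/k_finitely_generated} applies to $ R \subset K $.

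Finally, I would translate the rational point into an augmentation. The point $ x \in X(k) $ corresponds to a $ k $-algebra homomorphism $ \epsilon \colon \fromto{R}{k} $, which is a $ k $-algebra augmentation. Unwinding \Cref{ntn:Units_1}, we have $ \Units(X) = R^{\units} $ and $ \Unitsone(X) $ is precisely the kernel of the induced map $ \epsilon^{\units} \colon \fromto{R^{\units}}{k^{\units}} $. Hence \enumref{prop:R/k_finitely_generated}{2} yields an isomorphism $ \Unitsone(X) = \ker(\epsilon^{\units}) \isomorphic R^{\units}/k^{\units} $, and \enumref{prop:R/k_finitely_generated}{1} shows this group is finitely generated and free.

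There is no genuine obstacle here; the statement is essentially a repackaging of \Cref{prop:R/k_finitely_generated}. The only points demanding care are confirming that it is precisely the \emph{geometric} integrality of $ X $ (and not mere integrality) that supplies the regularity of $ \Kup(X) \supset k $ required to invoke the proposition, and checking that the $ k $-rational point produces a genuine augmentation $ R \to k $ so that the kernel in \Cref{ntn:Units_1} matches $ \ker(\epsilon^{\units}) $.
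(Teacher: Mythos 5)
Your proposal is correct and matches the paper's proof essentially verbatim: the paper likewise invokes \Cref{ex:geometrically_integral_implies_regular} to get that $ \Kup(X) \supset k $ is finitely generated and regular, then applies \Cref{prop:R/k_finitely_generated} to the finitely generated $ k $-subalgebra $ \Ocal(X) \subset \Kup(X) $ with the augmentation coming from the rational point. Your only additions are to spell out explicitly the identification $ \Unitsone(X) = \ker(\epsilon^{\units}) $ and the role of geometric (rather than mere) integrality, both of which the paper leaves implicit.
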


\begin{proof}
	Since $ X $ is geometrically integral and of finite type over $ k $, \Cref{ex:geometrically_integral_implies_regular} shows that the field extension $ \Kup(X) \supset k $ is finitely generated and regular.
	Since the $ k $-subalgebra $ \Ocal(X) \subset \Kup(X) $ is finitely generated, the claim now follows from \Cref{prop:R/k_finitely_generated}.
\end{proof}

\begin{lemma}\label{lem:Units_1_is_finitely_generated}
	Let $ k $ be a field of exponential characteristic $ p $ and let $ X $ be an affine, regular, connected, finite type $ k $-scheme with $ X(k) \neq \emptyset $.
	Then for each $ x \in X(k) $, the $ \ZZ[\nicefrac{1}{p}] $-module $ \Unitsone(\Xawn) $ is finitely generated and free.
\end{lemma}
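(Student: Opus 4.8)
The plan is to reduce the claim to the statement that the group of honest units $\Unitsone(X)$ is finitely generated and free, and then to invoke \Cref{cor:Units_1_finitely_generated}; the only real work lies in verifying that $X$ is geometrically integral. First I would identify $\Unitsone(\Xawn)$ with $\Unitsone(X)[\nicefrac{1}{p}]$. Since $X$ is regular it is normal, hence seminormal (\enumref{prop:basic_properties_of_seminormal_rings}{4}), so \Cref{lem:units_in_Xawn} applies and gives $\Units(X)[\nicefrac{1}{p}] \isomorphic \Units(\Xawn)$; the same lemma applied to $\Spec(k)$ gives $k^{\units}[\nicefrac{1}{p}] \isomorphic (\kawn)^{\units}$. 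The rational point $x$ splits the evaluation map $\fromto{\Units(X)}{k^{\units}}$ via the constants, so $\Units(X) \isomorphic k^{\units} \oplus \Unitsone(X)$; because the counit $\fromto{\Xawn}{X}$ carries $x^{\awn}$ to $x$, these identifications are compatible with evaluation, and inverting $p$ (an exact functor) yields $\Unitsone(\Xawn) \isomorphic \Unitsone(X)[\nicefrac{1}{p}]$. Thus it suffices to show that $\Unitsone(X)$ is a finitely generated free abelian group.

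Next I would establish that $X$ is geometrically integral, in order to apply \Cref{cor:Units_1_finitely_generated}. As $X$ is connected and regular it is integral; write $K = \Kup(X)$. By \Cref{ex:geometrically_integral_implies_regular} it is enough to prove that $K/k$ is a regular field extension, i.e.\ that $K/k$ is separable and that $k$ is algebraically closed in $K$. For separability, note that the rational point $x$ has residue field $\upkappa(x) = k$, which is separable over $k$; hence regularity of $X$ at $x$ forces $X$ to be \emph{smooth} at $x$ (regularity and smoothness coincide at points with separable residue field, \stacks{0B8X}), so $X$ is geometrically reduced at $x$. Localizing $\Ocal_{X,x} \tensor_k \kperf$ at the generic point then shows $K \tensor_k \kperf$ is reduced, i.e.\ $K/k$ is separable. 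For the second condition, observe that $\Ocal(X)$ is a normal domain with fraction field $K$, hence contains the algebraic closure $k'$ of $k$ in $K$ (such elements are integral over $k \subseteq \Ocal(X)$, and $\Ocal(X)$ is integrally closed in $K$); the augmentation $\fromto{\Ocal(X)}{\upkappa(x) = k}$ restricts to an injective $k$-algebra map $\fromto{k'}{k}$, which forces $k' = k$.

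With geometric integrality in hand, \Cref{cor:Units_1_finitely_generated} shows that $\Unitsone(X)$ is finitely generated and free as an abelian group, so $\Unitsone(X)[\nicefrac{1}{p}] \isomorphic \Unitsone(\Xawn)$ is a finitely generated free $\ZZ[\nicefrac{1}{p}]$-module, as desired. The main obstacle is the separability of $K/k$ over a possibly imperfect field $k$: a priori a regular scheme need not be geometrically reduced, and the argument crucially uses the rational point (through the equality $\upkappa(x) = k$ and the regular-implies-smooth criterion at separable points) to rule this out.
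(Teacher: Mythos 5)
Your proof is correct, and its overall skeleton — identifying $ \Unitsone(\Xawn) $ with $ \Unitsone(X)[\nicefrac{1}{p}] $ via \Cref{lem:units_in_Xawn} and then feeding a geometric integrality statement into \Cref{cor:Units_1_finitely_generated} — matches the paper's. Where you genuinely diverge is the geometric-integrality step. The paper never shows that $ X $ itself is geometrically integral over $ k $: it uses the rational point to get geometric connectedness (\stacks{04KV}), hence geometric irreducibility, and then splits by characteristic — in characteristic $ 0 $ it invokes smoothness of $ X $ over the perfect field $ k $ together with \stacks{056T}, while in characteristic $ p $ it base changes to $ \kperf $, passes to $ (X_{\kperf})_{\red} $ (geometrically reduced because $ \kperf $ is perfect, hence geometrically integral over $ \kperf $), applies \Cref{cor:Units_1_finitely_generated} over $ \kperf $, and only then uses \Cref{lem:units_in_Xawn} through the universal homeomorphism $ \fromto{(X_{\kperf})_{\red}}{X} $. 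You instead prove the sharper statement that $ X $ is geometrically integral over $ k $ itself, uniformly in the characteristic: regularity at the rational point $ x $, where $ \upkappa(x) = k $ is trivially separable over $ k $, forces smoothness at $ x $, which gives separability of $ \Kup(X)/k $ by localizing at the generic point; and integral closedness of $ \Ocal(X) $ together with the augmentation at $ x $ shows $ k $ is algebraically closed in $ \Kup(X) $, so $ \Kup(X)/k $ is regular and \Cref{ex:geometrically_integral_implies_regular} applies. Your route buys a case-free argument over $ k $ and isolates a fact of independent interest (a regular connected finite type $ k $-scheme with a rational point is geometrically integral); the paper's route buys softer inputs — it needs only that reduced schemes over perfect fields are geometrically reduced, not the regular-equals-smooth criterion at points with separable residue field. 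Your supporting details check out: \Cref{lem:units_in_Xawn} is applicable since $ X $ is normal, hence seminormal by \enumref{prop:basic_properties_of_seminormal_rings}{4}, and $ \Spec(k) $ is seminormal; the counit's naturality makes the two evaluation maps compatible; and exactness of inverting $ p $ passes the splitting $ \Units(X) \isomorphic k^{\units} \oplus \Unitsone(X) $ to the kernels, exactly as needed.
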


\begin{proof}
	First notice that since $ X $ is a connected $ k $-scheme that admits a rational point, $ X $ is geometrically connected \stacks{04KV}.
	Since $ X $ is regular and of finite type over $ k $, we deduce that $ X $ is also geometrically irreducible.

	If $ p = 1 $, then $ X = \Xawn $ and $ X $ is smooth over $ k $.
	Since $ X $ is geometrically irreducible, \stacks{056T} shows that $ X $ is geometrically integral over $ k $.
	Thus \Cref{cor:Units_1_finitely_generated} shows that $ \Unitsone(X) $ is a finitely generated free abelian group. 
	If $ p > 1 $, then since the natural morphism $ \fromto{(X_{\kperf})_{\red}}{X} $ is a universal homeomorphism, \Cref{lem:units_in_Xawn} shows that 
	\begin{equation*}
		\Unitsone(\Xawn) \isomorphic \Unitsone((X_{\kperf})_{\red})[\nicefrac{1}{p}] \period
	\end{equation*}
	Now note that since $ X $ is geometrically irreducible, $ (X_{\kperf})_{\red} $ is also geometrically irreducible.
	Moreover, since $ \kperf $ is perfect and $ (X_{\kperf})_{\red} $ is reduced, $ (X_{\kperf})_{\red} $ is also geometrically reduced.
	Hence $ (X_{\kperf})_{\red} $ is geometrically integral.
	To conclude, note that \Cref{cor:Units_1_finitely_generated} shows that $ \Unitsone((X_{\kperf})_{\red}) $ is a finitely generated free abelian group.
\end{proof}


\subsection{Amenable discretely valued fields satisfy étale reconstruction}\label{subsec:amenable_discretely_valued_fields_satisfy_etale_reconstruction}

We now verify that the hypothesis of \Cref{prop:discreteness_of_chi_top_implies_étale_reconstruction} holds for amenable fields that admit a nontrivial discrete valuation.
This is a slight generalization of Voevodsky's argument in \cite[Proposition 3.4]{MR1098621}.
For this, we need two preparatory results.

\begin{nul}
	Let $ p $ be a prime number or $ 1 $.
	Recall that by \Cref{lem:inverting_p_and_completing_away_from_p}, we have \smash{$ \ZZpprime \isomorphic \ZZ[\nicefrac{1}{p}]\pprimecomp $}.
	Also note that the natural homomorphism \smash{$ \fromto{\ZZ[\nicefrac{1}{p}]}{\ZZ[\nicefrac{1}{p}]\pprimecomp} $} is injective.
	Hence for the rest of this section, we tacitly regard $ \ZZ[\nicefrac{1}{p}] $ as a subgroup of \smash{$ \ZZpprime $}.
\end{nul}

\begin{lemma}\label{lemma:integer_up_to_p-power} 
	Let $ p $ be a prime number or $ 1 $, and let \smash{$ \alpha \in \ZZpprime $}. 
	If there exists a nonzero element \smash{$ n \in \ZZ[\nicefrac{1}{p}] $} such that $ n \alpha \in \ZZ[\nicefrac{1}{p}] \subset \ZZpprime $, then $\alpha \in \ZZ[\nicefrac{1}{p}]$.
\end{lemma}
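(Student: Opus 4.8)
The plan is to recognize that the assertion is exactly the statement that the class of $\alpha$ in $\ZZpprime/\ZZ[\nicefrac{1}{p}]$, which is killed by $n$, already vanishes; so I want to argue $\ell$-component by $\ell$-component using the identification $\ZZpprime \isomorphic \prod_{\ell \neq p} \ZZell$. First I would reduce to the case where $n$ is a positive integer coprime to $p$. Since $p$ is a unit in every $\ZZell$ with $\ell \neq p$, it is a unit in $\ZZpprime$; writing the nonzero element $n \in \ZZ[\nicefrac{1}{p}]$ as $n = \pm p^{d} n_0$ with $d \in \ZZ$ and $n_0$ a positive integer coprime to $p$, the hypothesis $n\alpha \in \ZZ[\nicefrac{1}{p}]$ is equivalent to $n_0 \alpha \in \ZZ[\nicefrac{1}{p}]$, because $\pm p^{d}$ is a unit of $\ZZpprime$ under which $\ZZ[\nicefrac{1}{p}]$ is stable. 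Hence I may assume $n$ is a positive integer with $\gcd(n,p) = 1$.

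Next, write $n\alpha = m$ with $m \in \ZZ[\nicefrac{1}{p}]$, say $m = c/p^{e}$ for some $c \in \ZZ$ and $e \geq 0$. For each prime $\ell \neq p$, the $\ell$-adic valuation $v_{\ell}$ of the image of $m$ in $\ZZell$ equals $v_{\ell}(c)$, since $p$ is a unit in $\ZZell$; comparing $\ell$-components of $n\alpha = m$ gives $v_{\ell}(\alpha_{\ell}) = v_{\ell}(c) - v_{\ell}(n)$. Because $\alpha \in \ZZpprime$ we have $v_{\ell}(\alpha_{\ell}) \geq 0$, and therefore $v_{\ell}(c) \geq v_{\ell}(n)$ for every $\ell \neq p$. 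As $\gcd(n,p) = 1$ forces $v_{p}(n) = 0 \leq v_{p}(c)$, the inequality $v_{\ell}(c) \geq v_{\ell}(n)$ in fact holds for all primes $\ell$, which is precisely the statement that $n \mid c$ in $\ZZ$.

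Writing $c = n c'$ with $c' \in \ZZ$, we obtain $m = n(c'/p^{e})$ with $c'/p^{e} \in \ZZ[\nicefrac{1}{p}]$. The integer $n$ has nonzero image in each domain $\ZZell$, so it is a nonzerodivisor in the product $\ZZpprime$, and cancelling $n$ in the equality $n\alpha = n(c'/p^{e})$ yields $\alpha = c'/p^{e} \in \ZZ[\nicefrac{1}{p}]$, as desired. I do not expect any genuine obstacle here: the argument is elementary, and the only points requiring care are that $p$ (hence every power of $p$) is a unit of $\ZZpprime$, which is what licenses the reduction to an integral $n$, and the bookkeeping that upgrades integrality of $\alpha$ at each $\ell \neq p$ together with coprimality of $n$ to $p$ into an honest divisibility $n \mid c$ in $\ZZ$.
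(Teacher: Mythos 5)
Your proof is correct and takes essentially the same route as the paper: both arguments rest on the decomposition $\ZZpprime \isomorphic \prod_{\ell \neq p} \ZZell$ and the observation that $\alpha$ has nonnegative $\ell$-adic valuation at every prime $\ell \neq p$ while $n\alpha$ has only $p$-power denominators. The paper packages this slightly more quickly by passing to the rationalization $\ZZpprime \tensor \QQ$, where $\alpha = (n\alpha)/n$ is visibly a rational number, and then reading off that $\alpha = m/p^k$; your reduction to an integral $n$ coprime to $p$, the divisibility $n \mid c$ in $\ZZ$, and cancellation of the nonzerodivisor $n$ are equivalent bookkeeping with identical mathematical content.
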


\begin{proof}
	For $ \ell \neq p $, write $ \pr_{\ell} \colon \ZZpprime \to \ZZell $ for the projection.
	By assumption,
	\begin{equation*}
		\alpha \in \QQ \subset \ZZpprime \tensor \QQ \andeq \pr_\ell(\alpha) \in \ZZell \subset \QQell \period
	\end{equation*}
	Thus the $ \ell $-adic valuation of $ \alpha $ is greater than or equal to $ 0 $.
	So $ \alpha = m/p^k $ for some $ m \in \ZZ $ and $ k \in \NN $.
\end{proof}

\noindent The following result is the only part that seems to break over a finite field:

\begin{proposition}\label{prop:argument_using_discrete_valuation}
	Let $ k $ be an amenable field of exponential characteristic $ p $.
	Let $ X $ be an affine, regular, connected, finite type $ k $-scheme with a fixed rational point $ x \in X(k) $.
	Choose a basis $ g_1,\ldots,g_m $ for the finitely generated free $ \ZZ[\nicefrac{1}{p}] $-module $ \Unitsone(\Xawn) $.
	Let 
	\begin{equation*}
		\xi = g_1^{a_1} \cdots g_m^{a_m} \in \Unitsone(\Xawn)\pprimecomp
	\end{equation*}
	where $ a_1,\ldots,a_m \in \ZZpprime $, be such that $ \evhat(-,\xi) $ factors through
	\begin{equation*}
		i \colon \kbar^{\units} \to \colim_E (E^{\units})\pprimecomp \period
	\end{equation*}
	If $ k $ admits a nontrivial discrete valuation, then $ \xi \in \Unitsone(\Xawn) $.
\end{proposition}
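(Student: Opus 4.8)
The plan is to show that each exponent $a_i$ lies in $\ZZ[\nicefrac{1}{p}]$; since $g_1,\ldots,g_m$ is a $\ZZ[\nicefrac{1}{p}]$-basis of $\Unitsone(\Xawn)$, this immediately gives $\xi \in \Unitsone(\Xawn)$. By \Cref{lemma:integer_up_to_p-power} it suffices, for each $i$, to produce a nonzero $n \in \ZZ[\nicefrac{1}{p}]$ with $n a_i \in \ZZ[\nicefrac{1}{p}]$, and I will obtain this by solving a linear system whose coefficients are integers coming from the discrete valuation.

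First I would extract linear equations from the hypothesis. Fix an extension $\bar{v}$ of the given nontrivial discrete valuation $v$ to $\kbar$; its value group is the divisible hull $\QQ$ of $\ZZ$, and on any finite subextension $E \supseteq \kawn$ the restriction has value group $\cong \ZZ$, so after normalising we get $w_E \colon E^{\units} \to \ZZ$. Completing the surjection $E^{\units} \to \ZZ$ away from $p$ yields a continuous $\ZZpprime$-linear map $\widehat{w}_E \colon (E^{\units})\pprimecomp \to \ZZpprime$ satisfying $\widehat{w}_E(\eta^{\alpha}) = \alpha\,\widehat{w}_E(\eta)$ and restricting to $w_E$ on $E^{\units}$. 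For a $\kbar$-point $\bar{x}$ factoring through $\Spec(E)$, unwinding \Cref{const:psi_map} together with the fact that the Kummer map and pullback are homomorphisms gives $\evhat(\bar{x},\xi) = \prod_i g_i(\bar{x})^{a_i}$ in $(E^{\units})\pprimecomp$. The hypothesis says this equals $i(u_{\bar{x}})$ for a unique $u_{\bar{x}} \in \kbar^{\units}$ (uniqueness because $i$ is injective by \Cref{obs:can_map_injective}); enlarging $E$ so that $u_{\bar{x}} \in E^{\units}$ and applying $\widehat{w}_E$ produces
\[
	\sum_i a_i\, w_E(g_i(\bar{x})) = w_E(u_{\bar{x}}) \in \ZZpprime,
\]
an equation whose coefficients $w_E(g_i(\bar{x}))$ and right-hand side $w_E(u_{\bar{x}})$ are integers.

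The main point — and the step I expect to be the crux — is to choose $m$ geometric points $\bar{x}_1,\ldots,\bar{x}_m$ for which the integer matrix $\big(w_{E_j}(g_i(\bar{x}_j))\big)_{i,j}$ is invertible over $\QQ$. I claim the pairing $(g,\bar{x}) \mapsto \bar{v}(g(\bar{x}))$ induces an \emph{injective} $\QQ$-linear map $\Unitsone(\Xawn)\otimes\QQ \to \QQ^{\Xawn(\kbar)}$. Granting this, the functionals $\ell_{\bar{x}}(g) = \bar{v}(g(\bar{x}))$ have trivial common kernel, hence span the dual; selecting $m$ of them that form a basis makes $\big(\bar{v}(g_i(\bar{x}_j))\big)$ nonsingular, and scaling the rows by the (nonzero) ramification indices shows the normalised integer matrix is nonsingular too. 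To prove injectivity it suffices to show that a nontrivial $g \in \Unitsone(\Xawn)$ satisfies $\bar{v}(g(\bar{x})) \neq 0$ for some $\bar{x}$. Here the geometry enters: by the proof of \Cref{lem:Units_1_is_finitely_generated}, $X$ is geometrically integral, and a nontrivial $g$ (a $p$-power root of an honest nonconstant unit $h$ on $X$, using $\Units(\Xawn) = \Units(X)[\nicefrac{1}{p}]$) defines a dominant $k$-morphism $X \to \GGm$. Thus $h_{\kbar} \colon X_{\kbar} \to \GGm$ is dominant, so its image contains all but finitely many closed points of $\GGm$; since $\{\,b \in \kbar^{\units} : \bar{v}(b)\neq 0\,\}$ is infinite (it contains every nonzero power of a uniformizer of $v$), some value $b = h(\bar{x})$ in the image has $\bar{v}(b) \neq 0$, whence $\bar{v}(g(\bar{x})) \neq 0$. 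This is precisely where the nontrivial discrete valuation is used, and why the argument breaks over finite fields.

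Finally I would solve the system. With $N = \big(w_{E_j}(g_i(\bar{x}_j))\big)$ nonsingular over $\QQ$ and right-hand side $b = \big(w_{E_j}(u_{\bar{x}_j})\big) \in \ZZ^m$, Cramer's rule gives a rational solution $N^{-1}b \in \QQ^m$; viewing everything in $\QQ \otimes \ZZpprime$ and using that $\det N \in \ZZ \setminus \{0\}$ is invertible there, this rational vector must coincide with $(a_i) \in (\ZZpprime)^m$. Hence each $a_i$ equals a rational number, and clearing its prime-to-$p$ denominator exhibits a nonzero $n \in \ZZ[\nicefrac{1}{p}]$ with $n a_i \in \ZZ[\nicefrac{1}{p}]$. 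By \Cref{lemma:integer_up_to_p-power} we conclude $a_i \in \ZZ[\nicefrac{1}{p}]$ for every $i$, so $\xi = g_1^{a_1}\cdots g_m^{a_m} \in \Unitsone(\Xawn)$, as desired. Apart from the separating-points claim, the remaining manipulations are formal bookkeeping with valuations and Kummer theory.
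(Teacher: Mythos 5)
Your proof is correct, and it reorganizes the crucial step genuinely differently from the paper. Both arguments share the same skeleton: reduce to showing $a_1,\ldots,a_m \in \ZZ[\nicefrac{1}{p}]$, extract linear equations $\sum_i a_i\,\nu(g_i(\bar{x})) = \nu(u_{\bar{x}})$ by applying $p'$-completed valuations to $\evhat(\bar{x},\xi) = \prod_i g_i(\bar{x})^{a_i} = i(u_{\bar{x}})$, produce $m$ points making the coefficient matrix nonsingular, and conclude by solving the system (your Cramer step and the paper's adjugate step are the same computation) together with \Cref{lemma:integer_up_to_p-power}. They also share the identical geometric input: a nontrivial element of $\Unitsone(\Xawn)$ is a $p$-power root of a nonconstant unit, which gives an open, dominant map to $\GGm$ whose image misses only finitely many closed points, while infinitely many elements of $\kbar^{\units}$ have nonzero valuation. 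Where you differ is in how the nonsingular matrix is obtained. The paper proves \Cref{claim:in_argument_using_discrete_valuations} by induction on the size $d$ of the matrix, choosing the points adaptively and, at each step, applying the nonconstant-unit argument to the auxiliary function $g' = g_1^{\pm M_1} \cdots g_{d+1}^{\pm M_{d+1}}$ built from cofactors of the minor expansion. You instead observe once and for all that the functionals $\ell_{\bar{x}}(g) = \bar{v}(g(\bar{x}))$ have trivial common kernel on the $m$-dimensional space $\Unitsone(\Xawn)\otimes\QQ$ (one application of the geometric lemma per nonzero $g$), hence span the dual space, so that $m$ of them form a basis and the matrix $(\bar{v}(g_i(\bar{x}_j)))$ is invertible for free. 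This duality formulation is cleaner: it replaces the paper's induction and cofactor bookkeeping by elementary linear algebra, at the mild cost of being less explicit about which points are used. Fixing a single extension $\bar{v}$ to $\kbar$ and restricting, as you do, is also legitimate and keeps all the equations compatible.

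One inaccuracy should be repaired, though it creates no gap. In characteristic $p > 1$ the field $\kawn$ is perfect, so the extension of the discrete valuation to $\kawn$ has value group $\ZZ[\nicefrac{1}{p}]$, and a finite extension $E \supset \kawn$ has value group of the form $\frac{1}{N}\ZZ[\nicefrac{1}{p}]$ --- not $\ZZ$, contrary to your claim that the restriction to a finite subextension is discrete. Your normalized valuations $w_E$ are therefore $\ZZ[\nicefrac{1}{p}]$-valued (this is exactly the paper's convention $\nu \colon E^{\units} \to \ZZ[\nicefrac{1}{p}]$), and accordingly your matrix entries, right-hand sides, and $\det N$ lie in $\ZZ[\nicefrac{1}{p}]$ rather than $\ZZ$. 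Everything downstream survives verbatim: a nonzero element of $\ZZ[\nicefrac{1}{p}]$ is still invertible in $\QQ \otimes \ZZpprime$, the solution vector is still rational, and \Cref{lemma:integer_up_to_p-power} is stated precisely for nonzero $n \in \ZZ[\nicefrac{1}{p}]$.
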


\begin{proof}
	We note that the discrete valuation on $ k $ extends to a nontrivial $\ZZ[\nicefrac{1}{p}]$-valued valuation on $\kawn$. 
	We make the following claim:

	\begin{claim}\label{claim:in_argument_using_discrete_valuations}
		For any $ 1 \leq d \leq m $, we can find a finite extension $ E \supset \kawn $, points $ x_1, \dots, x_d \in \Xawn(E) $ and an extension of the valuation on $\kawn$ to a valuation $\nu \colon E^{\units} \to \ZZ[\nicefrac{1}{p}] $ such that the determinant of the $ \ZZ[\nicefrac{1}{p}] $-matrix
		\begin{equation*}
			A(d) = (\nu(g_s(x_t)))_{s,t}
		\end{equation*}
		is nonzero.
	\end{claim}

	If we can prove \Cref{claim:in_argument_using_discrete_valuations} we are done.
	To see this, note that since $ \Unitsone(\Xawn) $ is a finitely generated free $ \ZZ[\nicefrac{1}{p}] $-module, to prove that $ \xi \in \Unitsone(\Xawn) $, we equivalently need to show that $ a_1,\ldots,a_m \in \ZZ[\nicefrac{1}{p}] $.
	Write \smash{$ \nuhat \colon (E^{\units})\pprimecomp \to \ZZpprime $} for the $ p' $-completion of $ \nu $.
	Note that for $ d = m $ we have
	\begin{equation*}
		A(m) \cdot
		\begin{pmatrix}
			a_1 \\ 
			\vdots \\ 
			a_m
		\end{pmatrix}
		= 
		\begin{pmatrix}
			\nuhat(\evhat(x_1,\xi)) \\ 
			\vdots \\ 
			\nuhat(\evhat(x_m,\xi))
		\end{pmatrix}
		\period
	\end{equation*}
	Since $\nuhat(\evhat(x_d,\xi)) \in \ZZ[\nicefrac{1}{p}]$ by assumption, by multiplying with the adjugate of the $ \ZZ[\nicefrac{1}{p}] $-matrix $ A(m) $, we see that each $a_j$ has the property that 
	\begin{equation*}
		\det(A(m)) a_j \in \ZZ[\nicefrac{1}{p}] \period
	\end{equation*}
	Since $ \det(A(m)) \in \ZZ[\nicefrac{1}{p}] $ is nonzero, \Cref{lemma:integer_up_to_p-power} implies that $ a_1,\ldots,a_m \in \ZZ[\nicefrac{1}{p}] $, as desired. 
\end{proof}

\begin{proof}[Proof of \Cref{claim:in_argument_using_discrete_valuations}]
	We proceed by induction on $ d $. 
	For the base case $ d = 1 $, we claim that there is some finite extension $E \supset \kawn$, and some $x \in \Xawn(E)$ such that $\nu(g_1(x)) \neq 0$.  
	Pick a nontrivial extension of the valuation of $\kawn$ to $E$, and consider the induced map $g_1 \colon \Xawn \to \GGm$. 
	Since $ g_1 $ is not constant, $ g_1 $ is an open map. 
	Since the complement of $g_1(\Xawn)$ is a finite set of closed points, we can pick some $x \in \Xawn(E)$  for some extension $ E \supset \kawn $ such that $\nu(g_1(x)) \neq 0$, . 

	For the induction step, assume that the statement holds for $ d $. 
	Consider the matrix $A(d+1) = (\nu(g_s(x_t)))_{s,t} $ for some points $x_1,\ldots,x_{d+1} $ yet to be chosen, and expand $ \det(A(d+1)) $ formally in minors as
	\begin{equation*}
		\det(A(d+1)) = \sum_{s=1}^{d+1} (-1)^{s+1} \nu(g_s(x_1)) M_s \comma
	\end{equation*}
	where $M_s$ is the determinant we obtain by removing the row and column containing $g_s(x_1)$. 
	By induction, there exist a finite extension $ E' \supset \kawn $ and points $ x_2, \ldots, x_{d+1} \in \Xawn(E')$ such that $M_1 \neq 0$. 
	We now consider the function
	\begin{equation*}
		g' \colonequals g_1^{\pm M_1} \cdots g_{d+1}^{\pm M_{d+1}} \in \Unitsone(\Xawn) \period
	\end{equation*}
	Since the $g_j$ are linearly independent and $M_1 \neq 0$, the function $g'$ is not constant. 
	Thus, $g'$ is open, so there is a finite extension $ E \supset E' $ and a $x_1 \in \Xawn(E')$ such that $\nu(g'(x_1)) \neq 0 $ and the claim follows.
\end{proof}

The following is the main result of this paper:

\begin{theorem}\label{thm:reconstruction_for_amenable_fields}
	If $ k $ is an amenable field that admits a nontrivial discrete valuation, then $ k $ satisfies étale reconstruction.
\end{theorem}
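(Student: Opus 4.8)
The plan is to deduce the theorem from the cohomological criterion \Cref{prop:discreteness_of_chi_top_implies_étale_reconstruction}: since $k$ is amenable, it suffices to verify condition \enumref{prop:discreteness_of_chi_top_implies_étale_reconstruction}{2}, namely that for each affine, regular, connected, finite type $k$-scheme $X$ with $X(k) \neq \emptyset$ one has $\image(\chitop) \subset \image(\can)$ inside $\Units(\Xawn)\pprimecomp$. So I fix such an $X$, a rational point $x \in X(k)$, and an arbitrary pinned geometric morphism $\philowerstar \in \OTop(\Xawn)$, and I must show $\chitop(\philowerstar) \in \image(\can)$. All of the genuinely hard analysis is already packaged into \Cref{prop:argument_using_discrete_valuation}; the work here is to massage $\chitop(\philowerstar)$ into a form to which that proposition applies.

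First I would reduce to the subgroup $\Unitsone(\Xawn)$. The rational point induces $x^{\awn} \in \Xawn(\kawn)$ and a splitting $\Units(\Xawn) \cong \Unitsone(\Xawn) \oplus (\kawn)^{\units}$ (constants), which persists after $p'$-completion. Since $\kawn$ is perfect, \Cref{prop:reconstruction_for_fields} identifies the restriction $\philowerstar \circ (x^{\awn})\lowerstar \in \OTop(\Spec(\kawn))$ with an element $u \in \GGm(\kawn) = (\kawn)^{\units}$. I then replace $\philowerstar$ by its composite with the automorphism of $\GGmet$ induced by multiplication by $u^{-1}$ on $\GGm$; this is again a pinned geometric morphism over $\Spec(k)_{\et}$ whose value at $x^{\awn}$ is $1$. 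A short computation with the canonical class — using that multiplication by $c$ on $\GGm$ shifts $c_1$ by the class of the constant $c$, i.e.\ is compatible with the primitivity of $c_1$ — shows this replacement alters $\chitop$ by exactly $\can(u^{-1})$. As $\can(u) \in \image(\can)$, it suffices to treat a \emph{normalized} $\philowerstar$ sending $x^{\awn}$ to $1$.

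For a normalized $\philowerstar$, naturality of $\chitop$ under pullback along $x^{\awn}$ identifies the $(\kawn)^{\units}\pprimecomp$-component of $\chitop(\philowerstar)$ with $\chitop$ of the (trivial) restriction, hence it vanishes; so $\chitop(\philowerstar) \in \Unitsone(\Xawn)\pprimecomp$. By \Cref{lem:Units_1_is_finitely_generated}, $\Unitsone(\Xawn)$ is a finitely generated free $\ZZ[1/p]$-module, so I may expand $\chitop(\philowerstar) = g_1^{a_1}\cdots g_m^{a_m}$ in a basis with $a_i \in \ZZpprime$; and \Cref{obs:naturality_of_evaluation} shows that $\evhat(-,\chitop(\philowerstar))$ factors through the injection $i \colon \kbar^{\units} \to \colim_E (E^{\units})\pprimecomp$. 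These are precisely the hypotheses of \Cref{prop:argument_using_discrete_valuation}, whose conclusion — here invoking the nontrivial discrete valuation on $k$ — is that $\chitop(\philowerstar) \in \Unitsone(\Xawn) \subset \image(\can)$. Undoing the normalization gives $\chitop(\philowerstar) \in \image(\can)$, establishing condition \enumref{prop:discreteness_of_chi_top_implies_étale_reconstruction}{2}.

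The main obstacle is not depth but care in the reduction to $\Unitsone(\Xawn)$: one must check that after normalizing at the rational point the pairing $\evhat(-,\chitop(\philowerstar))$ still factors through $i$ (which rests on the behaviour of $\evhat$ on classes pulled back from $\Spec(\kawn)$ together with \Cref{obs:naturality_of_evaluation}), and that the translation indeed shifts $\chitop$ by $\can(u)$. Because $(-)_{\et}$ does not preserve products, I would be careful to carry out this normalization through honest scheme automorphisms of $\GGm$ rather than through any presumed group structure on $\OTop(\Xawn)$.
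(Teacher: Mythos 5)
Your proposal is correct and takes essentially the same route as the paper's proof: reduce via \Cref{prop:discreteness_of_chi_top_implies_étale_reconstruction}, normalize $\philowerstar$ at the rational point so that $\chitop(\philowerstar)$ lands in $\Unitsone(\Xawn)\pprimecomp$, use \Cref{obs:naturality_of_evaluation} to see that $\evhat(-,\chitop(\philowerstar))$ factors through the injection $i$, and conclude with \Cref{prop:argument_using_discrete_valuation}. Your careful unpacking of the normalization — composing with the scheme automorphism of $\GGm$ given by multiplication by $u^{-1}$ and checking that this shifts $\chitop$ by exactly $\can(u^{-1})$ — simply makes explicit what the paper compresses into the phrase `after scaling $\philowerstar$', and your caution about avoiding a presumed group structure on $\OTop(\Xawn)$ is well placed since $(-)_{\et}$ does not preserve products.
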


\begin{proof}
	Let $ p $ denote the exponential characteristic of $ k $ and choose a separable closure $ \kbar \supset \kawn $.
	By \Cref{prop:discreteness_of_chi_top_implies_étale_reconstruction}, we need to show that for each affine, regular, connected, finite type $ k $-scheme $ X $ with a fixed rational point $ x \in X(k) $, we have 
	\begin{equation*}
		\image\paren{ \chitop \colon \fromto{\OTop(\Xawn)}{\Units(\Xawn)\pprimecomp} } \subset \image\paren{ \can \colon \fromto{\Units(\Xawn)}{\Units(\Xawn)\pprimecomp} } \period
	\end{equation*}
	Let $ \philowerstar \in \OTop(\Xawn) $ and write $ \xi \colonequals \chitop(\philowerstar) $.
	After scaling $ \philowerstar $, we can, without loss of generality assume that $ \philowerstar(x^{\awn}) = 1 $.
	Thus $ \xi $ lies in the group $ \Unitsone(\Xawn)\pprimecomp $.

	We note that the pairing 
	\begin{equation*}
		\evhat \colon \Xawn(\kbar) \times \Unitsone(\Xawn)\pprimecomp \to \colim_{E} \, (E^{\units})\pprimecomp
	\end{equation*}
	defined by \eqref{eq:pairing} extends the usual evaluation map
	\begin{equation*}
		\begin{tikzcd}
			\Xawn(\kbar) \times \Unitsone(\Xawn) \arrow[r] & \kbar^{\units} \arrow[r, "i", hooked]  & \colim_{E} \, (E^{\units})\pprimecomp \period
		\end{tikzcd}
	\end{equation*}
	Next, we observe that, by construction, $ \evhat(-,\xi) = \psi_{\chitop(\philowerstar)} $. 
	In particular, since $\philowerstar$ is pinned, $ \evhat(-,\xi)$ factors through $ i \colon \incto{\kbar^{\units}}{\colim_E (E^{\units})\pprimecomp} $.
	Thus by \Cref{prop:argument_using_discrete_valuation}, $ \xi $ is in the image of the natural homomorphism
	\begin{equation*}
		\can \colon \Unitsone(\Xawn) \to \Unitsone(\Xawn)\pprimecomp \comma
	\end{equation*}
	as desired.
\end{proof} 

\begin{corollary}\label{cor:finitely_generated_fields_satisfy_etale_reconstruction}
	If $ k $ is an infinite field that is also finitely generated, then $ k $ satisfies étale reconstruction.
\end{corollary}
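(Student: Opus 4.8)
The plan is to deduce this corollary directly from the two main results already in hand. By \Cref{prop:finitely_generated_fields_are_amenable}, every finitely generated field is amenable, so the only remaining hypothesis of \Cref{thm:reconstruction_for_amenable_fields} to verify is that $ k $ admits a nontrivial discrete valuation. Thus the entire content of the corollary reduces to the purely valuation-theoretic statement that \emph{an infinite finitely generated field admits a nontrivial discrete valuation}, after which \Cref{thm:reconstruction_for_amenable_fields} finishes the proof.

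To produce such a valuation I would argue by cases on the transcendence degree $ n $ of $ k $ over its prime field $ k_0 $, writing $ k $ as a finite extension of a purely transcendental subextension $ k_0(t_1,\ldots,t_n) $. If $ n \geq 1 $, then $ k_0(t_1,\ldots,t_n) $ carries the $ t_1 $-adic valuation, a nontrivial discrete valuation, and this extends to a nontrivial discrete valuation on the finite extension $ k $. If $ n = 0 $ and $ \characteristic(k) = 0 $, then $ k $ is a number field, and any $ \ell $-adic valuation on $ \QQ $ extends to a nontrivial discrete valuation on $ k $. The only remaining case, $ n = 0 $ and $ \characteristic(k) = p > 0 $, is exactly where $ k $ is a finite extension of $ \FFp $, i.e. a finite field---and this is precisely the case ruled out by the hypothesis that $ k $ is infinite. (Note that in characteristic $ 0 $ the field $ k $ contains $ \QQ $ and is automatically infinite, so the infinitude hypothesis has force only in positive characteristic.)

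I expect the main, and only mildly technical, point to be the standard fact that a discrete valuation extends along a finite field extension with value group again isomorphic to $ \ZZ $; some care is needed since the extension may be inseparable when $ \characteristic(k) = p $, but valuations extend to arbitrary algebraic extensions, and a finite extension of a discretely valued field is again discretely valued, its value group landing in $ \tfrac{1}{e}\ZZ \cong \ZZ $ for the ramification index $ e $. The conceptual heart of the argument---the reason the infinitude hypothesis appears at all---is simply that a finite field admits no nontrivial valuation: its multiplicative group is torsion, while the value group of any valuation is torsion-free, forcing every nonzero element to have valuation $ 0 $.
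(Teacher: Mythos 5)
Your proposal is correct and follows exactly the paper's proof: amenability via \Cref{prop:finitely_generated_fields_are_amenable}, existence of a nontrivial discrete valuation from infinitude plus finite generation, and then \Cref{thm:reconstruction_for_amenable_fields}. The paper simply asserts the valuation-theoretic fact in one sentence, whereas you correctly supply the standard case analysis (transcendence degree $\geq 1$ via the $t_1$-adic valuation, number fields via an $\ell$-adic valuation, and finite fields excluded by hypothesis), so there is no gap.
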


\begin{proof}
	Since $ k $ is finitely generated, \Cref{prop:finitely_generated_fields_are_amenable} shows that $ k $ is amenable.
	Since $ k $ is also infinite, $ k $ admits a nontrivial discrete valuation.
	Hence $ k $ satisfies the hypotheses of \Cref{thm:reconstruction_for_amenable_fields}.
\end{proof}


\appendix

\section{Morphisms of étale sites and étale topoi}\label{app:morphisms_of_etale_sites_and_etale_topoi}

Given a scheme $ X $, write \smash{$ \Etqcs_X \subset \Et_X $} for the subsite of the étale site spanned by the quasicompact separated morphisms.
If $ X $ is quasiseparated, then \smash{$ \Etqcs_X $} is a basis for the étale topology (see \Cref{lem:separated_and_separated_finitely_presented_etale_sites_are_bases}).
In this appendix, we verify that if $ X $ and $ Y $ are topologically noetherian schemes, then every geometric morphism $ \flowerstar \colon \fromto{\Xet}{\Yet} $ is induced by a morphism of sites $ \fromto{\Etqcs_Y}{\Etqcs_X} $.
See \Cref{cor:every_geometric_morphism_between_topologically_noetherian_schemes_is_coherent}.
This allows us to repackage the étale reconstruction property in site-theoretic terms (see \Cref{cor:reformulation_of_etale_reconstruction_in_terms_of_sites}).
In \cite{MR1098621}, Voevodsky exclusively works with quasicompact separated étale sites; however, Grothendieck's conjecture \cite[p. 7]{MR1483108} is specifically about étale topoi.
So \Cref{cor:every_geometric_morphism_between_topologically_noetherian_schemes_is_coherent} also verifies that Voevodsky's site-theoretic result is equivalent to the topos-theoretic statement appearing in Grothendieck's letter.

We start by introducing a few bases for the étale site.
Before doing so, we remind the reader that quasicompactness and quasiseparatedness are \textit{topological} properties of schemes and morphisms of schemes.

\begin{recollection}[(quasicompactness and quasiseparatedness)]
	A map of topological spaces $ f \colon \fromto{S}{T} $ is \defn{quasicompact} (resp., \defn{quasiseparated}) if for every quasicompact (resp., quasiseparated) open $ V \subset T $, the preimage $ f^{-1}(V) \subset S $ is quasicompact (resp., quasiseparated).
	A morphism of schemes $ f \colon \fromto{X}{Y} $ is \defn{quasicompact} (resp., \defn{quasiseparated}) if the induced map on underlying topological spaces $ |f| \colon \fromto{|X|}{|Y|} $ is quasicompact (resp., quasiseparated).

	If $ S $ is a noetherian space, then every subset of $ S $ is quasicompact and quasiseparated.
	In particular, in this case, every map $ f \colon \fromto{S}{T} $ to an arbitrary topological space $ T $ is qcqs.
\end{recollection}


\begin{notation}[(variants of the étale site)]\label{ntn:variants_of_the_etale_site}
	Let $ X $ be a scheme.
	We write
	\begin{equation*}
		\Etsep_X \subset \Et_X \comma \qquad \Etqcs_X \subset \Et_X \comma \andeq \Etaff_X \subset \Et_X 
	\end{equation*}
	for the full subcategories spanned by those $ p \colon \fromto{X'}{X} $ such that $ p $ is separated, $ p $ is quasicompact and separated, and $ X' $ is affine over $ \Spec(\ZZ) $, respectively.
\end{notation}

\begin{observation}\label{obs:properties_of_variants_of_the_etale_site}
	Let $ X $ be a scheme.
	\begin{enumerate}[label=\stlabel{obs:properties_of_variants_of_the_etale_site}, ref=\arabic*]
		\item By cancellation, every morphism in $ \Etsep_X $ is separated and every morphism in $ \Etqcs_X $ is quasicompact and separated.
		Moreover, since separated and quasicompact morphisms are stable under basechange, the full subcategories
		\begin{equation*}
			\Etqcs_X \subset \Etsep_X \subset \Et_X
		\end{equation*}
		are closed under finite limits.

		\item Since affine schemes are separated and every morphism with separated source is separated, we have
		\begin{equation*}
			\Etaff_X \subset \Etsep_X \period
		\end{equation*}

		\item Assume that $ X $ is quasiseparated.
		Since every morphism from a quasicompact scheme to a quasiseparated scheme is quasicompact, we have
		\begin{equation*}
			\Etaff_X \subset \Etqcs_X \period
		\end{equation*}

		\item If $ X $ is qcqs, then every object of $ \Etqcs_X $ is qcqs.
	\end{enumerate}
\end{observation}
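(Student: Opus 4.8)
The plan is to deduce all four items from the standard stability and cancellation properties of separated, quasicompact, and quasiseparated morphisms, together with the observation that $\Et_X$ is closed under finite limits inside $\Sch_X$. The latter holds because étale morphisms are stable under composition and base change and contain all isomorphisms: the terminal object of $\Et_X$ is the identity $\fromto{X}{X}$, and a pullback of étale $X$-schemes taken over an étale $X$-scheme is again étale over $X$, so finite limits in $\Et_X$ are computed as in $\Sch_X$.

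For item (1), I would argue as follows. A morphism in $\Etsep_X$ is a morphism $f \colon \fromto{X'}{X''}$ over $X$ whose source and target are separated over $X$; since the composite $\fromto{X'}{X}$ is then separated, the cancellation lemma \stacks{01KV} shows $f$ is separated. For a morphism in $\Etqcs_X$ the target structure map $\fromto{X''}{X}$ is separated, hence quasiseparated, so the same cancellation lemma shows $f$ is in addition quasicompact. To see that $\Etqcs_X \subset \Etsep_X \subset \Et_X$ are closed under finite limits, it then suffices to note that each contains the terminal object $\fromto{X}{X}$ (which is separated and quasicompact) and is closed under pullbacks: given $\fromto{A}{C}$ and $\fromto{B}{C}$ in $\Etsep_X$, the projection $\fromto{A \times_C B}{A}$ is a base change of $\fromto{B}{C}$, which is separated by the first part of item (1), whence $\fromto{A \times_C B}{X}$ is a composite of separated morphisms and thus separated; the quasicompact-and-separated case is verbatim the same.

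For items (2)--(4), I would again invoke \stacks{01KV}. For (2), if $X'$ is affine then $\fromto{X'}{\Spec(\ZZ)}$ is separated, so cancellation forces $\fromto{X'}{X}$ to be separated, i.e.\ $\fromto{X'}{X}$ lies in $\Etsep_X$. For (3), when $X$ is quasiseparated the structure map $\fromto{X}{\Spec(\ZZ)}$ is quasiseparated while $\fromto{X'}{\Spec(\ZZ)}$ is quasicompact (as $X'$ is affine), so cancellation makes $\fromto{X'}{X}$ quasicompact; together with (2) this gives $\Etaff_X \subset \Etqcs_X$. For (4), when $X$ is qcqs and $p \colon \fromto{X'}{X}$ lies in $\Etqcs_X$, composing $p$ with $\fromto{X}{\Spec(\ZZ)}$ exhibits $X'$ as quasicompact (both maps are quasicompact) and quasiseparated (both maps are quasiseparated, using that $p$ is separated), so $X'$ is qcqs.

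I expect no genuine obstacle here; everything is a matter of lining up the correct cancellation hypotheses. The only point that warrants a moment's care is that the cancellation statement for quasicompactness requires the target structure map to be quasiseparated, and this is automatic throughout because the relevant structure maps are separated.
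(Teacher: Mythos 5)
Your proposal is correct and follows exactly the route the paper intends: the Observation carries its justification inline (cancellation for separatedness/quasicompactness plus stability of these properties under composition and base change), and your write-up is precisely the careful expansion of those hints, including the one point of substance — that quasicompactness cancellation needs a quasiseparated second factor, which the separatedness of the relevant structure morphisms supplies. The only nitpick is bibliographic: the quasicompactness cancellation lemma is a different Stacks Project tag than the separatedness one \stacks{01KV}, but the mathematics is unaffected.
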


\begin{lemma}\label{lem:separated_and_separated_finitely_presented_etale_sites_are_bases}
	Let $ X $ be a scheme.
	Then:
	\begin{enumerate}[label=\stlabel{lem:separated_and_separated_finitely_presented_etale_sites_are_bases}, ref=\arabic*]
		\item\label{lem:separated_and_separated_finitely_presented_etale_sites_are_bases.1} The full subcategories
		\begin{equation*}
			 \Etaff_X \subset \Etsep_X \subset \Et_X
		\end{equation*}
		are both bases for the étale topology.
		Hence restriction along the inclusions define equivalences of topoi 
		\begin{equation*}
			\Sh_{\et}(\Et_X) \equivalence \Sh_{\et}(\Etsep_X) \equivalence \Sh_{\et}(\Etaff_X)  \period 
		\end{equation*}
		The inverses are given by right Kan extension along the inclusions.

		\item\label{lem:separated_and_separated_finitely_presented_etale_sites_are_bases.2} Assume that $ X $ is quasiseparated.
		Then $ \Etqcs_X \subset \Et_X $ is a basis for the étale topology.
		Hence restriction along the inclusion defines an equivalences of topoi
		\begin{equation*}
			\Sh_{\et}(\Et_X) \equivalence \Sh_{\et}(\Etqcs_X) \period 
		\end{equation*}
		The inverse is given by right Kan extension along the inclusion.
	\end{enumerate}
\end{lemma}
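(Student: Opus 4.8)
The plan is to deduce both statements from the Comparison Lemma for dense subsites \cite[Exposé III, Théorème 4.1]{MR50:7130}: if $ \Ccal $ is a site and $ \Bcal \subseteq \Ccal $ is a full subcategory with the property that every object of $ \Ccal $ admits a covering by objects of $ \Bcal $, then, equipping $ \Bcal $ with the induced topology, restriction $ \fromto{\Sh(\Ccal)}{\Sh(\Bcal)} $ along the inclusion is an equivalence of topoi whose inverse is the right Kan extension along the inclusion. Applying this with $ \Ccal = \Et_X $ and $ \Bcal $ one of $ \Etaff_X $, $ \Etsep_X $, or $ \Etqcs_X $, everything reduces to verifying the density hypothesis, namely that each of these full subcategories is a basis for the étale topology in the sense that every étale $ X $-scheme is covered by objects of $ \Bcal $.

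First I would treat $ \Etaff_X $. Given an étale morphism $ p \colon \fromto{X'}{X} $, choose a cover of $ X' $ by affine opens $ \{U_i\} $. Each $ U_i $ is affine, hence affine over $ \Spec(\ZZ) $, so $ U_i \in \Etaff_X $, and the structure morphism $ \fromto{U_i}{X} $ is étale, being the composite of the open immersion $ \incto{U_i}{X'} $ with $ p $. Since $ \{U_i \hookrightarrow X'\} $ is a Zariski cover, it is in particular an étale covering, so $ \Etaff_X $ is dense and the Comparison Lemma gives the equivalence $ \Sh_{\et}(\Et_X) \equivalence \Sh_{\et}(\Etaff_X) $. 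Because $ \Etaff_X \subseteq \Etsep_X $ by \enumref{obs:properties_of_variants_of_the_etale_site}{2}, the same affine covers exhibit $ \Etsep_X $ as dense, yielding $ \Sh_{\et}(\Et_X) \equivalence \Sh_{\et}(\Etsep_X) $ and proving \enumref{lem:separated_and_separated_finitely_presented_etale_sites_are_bases}{1}. For \enumref{lem:separated_and_separated_finitely_presented_etale_sites_are_bases}{2}, when $ X $ is quasiseparated we have $ \Etaff_X \subseteq \Etqcs_X $ by \enumref{obs:properties_of_variants_of_the_etale_site}{3}, so the affine covers again show that $ \Etqcs_X $ is dense, and the Comparison Lemma gives $ \Sh_{\et}(\Et_X) \equivalence \Sh_{\et}(\Etqcs_X) $.

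The one subtlety to flag is that the form of the Comparison Lemma I invoke requires only the density (topologically generating) hypothesis and not closure of $ \Bcal $ under fiber products; the induced topology is defined sieve-theoretically, so no fiber products in $ \Bcal $ are needed. This matters precisely for $ \Etaff_X $: the fiber product of two affine étale $ X $-schemes need not be affine when $ X $ is non-separated, since the diagonal of $ X $ is only an immersion and $ U \times_X V $ is merely a locally closed subscheme of the affine scheme $ U \times_{\Spec(\ZZ)} V $. For $ \Etsep_X $ and $ \Etqcs_X $ this difficulty does not arise, as \enumref{obs:properties_of_variants_of_the_etale_site}{1} records that these subcategories are closed under finite limits, so the induced topology is manifestly the étale topology restricted; but phrasing the argument uniformly through the density-only form of the lemma handles all three cases at once. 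Finally, the identification of the inverse equivalence with right Kan extension along the inclusion is part of the conclusion of the Comparison Lemma: the right Kan extension of a sheaf on the dense subsite is automatically a sheaf on $ \Et_X $ and is inverse to restriction.
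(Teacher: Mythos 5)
Your proposal is correct and takes essentially the same route as the paper: the paper likewise reduces everything to the density of $ \Etaff_X $ in $ \Et_X $ (immediate because Zariski covers by affine opens are étale covers), deduces density of the intermediate subcategories $ \Etsep_X $ and, when $ X $ is quasiseparated, $ \Etqcs_X $ from the inclusions of $ \Etaff_X $, and invokes a comparison result for dense subsites (Propositions B.6.4 and B.6.6 of Lurie's \emph{Ultracategories} rather than your SGA 4, Exposé III, Théorème 4.1) to conclude that restriction is an equivalence with inverse given by right Kan extension. Your extra remark that $ \Etaff_X $ need not be closed under fiber products, so the density-only form of the comparison lemma is needed there, is accurate and a sensible point of care, though it does not change the substance of the argument.
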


\begin{proof}
	First note that by \cite[Propositions B.6.4 \& B.6.6]{Ultracategories}, the statements about equivalences of topoi follow from the statements about bases.
	To prove \enumref{lem:separated_and_separated_finitely_presented_etale_sites_are_bases}{1}, first note that it suffices to prove that \smash{$ \Etaff_X \subset \Et_X $} is a basis.
	This is immediate from the fact that Zariski covers are étale covers.
	For \enumref{lem:separated_and_separated_finitely_presented_etale_sites_are_bases}{2} note that since $ X $ is quasiseparated, we have 
	\begin{equation*}
		\Etaff_X \subset \Etqcs_X \period
	\end{equation*}
	Hence the claim is a consequence of \enumref{lem:separated_and_separated_finitely_presented_etale_sites_are_bases}{1}.
\end{proof}

In the remainder of this subsection, we explain why every \textit{coherent} geometric morphism between the étale topoi of qcqs schemes is induced by a morphism between quasicompact separated étale sites.

\begin{recollection}[(coherent topoi)]
	Let $ \Xcal $ be a topos.
	An object $X \in \Xcal$ is \defn{quasicompact} if for every epimorphism
	\begin{equation*}
		\surjto{\coprod_{i \in I} X_i}{X}
	\end{equation*}
	there exists a finite subset $J \subset I$ such that $\coprod_{j \in J} X_j \rightarrow X$ is an epimorphism.  
	An object $X \in \Xcal$ is \defn{quasiseparated} if for all diagrams $Y \rightarrow X \leftarrow Y' $ with $ Y $ and $Y'$ quasicompact, the pullback $Y \times_X Y'$ is quasicompact. 
	Finally, an object $ X \in \Xcal$ is \defn{coherent} if $ X $ is both quasicompact and quasiseparated. 
	A geometric morphism $ \flowerstar \colon \Xcal \rightarrow \Ycal$ of topoi is \defn{coherent} if for every coherent object $Y \in \Ycal$, the pullback $\fupperstar(Y)$ is coherent.
	See \cites[Exposé VI]{MR50:7131}[\S C.5]{Ultracategories} for more on coherent topoi.
\end{recollection}

\begin{example}\label{ex:coherent_objects_of_the_etale_topos}
	Let $ X $ be a qcqs scheme.
	Then the étale topos $ \Xet $ is coherent.
	Moreover, \cite[Exposé IX, Proposition 2.7]{MR50:7131} shows that an étale sheaf $ \Fcal \in \Xet $ is coherent if and only if $ \Fcal $ is constructible.
\end{example}

\begin{lemma}\label{lem:coherence_for_geometric_morphisms_between_etale_topoi}
	Let $ X $ and $ Y $ be qcqs schemes and let $ \flowerstar \colon \fromto{\Xet}{\Yet} $ be a geometric morphism.
	The following are equivalent:
	\begin{enumerate}[label=\stlabel{lem:coherence_for_geometric_morphisms_between_etale_topoi}, ref=\arabic*]
		\item The geometric morphism $ \flowerstar $ is coherent.

		\item The induced map of topological spaces $ |\flowerstar| \colon \fromto{|X|}{|Y|} $ is quasicompact.
	\end{enumerate}
\end{lemma}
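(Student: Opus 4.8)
The plan is to pass through the dictionary between subterminal objects of $\Yet$ and opens of $|Y|$, and then to isolate the two sources of coherence in the étale topos: the localic part, governed by $|\flowerstar|$, and the finite-fibre (Galois) part, which any inverse-image functor controls for free. Since $Y$ is qcqs, $\Yet$ is coherent (\Cref{ex:coherent_objects_of_the_etale_topos}), so its terminal object is quasiseparated; hence every subterminal object is quasiseparated, and a subterminal object is coherent precisely when it is quasicompact. Under $\Open(\Yet) \isomorphic \Open(|Y|)$, the subterminal $u_V$ attached to an open $V \subseteq |Y|$ is quasicompact iff $V$ is a quasicompact open, and, since $\fupperstar$ is left exact and preserves the terminal object, $\fupperstar(u_V) = u_{|\flowerstar|^{-1}(V)}$. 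This yields $(1)\Rightarrow(2)$ at once: if $\flowerstar$ is coherent then for each quasicompact open $V$ the coherent subterminal $u_V$ has coherent — hence quasicompact — image $u_{|\flowerstar|^{-1}(V)}$, so $|\flowerstar|^{-1}(V)$ is a quasicompact open; as these form a basis, $|\flowerstar|$ is quasicompact.

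For $(2)\Rightarrow(1)$ I would invoke the standard reduction (\cite[Exposé VI]{MR50:7131}; see also \cite[\S C.5]{Ultracategories}): because $\Etqcs_Y$ is a generating family of coherent objects closed under fiber products (\Cref{lem:separated_and_separated_finitely_presented_etale_sites_are_bases} and \Cref{obs:properties_of_variants_of_the_etale_site}), it suffices to prove that $\fupperstar(h_U)$ is coherent for every $U \in \Etqcs_Y$. Covering $U$ by finitely many affines, and $Y$ by finitely many affine opens $Y_a$ — the latter being harmless since coherence of an object may be tested after restriction to the finite cover $\{|\flowerstar|^{-1}(Y_a)\}$ of $|X|$ — I reduce to $Y$ affine and $U \to Y$ affine étale. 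The key structural input is then that a quasicompact separated étale morphism $U \to Y$ has bounded fibre cardinality, and that $y \mapsto \#U_y$ is lower semicontinuous, so the loci $V_d = \{\,y : \#U_y \ge d\,\}$ form a finite descending chain of quasicompact opens whose successive differences are the strata over which $U \to Y$ is finite étale. This exhibits $h_U$ as assembled, by extension-by-zero $j_!$ along the open immersions $V_d \hookrightarrow Y$, from finite locally constant sheaves on the $V_d$.

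With this in hand the two mechanisms combine. On the localic side, for an open immersion $j\colon V \hookrightarrow Y$ the standard base-change identity $\fupperstar \of j_! \isomorphic j'_! \of \widetilde{\fupperstar}$ holds, where $j'\colon (\Xet)_{/\fupperstar(u_V)} \hookrightarrow \Xet$ is the open immersion attached to $\fupperstar(u_V) = u_{|\flowerstar|^{-1}(V)}$; here hypothesis (2) enters, guaranteeing that $|\flowerstar|^{-1}(V)$ is quasicompact so that $j'_!$ preserves quasicompactness. On the finite side, the restricted geometric morphism $\widetilde{\fupperstar}$ preserves finite locally constant sheaves (it preserves the terminal object, finite coproducts, and covers), and such sheaves are Kuratowski-finite, hence quasicompact, and in fact coherent by \Cref{ex:coherent_objects_of_the_etale_topos}; this step uses no topology. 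Feeding the finite locally constant pieces through $j'_!$ and taking the finite colimit dictated by the $V_d$-filtration shows $\fupperstar(h_U)$ is coherent, which by the reduction gives coherence of $\flowerstar$.

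The main obstacle is precisely this dévissage: a general quasicompact separated étale $U \to Y$ is neither an open immersion (the localic case controlled by (2)) nor finite étale (the Galois case, automatic), and one cannot split it by a finite étale cover of $Y$ — a Kummer cover of $\GGm \subset \AA^1$ already fails to extend over the origin. The real content is therefore to show that the \emph{only} obstruction to coherence of $\fupperstar$ is localic, by reconstructing every coherent object from subterminals and finite locally constant sheaves compatibly with $\fupperstar$; verifying that the $V_d$-filtration and the $j_!$ base-change assemble correctly, and that coherence (and not merely quasicompactness) is preserved at each stage, is where the care lies.
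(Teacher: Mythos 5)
Your direction (1)\,$\Rightarrow$\,(2) is exactly the paper's argument: restrict $\fupperstar$ to subterminal objects and use that quasicompact opens correspond to coherent subterminals. The genuine gap is in (2)\,$\Rightarrow$\,(1), at the dévissage of $h_U$. The fiber-degree loci $V_d$ are indeed quasicompact opens, and on the successive \emph{locally closed} differences $V_d \setminus V_{d+1}$ the map $U \to Y$ is finite étale; but the restriction of $h_U$ to $V_d$ itself is \emph{not} finite locally constant (its stalk cardinality still jumps along $V_{d+1}$), so the asserted presentation of $h_U$ by extensions by zero of finite locally constant sheaves on the $V_d$ is false as stated. Nor is the repair routine: for sheaves of \emph{sets} there are no recollement exact sequences, and a sheaf on $Y$ is recovered from its restrictions to an open $V$ and the closed complement $Z$ only together with a gluing map $i^*\Fcal \to i^*j_*j^*\Fcal$ — a comma-category datum, not a finite colimit of $j_!$-pieces. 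So ``taking the finite colimit dictated by the $V_d$-filtration'' is precisely the unproven step (as you half-concede in your closing paragraph), and without it the argument does not close. (There is also a mild circularity lurking in the boundedness of the fiber degree, which is usually itself extracted from the constructibility of $h_U$.)

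The paper's proof shows that no colimit presentation is needed: since coherent objects of $\Yet$ are exactly the constructible sheaves (\Cref{ex:coherent_objects_of_the_etale_topos}), it suffices that $\fupperstar$ preserve constructibility, and constructibility is a \emph{stratumwise} condition. Given constructible $\Fcal$ with a finite stratification $\{Y_i\}_{i \in I}$ of $Y$ by qcqs locally closed subschemes on which $\Fcal$ is finite locally constant, hypothesis (2) guarantees that $\{|\flowerstar|^{-1}(Y_i)\}_{i \in I}$ is again a finite stratification of $X$ by qcqs locally closed subsets. Restriction to a pulled-back stratum commutes with $\fupperstar$ — for open strata because $\fupperstar$ carries the subterminal of $V$ to that of $|\flowerstar|^{-1}(V)$, hence carries open subtopoi to open subtopoi and therefore closed complements to closed complements — and pullback under \emph{any} geometric morphism preserves finite locally constant sheaves, which is your own correct observation that the ``Galois part'' costs nothing. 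This renders both your reduction to representables $h_U$ and the $j_!$ base-change bookkeeping superfluous: if you replace the assembly step by this direct stratumwise verification, the remaining ingredients of your outline (quasicompact opens pull back to quasicompact opens; finite locally constant pulls back to finite locally constant) are exactly what the proof needs.
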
	

\begin{proof}
	If $\flowerstar$ is coherent, then by restricting the morphism $\flowerstar$ to subobjects of the terminal object of $\Yet$ we deduce that induced map $ |\flowerstar| $ is quasicompact.
	Conversely, assume that $ |\flowerstar| $ is quasicompact.  
	Since $ X $ and $ Y $ are qcqs, in light of \Cref{ex:coherent_objects_of_the_etale_topos} we need to show that for each constructible sheaf $\Fcal \in \Yet $, the pullback $ \fupperstar(\Fcal) $ is constructible.
	By assumption, there is a finite stratification $\{Y_i\}_{i \in I}$ of $ Y $ where each $ Y_i \subset Y $ is qcqs and locally closed such that on each stratum, $\restrict{\Fcal}{Y_i}$ is finite locally constant. 
	Since $ |\flowerstar| $ is quasicompact, the stratification pulls back to a stratification $\{|\flowerstar|^{-1}(Y_i)\}_{i \in I}$ of $ X $ by qcqs locally closed subschemes such that on each stratum, $\fupperstar(\Fcal)$ is finite locally constant. 
\end{proof}

\begin{theorem}\label{thm:every_coherent_geometric_morphism_between_etale_topoi_of_qcqs_schemes_is_induced_by_a_morphism_of_sites}
	Let $ X $ and $ Y $ be qcqs schemes and let $ \flowerstar \colon \fromto{\Xet}{\Yet} $ be a coherent geometric morphism.
	Then for each $ V \in \Etqcs_Y $, the étale sheaf $ \fupperstar(V) $ is representable by an object of $ \Etqcs_X $.
	Hence $ \flowerstar $ is induced by the morphism of sites
	\begin{equation*}
		\fupperstar \colon \fromto{\Etqcs_Y}{\Etqcs_X} \period
	\end{equation*}
\end{theorem}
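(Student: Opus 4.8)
The plan is to prove that $\fupperstar(V)$ is a coherent object of $\Xet$ which is moreover representable by a quasi-finite, separated, étale $X$-scheme, building the representing scheme by decomposing $V$ into finite étale pieces and open pieces, both of which behave well under $\fupperstar$. First I would record the formal properties that the argument rests on: as the inverse image of a geometric morphism, $\fupperstar$ is left exact and preserves all colimits, hence preserves the terminal object, monomorphisms, epimorphisms, finite coproducts, fiber products, complemented subobjects, and — for any open immersion $j$ — commutes with the extension-by-the-initial-object functor $j_!$. Since $\flowerstar$ is coherent and $V \in \Etqcs_Y$ is a coherent object of $\Yet$ (\Cref{ex:coherent_objects_of_the_etale_topos}), the sheaf $\fupperstar(V)$ is coherent, hence will be quasicompact and quasiseparated once it is known to be a scheme. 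Because $V \to Y$ is separated and étale, its relative diagonal is a clopen immersion, so the diagonal of $\fupperstar(V)$ is a complemented monomorphism; this will give separatedness. It therefore remains to prove representability.

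I would check representability by reducing to two base cases. Covering $V$ by finitely many affine opens $V_\alpha$ (possible as $V$ is quasicompact) gives an epimorphism $\coprod_\alpha V_\alpha \twoheadrightarrow V$ whose legs are open immersions; applying $\fupperstar$ and using that it preserves these together with the intersections $V_\alpha \cap V_\beta$ (which are open by separatedness of $V$), it suffices to represent each $\fupperstar(V_\alpha)$ and glue along opens, so I may assume $V$ affine. The two base cases are then: (i) $V \hookrightarrow Y$ an open immersion, where $\fupperstar(V)$ is the subterminal object corresponding to the open $|\flowerstar|^{-1}(V) \subseteq X$ under the induced map of locales, hence is an open subscheme of $X$; and (ii) $V \to Y$ finite étale, where $V$ corresponds to a finite locally constant sheaf. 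In case (ii), $\fupperstar$ preserves local constancy — it sends a trivializing cover $\coprod_i Y_i \twoheadrightarrow 1$ to a cover $\coprod_i \fupperstar(Y_i) \twoheadrightarrow 1$, and sends a constant finite sheaf $\coprod_S 1$ to $\coprod_S 1$ — so $\fupperstar(V)$ is finite locally constant, hence representable by a finite étale $X$-scheme.

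To pass from the base cases to a general affine $V \in \Etqcs_Y$, I would induct on the maximal cardinality $n$ of a geometric fiber of $V \to Y$. As étale morphisms are open, the locus $U = \{\, y : \#V_{\bar y} \geq n \,\}$ is open, $V|_U \to U$ is finite étale of degree $n$, and over the closed complement $Z = Y \setminus U$ the scheme $V|_Z$ has strictly smaller maximal fiber cardinality. Using the open–closed recollement of the étale topos — restriction to the open $U$ and to the closed complement $Z$ (whose étale topos is that of the closed subscheme, \Cref{remark:closed_subtopoi_of_schemes}), together with the gluing datum — I would reconstruct $\fupperstar(V)$ from $\fupperstar(V|_U)$, which is finite étale over the open $|\flowerstar|^{-1}(U)$ by case (ii), and from its restriction over the closed subscheme $|\flowerstar|^{-1}(Z)$, which is representable by a quasi-finite separated étale scheme by the inductive hypothesis. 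Once representability is known, $\fupperstar(V)$ is étale over $X$ (étaleness being local on the source), separated and quasicompact by the first paragraph, hence an object of $\Etqcs_X$; functoriality and exactness of $\fupperstar$ then make $V \mapsto \fupperstar(V)$ a morphism of sites $\Etqcs_Y \to \Etqcs_X$ inducing $\flowerstar$, using that $\Etqcs$ is a basis (\Cref{lem:separated_and_separated_finitely_presented_etale_sites_are_bases}).

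The main obstacle is precisely the assembly step: one must check that $\fupperstar$ is compatible with the open–closed recollement — the delicate point being the cospecialization functor $i^{*}j_{*}$, with which $\fupperstar$ need not commute on the nose — and, more seriously, that the glued sheaf is representable by an honest \emph{scheme} rather than merely an étale algebraic space over $X$. I expect to handle the latter by observing that the assembled object is a quasi-finite, separated, étale algebraic space over $X$ and invoking that such algebraic spaces over a scheme are automatically schemes (Zariski's main theorem for algebraic spaces); and the former by restricting attention to constructible sheaves, for which the recollement data is governed by the finite stratification above and is visibly preserved by $\fupperstar$.
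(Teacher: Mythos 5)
Your outline shares the paper's two decisive ingredients --- deducing separatedness of $\fupperstar(V)$ from the clopen relative diagonal of $V$ over $Y$ (the paper pulls back the splitting $V \sqcup V' \cong V \times_Y V$ along $\fupperstar$ to split the diagonal of the pullback), and the final appeal to \stacks{03XX}, that a locally quasi-finite separated étale algebraic space over a scheme is a scheme --- but the core of your argument, the by-hand representability of $\fupperstar(V)$ via induction on maximal fiber cardinality, has a genuine unresolved gap exactly where you flag it. Knowing that the restriction of $\fupperstar(V)$ over the open $|\flowerstar|^{-1}(U)$ is representable (finite locally constant, hence finite étale) and that its restriction to the closed complement is representable by induction does not let you reassemble $\fupperstar(V)$: the recollement encodes a sheaf as the triple consisting of its two restrictions together with the gluing map $i^{*}F \to i^{*}j_{*}j^{*}F$, and since $\fupperstar$ does not commute with $j_{*}$ you cannot identify the gluing datum of $\fupperstar(V)$ with the pullback of the datum on the $Y$-side; worse, even granting the datum, a sheaf whose restrictions to an open and its closed complement are representable by schemes is not thereby visibly a scheme --- gluing along a cospecialization map is precisely the situation the algebraic-space formalism exists to handle. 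Your proposed repair (``observe that the assembled object is a quasi-finite separated étale algebraic space and invoke Zariski's main theorem'') presupposes that the assembled sheaf \emph{is} an algebraic space, and the only available source for that is the general fact that every sheaf on the small étale site of $X$ is representable by an algebraic space étale over $X$.

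Once that general fact is invoked, however, your entire stratification induction becomes unnecessary, and the completed argument collapses to the paper's proof: coherence of $\flowerstar$ gives, via \Cref{lem:coherence_for_geometric_morphisms_between_etale_topoi}, that $\fupperstar(V)$ is constructible, hence representable by a quasicompact algebraic space étale over $X$; the clopen-diagonal trick gives separatedness; and \stacks{03XX} concludes. The intermediate steps of your induction are individually fine --- lower semicontinuity of the fiber degree does make your locus $U$ open, a quasicompact separated étale morphism with locally constant degree is indeed finite, and $\fupperstar$ does preserve finite locally constant sheaves and subterminal objects --- but they should simply be discarded in favor of the direct argument, since they feed into an assembly step that cannot be carried out without the very representability theorem that makes them redundant.
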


\begin{proof}
    The diagonal $ V \to V \times_Y V  $ is a clopen immersion; let $ V' $ be the complementary clopen subscheme.
    Since $ \flowerstar $ is coherent, by \Cref{lem:coherence_for_geometric_morphisms_between_etale_topoi} the sheaves $ \fupperstar(V) $ and $ \fupperstar(V') $ are constructible.
    Hence $ \fupperstar(V) $ and $ \fupperstar(V') $ are represented by an étale algebraic spaces $ U \to X $ and $ U' \to X $.
    Furthermore, pulling back the isomorphism $ \isomto{V \coproduct V'}{V \times_Y V} $, we obtain an isomorphism
    \begin{equation*}
        U \coproduct U' \isomorphism U \times_X U \period
    \end{equation*}
    Thus the diagonal of $ U $ is a closed immersion, equivalently, $ U \to X $ is separated. 
    Since an étale algebraic space is also locally quasi-finite, it follows from \stacks{03XX} that $ U $ is a scheme and $ U \to X $ is quasicompact and separated, as desired.
\end{proof}

\begin{corollary}\label{cor:every_geometric_morphism_between_topologically_noetherian_schemes_is_coherent}
	Let $ X $ and $ Y $ be schemes.
	If $ X $ is topologically noetherian and $ Y $ is qcqs, then:
	\begin{enumerate}[label=\stlabel{cor:every_geometric_morphism_between_topologically_noetherian_schemes_is_coherent}, ref=\arabic*]
		\item\label{cor:every_geometric_morphism_between_topologically_noetherian_schemes_is_coherent.1} Every geometric morphism $ \flowerstar \colon \fromto{\Xet}{\Yet} $ is coherent. 
		
		\item\label{cor:every_geometric_morphism_between_topologically_noetherian_schemes_is_coherent.2} Every geometric morphism $ \flowerstar \colon \fromto{\Xet}{\Yet} $ is induced by a morphism of quasicompact separated étale sites $ \fromto{\Etqcs_Y}{\Etqcs_X} $.
	\end{enumerate}
\end{corollary}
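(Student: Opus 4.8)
The plan is to deduce both statements directly from the two results already established in this appendix, the only extra ingredient being the observation that a topologically noetherian scheme is automatically qcqs. First I would note that since $X$ is topologically noetherian, its underlying space $|X|$ is a noetherian topological space; such a space is quasicompact, and since every subset of a noetherian space is quasicompact, it is in particular quasiseparated. Hence $X$ is qcqs. Together with the standing hypothesis that $Y$ is qcqs, this places us in the situation required by both \Cref{lem:coherence_for_geometric_morphisms_between_etale_topoi} and \Cref{thm:every_coherent_geometric_morphism_between_etale_topoi_of_qcqs_schemes_is_induced_by_a_morphism_of_sites}.

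For \enumref{cor:every_geometric_morphism_between_topologically_noetherian_schemes_is_coherent}{1}, I would invoke the recollection on quasicompactness of maps: since $|X|$ is noetherian, every subset of $|X|$ is quasicompact, so the preimage under $|\flowerstar|$ of any quasicompact open of $|Y|$ is quasicompact. Thus the induced map of spaces $ |\flowerstar| \colon \fromto{|X|}{|Y|} $ is quasicompact, and \Cref{lem:coherence_for_geometric_morphisms_between_etale_topoi} immediately yields that $\flowerstar$ is coherent.

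For \enumref{cor:every_geometric_morphism_between_topologically_noetherian_schemes_is_coherent}{2}, having shown in part \enumref{cor:every_geometric_morphism_between_topologically_noetherian_schemes_is_coherent}{1} that $\flowerstar$ is coherent, and recalling that both $X$ and $Y$ are qcqs, I would apply \Cref{thm:every_coherent_geometric_morphism_between_etale_topoi_of_qcqs_schemes_is_induced_by_a_morphism_of_sites} verbatim: for each $ V \in \Etqcs_Y $ the pullback $ \fupperstar(V) $ lies in $ \Etqcs_X $, so $\flowerstar$ is induced by the morphism of sites $ \fupperstar \colon \fromto{\Etqcs_Y}{\Etqcs_X} $.

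There is essentially no genuine obstacle to surmount: all the real work is front-loaded into \Cref{lem:coherence_for_geometric_morphisms_between_etale_topoi} and \Cref{thm:every_coherent_geometric_morphism_between_etale_topoi_of_qcqs_schemes_is_induced_by_a_morphism_of_sites}, and the corollary merely repackages them under the hypothesis that forces every continuous map out of $|X|$ to be quasicompact. The one point worth stating carefully is the reduction ``topologically noetherian $\Rightarrow$ qcqs,'' which is exactly what licenses the use of those two results.
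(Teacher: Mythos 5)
Your proof is correct and follows essentially the same route as the paper: part (2) is reduced to part (1) via \Cref{thm:every_coherent_geometric_morphism_between_etale_topoi_of_qcqs_schemes_is_induced_by_a_morphism_of_sites}, and part (1) follows from \Cref{lem:coherence_for_geometric_morphisms_between_etale_topoi} because $|\flowerstar| \colon \fromto{|X|}{|Y|}$ is automatically quasicompact when $|X|$ is noetherian. Your explicit verification that a topologically noetherian scheme is qcqs is a detail the paper leaves implicit, and spelling it out is a worthwhile addition since both cited results carry qcqs hypotheses.
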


\begin{proof}
	By \Cref{thm:every_coherent_geometric_morphism_between_etale_topoi_of_qcqs_schemes_is_induced_by_a_morphism_of_sites}, it suffices to prove \enumref{cor:every_geometric_morphism_between_topologically_noetherian_schemes_is_coherent}{1}. 
	For this, note that by \Cref{lem:coherence_for_geometric_morphisms_between_etale_topoi}, it suffices to shows that $ |\flowerstar| \colon \fromto{|X|}{|Y|} $ is quasicompact.
	Since $ |X| $ is noetherian, this is automatic.
\end{proof}

We conclude by reformulating the étale reconstruction property (see \Cref{prop:equivalent_conditions_for_etale_reconstruction,def:etale_reconstruction}) in terms of sites.

\begin{notation}
	Let $ k $ be a field and let $ X $ and $ Y $ be $ k $-schemes.
	Assume that $ X $ is topologically noetherian and $ Y $ is qcqs.
	We write $ \Hom_k(\Etqcs_Y,\Etqcs_X) $ for the groupoid of morphisms $ f^{-1} \colon \fromto{\Etqcs_Y}{\Etqcs_X} $ in the $ (2,1) $-category of sites under the étale site of $ \Spec(k) $.
	We write 
	\begin{equation*}
		\Hompink(\Etqcs_Y,\Etqcs_X) \subset \Hom_k(\Etqcs_Y,\Etqcs_X) 
	\end{equation*}
	for the full subgroupoid spanned by those morphisms of sites $ f^{-1} \colon \fromto{\Etqcs_Y}{\Etqcs_X} $ such that the induced geometric morphism
	\begin{equation*}
		\begin{tikzcd}
			\Xet \equivalent \Sh_{\et}(\Etqcs_X) \arrow[r, "\flowerstar"] & \Sh_{\et}(\Etqcs_Y) \equivalent \Yet
		\end{tikzcd}
	\end{equation*}
	is a pinned geometric morphism in the sense of \Cref{def:pinned_geometric_morphism}.
\end{notation}

\begin{corollary}\label{cor:reformulating_morphisms_of_topoi_in_terms_of_morphisms_of_sites}
	Let $ k $ be a field and let $ X $ and $ Y $ be $ k $-schemes.
	Assume that $ X $ is topologically noetherian and $ Y $ is qcqs.
	Then the natural functors
	\begin{align*}
		\Hom_k(\Etqcs_Y,\Etqcs_X) \to \Hom_k(\Xet,\Yet) \andeq \Hompin_k(\Etqcs_Y,\Etqcs_X) \to \Hompin_k(\Xet,\Yet)
	\end{align*}
	given by $ \goesto{f^{-1}}{\flowerstar} $ are equivalences of groupoids.
\end{corollary}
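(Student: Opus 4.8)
The plan is to prove that the first comparison functor $\Hom_k(\Etqcs_Y,\Etqcs_X) \to \Hom_k(\Xet,\Yet)$ is an equivalence of groupoids and then deduce the pinned version formally: since $\Hompin_k(\Etqcs_Y,\Etqcs_X)$ is by definition the full subgroupoid of $\Hom_k(\Etqcs_Y,\Etqcs_X)$ lying over the pinned geometric morphisms, once the unrestricted comparison is an equivalence its restriction to the full subgroupoids that correspond under it is automatically an equivalence onto $\Hompin_k(\Xet,\Yet)$. So I focus on the first functor. First I would record the bookkeeping. By \Cref{lem:separated_and_separated_finitely_presented_etale_sites_are_bases}, restriction identifies $\Xet \equivalent \Sh_{\et}(\Etqcs_X)$ and $\Yet \equivalent \Sh_{\et}(\Etqcs_Y)$; since the étale topology is subcanonical, the Yoneda embeddings $j_X \colon \Etqcs_X \hookrightarrow \Xet$ and $j_Y \colon \Etqcs_Y \hookrightarrow \Yet$ (sending an étale scheme to its representable sheaf) are fully faithful. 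A morphism of sites $f^{-1} \colon \Etqcs_Y \to \Etqcs_X$ induces a geometric morphism $\flowerstar$ whose inverse image $\fupperstar$ is a left exact left adjoint fitting into a canonical isomorphism $\fupperstar \circ j_Y \isomto j_X \circ f^{-1}$; this is precisely the assignment $\goesto{f^{-1}}{\flowerstar}$ of the statement, and it manifestly respects the slice structures, matching ``sites under $\Et_{\Spec(k)}$'' with ``topoi over $\Spec(k)_{\et}$''.

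For essential surjectivity I would invoke \Cref{cor:every_geometric_morphism_between_topologically_noetherian_schemes_is_coherent}: since $X$ is topologically noetherian and $Y$ is qcqs, every geometric morphism $\flowerstar \colon \Xet \to \Yet$ over $\Spec(k)_{\et}$ is coherent, and by \Cref{thm:every_coherent_geometric_morphism_between_etale_topoi_of_qcqs_schemes_is_induced_by_a_morphism_of_sites} the inverse image $\fupperstar$ carries each representable $j_Y(V)$ with $V \in \Etqcs_Y$ to a representable object $j_X(f^{-1}(V))$ of $\Etqcs_X$. This produces a morphism of sites $f^{-1}$ inducing $\flowerstar$, and its compatibility with the structure maps from $\Et_{\Spec(k)}$ is inherited from that of $\flowerstar$ over $\Spec(k)_{\et}$.

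For full faithfulness I would compare $2$-morphisms. Given morphisms of sites $f^{-1}, g^{-1}$ with induced inverse images $\fupperstar, \gupperstar$, both functors are colimit-preserving, and every object of $\Yet$ is canonically a colimit of representables $j_Y(V)$ (the colimit over its category of elements, computed in the sheaf topos, which is licit because the topology is subcanonical). Consequently a natural isomorphism $\fupperstar \isomto \gupperstar$ is determined by, and conversely freely extends from, its restriction along $j_Y$. Via the canonical isomorphisms $\fupperstar \circ j_Y \isomto j_X \circ f^{-1}$ and $\gupperstar \circ j_Y \isomto j_X \circ g^{-1}$ together with the full faithfulness of $j_X$, such a restriction is exactly the datum of a natural isomorphism $f^{-1} \isomto g^{-1}$ of functors of sites, i.e.\ a $2$-morphism in $\Hom_k(\Etqcs_Y,\Etqcs_X)$. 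This yields the bijection on $\Hom$-groupoids, and combined with essential surjectivity shows the functor is an equivalence.

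The hard part will be the clean justification that natural isomorphisms between the colimit-preserving inverse-image functors $\Yet \to \Xet$ are interchangeable with natural isomorphisms of their restrictions along $j_Y$ to representable sheaves --- that is, that restriction along $j_Y$ is fully faithful on the relevant colimit-preserving functors. This rests on writing each sheaf as a colimit of representables and on both $\fupperstar$ and $\gupperstar$ preserving that colimit, so I would cite the density (co-Yoneda) presentation of sheaves together with the cocontinuity of inverse-image functors. A secondary, routine point is verifying the slice-compatibility bookkeeping, which holds because the structure data on the two sides are images of one another under $\goesto{f^{-1}}{\flowerstar}$.
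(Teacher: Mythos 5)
Your proposal is correct and follows essentially the same route as the paper, whose proof is the one-line ``Immediate from \Cref{cor:every_geometric_morphism_between_topologically_noetherian_schemes_is_coherent} and the definitions'': essential surjectivity is exactly the coherence corollary (via \Cref{thm:every_coherent_geometric_morphism_between_etale_topoi_of_qcqs_schemes_is_induced_by_a_morphism_of_sites}), and your density/co-Yoneda argument for full faithfulness, together with the formal restriction to the pinned subgroupoids, is just the standard unwinding the paper leaves implicit in ``the definitions.'' The extension step you flag as the hard part is indeed licit: $j_Y$ is dense since the étale topology is subcanonical, so a cocontinuous $\fupperstar$ is the left Kan extension of its restriction along $j_Y$, and natural isomorphisms correspond bijectively under restriction.
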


\begin{proof}
	Immediate from \Cref{cor:every_geometric_morphism_between_topologically_noetherian_schemes_is_coherent} and the definitions.
\end{proof}

\begin{nul}
	In light of \Cref{prop:rigidity,cor:reformulating_morphisms_of_topoi_in_terms_of_morphisms_of_sites}, if $ X $ and $ Y $ are topologically of finite type over $ k $, then the groupoid $ \Hompin_k(\Etqcs_Y,\Etqcs_X) $ is equivalent to a set.
\end{nul}

\begin{corollary}\label{cor:etale_reconstruction_via_sites_for_two_schemes}
	Let $ k $ be a field and let $ X $ and $ Y $ be schemes topologically of finite type over $ k $.
	Then the following are equivalent:
	\begin{enumerate}[label=\stlabel{cor:etale_reconstruction_via_sites_for_two_schemes}, ref=\arabic*]
		\item The natural map 
		\begin{equation*}
			\Hom_k(X,Y) \to \Hompink(\Etqcs_Y,\Etqcs_X) \comma \qquad f \mapsto f^{-1}
		\end{equation*}
		is bijective.

		\item The natural map $ \fromto{\Hom_k(X,Y)}{\Hompink(\Xet,\Yet)} $ is bijective.
	\end{enumerate}
\end{corollary}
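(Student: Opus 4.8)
The plan is to deduce this directly from \Cref{cor:reformulating_morphisms_of_topoi_in_terms_of_morphisms_of_sites}, using the rigidity of pinned morphisms to reduce everything to a statement about bijections of sets. First I would check that the hypotheses of that corollary are met: since $ X $ and $ Y $ are topologically of finite type over $ k $, they are in particular topologically noetherian, and a topologically noetherian scheme is qcqs. Hence \Cref{cor:reformulating_morphisms_of_topoi_in_terms_of_morphisms_of_sites} applies and tells us that the functor
\begin{equation*}
	\Hompink(\Etqcs_Y,\Etqcs_X) \to \Hompink(\Xet,\Yet) \comma \qquad f^{-1} \mapsto \flowerstar
\end{equation*}
is an equivalence of groupoids. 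Moreover, by \Cref{prop:rigidity} both groupoids are equivalent to sets (as recorded in the paragraph preceding the statement), so this functor is a bijection of sets.

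Next I would observe that the map $ \goesto{f}{f^{-1}} $ of condition \textup{(1)}, followed by the passage $ \goesto{f^{-1}}{\flowerstar} $ from morphisms of sites to geometric morphisms, recovers exactly the map $ \goesto{f}{\flowerstar} $ of condition \textup{(2)}. In other words, there is a commutative triangle
\begin{equation*}
	\begin{tikzcd}[column sep=1em]
		\Hom_k(X,Y) \arrow[rr] \arrow[dr] & & \Hompink(\Xet,\Yet) \\
		& \Hompink(\Etqcs_Y,\Etqcs_X) \arrow[ur, "\sim"'] &
	\end{tikzcd}
\end{equation*}
in which the right-hand slanted arrow is the bijection just described. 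Since that leg is a bijection, the top horizontal map is bijective if and only if the left slanted map is bijective, which is precisely the asserted equivalence of \textup{(1)} and \textup{(2)}.

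I do not expect any real obstacle here beyond bookkeeping: the corollary is essentially formal once \Cref{cor:reformulating_morphisms_of_topoi_in_terms_of_morphisms_of_sites} is in hand. The one point that deserves an explicit remark is the commutativity of the triangle, i.e.\ that the morphism of sites $ f^{-1} \colon \fromto{\Etqcs_Y}{\Etqcs_X} $ induced by a morphism of schemes $ f \colon \fromto{X}{Y} $ really does give rise to the geometric morphism $ \flowerstar $; but this is immediate from the definition of $ \flowerstar $ as the geometric morphism associated to the morphism of sites $ X \cross_Y (-) $ recorded in the notational conventions, so the triangle commutes by construction.
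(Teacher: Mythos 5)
Your proof is correct and is essentially the argument the paper intends: the corollary is stated without proof precisely because it follows, as you show, from \Cref{cor:reformulating_morphisms_of_topoi_in_terms_of_morphisms_of_sites} together with the rigidity of \Cref{prop:rigidity} (recorded in the preceding paragraph) and the commutativity of the evident triangle. Your verification that topologically of finite type over $k$ implies topologically noetherian, hence qcqs, and that $\goesto{f}{f^{-1}}$ followed by $\goesto{f^{-1}}{\flowerstar}$ recovers the map of condition (2) by the notational convention, is exactly the bookkeeping the paper leaves implicit.
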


\begin{remark}
	\Cref{cor:etale_reconstruction_via_sites_for_two_schemes} shows that Voevodsky's site-theoretic result \cite[Theorem 3.1]{MR1098621} is equivalent to the following statement: for every finitely generated field $ k $ of characteristic $ 0 $ and finite type $ k $-schemes $ X $ and $ Y $ with $ X $ normal, the natural map
	\begin{equation*}
		\fromto{\Hom_k(X,Y)}{\Hompink(\Xet,\Yet)}
	\end{equation*}
	is bijective.
\end{remark}

\begin{corollary}\label{cor:reformulation_of_etale_reconstruction_in_terms_of_sites}
	Let $ k $ be a field.
	Then $ k $ satisfies étale reconstruction if and only if for all schemes $ X $ and $ Y $ topologically of finite type over $ k $ with $ X $ absolutely weakly normal, the natural map
	\begin{equation*}
		\Hom_k(X,Y) \to \Hompink(\Etqcs_Y,\Etqcs_X)
	\end{equation*}
	is bijective.
\end{corollary}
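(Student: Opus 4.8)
The plan is to deduce the statement formally from two results already established, without any new geometric input. I would begin by unwinding the definition of étale reconstruction. By \Cref{def:etale_reconstruction}, the field $k$ satisfies étale reconstruction precisely when the equivalent conditions of \Cref{prop:equivalent_conditions_for_etale_reconstruction} hold, and among these, formulation \enumref{prop:equivalent_conditions_for_etale_reconstruction}{3} states exactly that for all schemes $X$ and $Y$ topologically of finite type over $k$ with $X$ absolutely weakly normal, the natural map $\Hom_k(X,Y) \to \Hompink(\Xet,\Yet)$ is bijective. So the first step is simply to record that $k$ satisfies étale reconstruction if and only if this topos-theoretic bijectivity holds for every such pair $(X,Y)$.

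The remaining task is to replace the topos-theoretic target $\Hompink(\Xet,\Yet)$ by the site-theoretic target $\Hompink(\Etqcs_Y,\Etqcs_X)$. This translation is precisely the content of \Cref{cor:etale_reconstruction_via_sites_for_two_schemes}, which asserts that for any pair of schemes topologically of finite type over $k$, bijectivity of $\Hom_k(X,Y) \to \Hompink(\Xet,\Yet)$ is equivalent to bijectivity of $\Hom_k(X,Y) \to \Hompink(\Etqcs_Y,\Etqcs_X)$. Applying this equivalence to each pair $(X,Y)$ with $X$ absolutely weakly normal converts the criterion from the first step into the asserted site-theoretic criterion, completing the argument.

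The only thing to check is that the hypotheses of \Cref{cor:etale_reconstruction_via_sites_for_two_schemes}—and, underlying it, of \Cref{cor:every_geometric_morphism_between_topologically_noetherian_schemes_is_coherent}—are met for the schemes in question, namely that $X$ is topologically noetherian and $Y$ is qcqs. This is automatic rather than an obstacle: a scheme topologically of finite type over $k$ is universally homeomorphic to a finite type $k$-scheme, which is noetherian and hence qcqs, and since both topological noetherianness and the qcqs condition depend only on the underlying topological space, they transfer across the universal homeomorphism. Thus every $X$ and $Y$ under consideration satisfies the requisite finiteness hypotheses, and the corollary follows formally from the two cited results.
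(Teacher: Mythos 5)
Your proposal is correct and matches the paper's intended argument exactly: the corollary is stated without a separate proof precisely because it follows formally by combining formulation \enumref{prop:equivalent_conditions_for_etale_reconstruction}{3} of \Cref{prop:equivalent_conditions_for_etale_reconstruction} with \Cref{cor:etale_reconstruction_via_sites_for_two_schemes}. Your verification that the finiteness hypotheses (topological noetherianness and qcqs, transferred across the universal homeomorphism to a finite type $k$-scheme) are automatic is the right point to check and is handled correctly.
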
	


\DeclareFieldFormat{labelnumberwidth}{#1}
\printbibliography[keyword=alph]
\DeclareFieldFormat{labelnumberwidth}{{#1\adddot\midsentence}}
\printbibliography[heading=none, notkeyword=alph]

\end{document}